\documentclass[reqno,10pt]{amsart}
\usepackage{amsfonts}
\usepackage[colorlinks,linkcolor=blue,anchorcolor=red,citecolor=blue]{hyperref}
\usepackage[margin=1in]{geometry} 
\usepackage{marginnote}
\usepackage{bm}

\numberwithin{equation}{section}

\usepackage{amsmath,amssymb,amsthm,amsfonts}
\usepackage{mathrsfs}
\usepackage{bbm}
\usepackage{hyperref}
\usepackage{xcolor}

\allowdisplaybreaks[4]%
\everymath{\displaystyle}

\newtheorem{lemma}{Lemma}[section]
\newtheorem{theorem}{Theorem}[section]

\newtheorem{proposition}{Proposition}[section]
\newtheorem{remark}{Remark}[section]

\newcommand{\dis}{\displaystyle}

\newcommand{\R}{\mathbb{R}}

\renewcommand{\S}{\mathbb{S}}

\newcommand{\FP}{\mathbf{P}}

\newcommand{\FI}{\mathbf{I}}

\newcommand{\Fb}{\mathbf{b}}
\newcommand{\Fk}{\mathbf{k}}

\newcommand{\rd}{\mathrm{d}}

\newcommand{\CA}{\mathcal{A}}

\newcommand{\CE}{\mathcal{E}}

\newcommand{\CM}{\mathcal{M}}

\newcommand{\CT}{\mathcal{T}}

\newcommand{\al}{\alpha}

\newcommand{\ga}{\gamma}

\newcommand{\Om}{\Omega}
\newcommand{\la}{\lambda}
\newcommand{\de}{\delta}

\newcommand{\pa}{\partial}

\newcommand{\eps}{\epsilon}

\newcommand{\vho}{\varrho}
\newcommand{\vps}{\varepsilon}

\newcommand{\Ga}{\Gamma}

\newcommand{\eqdef}{\overset{\mbox{\tiny{def}}}{=}}

\begin{document}

\title[Free boundary problem of the Boltzmann equation]{On a free boundary problem of the Boltzmann equation}

\author[S.-C. Chang]{Shengchuang Chang}
\address[SCC]{School of Mathematics and Statistics and Hubei Key Laboratory of Mathematical Sciences, Central China Normal University, Wuhan 430079, P.R.~China}
\email{csc981020@mails.ccnu.edu.cn}

\author[R.-J. Duan]{Renjun Duan}
\address[RJD]{Department of Mathematics, The Chinese University of Hong Kong,
	Shatin, Hong Kong, P.R.~China}
\email{rjduan@math.cuhk.edu.hk}

\author[S.-Q. Liu]{Shuangqian Liu}
\address[SQL]{School of Mathematics and Statistics and Key Lab NAA--MOE, Central China Normal University, Wuhan 430079, P.R.~China}
\email{sqliu@ccnu.edu.cn}

\begin{abstract}
This paper studies a free boundary problem for the Boltzmann equation that models the interaction of rarefied gas with a moving wall---a classical piston problem in kinetic theory. The piston motion is governed by Newton's law under the drag force exerted by the gas, with or without an additional Hookean restoring force. A central challenge for such a problem is the strong coupling between the kinetic equation and the free moving boundary, for which the classical Lagrangian formulation is unavailable due to low-regularity of solutions. Our approach relies on two new ingredients: a conformal transformation is introduced to reduce the moving domain to a fixed domain, and a coupled energy structure linking the kinetic distribution and free boundary variables is uncovered to reveal intrinsic dissipation and cancellation mechanisms at the interface. These structural observations lead to a global nonlinear theory for the fully coupled system. As a result, we establish the global existence, uniqueness, nonlinear stability, and exponential convergence to equilibrium of solutions near a global Maxwellian. This provides the first rigorous global well-posedness theory for a fully coupled Boltzmann free boundary problem of piston type.\end{abstract}
	

\subjclass[2020]{35Q20, 35R35, 35B20, 35B45, 35B40}


\keywords{Boltzmann equation, gas-piston interaction, free boundary problem, global existence, asymptotic stability}
	
\maketitle
\tableofcontents	
\thispagestyle{empty}

\section{Introduction}
The Boltzmann equation is a fundamental equation in kinetic theory that describes the statistical behavior of a dilute gas out of equilibrium, cf.~\cite{Ce1988}. In various physical contexts, the domain occupied by a rarefied gas is not fixed but evolves dynamically over time, leading to the formulation of free boundary problems for the Boltzmann equation, cf.~\cite{BCM-15Book,Sone07}. Such problems naturally occur in gas-piston systems where the motion of a piston is influenced by the kinetic pressure of the gas, cf.~\cite{ATC,CaMa-12PMP,GrPi-99PA,TAK-15PRE,TSK-18PA}. The situation becomes more complicated with quite few mathematical results when we consider the rarefied gas dynamics confined by a movable surface (for instance, an elastic plate) where gas-structure interaction plays a crucial role and requires kinetic modeling to capture non-equilibrium effects. However, we may refer to \cite{BGN,Ca21BAMS,KKLTT} and references therein for an extensive literature of the fluid-structure interaction; their extensions to some similar results under the gas-structure interaction seem very interesting to be explored, since the fluid equations are hydrodynamic approximations of the kinetic Boltzmann equation for collisional rarefied gas. Moreover, while free boundary problems for fluid dynamic equations have been extensively investigated (cf.~\cite{B-83,CS-07,CS-12,tani-96}, for instance), their counterparts for kinetic models remain largely open due to the intrinsic difficulties caused by the low regularity of solutions (cf.~\cite{Guo-2010,guo-17-inv}), the strong coupling between particle transport and boundary motion, and the nonlinear collision effects of particles.

In this article, we mainly focus on a free boundary value problem for the Boltzmann equation that models the interactions of rarefied gas with a piston (i.e., a point mass) in the spatially one-dimensional setting. Precisely, we consider the motion of rarefied gas confined in the time-dependent finite interval
\begin{equation*}
-1<X_1<x_w(t)
\end{equation*}
with slab symmetry in space, where the left boundary $X_1=-1$ is a fixed wall, while the right boundary $X_1=x_w(t)$ at which the piston is located with the moving velocity $v_w(t)$ is a free boundary that fluctuates near the fixed point $X_1=0$. The problem is formulated to solve the Boltzmann equation
\begin{eqnarray}\label{BE}
		&\dis \pa_tF+v_1 \pa_{X_1} F=Q(F,F), 
		\quad t>0,\ X_1\in (-1,x_w(t)),\  v\in \R^3,
\end{eqnarray}
coupled with the Newton's system
\begin{align}\label{ODE}
\left\{\begin{array}{rll}
		\frac{{\rm d}}{{\rm d}t}x_w(t)&=v_w(t),\\[2mm]
		\frac{{\rm d}}{{\rm d}t}v_w(t)&=-\kappa x_w(t)-\mathcal M\left(P_r[F]-P_l[F] \right).
\end{array}\right.
	\end{align}
Here, the unknown $F(t,X_1,v)\geq0$ stands for the density distribution function of gas particles with velocity $v=(v_1,v_2,v_3)\in \R^3$ at time $t>0$ and spatial position $X_1\in (-1,x_w(t))$, and the other unknowns $[x_w(t),v_w(t)]$ denote the displacement and velocity of the piston. The bilinear Boltzmann collision operator $Q(\cdot,\cdot)$ acting only on velocity variable is given by 
	\begin{equation*}
		Q(G,F)(v)=\int_{\R^3}\int_{\S^2}B(v-u,\omega)\left[ G(u')F(v')-G(u)F(v)\right]{\rm d}\omega {\rm d} u,
	\end{equation*}
where for $\omega\in \mathbb{S}^{2}$, the velocity pairs $(v,u)$ before collision and $(v',u')$ after collision satisfy
$$
v'=v-[(v-u)\cdot\omega]\omega, \quad  u'=u+[(v-u)\cdot\omega]\omega,
$$
in terms of the conservation laws of molecular momentum and energy for elastic collisions:
\begin{equation*}
	v+u=v'+u', \quad |v|^{2}+|u|^{2}=|v'|^{2}+|u'|^{2}.
\end{equation*}
Through the paper, for simplicity we consider only the hard-sphere model 
\begin{equation*}
B(v-u,\omega)=|(v-u)\cdot \omega|,
\end{equation*}
and an extension of main results to more general angular cutoff collision kernels with hard or Maxwell molecule potentials can be made in a straightforward way.  

The ODE system \eqref{ODE} is used to determine the motion of piston driven by the drag force exerted by the surrounding gas particles, with or without the inclusion of Hooke's law. Here, the constant $\kappa\geq0$ is Hooke's law coefficient, and $\mathcal M>0$ is a positive constant reciprocal to the piston mass regarded as a point mass.
The terms $P_r[F]$ and $P_l[F]$ represent the average drag forces exerted by the gas particles on the boundary from the right and left sides, respectively; they are defined by
\begin{equation}\label{def-PF}
\left\{\begin{aligned}
	P_r[F]=\int_{\mathbb R^3}[v_1-v_w(t)]^2F(t,x_w(t)+0,v){\rm d}v,\\ P_l[F]=\int_{\mathbb R^3}[v_1-v_w(t)]^2F(t,x_w(t)-0,v){\rm d}v.
\end{aligned}\right.
\end{equation}
In \eqref{def-PF} above, we further define $F(t,x_w(t)\pm 0,v)$ at the moving interface $X_1=x_w(t)\pm 0$ as 
		\begin{align}\label{+0-0 data->0}
\left\{\begin{array}{ll}
		&F(t,x_w(t)+0,v)=\left\{\begin{array}{rll}
			&\mu(v),\ \textrm{if}\ v_1-v_w(t)<0,\\[2mm]
			&\sqrt{2\pi}\mu_w(v)\int_{u_1-v_w(t)<0}\mu(u)|u_1-v_w(t)|{\rm d}u,\ \textrm{if}\ v_1-v_w(t)>0,
		\end{array}\right.\\[8mm]
		&F(t,x_w(t)-0,v)=F(t,x_w(t),v).
\end{array}\right.
	\end{align}
Here, the distribution function of rarefied gas on the right side of the interface $X_1=x_w(t)+ 0$ is given by the global Maxwellian $\mu(v)$ for incoming particles with relative velocity $v_1-v_w(t)<0$ and by the local Maxwellian $\sqrt{2\pi}\mu_w(v)\int_{u_1-v_w(t)<0}\mu(u)|u_1-v_w(t)|{\rm d}u$ for reflected outgoing particles with relative velocity $v_1-v_w(t)>0$, where we have denoted
\begin{equation*}
\left\{\begin{aligned}
	&\mu(v)=\mu(v_1,v_2,v_3)=\frac{1}{(2\pi)^{3/2}}e^{-\frac{|v|^2}{2}},\\ 
	&\mu_w(v)=\mu(v_1-v_w,v_2,v_3)=\frac{1}{(2\pi)^{3/2}}e^{-\frac{|v_1-v_w|^2+|v_2|^2+|v_3|^2}{2}}.
\end{aligned}\right.
\end{equation*}
On the left side of the interface $X_1=x_w(t)- 0$, the distribution of rarefied gas is given by $F(t,x_w(t),v)$ taken as the boundary data of the unknown function $F(t,X_1,v)$ for the Boltzmann dynamics \eqref{BE}. This models a situation in which the right hand side acts as a thermal reservoir in equilibrium prior to collision, while the left-hand side evolves dynamically according to the Boltzmann equation. For the later use, we notice that under the assumption in \eqref{+0-0 data->0}, we can rewrite $P_r[F]-P_l[F]$ as
 \begin{align}\label{P_rF-P_lF}
 	P_r[F]-P_l[F]&=\int_{v_1-v_w(t)<0}\mu(v)|v_1-v_w(t)|^2{\rm d}v+\frac{\sqrt{2\pi}}{2}\int_{v_1-v_w(t)<0}\mu(v)|v_1-v_w(t)|{\rm d}v\notag\\
 	&\quad-\int_{\mathbb R^3}|v_1-v_w(t)|^2F(t,x_w(t),v){\rm d}v,
 \end{align}
see \eqref{der.ppm} for the brief derivation in the Appendix \ref{Appendix} later.

We further supplement the Boltzmann-Newton system \eqref{BE} and \eqref{ODE} with the following initial conditions:
	\begin{align}\label{initial F}
		F(0,X_1,v)=F_0(X_1,v),\quad \left(x_w(0),v_w(0)\right)=\left(x_{w0},v_{w0}\right).
	\end{align}
The boundary condition at the fixed wall located at $X_1=-1$ is given by the diffuse  reflection boundary condition
\begin{equation}\label{drbcl}
F(t,-1,v)|_{v_1>0}=\sqrt{2\pi}\mu(v)\int_{u_1<0}F(t,-1,u)|u_1|{\rm d}u.	
	\end{equation}
	At the moving boundary, we impose a diffuse reflection condition in the moving frame
	\begin{align}\label{drbcr}
		F(t,x_w(t),v)|_{v_1-v_w(t)<0}=\sqrt{2\pi}\mu_{w}(v)\int_{u_1-v_w(t)>0}F(t,x_w(t),u)|u_1-v_w(t)|{\rm d}u.
	\end{align}
In terms of \eqref{BE} under the boundary conditions \eqref{drbcl} and \eqref{drbcr}, the total mass of rarefied gas in $(-1,x_w(t))$ is conservative, namely 
\begin{equation*}
\int_{-1}^{x_w(t)}{\rm d}X_1 \int_{\R^3}{\rm d}v\,F(t,X_1,v)={\rm constant},\quad \forall\,t\geq 0,
\end{equation*} 
see Lemma \ref{mass-lem} for the proof later. Therefore, we impose the following compatibility condition
\begin{equation}
\label{cond.comid}
\int_{-1}^{x_{w0}}{\rm d}X_1 \int_{\R^3}{\rm d}v\,F_0(X_1,v)=1=\int_{-1}^{0}{\rm d}X_1 \int_{\R^3}{\rm d}v\,\mu(v),
\end{equation} 
so that we may expect $F(t,X_1,v)\to \mu(v)$ and $[x_w(t),v_w(t)]\to [0,0]$ in large time, provided that their initial data satisfying \eqref{cond.comid} are a small perturbation of $\mu(v)$ and $[0,0]$. The main goal of this paper is to provide a rigorous mathematical proof for the free boundary problem on the Boltzmann equation under the above formulation.

In what follows we review the mathematical and numerical literature on the motion of a rigid body (piston, plate, or sphere) in a rarefied gas, where the gas is described by the Boltzmann equation or related kinetic models. Indeed, the interaction between a moving rigid body and a surrounding rarefied gas is a fundamental problem in kinetic theory with applications ranging from aerospace engineering to nanoscale devices. When the gas is so dilute that intermolecular collisions are negligible, the problem exhibits a long-time memory effect: molecules reflected from the body at early times can return to collide with it later, leading to algebraically slow decay of the body's motion. This behavior contrasts sharply with the exponential decay predicted by simple Stokes drag models. The introduction of intermolecular collisions destroys this memory effect, but the resulting decay rate is not simply exponential; instead, it can be algebraic with a different exponent, as shown by numerical studies. 

\begin{itemize}
  \item  {\it Collisionless gas: long-time memory and algebraic decay}. 

The study of a body moving in a collisionless gas began with the work of Caprino, Marchioro, and Pulvirenti \cite{CMP-06CMP} as well as \cite{CCM-07M3AS}. They considered a circular disk immersed in an infinite rarefied gas subject to a Hookean restoring force and showed that, under specular reflection, the displacement decays algebraically as 
$$
|x_w(t)| \sim \frac{C}{t^{d+2}}
$$ 
in dimension $d$. This result was extended by Aoki, Cavallaro, Marchioro, and Pulvirenti \cite{ACMP-08M2AN}, who analyzed the approach to steady motion under a constant external force. The algebraic decay is a consequence of the long-time memory effect: molecules carry information about the body's past motion and return to influence its future dynamics. We also refer to Koike \cite{Ko18krm} for an extension to the motion of a rigid body immersed in a semi-infinite expanse of gas.

Tsuji and Aoki \cite{TT-AK-12} performed numerical simulations for a linear pendulum in a free-molecular gas and confirmed the algebraic decay even in cases with many oscillations. They also studied a special Lorentz gas, where a background interaction destroys the memory effect, leading to faster, nearly exponential decay. This highlighted the crucial role of collisions in altering the decay rate. See also Koike \cite{Ko18jsp}.

Aoki and Golse \cite{AoGo-11KRM} investigated the speed of approach to equilibrium for a collisionless gas, providing a rigorous framework for understanding the slow decay. Their work complements the numerical studies and offers insights into the underlying functional-analytic mechanisms.

Chen and Strauss \cite{CS-14ARMA, CS-15CMP, CS-15SIMA} examined the motion of a body in a sea of free-transported particles with both specular and diffuse reflection. They derived criteria for velocity reversal and established convergence to equilibrium, showing that the diffuse reflection condition leads to a slower decay rate 
$
1/{t^{d+1}}
$ 
compared to the specular case. 

Cavallaro \cite{Ca-07RMA} studied the motion of a convex body in the mean-field approximation, while Sisti and Ricciuti \cite{SR-14SIMA} analyzed the effects of concavity on the body's motion in a Vlasov gas. These works extend the collisionless analysis to more general geometries and interaction potentials.
  
  \medskip
  
  \item {\it Collisional gas: the Boltzmann equation and BGK model.}

When intermolecular collisions are included, the gas is described by the Boltzmann equation, which is a nonlinear integro-differential equation. The moving-boundary problem becomes significantly more challenging due to the coupling between the body's motion and the gas's velocity distribution function.

Tsuji and Aoki \cite{TA-13JCP} developed a numerical method based on characteristics to capture the singularities (discontinuities and weaker singularities) produced by an oscillating plate in a rarefied gas. In their subsequent paper \cite{TA-14PRE}, they investigated the decay of a linear pendulum in a collisional gas using the BGK model. They found numerical evidence that the displacement decays as $|x_w(t)| \sim C t^{-3/2}$ for large time, which is slower than the collisionless case ($t^{-2}$) and much slower than exponential. This surprising result indicates that collisions do not simply restore exponential decay; instead, they produce a new algebraic exponent.

Dechristé and Mieussens \cite{dm-jcp} developed a Cartesian cut cell method for rarefied flow simulations around moving obstacles, providing a robust numerical framework for more complex geometries. Their work is complementary to the one-dimensional studies and paves the way for higher-dimensional simulations.

Khalil \cite{nk-jsp} studied the adiabatic piston problem using kinetic equations and established an H-theorem, demonstrating the thermodynamic consistency of the mesoscopic description. This connects the moving-boundary problem to statistical mechanics and the approach to equilibrium.

\end{itemize}	

Our goal of this paper is to analyze the free boundary problem on the Boltzmann-Newton system \eqref{BE} and  \eqref{ODE} with initial-boundary conditions \eqref{initial F}, \eqref{drbcl} and \eqref{drbcr} satisfying \eqref{cond.comid}, where the difference of pressure $P_r[F]-P_l[F] $ is given by \eqref{P_rF-P_lF} for the gas-piston interaction. A fundamental obstacle is that the classical Lagrangian transformation (generally used for fluid equations) is not available; the low regularity of solutions to the Boltzmann equation with boundaries prevents the straightforward use of a flow map to fix the domain. To overcome this difficulty, we introduce a  nonlinear (conformal) transformation which maps the time-dependent physical domain $-1<X_1<x_w(t)$ onto a fixed reference domain, which we may take to be $\Omega=(0,1)$. In the new variables, the gas distribution is re-expressed as
\[
\bar F(t,x_1,v)=F(t,X_1(t,x_1),v),
\]
and the Boltzmann equation is reformulated with coefficients that now depend on the free boundary variables.

A second key insight in our work is that once the problem is reformulated on a fixed spatial domain, the kinetic distribution and the free boundary variables can be viewed together as a coupled system that supports a natural quadratic energy structure. In other words, when one writes the perturbed solution in the form
\[
\bar F(t,x_1,v)=\mu(v)+\sqrt{\mu(v)}\,f(t,x_1,v), 
\]
one finds that the kinetic perturbation $f(t,x_1,v)$ and the free boundary variables $[x_w(t),v_w(t)]$ appear jointly in energy estimates. This structure reveals intrinsic damping both in the gas and in the piston dynamics. In particular, upon a careful analysis one can show that the momentum exchange term, which arises from the difference between the right- and left-hand side pressure, not only activates a damping mechanism in the piston velocity but also provides a cancellation mechanism in the energy estimates.

Our main result given by Theorem \ref{main result} in the next section asserts that, for initial data sufficiently close to a global Maxwellian and $[x_w=0,v_w=0]$, the full free boundary problem for the Boltzmann-Newton equations is globally well-posed. More precisely, we prove the following:
\begin{itemize}
\item Global-in-time existence and uniqueness of the solutions $f(t,x_1,v)$ and $[x_w(t),v_w(t)]$ in appropriate function spaces;
\item Exponential decay of the kinetic perturbation $f(t,x_1,v)$ and the free boundary deviations $|x_w(t)|$ and $|v_w(t)|$, as $t\to\infty$;
\item  Weighted $W^{1,p}$ estimates (for some $p>2$) that imply the stability in the $L^{1+\delta}$ norm and ultimately yield the local-in-time existence and uniqueness.
\end{itemize}
The analysis relies on a delicate combination of energy estimates and trace theory on the boundary. In particular, we are able to derive the quadratic energy structure that couples the kinetic perturbation with the free boundary variables and highlight the cancellation mechanisms that yield intrinsic damping. We further work within a framework that couples an $L^\infty-L^2$ approach (cf.~\cite{Guo-2010} and \cite{EGKM-13}) to control the collision operator and its nonlinear terms, together with localized weighted $W^{1,p}$ estimates for spatial derivatives (cf.~\cite{cao kim,Ck-22ARMA}). In addition, an intricate study of the boundary integrals, via a careful reformulation of the diffuse reflection conditions, is required to capture the dissipation of the free boundary motion.

Our results represent, to the best of our knowledge, the first global well-posedness theory for a fully coupled Boltzmann free boundary problem of piston type. The additional insight into the quadratic energy structure and the cancellation mechanism at the free boundary may also provide a new pathway for tackling related kinetic models with moving interfaces. In particular, extensions of the current results could be made along two main directions, such as the motion of the piston in infinite rarefied gas and the gas-structure interaction in the multi-dimensions. For the former, we refer to \cite{VZ} and \cite{Ko21jde, Ko23ARMA} for studies of some fluid equations, and for the latter, we refer to \cite{BGN,Ca21BAMS,KKLTT} for the fluid-structure interaction as mentioned before.     

The rest of this paper is organized as follows. In Section~\ref{sec2}, we introduce the conformal transformation that fixes the spatial domain and describe in detail how the free boundary conditions transform under this mapping. With an introduction of notations to be used throughout the paper, we state the main result in Theorem \ref{main result} followed by a summary of key points and strategies for the proof. In Section~\ref{inter-bd-sec}, we establish a dissipation mechanism for the piston motion $[x_w(t),v_w(t)]$ through a newly constructed quadratic form. In Section~\ref{mac-l2-sec}, we establish the global $L^2$ framework. In particular, we derive energy estimates for the linearized problem, prove the macroscopic dissipation, and deduce the exponential time-decay structure. Section~\ref{lif-sec} focuses on the $L^\infty$ estimates with appropriate weight functions. Using the method of characteristics and tracking the solution along the backward and forward trajectories, we obtain uniform weighted estimates that are crucial to close the nonlinear iteration. In Section~\ref{sec-loc-reg}, we prove any-finite-time weighted $W^{1,p}$ estimates, which serve as the key step to establish the $L^{1+\delta}$ stability result as shown in Section~\ref{sec-sta}. Finally, Section~\ref{glex-sec} combines the various a priori estimates together with a continuation argument to prove global existence and uniqueness of solutions near equilibrium. The Appendix \ref{Appendix} collects some auxiliary estimates and technical lemmas concerning properties of the collision operator and the change-of-variable maps associated with the kinetic transport.


\section{Main results}\label{sec2}	
	In order to reformulate the problem on a fixed spatial domain, we introduce the following nonlinear transformation from the original variable $X_1$ to a rescaled spatial variable $x_1$
	\begin{align}
		x_1=\frac{1+X_1}{1+x_w(t)}.\notag
	\end{align}
This transformation is clearly one-to-one, provided that
$|x_w(t)|\ll1,$  with the derivatives satisfying
\begin{align}\label{Newderi}
		\frac{\pa X_1}{\pa x_1}=1+x_w(t),\quad \frac{\pa X_1}{\pa t}=v_w(t)x_1.
	\end{align}
Moreover, one sees that, under the assumption $|x_w(t)|\ll1$, this transformation maps the time-dependent physical domain $-1<X_1<x_w(t)$ onto the fixed interval
$0<x_1<1$.

	In terms of \eqref{Newderi}, denoting $\bar{F}(t,x_1,v)=F(t,X_1(t,x_1),v)$, we can rewrite the free boundary problem as
	\begin{eqnarray}\label{RBE}
		&\dis \pa_t\bar{F}+\frac{v_1-v_w(t)x_1}{1+x_w(t)}\pa_{x_1} \bar{F}=Q(\bar{F},\bar{F}),
	\end{eqnarray}
subject to the initial and boundary condition
\begin{align}\label{tr-ibvp}
\left\{\begin{array}{rll}
		&\bar{F}(0,x_1,v)=\bar{F}_0(x_1,v)=F(0,X_1(0,x_1),v),\\[2mm]
		&\bar{F}(t,0,v)|_{v_1>0}=\sqrt{2\pi}\mu(v)\int_{u_1<0}\bar F(t,0,u)|u_1|{\rm d}u,\\
		&\bar{F}(t,1,v)|_{v_1-v_w(t)<0}=\sqrt{2\pi}\mu_{w}(v)\int_{u_1-v_w(t)>0}\bar{F}(t,1,u)|u_1-v_w(t)|{\rm d}u,
        \end{array}\right.
	\end{align}
where the evolution of the moving boundary $x_w(t)$ and its velocity  $v_w(t)$ is governed by the coupled system
	\begin{align*}
    \left\{\begin{array}{rll}
        &\frac{{\rm d}}{{\rm d}t}x_w(t)=v_w(t),\\[2mm]
		&\frac{{\rm d}}{{\rm d}t}v_w(t)=-\kappa x_w(t)-\mathcal M\left(P_r[\bar F]-P_l[\bar F] \right),
    \end{array}\right.
		\end{align*}
with initial data
\begin{align}
\left(x_w(0),v_w(0)\right)=\left(x_{w0},v_{w0}\right).\notag
\end{align}
Here,
\begin{align}
	P_r[\bar F]-P_l[\bar F]
	&=\int_{v_1-v_w(t)<0}\mu(v)|v_1-v_w(t)|^2{\rm d}v+\frac{\sqrt{2\pi}}{2}\int_{v_1-v_w(t)<0}\mu(v)|v_1-v_w(t)|{\rm d}v\notag\\
	&\quad-\int_{v_1-v_w(t)>0}\bar F(t,1,v)|v_1-v_w(t)|^2{\rm d}v-\frac{\sqrt{2\pi}}{2}\int_{v_1-v_w(t)>0}\bar F(t,1,v)|v_1-v_w(t)|{\rm d}v.\notag
\end{align}
Next, we define the phase boundary of the phase space $\Omega\times\mathbb R^3:=(0,1)\times\mathbb R^3$ as
\begin{equation*}
\gamma=\{x_1=0, v\in\mathbb R^3\}\cup\{x_1=1, v\in\mathbb R^3\}.
\end{equation*}
This boundary can be decomposed into the outgoing, incoming, and grazing sets:
\begin{align*}
	\gamma^i_{+}&=\{x_1=i,  v\in\mathbb R^3\big| [v_1-v_w(t)x_1]n(x_1)>0\},\\
	\gamma_-^{i}&=\{x_1=i,  v\in\mathbb R^3\big|  [v_1-v_w(t)x_1]n(x_1)<0\},\\
	\gamma^i_{0}&=\{x_1=i,  v\in\mathbb R^3\big| [v_1-v_w(t)x_1]n(x_1)=0\},
\end{align*}
where $i=0,1$, and $ n(0)=-1, n(1)=1.$
We also denote $\gamma_{0}=\gamma^0_{0}\cup\gamma^1_{0}$.

We now consider the following ODE describing the evolution of the spatial trajectory in the $x_1$ direction
\begin{equation}\label{ODE X}
	\frac{{\rm d}}{{\rm d}s}[X_1^f(s;t,x_1,v_1)]=\frac{v_1-v_w(s)X_1^f(s;t,x_1,v_1)}{1+x_w(s)},
\end{equation}
with initial condition $X_1^f(t;t,x_1,v_1)=x_1$. Direct computation gives
\begin{equation}\label{X}
	X_1^f(s)=X_1^f(s;t,x_1,v_1)=\frac{1+x_w(t)}{1+x_w(s)}x_1+\frac{v_1(s-t)}{1+x_w(s)}.
\end{equation}
We then define the backward exit time $t_b(t,x_1,v_1)$ as
\begin{equation*}
	t_{\textbf b}(t,x_1,v_1)=\sup\{s\geq0 : X_1^f(\tau;t,x_1,v_1)\in(0,1)\ for\ all\ \tau\in(t-s,t)\}.
\end{equation*}
Accordingly, the backward exit position is denoted by $x_{1\textbf b}=X_1^f(t-t_{\textbf b};t,x_1,v_1)$.
Clearly, $x_{1\textbf b}=0$ or $x_{1\textbf b}=1$. Since $x_w(t)$  varies near $X_1=0$, we observe that $X_1^f(s)\sim x_1+v_1(s-t)$,  which guarantees that the backward exit time $t_{\textbf b}$ is well-defined.

To construct solutions near the global Maxwellian, we introduce the linearized collision operator $L$ and nonlinear operator $\Ga$, cf.~\cite{Glassey}. Here, $L$ is defined by
	$$L=\nu-K,$$
	where the collision frequency $\nu(v)$ and the operator $K$ are given respectively by
	$$
	\nu(v)=\int_{\R^3}\int_{\S^2}B(v-u,\omega)\mu(u){\rm d}\omega {\rm d}u\sim (1+|v|),
	$$
    and
\begin{align}\label{k-def}
	(Kf)(v)=\int_{\R^3}\int_{\S^2}B(v-u,\omega)\sqrt{\mu(u)}\left( \sqrt{\mu(u')}f(v')+\sqrt{\mu(v')}f(u')-\sqrt{\mu(v)}f(u) \right){\rm d}\omega {\rm d}u.
\end{align}
Moreover, it is well-known that there exists $\nu_0>0$ such that
$
\nu\geq\nu_0,$
and there exist constants $C_{\textbf k_1}>0$ and $C_{\textbf k_2}$ such that
\begin{align*}
Kf:=K_2f-K_1f=\int_{\mathbb R^3}\textbf k_2(v,u)f(u){\rm d}u-\int_{\mathbb R^3}\textbf k_1(v,u)f(u){\rm d}u,
\end{align*}
with
\begin{align*}
	\textbf k_1(v,u)=C_{\textbf k_1}|v-u| e^{-\frac{|v|^2+|u|^2}{4}},\
	\textbf k_2(v,u)=C_{\textbf k_2}|v-u|^{-1}e^{-\frac{|v-u|^2}{8}}e^{-\frac{||v|^2-|u|^2|^2}{8|v-u|^2}}.
\end{align*}
The kernel of $L$, denoted as $\ker L$, is a five-dimensional space spanned by
	$$
	\{1,v,|v|^2-3\}\sqrt{\mu}:= \{\phi_i\}_{i=1}^5.
	$$
	we define the orthogonal projection from $L^2$ onto $\ker L$ by
	\begin{align}\label{mac-p}
		\FP g=\left\{a+\Fb\cdot v+\frac{1}{2}(|v|^2-3)c\right\}\sqrt{\mu},\ g\in L^2,
	\end{align}
where $a={ a(t,x_1)}\in \R$, $\Fb=\Fb(t,x_1)\in\R^3$ and $c=c(t,x_1)\in\R$. Correspondingly, we denote by
${\FI-\FP}$ the orthogonal complement of
$\FP$ in $L^2_v$, with
$\FI$ being the identity operator.

	The nonlinear collision operator $\Ga$ is defined as
	$$
	\Ga(f,f)=\Ga_+(f,f)-\Ga_-(f,f), \quad \Ga_\pm(f,f)=\frac{1}{\sqrt{\mu}}Q_\pm({\sqrt{\mu}f},{\sqrt{\mu}f}),
	$$
	where the gain and loss parts of the Boltzmann bilinear operator are
	$$
	Q_+(f,g)=\int_{\R^3}\int_{\S^2}B(v-u,\omega)f(v')g(u') {\rm d}\omega {\rm d}u,\quad
	Q_-(f,g)=\int_{\R^3}\int_{\S^2}B(v-u,\omega)f(v)g(u){\rm d}\omega {\rm d}u.
	$$
	Substituting the ansatz
\[
\bar{F}(t,x_1,v)=\mu(v)+\sqrt{\mu(v)}\,f(t,x_1,v)
\]
into the initial boundary value problem \eqref{RBE} and \eqref{tr-ibvp},
we obtain the reformulated equation for the fluctuation \(f\):
	\begin{eqnarray}\label{PBE}
		&\dis \pa_t f+\frac{v_1-v_w(t)x_1}{1+x_w(t)}\pa_{x_1} f+Lf=\Ga(f,f),
	\end{eqnarray}
	posed on the fixed domain $\Om=(0,1)$, which arises from rescaling the original time-dependent physical domain $(-1,x_w(t))$ to a time-independent interval. In this formulation, the effect of the moving boundary is encoded in the coefficients of the equation and in the time evolution of the wall position $x_w(t)$ and velocity $v_w(t)$. The dynamics of the moving boundary point are governed by the following system
\begin{align}\label{trb-ode}
\left\{\begin{array}{rll}
		\frac{{\rm d}}{{\rm d}t}x_w(t)&=v_w(t),\\[2mm]
		\frac{{\rm d}}{{\rm d}t}v_w(t)&=-\kappa x_w(t)-\mathcal M\left(P_r[\mu+\sqrt{\mu}f]-P_l[\mu+\sqrt{\mu}f] \right),
\end{array}\right.
	\end{align}
with
\begin{align}
	P_r[\mu&+\sqrt{\mu}f]-P_l[\mu+\sqrt{\mu}f]\notag\\
	&=\int_{v_1-v_w(t)<0}\mu(v)|v_1-v_w(t)|^2{\rm d}v+\frac{\sqrt{2\pi}}{2}\int_{v_1-v_w(t)<0}\mu(v)|v_1-v_w(t)|{\rm d}v\notag\\
	&\quad-\int_{v_1-v_w(t)>0} \mu(v)|v_1-v_w(t)|^2{\rm d}v-\frac{\sqrt{2\pi}}{2}\int_{v_1-v_w(t)>0} \mu(v)|v_1-v_w(t)|{\rm d}v\notag\\
	&\quad-\int_{v_1-v_w(t)>0} \sqrt{\mu(v)}f(t,1,v)|v_1-v_w(t)|^2{\rm d}v\notag\\
	&\quad-\frac{\sqrt{2\pi}}{2}\int_{v_1-v_w(t)>0}\sqrt{\mu(v)}f(t,1,v)|v_1-v_w(t)|{\rm d}v.\notag
\end{align}
This system is subject to the initial condition
	\begin{align}\label{ic f}
		f(0,x_1,v)=f_0(x_1,v),\ (x_w,v_w)(0)=(x_{w0},v_{w0}),
	\end{align}
and the boundary condition
	\begin{align}
		&f(t,0,v)|_{v_1>0}=\sqrt{2\pi\mu(v)}\int_{u_1<0}f(t,0,u)\sqrt{\mu(u)}|u_1|{\rm d}u=P_\gamma f(t,0,v),\label{drbl-f}\\
		&f(t,1,v)|_{v_1-v_w(t)<0}=\sqrt{2\pi}\frac{\mu_{w}(v)}{\sqrt{\mu (v)}}\int_{u_1-v_w(t)>0}f(t,1,u)\sqrt{\mu(u)}|u_1-v_w(t)|{\rm d}u\notag\\
		&\qquad\qquad\qquad\qquad\qquad+\sqrt{2\pi}\frac{\mu_{w}(v)}{\sqrt{\mu (v)}}\int_{u_1-v_w(t)>0}\mu(u)|u_1-v_w(t)|{\rm d}u-\sqrt{\mu(v)}\notag\\
		&\qquad\qquad\qquad\qquad=P_\gamma f(t,1,v)+r(t,v),\label{drb-f}
	\end{align}
where we define the projection
\begin{equation*}
	P_\gamma f(t,x_1,v):=\sqrt{2\pi\mu(v)}\int_{(u_1-v_w(t)x_1)n(x_1)>0}f(t,x_1,u)\sqrt{\mu(u)}|u_1-v_w(t)x_1|{\rm d}u,\ x_1=0,1,
\end{equation*}
and the error term
\begin{align}
	r(t,v):=&\sqrt{2\pi}\frac{\mu_{w}(v)-\mu(v)}{\sqrt{\mu (v)}}\int_{u_1-v_w(t)>0}f(t,1,u)\sqrt{\mu(u)}|u_1-v_w(t)|{\rm d}u\notag\\
	&+\sqrt{2\pi}\frac{\mu_{w}(v)-\mu(v)}{\sqrt{\mu (v)}}\int_{u_1-v_w(t)>0}\mu(u)|u_1-v_w(t)|{\rm d}u\notag\\
	&+\sqrt{2\pi\mu(v)}\int_{u_1-v_w(t)>0}[\mu(u)-\mu_{w}(u)]|u_1-v_w(t)|{\rm d}u.\notag
\end{align}

We introduce notations before stating the main result. We denote by $\left\|\ \cdot\ \right\|_{p}$  the $L^p(\Omega\times\mathbb R^3)$ norm or $L^p(\Omega)$ norm for $1\leq p\leq+\infty$. The $L^p$ norm on the boundary is denoted by $L^p(\partial\Omega\times\mathbb R^3;{\rm d}\tilde{\gamma})$, and can be decomposed as follows:
\begin{align*}
	|f|_{p}^p=|f|_{p,+}^p-|f|_{p,-}^p,
\end{align*}
where
\begin{align*}
	|f|_{p,+}^p&=\int_{\ga_+}|f(x_1,v)|^p{\rm d}\tilde{\ga}=\int_{\ga_+}|f(x_1,v)|^p|v_1-v_w(t)x_1|{\rm d}v\\[2mm]
&=\int_{v_1-v_w(t)>0}[v_1-v_w(t)]	|f(t,1,v)|^p{\rm d}v-\int_{v_1<0}v_1	|f(t,0,v)|^p{\rm d}v\\[2mm]
&=:|f|_{p,+,1}^p+|f|_{p,+,0}^p,
\end{align*}
and similarly,
\begin{align*}
	|f|_{p,-}^p&=\int_{\ga_-}|f(x_1,v)|^p{\rm d}\tilde{\ga}=\int_{\ga_-}|f(x_1,v)|^p|v_1-v_w(t)x_1|{\rm d}v\\[2mm]
&=-\int_{v_1-v_w(t)<0}[v_1-v_w(t)]	|f(t,1,v)|^p{\rm d}v+\int_{v_1>0}v_1	|f(t,0,v)|^p{\rm d}v\\[2mm]
&=:|f|_{p,-,1}^p+|f|_{p,-,0}^p,
\end{align*}
with $\ga_{\pm}=\ga_{\pm}^0\cup\ga_{\pm}^1.$ In particular, $|\cdot|_{\infty}$ denotes the $L^\infty$ norm on the boundary $\{0\}\times\R^3\cup\{1\}\times\R^3$. Moreover, we define the (signed) boundary flux by ${\rm d}\tilde\ga$ as $(v_1-v_w(t)x_1)n(x_1){\rm d}v$. Denote $\langle v\rangle=\sqrt{1+|v|^2}$. For $\zeta\geq0$,
we define a weight function
\begin{equation*}
w(v):=e^{\zeta |v|^2}.
\end{equation*}
As introduced in \cite{cao kim}, we define a kinetic distance weight by
\begin{align}\label{def alpha}
	\alpha_\vps(t,x_1,v_1):&=\chi\left(\frac{t-t_{\textbf b}(t,x_1,v_1)+\vps}{\vps}\right)|v_1-v_w(t-t_{\textbf b}(t,x_1,v_1))x_{1\textbf b}|\notag\\
    &\quad+\left[1-\chi\left(\frac{t-t_{\textbf b}(t,x_1,v_1)+\vps}{\vps}\right)\right],
\end{align}
where the smooth cutoff function $\chi$ satisfies
\begin{align}\label{def chi}
0\leq\chi\leq 1;\	\chi(\tau)=0,\ \tau\leq0;\  \chi(\tau)=1,\ \tau\geq1;\ \textrm{and}\
	\frac{{\rm d}}{{\rm d}\tau}\chi(\tau)\in[0,4]\ \textrm{for all}\ \tau\in\mathbb R.
\end{align}
Note that on the incoming boundary $\gamma_-$, we have
\begin{equation}\label{al on gamma}
	\alpha_\vps(t,x_1,v_1)=|v_1-v_w(t)x_1|.
\end{equation}
Define the weighted function
\begin{equation}\label{wc}
{W_c}(t,x_1,v)=e^{\zeta'|v|^2}\alpha^\vho_\vps(t,x_1,v_1),\ 0<\zeta'<1,\ \vho>0.
\end{equation}
Following the argument in the appendix of \cite{cao kim}, we obtain
\begin{equation}\label{par alpha}
	\left(\partial_t+\frac{v_1-v_w(t)x_1}{1+x_w(t)}\partial_{x_1}\right){W_c}(t,x_1,v)=0.
\end{equation}
Finally, we use $O(\cdot)$ and $o(\cdot)$ to denote terms of the same order and of higher order, respectively.

The main result of this paper is now stated as follows.

\begin{theorem}\label{main result}
	Let $0<\zeta'<\zeta\ll1$, $2<p<+\infty$, $1-\frac{2}{p}<\vho<1-\frac{1}{p}$, and $\mathcal M>0$. For any $\kappa\geq0$,
    there exists a constant $\vps_0>0$ such that if the initial data satisfies
	\begin{equation}\label{to-id-drbcl}
		\left\|wf_0\right\|_\infty+|x_{w0}|+|v_{w0}|\leq \vps_0,\ \left\|{W_c}\partial_{x_1}f_0\right\|_{p}<\infty,
\ \int_{\Om\times\R^3}\sqrt{\mu}f_0{\rm d}x_1{\rm d}v=-\frac{x_{w0}}{1+x_{w0}},
	\end{equation}
then there exists a unique non-negative solution $F(t,x_1,v)=\mu+\sqrt{\mu}f(t,x_1,v)\geq0$ and $[x_w(t),v_w(t)]$ to the free boundary problem on the coupled Boltzmann-Newton system \eqref{BE}, \eqref{ODE}, $\eqref{initial F}$, \eqref{drbcl} and \eqref{drbcr}. Moreover, there exist constants $\la>0$ and $C>0$ such that the solution $[f,x_w,v_w]$ enjoys the exponential decay estimate
	\begin{align}\label{es decay-drbcl}
		\left\|	w f(t)\right\|_\infty+|x_w(t)|+|v_{w}(t)|
		\leq Ce^{-\lambda t}\Big\{\left\|w f_0\right\|_\infty+|x_{w0}|+|v_{w0}|\Big\},
	\end{align}
	and the spatial derivative satisfies
	\begin{equation}\label{es par f-drbcl}
		\left\|{W_c}\partial_{x_1}f(t)\right\|_{p}\lesssim e^{Ct^{p}},
	\end{equation}
	for any $t\geq0.$ Moreover, if $f_0$ is continuous except on $\gamma_0$ with
	\begin{equation*}
		f_0(0,v)|_{v_1>0}=\sqrt{2\pi\mu(v)}\int_{u_1<0}f_0(0,u)\sqrt{\mu(u)}|u_1|{\rm d}u,
	\end{equation*} and
	\begin{align*}
		f_0(1,v)|_{v_1-v_{w0}<0}&=\sqrt{2\pi}\frac{\mu_{w0}(v)}{\sqrt{\mu (v)}}\int_{u_1-v_{w0}>0}f_0(1,u)\sqrt{\mu(u)}|u_1-v_{w0}|{\rm d}u\\
		&\quad	+\sqrt{2\pi}\frac{\mu_{w0}(v)}{\sqrt{\mu (v)}}\int_{u_1-v_{w0}>0}\mu(u)|u_1-v_{w0}|{\rm d}u-\sqrt{\mu(v)},
	\end{align*}
	then $f(t,x_1,v)$ is continuous in $[0,\infty)\times\{\bar\Omega\times\mathbb R^3\setminus\gamma_0\}$.
\end{theorem}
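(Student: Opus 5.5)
The proof will be a continuation argument built on three families of a priori estimates for the coupled unknown $[f,x_w,v_w]$ on the fixed domain $\Omega=(0,1)$, in the outline of Sections~\ref{inter-bd-sec}--\ref{glex-sec}.

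\emph{Local theory.} First I will construct a local-in-time solution by iteration. Freeze $[x_w^n,v_w^n]$ and $f^n$. Solve the linear transport problem \eqref{PBE} with the boundary conditions \eqref{drbl-f}--\eqref{drb-f} along the characteristics \eqref{X}, using the backward exit time $t_{\mathbf b}$. Then update $[x_w^{n+1},v_w^{n+1}]$ from \eqref{trb-ode}, using the trace $f^{n}(t,1,\cdot)$. Weighted $L^\infty$ bounds with weight $w=e^{\zeta|v|^2}$ hold uniformly in $n$ on a short time interval. The diffuse reflection operator is not a contraction in $L^\infty$ alone, so convergence of the iterates needs compactness or stability in a weaker norm. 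I will get this from the weighted $W^{1,p}$ bound for $W_c\partial_{x_1}f$. The weight $W_c$ is transported by \eqref{par alpha}, and the range $1-\frac{2}{p}<\vho<1-\frac1p$ makes the boundary singularity integrable. This bound yields $L^{1+\delta}$ stability of differences, hence uniqueness and convergence. Nonnegativity of $F$ follows by running the iteration for $F$ itself with the gain/loss splitting. Continuity away from $\gamma_0$ follows from the compatibility conditions on $f_0$ via the standard argument along characteristics.

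\emph{Global estimates.} For the $L^2$ part I take the inner product of \eqref{PBE} with $f$. The boundary term at $x_1=1$ involves $|f|_{2,+,1}^2-|P_\gamma f+r|_{2,-,1}^2$. The error $r$ is linear in $v_w$ to leading order, and the pressure difference in \eqref{trb-ode} is $-c_0 v_w-(\text{trace moments of }f)+O(v_w^2)$ with some $c_0>0$. I will multiply the ODE by $v_w$, add the quadratic potential $\frac{\kappa}{2}x_w^2$, and add a small multiple $\eta x_wv_w$ to extract dissipation of $x_w$ when $\kappa>0$. I then look for a quadratic form $\mathcal E(t)\sim\|f\|_2^2+x_w^2+v_w^2$ in which the cross terms $v_w\int\sqrt\mu f(t,1)\ldots$ cancel between the kinetic boundary flux and the piston equation. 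Combining this with the boundary dissipation $|(I-P_\gamma)f|_{2,+}^2$ gives a damping $\gtrsim v_w^2$.

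When $\kappa=0$, $x_w$ is not directly damped. Instead I will use mass conservation (Lemma~\ref{mass-lem} together with the compatibility condition in \eqref{to-id-drbcl}) to tie $x_w$ to $\int a$. The macroscopic dissipation of $a,\mathbf b,c$ comes from the usual test-function method for the elliptic structure, adapted to the variable coefficient $\frac{v_1-v_wx_1}{1+x_w}$. Combined with the microscopic coercivity of $L$, this gives $\frac{d}{dt}\mathcal E+\lambda\mathcal E\lesssim$ cubic terms, and hence exponential $L^2$ decay.

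\emph{$L^\infty$ and closing.} Next comes the $L^2$–$L^\infty$ bootstrap. I write $h=wf$, iterate Duhamel's formula twice along backward characteristics with diffuse bounces, and split into the few-bounce part and the many-bounce part, which is small by the measure estimate. The $K$-terms after the double iteration are reduced to $\|f\|_2$ by the change of variables $v\mapsto X_1^f$, whose Jacobian is a perturbation of that for $x_w\equiv0$. This gives $\|wf(t)\|_\infty\lesssim e^{-\lambda t}(\|wf_0\|_\infty+|x_{w0}|+|v_{w0}|)+\sup_s e^{-\lambda(t-s)}\|wf(s)\|_\infty^2$. Together with the ODE bound for $|x_w|+|v_w|$, this closes under the smallness $\vps_0$ and extends the local solution globally, yielding \eqref{es decay-drbcl}. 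The $W^{1,p}$ bound \eqref{es par f-drbcl} is obtained by a Gronwall argument, with a growth rate coming from iterated bounces that accounts for $e^{Ct^p}$.

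\emph{Main obstacle.} I expect the hardest step to be finding the coupled quadratic energy. The boundary flux at the moving wall, the error term $r$, and the pressure difference must combine so that the indefinite linear cross terms in $v_w$ cancel exactly and leave positive damping. In particular, at this step I must compute $\int_{v_1>v_w}\mu|v_1-v_w|^2$ and the $\sqrt{2\pi}$-weighted moments to second order in $v_w$, and match them against $|P_\gamma f+r|_{2,-,1}^2$.
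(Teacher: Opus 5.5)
The parts of your plan that agree with the paper are the local iteration, the $L^2$--$L^\infty$ bootstrap, the $W^{1,p}$/$L^{1+\delta}$ route to uniqueness, and the cancellation of the $v_w$ cross term between $\mathcal M$ times the $L^2$ energy and the piston energy. The gap is the dissipation of $x_w$, and it arises for every $\kappa\ge0$, not only $\kappa=0$.

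Set $z(t)=\int_{u_1-v_w>0}f(t,1,u)\sqrt{\mu(u)}\,|u_1-v_w|\,{\rm d}u$, so that $P_\gamma f(t,1,\cdot)=\sqrt{2\pi\mu}\,z(t)$. Multiplying the piston equation by $x_w$ (your $\eta x_wv_w$ term) produces the linear term $\sqrt{2\pi}\,\mathcal M x_w z(t)$. In the $v_w$ estimate the analogous term $v_wz$ is cancelled by the cross term $2\langle P_\gamma f,r\rangle$ in $|f|_{2,-,1}^2$, because $r\approx\sqrt\mu(v_1-\frac{\sqrt{2\pi}}2)v_w$. Nothing in the $L^2$ energy is proportional to $x_w$, and $z$ itself is not dissipated; only traces of $(\FI-P_\gamma)f$ type are. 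So $x_wz$ is controlled by nothing in your functional. Estimating $z$ by a trace lemma only returns $\|f\|_2$, whose uncontrolled part is again the mean of $a$.

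Mass conservation does not fix this in the way you propose. Since $\int_0^1a\,{\rm d}x_1=-\frac{x_w}{1+x_w}$, the test-function estimate for $a$ must use the Neumann problem $-\partial_{x_1}^2\phi_a=a+\frac{x_w}{1+x_w}$. Under diffuse reflection it is the Neumann condition that kills the boundary terms, since the $P_\gamma f$ contribution $\int(|v|^2-10)v_1^2\mu\neq0$; and the Neumann condition forces a zero-mean source. That estimate therefore controls only the oscillation of $a$: $\frac{\rm d}{{\rm d}t}N_f^{(a)}+\frac54\|a\|_2^2\le C_a|x_w|^2+\cdots$. Hence ``tie $x_w$ to $\int a$, then dissipate $a$'' is circular, because the mean of $a$ is $x_w$.

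The missing ingredient is a momentum identity obtained by testing the equation with $x_w(1+x_w)x_1v_1\sqrt\mu$.
\begin{itemize}
\item The factor $x_1$ kills the boundary term at $x_1=0$.
\item At $x_1=1$ one gets the pressure on the piston, $x_w\int_{\mathbb R^3}(v_1-v_w)^2\sqrt\mu f(t,1,v)\,{\rm d}v$. After inserting the boundary condition this equals $\sqrt{2\pi}x_wz+x_w\int_{v_1>v_w}(v_1-v_w)^2\sqrt\mu(\FI-\frac{\sqrt{\mu_w}}{\sqrt\mu}P_\gamma)f\,{\rm d}v+O(|x_w||v_w|)$. Adding $\mathcal M$ times this identity to $x_w$ times the piston equation therefore cancels all trace terms, including $x_wz$.
\item The interior term is $-x_w\int v_1^2\sqrt\mu f=-x_w\int_0^1(a+c)\,{\rm d}x_1-x_w\int v_1^2\sqrt\mu(\FI-\FP)f$. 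Only here does mass conservation enter: it turns this into $\frac{|x_w|^2}{1+x_w}+O(|x_w|(\|c\|_2+\|(\FI-\FP)f\|_2))$, a restoring force supplied by the compressed gas.
\end{itemize}
This yields, also for $\kappa=0$,
\[
\frac{\rm d}{{\rm d}t}\Big[x_wv_w+\mathcal M\int x_w(1+x_w)x_1v_1\sqrt\mu f\Big]+\tfrac12\big(\kappa+\tfrac{\mathcal M}2\big)|x_w|^2
\le C_1(\|b_1\|_2^2+\|c\|_2^2)+C(|v_w|^2+\|(\FI-\FP)f\|_2^2)+\cdots .
\]

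To close, order the macroscopic estimates so that $c$ and $b_i$ are dissipated without $x_w$. The estimate for $c$ needs only a small multiple of $\|b_1\|_2^2$, and the one for $b_1$ only a small multiple of $\|a\|_2^2$. Then add the $a$-estimate with a weight $\eta_2<(2\kappa+\mathcal M)/(8C_a)$, so that its $C_a|x_w|^2$ is absorbed.

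A minor point: the $e^{Ct^p}$ growth in the $W^{1,p}$ bound does not come from counting bounces. It comes from the factor $(s+1)^{p-1}$ produced by $\int_{|u_1|\le N}\alpha_\vps^{-\vho q}\,{\rm d}u_1\lesssim s+1$.
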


\begin{remark}
Theorem \ref{main result} provides the first global well-posedness theory for a fully coupled Boltzmann free boundary problem of piston type. Its proof is based on three main ingredients: a conformal transformation reducing the moving domain to a fixed one, a coupled energy structure revealing hidden dissipation and cancellation mechanisms at the interface, and a combined $L^\infty-L^2$ framework yielding global nonlinear control. In particular, the damping of the boundary dynamics arises from the interaction between the kinetic distribution, the mass flux, and the conservation laws of the system. These structural ideas are not specific to the present piston model. Indeed, they provide a framework for studying nonlinear kinetic free boundary problems and are expected to be applicable to a broader class of related problems.
\end{remark}


\subsection{Key points and strategies}

Over the past several decades, substantial progress has been made in the local and global well-posedness theory of free boundary value problems (FBVPs) arising in fluid dynamics. In contrast, for FBVPs associated with kinetic equations, much less is known. This paper is devoted to the one-dimensional classical piston-type problem described by the Boltzmann equation, which constitutes a genuine free boundary value problem for a kinetic model.

Several fundamental difficulties arise in the analysis of this problem. We outline the main ones, together with the strategies employed to overcome them, as follows.

\medskip
\noindent$\bullet$ \textit{Boundary linear coupling and cancellation mechanisms at the free boundary.}

A distinctive feature of the present free boundary kinetic problem is the linear coupling between the particle distribution and the boundary variables $[x_w(t),v_w(t)]$, which encodes the microscopic momentum exchange between particles and the moving boundary. This coupling induces a damping mechanism for the boundary velocity, reflecting the fact that particles impinging on the boundary exert a restoring force that tends to relax the boundary motion.
More precisely, from \eqref{trb-ode}, one derives the following energy inequality:
\begin{align}
\frac{{\rm d}}{{\rm d}t}&\left\{\kappa|x_w(t)|^2+|v_w(t)|^2\right\}+\left(\frac{8}{\sqrt{2\pi}}+\sqrt{2\pi}\right)\mathcal M|v_w(t)|^2\notag\\
&-2\mathcal Mv_w\int_{v_1-v_w(t)>0}[v_1-v_w(t)]^2\sqrt{\mu(v)}\left(\FI-\frac{\sqrt{\mu_{w}}}{\sqrt{\mu}} P_\gamma \right)f{\rm d}v\notag\\
	&-2\sqrt{2\pi}\mathcal Mv_w\int_{u_1-v_w(t)>0}f(1)\sqrt{\mu(u)}|u_1-v_w(t)|{\rm d}u\leq  h.o.t.
    \notag
\end{align}
However, the interaction term $2\sqrt{2\pi}v_w\int_{u_1-v_w(t)>0}f(1)\sqrt{\mu(u)}|u_1-v_w(t)|{\rm d}u$ cannot be controlled directly at this level. Fortunately, this term can be canceled by the later $L^2$ estimate on the kinetic density function $f$, which gives as
\begin{align}
\frac{{\rm d}}{{\rm d}t}&\left\| (1+x_w)^{\frac{1}{2}}f(t)\right\|_{2}^2+\delta\left\|(\nu(1+x_w))^{\frac{1}{2}}(\textbf I-\textbf P)f\right\|_{2}^2+\left|\left(\FI-\frac{\sqrt{\mu_w}}{\sqrt{\mu}} P_\gamma \right)f\right|_{2,+,1}^2+|(\mathbf I-P_\gamma)f|_{2,+,0}^2\notag\\
&-(2\pi)^{-1/2}(2+\frac{3\pi}{2})|v_w(t)|^2+2\sqrt{2\pi}v_w\int_{v_1-v_w(t)>0}f(1)\sqrt{\mu}|v_1-v_w(t)|{\rm d}v\leq h.o.t. \notag
\end{align}
The resulting energy inequality retains a strictly positive dissipation in $v_w$, since it holds
$$
\left(\frac{8}{\sqrt{2\pi}}+\sqrt{2\pi}\right)-\frac{1}{\sqrt{2\pi}}(2+\frac{3\pi}{2})=\frac{6}{\sqrt{2\pi}}+\frac{\sqrt{2\pi}}{4}>0.
$$

\medskip
\noindent$\bullet$ \textit{Mass flux and mass conservation stabilization.}

In addition to the dissipation of the free boundary velocity $v_w(t)$, one also expects to recover the dissipation of the free boundary position $x_w(t)$ from a basic $L^2$-type estimate. More precisely, one can derive
\begin{align}\label{damx}
\frac{{\rm d}}{{\rm d}t}(x_wv_w)&+\kappa|x_w(t)|^2-\mathcal Mx_w\int_{v_1-v_w(t)>0}[v_1-v_w(t)]^2\sqrt{\mu(v)}\left(\FI-\frac{\sqrt{\mu_{w}}}{\sqrt{\mu}} P_\gamma \right)f{\rm d}v\notag\\
	&-\sqrt{2\pi}\mathcal Mx_w\int_{u_1-v_w(t)>0}f(t,1,u)\sqrt{\mu(u)}|u_1-v_w(t)|{\rm d}u\notag\\
= &|v_w(t)|^2-\left(\frac{4}{\sqrt{2\pi}}+\frac{\sqrt{2\pi}}{2}\right)\mathcal Mx_wv_w+h.o.t.
\end{align}
However, on the one hand, if $\kappa=0$, \eqref{damx} does not seem to yield any dissipation for $x_w(t)$. On the other hand, as in the velocity estimate, the boundary interaction term
$$-\sqrt{2\pi}\CM x_w\int_{u_1-v_w(t)>0}f(t,1,u)\sqrt{\mu(u)}|u_1-v_w(t)|{\rm d}u$$
cannot be controlled directly, and remains problematic even when $\kappa>0$ at this level.

To overcome this difficulty, we exploit the kinetic mass flux by choosing a suitable test function, which leads to the identity
\begin{align}\label{mf-id}
\frac{{\rm d}}{{\rm d}t}&\int_{\Omega\times\mathbb R^3}x_w(t)(1+x_w(t))x_1v_1\sqrt{\mu(v)}f(x_1){\rm d}x_1{\rm d}v\notag\\	
&\qquad+x_w(t)\int_{\mathbb R^3}(v_1-v_w(t))^2\sqrt{\mu(v)}f(1){\rm d}v\notag\\
&\qquad-x_w(t)\int_{\Omega\times\mathbb R^3} v_1^2\sqrt{\mu(v)}f(x_1){\rm d}x_1{\rm d}v\notag\\
&=-x_w(t)v_w(t)\int_{\mathbb R^3}(v_1-v_w(t))\sqrt{\mu(v)}f(1){\rm d}v-x_w(t)v_w(t)\int_{\Omega\times\mathbb R^3} v_1\sqrt{\mu(v)}f(x_1){\rm d}x_1{\rm d}v\notag\\
&\qquad-x_w(t)v_w(t)\int_{\Omega\times\mathbb R^3} v_1x_1\sqrt{\mu(v)}f(x_1){\rm d}x_1{\rm d}v\notag\\
&\qquad+\int_{\Omega\times\mathbb R^3}\pa_t[x_w(t)(1+x_w(t))v_1x_1]\sqrt{\mu(v)}\textbf Pf{\rm d}x_1{\rm d}v.\notag
\end{align}
Here, the leading-order contribution of the boundary term $\int_{\mathbb R^3}x_w(v_1-v_w)^2\sqrt{\mu}f{\rm d}v$ at $x_1=1$ provides the crucial cancellation of the problematic interaction term in \eqref{damx}, see \eqref{x_wf x_1=1}. In contrast, the boundary term at $x_1=0$ vanishes due to the specific choice of the test function $x_w(t)(1+x_w(t))v_1x_1\sqrt{\mu(v)}$. Moreover, the third term on the left-hand side of \eqref{mf-id} yields strong damping for the free boundary position $x_w(t)$:
\begin{align}
	-x_w(t)&\int_{\Omega\times\mathbb R^3} v_1^2\sqrt{\mu(v)}f{\rm d}x_1{\rm d}v\notag\\
	&=-x_w(t)\int_0^1 a{\rm d}x_1-x_w\int_{\Omega\times\mathbb R^3} v_1^2\sqrt{\mu(v)}\left(\sqrt{\mu}\frac{|v|^2-3}{2}c+(\mathbf I-\mathbf P)f\right){\rm d}x_1{\rm d}v\notag\\
	&=\frac{|x_w(t)|^2}{1+x_w}-x_w\int_{\Omega\times\mathbb R^3} v_1^2\sqrt{\mu(v)}\left(\sqrt{\mu}\frac{|v|^2-3}{2}c+(\mathbf I-\mathbf P)f\right){\rm d}x_1{\rm d}v\notag\\
	&\geq\frac{1}{2}|x_w(t)|^2-C|x_w(t)|(\|c\|_2+\|(\mathbf I-\mathbf P)f\|_2),
\end{align}
where we have used the mass conservation relation 
$$
\int_0^1 a{\rm d}x_1=-\frac{x_w}{1+x_w}.
$$

This analysis shows that the damping of the free boundary position is not merely a direct consequence of Hooke's law, but rather emerges from a subtle interplay between the kinetic mass flux and the conservation of mass. This combined mechanism is crucial for closing the energy estimates in our proof.

\medskip
\noindent$\bullet$ \textit{Macroscopic dissipation coupled to the free boundary position}

In case of fixed bounded domains, one may make use of dual argument on the thirteen-moments equations developed in \cite{EGKM-13} to capture the macroscopic dissipation of $\FP f$ or equivalently $[a,\Fb,c]$ on $\Omega=(0,1)$. The issue is more subtle in case of free boundary domains due to gas-mass interactions. In particular, we have to sacrifice the bounds of $[b_1,c]$ to obtain the dissipation of the free boundary position $x_w(t)$, see \eqref{xwf-int} in Proposition \ref{mix-eng}. Thus, it is important to deal with the dissipation of $[b_1,c]$ without relying on $x_w(t)$. Indeed, we are able to obtain
\[
\frac{\rd}{\rd t} N^{(c)}_f(t) +\|c\|_2^2\leq o(1) \|b_1\|_2^2 +C (|v_w(t)|^2+\|\{\FI-\FP\}f\|_2^2)+\cdots
\]
\[
\frac{\rd}{\rd t} N^{(b_1)}_f(t) +\|b_1\|_2^2\leq o(1) \|a\|_2^2 +C (\|c\|_2^2+|v_w(t)|^2+\|\{\FI-\FP\}f\|_2^2)+\cdots
\]
and
\[
\frac{\rd}{\rd t} N^{(b_i)}_f(t) +\|b_i\|_2^2\leq C (|v_w(t)|^2+\|\{\FI-\FP\}f\|_2^2)+\cdots,\ i=2,3.
\]
Since $o(1)$ is an arbitrary constant that can be sufficiently small, the macroscopic dissipation of $[\Fb,c]$ only relies on the free boundary velocity $v_w(t)$ and the microscopic component $\{\FI-\FP\}f$ that has been obtained in \eqref{dis-xv-mif}. To obtain the dissipation of the remaining macroscopic component $a$, we deduce that
\[
\frac{\rd}{\rd t} N^{(a)}_f(t) +\|a\|_2^2\leq C (|x_w(t)|^2+|v_w(t)|^2+\|b_1\|_2^2 + \|c\|_2^2+\|\{\FI-\FP\}f\|_2^2)+\cdots
\]
This indicates that only the dissipation of $a$ is connected with the free boundary position $x_w(t)$. Therefore, a suitable linear combination of the above estimates with \eqref{xwf-int} leads to the dissipation of all the macroscopic components $[a,\Fb,c]$ together with the free boundary position $x_w(t)$, see the desired estimate \eqref{es Pf} in Lemma \ref{lem es Pf}. This is a key step achieving the exponential dissipation structure for the coupled Boltzmann-Newton system in the $L^2$ setting.  

\medskip
\noindent$\bullet$ \textit{Local-in-time regularity and uniqueness.}

Due to the appearance of free boundary pair in transport term of the perturbed equation
\begin{eqnarray}
		&\dis \pa_t f+\frac{v_1-v_w(t)x_1}{1+x_w(t)}\pa_{x_1} f+Lf=\Ga(f,f),\notag
	\end{eqnarray}
the proof of uniqueness for $L^\infty$ solutions requires additional regularity beyond the standard energy framework. Following the strategy developed in \cite{cao kim, guo-17-inv}, we introduce a kinetic distance weight $W_c$
defined in \eqref{wc}, and establish local-in-time $W^{1,p}$ for $2<p<+\infty$. These estimates yield an $L^{1+\de}$ stability result for some $0<\de\ll 1$, which in turn implies the uniqueness of solutions. The details of this argument are presented in Sections~\ref{sec-loc-reg} and~\ref{sec-sta}.

\section{Boundary interaction between $x_w$, $v_w$ and $f$}\label{inter-bd-sec}
In this section, we derive the key energy estimates for the free boundary pair $[x_w(t),v_w(t)]$, in particular, we establish a dissipation mechanism for this pair through a newly constructed quadratic form that couples the free boundary dynamics, the boundary contributions from the energy of distribution function, the associated mass flux and the conservation of mass at kinetic level.

We begin by the {\it a priori} estimates for the perturbed problem
	\begin{eqnarray}\label{PBE-se2}
		&\dis \pa_t f+\frac{v_1-v_w(t)x_1}{1+x_w(t)}\pa_{x_1} f+Lf=\Ga(f,f),\ x_1\in(0,1),\ t>0,
	\end{eqnarray}
and
\begin{align}\label{trb-ode-se2}
\left\{\begin{array}{rll}
		\frac{{\rm d}}{{\rm d}t}x_w(t)&=v_w(t),\\[2mm]
		\frac{{\rm d}}{{\rm d}t}v_w(t)&=-\kappa x_w(t)-\mathcal M\left(P_r[\mu+\sqrt{\mu}f]-P_l[\mu+\sqrt{\mu}f] \right),
\end{array}\right.
	\end{align}
with the initial condition
	\begin{align}\label{icf-se2}
		f(0,x_1,v)=f_0(x_1,v),\ (x_w,v_w)(0)=(x_{w0},v_{w0}),
	\end{align}
and the boundary condition
	\begin{align}
\left\{\begin{array}{rll}
		&f(t,0,v)|_{v_1>0}=P_\gamma f(t,0,v),\label{bc-f-se2}\\[2mm]
		&f(t,1,v)|_{v_1-v_w(t)<0}=P_\gamma f(t,1,v)+r(t,v).
\end{array}\right.
	\end{align}
In this direction, we have the following result.

\begin{proposition}\label{mix-eng}
Let the conditions listed in Theorem \ref{main result} be satisfied. Assume $[f,x_w,v_w]$ is a strong solution to \eqref{PBE-se2}, \eqref{trb-ode-se2},
\eqref{icf-se2} and \eqref{bc-f-se2}, and also suppose
\begin{align}
\|wf(t)\|_{\infty}+|v_w(t)|+|x_w(t)|\leq 2\vps_0,\label{apas}
\end{align}
for all $t\in[0,\infty)$, where $\vps_0>0$ is sufficiently small. 
Then
there exist constants $\la_1>0$ and $C>0$ and an energy functional $\CE_0$ which satisfies $$\CE_{0}(t)\thicksim \| f(t)\|_{2}^2+\kappa|x_w(t)|^2+|v_w(t)|^2$$
 such that
\begin{align}\label{dis-xv-mif}
\frac{{\rm d}}{{\rm d}t}\CE_0(t)&+\la_1\left\{\left|\left(\FI-\frac{\sqrt{\mu_w}}{\sqrt{\mu}} P_\gamma \right)f\right|_{2,+,1}^2+|(\mathbf I-P_\gamma)f|_{2,+,0}^2\right\}\notag\\
&+\la_1\left\|(\nu(1+x_w))^{\frac{1}{2}}(\FI-\FP)f\right\|_{2}^2+\la_1|v_w(t)|^2\notag\\
\leq& C\left\| \nu^{-\frac{1}{2}}\Gamma(f,f)\right\|_{2}^2+CP(|x_w(t)|,|v_w(t)|,\|wf(t)\|_\infty),
\end{align}
where
\begin{equation*}
P(|x_w(t)|,|v_w(t)|,\|wf(t)\|_\infty):=(|x_w(t)|+|v_w(t)|)(|v_w(t)|^2+\|wf(t)\|_\infty^2).
\end{equation*}
Moreover, there exists a constant $C_1>0$ such that
\begin{align}\label{xwf-int}
	\frac{{\rm d}}{{\rm d}t}&(x_wv_w)
	+\mathcal M\frac{{\rm d}}{{\rm d}t}\int_{\Omega\times\mathbb R^3}x_w(1+x_w)x_1v_1\sqrt{\mu(v)}f{\rm d}x_1{\rm d}v\notag\\
	&
	+\frac{1}{2}\left(\kappa+\frac{\mathcal M}{2}\right)|x_w(t)|^2\notag\\
	\leq &CP(|x_w(t)|,|v_w(t)|,\|wf(t)\|_\infty)+C(|v_w(t)|^2+\|(\mathbf I-\mathbf P)f\|_2^2)+C_1(\|b_1(t)\|_2^2+\|c(t)\|_2^2).
\end{align}
\end{proposition}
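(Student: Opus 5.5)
We obtain \eqref{dis-xv-mif} by adding a multiple of the $L^2$ energy identity for $f$ to the Newton energy identity for $[x_w,v_w]$. We obtain \eqref{xwf-int} by combining the virial-type identity \eqref{damx} with the mass-flux identity \eqref{mf-id}.

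\emph{Newton energy.} Multiply the second equation of \eqref{trb-ode-se2} by $2v_w$. Expand the pressure difference $P_r-P_l$ to first order in $v_w$. The zeroth-order Maxwellian contributions cancel. The linear Maxwellian part produces the damping term $(\frac{8}{\sqrt{2\pi}}+\sqrt{2\pi})\mathcal M|v_w|^2$, with the remainder $O(|v_w|^3)$ absorbed into $P$. For the $f$-part at $x_1=1$ on $\{v_1-v_w>0\}$, decompose $f=(\FI-\frac{\sqrt{\mu_w}}{\sqrt{\mu}}P_\gamma)f+\frac{\sqrt{\mu_w}}{\sqrt{\mu}}P_\gamma f$. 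The projection piece is a multiple of the outgoing flux $\int_{u_1-v_w>0}f(1)\sqrt\mu|u_1-v_w|\,{\rm d}u$. This gives the displayed inequality for $\frac{{\rm d}}{{\rm d}t}\{\kappa x_w^2+v_w^2\}$.

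\emph{Kinetic $L^2$ identity.} Multiply \eqref{PBE-se2} by $(1+x_w)f$ and integrate. The transport term produces the boundary flux with measure ${\rm d}\tilde\gamma$, together with a bulk term $\frac{v_w}{2}\|f\|_2^2$, which is cubic and goes into $P$. The term $\dot x_w\|f\|^2$ is treated the same way. Coercivity of $L$ gives $\delta\|(\nu(1+x_w))^{1/2}(\FI-\FP)f\|_2^2$, and $\Gamma$ is handled by Cauchy–Schwarz with $\|\nu^{-1/2}\Gamma\|_2^2$. At $x_1=0$ the standard diffuse-reflection splitting gives $|(\FI-P_\gamma)f|_{2,+,0}^2$. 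At $x_1=1$, insert $f|_{\gamma_-}=P_\gamma f+r$. Compute $|P_\gamma f+r|_{2,-,1}^2$ against $|f|_{2,+,1}^2$ using the moving-frame orthogonality of $\sqrt{\mu_w}/\sqrt\mu$. Then expand $r$: its linear part in $v_w$ is $\sqrt{2\pi}\,v_w\times$(outgoing flux) plus a pure $v_w$ term, and its quadratic part gives $-(2\pi)^{-1/2}(2+\frac{3\pi}{2})|v_w|^2$. All other terms are $O(|v_w|(|v_w|^2+\|wf\|_\infty^2))$.

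\emph{Combining for \eqref{dis-xv-mif}.} Take $\mathcal M$ times the kinetic identity and add it to the Newton one, so that the cross terms $\pm2\sqrt{2\pi}\mathcal M v_w\int f(1)\sqrt\mu|u_1-v_w|$ cancel exactly. The remaining $v_w$-coefficient is positive by the arithmetic already noted. The leftover cross term $2\mathcal M v_w\int(v_1-v_w)^2\sqrt\mu(\FI-\frac{\sqrt{\mu_w}}{\sqrt\mu}P_\gamma)f$ is bounded by $\eta|v_w|^2+C_\eta|(\FI-\frac{\sqrt{\mu_w}}{\sqrt{\mu}}P_\gamma)f|_{2,+,1}^2$. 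This requires weighting the kinetic part by a large factor relative to the Newton part. Since that would spoil the exact cancellation, I would instead keep the exact cancellation and use a small $\eta$, since the boundary dissipation appears with coefficient $\mathcal M$. Setting $\CE_0:=\mathcal M\|(1+x_w)^{1/2}f\|_2^2+\kappa x_w^2+v_w^2$ then gives \eqref{dis-xv-mif}. I expect this exact bookkeeping of constants in the expansion of $r$ and of the boundary norms in the moving frame to be the main obstacle.

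\emph{Proof of \eqref{xwf-int}.} Multiply the Newton equation by $x_w$ to get \eqref{damx}. Derive \eqref{mf-id} by testing \eqref{PBE-se2} with $x_w(1+x_w)x_1v_1\sqrt\mu$. The $x_1=0$ boundary term vanishes because of the factor $x_1$, and the $L$ term vanishes since $v_1\sqrt\mu\in\ker L$. Add $\mathcal M\times$\eqref{mf-id} to \eqref{damx}. At $x_1=1$, split $\int(v_1-v_w)^2\sqrt\mu f(1)$ into outgoing and incoming parts and use the boundary condition on the incoming part. This cancels the flux term $\sqrt{2\pi}\mathcal M x_w\int f(1)\sqrt\mu|u_1-v_w|$, up to $x_w\times(\FI-\frac{\sqrt{\mu_w}}{\sqrt{\mu}}P_\gamma)f$ boundary terms and $O(|x_w||v_w|)$ terms.

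The bulk term $-\mathcal M x_w\int v_1^2\sqrt\mu f$ is evaluated through the mass relation $\int_0^1a=-x_w/(1+x_w)$, which comes from \eqref{to-id-drbcl} and Lemma \ref{mass-lem}. This yields $\frac{\mathcal M}{2}|x_w|^2-C|x_w|(\|c\|_2+\|(\FI-\FP)f\|_2)$. The $\kappa|x_w|^2$ term comes from \eqref{damx}. All remaining terms, namely $x_wv_w$, $|v_w|^2$, the $\pa_t$ of coefficients applied to $\FP f$ (which carries $b_1$), and the boundary microscopic parts, are bounded by Young's inequality: $\frac14(\kappa+\frac{\mathcal M}{2})|x_w|^2$ plus $C(|v_w|^2+\|(\FI-\FP)f\|_2^2)+C_1(\|b_1\|_2^2+\|c\|_2^2)+CP$.

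There is one further point. The boundary term $|(\FI-\frac{\sqrt{\mu_w}}{\sqrt\mu}P_\gamma)f|_{2,+,1}$ multiplied by $x_w$ is not listed on the right of \eqref{xwf-int}. It has to be absorbed, so I would bound it via the trace lemma by $\|(\FI-\FP)f\|_2$ and transport terms, or rewrite it using the equation, so that it lands in the displayed right-hand side.
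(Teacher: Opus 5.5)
Your overall architecture is the paper's: Newton energy plus $\mathcal M$ times the $(1+x_w)$-weighted $L^2$ identity, and the virial identity plus $\mathcal M$ times the mass-flux identity. However, the step that closes \eqref{dis-xv-mif} fails as you state it. The ratio $1:\mathcal M$ between the Newton and kinetic identities is fixed by the exact cancellation of $\pm2\sqrt{2\pi}\mathcal M v_w\int_{u_1-v_w>0}f(1)\sqrt{\mu}|u_1-v_w|\,{\rm d}u$. Once it is fixed, every remaining term involving $v_w$ or the boundary microscopic part carries the same factor $\mathcal M$:
the net damping $A\mathcal M|v_w|^2$, with $A=\frac{8}{\sqrt{2\pi}}+\sqrt{2\pi}-\frac{1}{\sqrt{2\pi}}(2+\frac{3\pi}{2})=\frac{6}{\sqrt{2\pi}}+\frac{\sqrt{2\pi}}{4}$;
the boundary dissipation $\mathcal M|g|_{2,+,1}^2$, with $g:=(\FI-\frac{\sqrt{\mu_w}}{\sqrt{\mu}}P_\gamma)f$;
and the cross term $2\mathcal M v_w\int_{v_1-v_w>0}(v_1-v_w)^2\sqrt{\mu}\,g\,{\rm d}v$.
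No free parameter is left, so ``take $\eta$ small'' goes the wrong way. If you bound the cross term by $2\mathcal M\sqrt{B}|v_w||g|_{2,+,1}$, Young gives $\mathcal M(\eta|v_w|^2+\frac{B}{\eta}|g|_{2,+,1}^2)$, and absorption needs $B<\eta<A$. For small $\eta$ the coefficient $B/\eta$ exceeds the available $1$.

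The missing ingredient is a sharp-constant check. Cauchy--Schwarz in $L^2(\{v_1-v_w>0\},(v_1-v_w){\rm d}v)$ gives $B=\int_{v_1>0}v_1^3\mu\,{\rm d}v=\frac{2}{\sqrt{2\pi}}$; the $O(|v_w|)$ correction is bounded by $P$. The form $|g|_{2,+,1}^2-2\sqrt{B}|v_w||g|_{2,+,1}+A|v_w|^2$ is then coercive exactly because $A-B=\frac{8+\pi}{2\sqrt{2\pi}}>0$. This numerical inequality is what yields $\la_1|v_w|^2$ and $\la_1|g|_{2,+,1}^2$ simultaneously; without it the estimate does not close.

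A second, smaller point concerns \eqref{xwf-int}: there is no residual term $x_w\int_{v_1-v_w>0}(v_1-v_w)^2\sqrt{\mu}\,g\,{\rm d}v$ to absorb. Split the boundary term $x_w\int_{\R^3}(v_1-v_w)^2\sqrt{\mu}f(1)\,{\rm d}v$ of \eqref{mf-id} into an outgoing part, written via $g+\frac{\sqrt{\mu_w}}{\sqrt\mu}P_\gamma f$, and an incoming part, given by the boundary condition. It equals $x_w\int_{v_1-v_w>0}(v_1-v_w)^2\sqrt{\mu}\,g\,{\rm d}v+\sqrt{2\pi}x_w\int_{u_1-v_w>0}f(1)\sqrt{\mu}|u_1-v_w|\,{\rm d}u$, plus $O(|x_w||v_w|)$, plus terms bounded by $P$. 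Multiplied by $\mathcal M$, the first two terms cancel both boundary terms on the left of \eqref{damx} identically, the $g$-term as well as the flux.

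This matters because your proposed fallback would not work. Lemma \ref{lem trace} is a time-integrated estimate involving the transport derivative. It cannot bound a boundary trace pointwise in $t$ by $\|(\FI-\FP)f\|_2$, which is what a differential inequality like \eqref{xwf-int} would need.
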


\begin{proof}
The proof is divided into the following three steps.

\noindent\underline{{\it Step 1. Energy estimates for the free boundary pair $[x_w,v_w].$}} Recalling \eqref{trb-ode-se2},
we begin by the analysis of $P_r[\mu+\sqrt{\mu}f]-P_l[\mu+\sqrt{\mu}f]$ in \eqref{trb-ode-se2}.
Note that
\begin{align}\label{P_rf-P_lf}
	P_r[\mu&+\sqrt{\mu}f]-P_l[\mu+\sqrt{\mu}f]\notag\\
	&=\int_{v_1-v_w(t)<0}\mu(v)|v_1-v_w(t)|^2{\rm d}v+\frac{\sqrt{2\pi}}{2}\int_{v_1-v_w(t)<0}\mu(v)|v_1-v_w(t)|{\rm d}v\notag\\
	&\quad-\int_{v_1-v_w(t)>0} \mu(v)|v_1-v_w(t)|^2{\rm d}v-\frac{\sqrt{2\pi}}{2}\int_{v_1-v_w(t)>0} \mu(v)|v_1-v_w(t)|{\rm d}v\notag\\
	&\quad-\int_{v_1-v_w(t)>0} \sqrt{\mu(v)}f(t,1,v)|v_1-v_w(t)|^2{\rm d}v\notag\\&\quad-\frac{\sqrt{2\pi}}{2}\int_{v_1-v_w(t)>0}\sqrt{\mu(v)}f(t,1,v)|v_1-v_w(t)|{\rm d}v.
\end{align}
To proceed, we now denote
\begin{equation*}
	I_1=\int_{v_1-v_w(t)<0}[v_1-v_w(t)]^2\mu(v){\rm d}v-\int_{v_1-v_w(t)>0}[v_1-v_w(t)]^2\mu(v){\rm d}v,
\end{equation*}
and
\begin{equation*}
	I_2=-\frac{\sqrt{2\pi}}{2}\int_{v_1-v_w(t)<0}[v_1-v_w(t)]\mu(v){\rm d}v-\frac{\sqrt{2\pi}}{2}\int_{v_1-v_w(t)>0}[v_1-v_w(t)]\mu(v){\rm d}v.	\end{equation*}
Then, under the {\it a priori} assumption \eqref{apas}, there  exist $\xi_1,\xi_2\in(0,1)$ such that
\begin{align*}
	I_1&=\int_{v_1-v_w(t)<0}[v_1-v_w(t)]^2\mu(v){\rm d}v-\int_{v_1-v_w(t)>0}[v_1-v_w(t)]^2\mu(v){\rm d}v\notag\\
	&=\frac{1}{\sqrt{2\pi}}\int_{-\infty}^0v_1^2\Big[e^{-\frac{1}{2}(v_1+v_w(t))^2}-e^{-\frac{1}{2}(v_1-v_w(t))^2}\Big]{\rm d}v_1\notag\\	&=\frac{1}{\sqrt{2\pi}}\int_{-\infty}^0v_1^2\Big[\Big(e^{-\frac{1}{2}v_1^2}-v_w(t)v_1e^{-\frac{1}{2}v_1^2}\Big)
-\Big(e^{-\frac{1}{2}v_1^2}+v_w(t)v_1e^{-\frac{1}{2}v_1^2}\Big)\Big]{\rm d}v_1\notag\\
&\quad+\frac{1}{2\sqrt{2\pi}}|v_w|^2\int^0_{-\infty}v_1^2\Big[e^{-\frac{1}{2}(v_1+\xi_1v_w(t))^2}\Big(-1+(v_1+\xi_1v_w(t))^2\Big)\notag\\
&\qquad-e^{-\frac{1}{2}(v_1-\xi_2v_w(t))^2}\Big(-1+(v_1-\xi_2v_w(t))^2\Big)\Big]{\rm d}v_1\notag\\
&	=\frac{4}{\sqrt{2\pi}}v_w(t)+\frac{1}{2\sqrt{2\pi}}|v_w(t)|^2\int^0_{-\infty}v_1^2\Big[e^{-\frac{1}{2}(v_1+\xi_1v_w(t))^2}\Big(-1+(v_1+\xi_1v_w(t))^2\Big)\notag\\
	&\qquad-e^{-\frac{1}{2}(v_1-\xi_2v_w(t))^2}\Big(-1+(v_1-\xi_2v_w(t))^2\Big)\Big]{\rm d}v_1\notag\\
&	=:\frac{4}{\sqrt{2\pi}}v_w(t)+\mathcal I_1(t)|v_w(t)|^2,
\end{align*}
and
\begin{align*}
I_2&=-\frac{\sqrt{2\pi}}{2}\times(2\pi)^{-3/2}\int_{v_1-v_w(t)<0}[v_1-v_w(t)]e^{-\frac{1}{2}|v|^2}{\rm d}v\notag\\
	&\quad-\frac{\sqrt{2\pi}}{2}\times(2\pi)^{-3/2}\int_{v_1-v_w(t)>0}[v_1-v_w(t)]e^{-\frac{1}{2}|v|^2}{\rm d}v\notag\\	
&	=-\frac{1}{2}\int_{-\infty}^0v_1\Big[e^{-\frac{1}{2}(v_1+v_w(t))^2}-e^{-\frac{1}{2}(v_1-v_w(t))^2}\Big]{\rm d}v_1\notag\\ &=-\frac{1}{2}\int_{-\infty}^0v_1\Big[\Big(e^{-\frac{1}{2}v_1^2}-v_w(t)v_1e^{-\frac{1}{2}v_1^2}\Big)-\Big(e^{-\frac{1}{2}v_1^2}+v_w(t)v_1e^{-\frac{1}{2}v_1^2}\Big)\Big]{\rm d}v_1\notag\\
	&\quad-\frac{1}{4}|v_w|^2\int^0_{-\infty}v_1\Big[e^{-\frac{1}{2}(v_1+\xi_1v_w(t))^2}\Big(-1+(v_1+\xi_1v_w(t))^2\Big)\notag\\
	&\qquad-e^{-\frac{1}{2}(v_1-\xi_2v_w(t))^2}\Big(-1+(v_1-\xi_2v_w(t))^2\Big)\Big]{\rm d}v_1\notag\\
	&=\frac{\sqrt{2\pi}}{2}v_w(t)-\frac{1}{4}|v_w(t)|^2\int^0_{-\infty}v_1\Big[e^{-\frac{1}{2}(v_1+\xi_1v_w(t))^2}\Big(-1+(v_1+\xi_1v_w(t))^2\Big)\notag\\
	&\qquad-e^{-\frac{1}{2}(v_1-\xi_2v_w(t))^2}\Big(-1+(v_1-\xi_2v_w(t))^2\Big)\Big]{\rm d}v_1\notag\\
	&=:\frac{\sqrt{2\pi}}{2}v_w(t)+\mathcal I_2(t)|v_w(t)|^2.
\end{align*}
Plugging $I_1$ and $I_2$ into \eqref{P_rf-P_lf}, we conclude that
\begin{align}
P_r[\mu&+\sqrt{\mu}f]-P_l[\mu+\sqrt{\mu}f]\notag\\
=&\left(\frac{4}{\sqrt{2\pi}}+\frac{\sqrt{2\pi}}{2}\right)v_w(t)
-\int_{v_1-v_w(t)>0}[v_1-v_w(t)]^2\sqrt{\mu(v)}f(t,1,v){\rm d}v\notag\\
	&-\frac{\sqrt{2\pi}}{2}\int_{v_1-v_w(t)>0}[v_1-v_w(t)]\sqrt{\mu(v)}f(t,1,v){\rm d}v\notag\\
	&+(\mathcal I_1(t)+\mathcal I_2(t))|v_w(t)|^2.\label{sumGpm}
\end{align}
Next, we multiply $\eqref{trb-ode-se2}_1$ by $\kappa x_w(t)$ and $\eqref{trb-ode-se2}_2$ by $v_w(t)$, respectively. Taking the summation of the resulting identities and applying \eqref{sumGpm}, we obtain
\begin{align}\label{d_tx_w^2} \frac{1}{2}\frac{{\rm d}}{{\rm d}t}&(\kappa|x_w(t)|^2+|v_w(t)|^2)+\left(\frac{4}{\sqrt{2\pi}}+\frac{\sqrt{2\pi}}{2}\right)\mathcal M|v_w(t)|^2\notag\\
&
\underbrace{-\mathcal Mv_w\int_{v_1-v_w(t)>0}[v_1-v_w(t)]^2\sqrt{\mu(v)}f(t,1,v){\rm d}v}_{\eqref{d_tx_w^2}_1}\notag\\
	&-\frac{\sqrt{2\pi}}{2}\mathcal Mv_w\int_{v_1-v_w(t)>0}[v_1-v_w(t)]\sqrt{\mu(v)}f(t,1,v){\rm d}v\leq C|v_w(t)|^3.
\end{align}
The term \eqref{d_tx_w^2}$_1$ can be rewritten as
\begin{align}
	\eqref{d_tx_w^2}_1=&-\mathcal Mv_w\int_{v_1-v_w(t)>0}[v_1-v_w(t)]^2\sqrt{\mu(v)}\Big[f(t,1,v)\notag\\
	&-\sqrt{2\pi\mu_w(v)}\int_{u_1-v_w(t)>0}f(t,1,u)\sqrt{\mu(u)}|u_1-v_w(t)|{\rm d}u\Big]{\rm d}v\notag\\
	&-\sqrt{2\pi}\mathcal M v_w\int_{v_1-v_w(t)>0}[v_1-v_w(t)]^2\sqrt{\mu(v)}\sqrt{\mu_w(v)}\int_{u_1-v_w(t)>0}f(t,1,u)\sqrt{\mu(u)}|u_1-v_w(t)|{\rm d}u{\rm d}v\notag\\
	=&-\mathcal Mv_w\int_{v_1-v_w(t)>0}[v_1-v_w(t)]^2\sqrt{\mu(v)}\left(\FI-\frac{\sqrt{\mu_{w}}}{\sqrt{\mu}} P_\gamma \right)f(1){\rm d}v\notag\\
	&-\frac{\sqrt{2\pi}}{2}\mathcal Mv_w\int_{u_1-v_w(t)>0}f(t,1,u)\sqrt{\mu(u)}|u_1-v_w(t)|{\rm d}u\notag\\
	&-\sqrt{2\pi}\mathcal M v_w\int_{v_1-v_w(t)>0}[v_1-v_w(t)]^2[\sqrt{\mu(v)}\sqrt{\mu_w(v)}-\mu_w(v)]{\rm d}v\notag\\
	&\quad\times\int_{u_1-v_w(t)>0}f(t,1,u)\sqrt{\mu(u)}|u_1-v_w(t)|{\rm d}u.\notag
\end{align}
Substituting the above into \eqref{d_tx_w^2}, we deduce
\begin{align}\label{es x_w^2}
\frac{{\rm d}}{{\rm d}t}&\left\{\kappa|x_w(t)|^2+|v_w(t)|^2\right\}+\left(\frac{8}{\sqrt{2\pi}}+\sqrt{2\pi}\right)\mathcal M|v_w(t)|^2\notag\\
&-2\mathcal Mv_w\int_{v_1-v_w(t)>0}[v_1-v_w(t)]^2\sqrt{\mu(v)}\left(\FI-\frac{\sqrt{\mu_{w}}}{\sqrt{\mu}} P_\gamma \right)f(1){\rm d}v\notag\\
	&-2\sqrt{2\pi}\mathcal Mv_w\int_{u_1-v_w(t)>0}f(1)\sqrt{\mu(u)}|u_1-v_w(t)|{\rm d}u\leq CP(|v_w(t)|,\|wf(t)\|_\infty).
\end{align}
On the other hand,  multiplying $\eqref{trb-ode-se2}_1$ by $v_w(t)$ and $\eqref{trb-ode-se2}_2$ by $x_w(t)$, respectively, one arrives at
\begin{align}\label{d_t(x_wv_w)}
\frac{{\rm d}}{{\rm d}t}(x_wv_w)&+\kappa|x_w(t)|^2-\mathcal Mx_w\int_{v_1-v_w(t)>0}[v_1-v_w(t)]^2\sqrt{\mu(v)}\left(\FI-\frac{\sqrt{\mu_{w}}}{\sqrt{\mu}} P_\gamma \right)f(1){\rm d}v\notag\\
	&-\sqrt{2\pi}\mathcal Mx_w\int_{u_1-v_w(t)>0}f(t,1,u)\sqrt{\mu(u)}|u_1-v_w(t)|{\rm d}u\notag\\
= &|v_w(t)|^2-\left(\frac{4}{\sqrt{2\pi}}+\frac{\sqrt{2\pi}}{2}\right)\mathcal Mx_wv_w-\mathcal M(\mathcal I_1(t)+\mathcal I_2(t))x_w|v_w(t)|^2\notag\\
&
+\sqrt{2\pi}\mathcal M x_w\int_{v_1-v_w(t)>0}[v_1-v_w(t)]^2[\sqrt{\mu(v)}\sqrt{\mu_w(v)}-\mu_w(v)]{\rm d}v\notag\\
&\quad\times\int_{u_1-v_w(t)>0}f(t,1,u)\sqrt{\mu(u)}|u_1-v_w(t)|{\rm d}u.
\end{align}
We note that the problematic term
\begin{equation*}
	-2\sqrt{2\pi}\mathcal Mv_w\int_{u_1-v_w(t)>0}f(1)\sqrt{\mu(u)}|u_1-v_w(t)|{\rm d}u
\end{equation*}
in \eqref{es x_w^2} is similar to
\begin{equation*}
	-\sqrt{2\pi}\mathcal Mx_w\int_{u_1-v_w(t)>0}f(1)\sqrt{\mu(u)}|u_1-v_w(t)|{\rm d}u
\end{equation*}
in \eqref{d_t(x_wv_w)}. In Step 2 and Step 3 below, we shall develop two different cancellation mechanisms to deal with these two linear terms, respectively.

\noindent\underline{{\it Step 2. Dissipation of $v_w$ and energy estimates for $f$.}}
To handle the interaction terms in \eqref{es x_w^2}, we next derive the following energy estimate for $f$:
\begin{align}\label{beng-f}
\frac{{\rm d}}{{\rm d}t}&\left\| (1+x_w)^{\frac{1}{2}}f(t)\right\|_{2}^2+\delta\left\|(\nu(1+x_w))^{\frac{1}{2}}(\textbf I-\textbf P)f\right\|_{2}^2+\left|\left(\FI-\frac{\sqrt{\mu_w}}{\sqrt{\mu}} P_\gamma \right)f\right|_{2,+,1}^2+|(\mathbf I-P_\gamma)f|_{2,+,0}^2\notag\\
&-(2\pi)^{-1/2}(2+\frac{3\pi}{2})|v_w(t)|^2+2\sqrt{2\pi}v_w\int_{v_1-v_w>0}f(1)\sqrt{\mu}|v_1-v_w|{\rm d}v\notag\\
\leq& C\left\|\nu^{-\frac{1}{2}} \Gamma(f,f)\right\|_{2}^2+CP(|x_w(t)|,|v_w(t)|,\|wf(t)\|_\infty).
\end{align}
Indeed, from Lemma \ref{lem green fun} we obtain
\begin{align}\label{bcl2}
	\frac{{\rm d}}{{\rm d}t}&\left\| (1+x_w)^{\frac{1}{2}}f(t)\right\|_{2}^2+|f|_{2,+}^2+\delta\left\|(\nu(1+x_w))^{\frac{1}{2}}(\textbf I-\textbf P)f\right\|_{2}^2\notag\\
	&\leq |f|_{2,-}^2+C\int_{\Omega\times\mathbb R^3}|\Gamma(f,f)||(\textbf I-\textbf P)f|{\rm d}x_1{\rm d}v.
\end{align}
We now compute the boundary contributions.
It is clear that at $x_1=0$,
\begin{align}\label{0bd}
|f|_{2,+,0}^2-|f|_{2,-,0}^2=|(\mathbf I-P_\gamma)f|_{2,+,0}^2,
\end{align}
and at $x_1=1$,
\begin{align}\label{1bd}
|f|_{2,+,1}^2-|f|_{2,-,1}^2=&\int_{v_1-v_w(t)>0}|v_1-v_w(t)|f^2(t,1,v){\rm d}v\notag\\
&-\int_{v_1-v_w(t)<0}|v_1-v_w(t)|f^2(t,1,v){\rm d}v.
\end{align}
We shall use \eqref{drb-f} to treat the negative term of \eqref{1bd}. In fact, by Taylor's expansion, we can write
\begin{align}\label{taylor.add}
\sqrt{2\pi}&\frac{\mu_{w}(v)}{\sqrt{\mu(v)}}\int_{v_1-v_w(t)>0}\mu(v)|v_1-v_w(t)|{\rm d}v-\sqrt{\mu(v)}=\sqrt{\mu(v)}(v_1-\frac{\sqrt{2\pi}}{2})v_w(t)+\mathcal T(t,v)|v_w(t)|^2, 
\end{align}
where $\CT(t,v)$ is defined as
\begin{align*}
\CT(t,v):&=-\frac{\sqrt{2\pi}}{2}\frac{\mu(v_1-\xi_3v_w(t),v_2,v_3)}{\sqrt{\mu(v)}}\Big[\Big(1-(v_1-\xi_3v_w(t))^2\Big)\int_{v_1-\xi_3v_w(t)>0}\mu(v)|v_1-\xi_3v_w(t)|{\rm d}v\notag\\
&\qquad-2(v_1-\xi_3v_w(t))\int_{v_1-\xi_3v_w(t)>0}\mu(v){\rm d}v-e^{-\frac{1}{2}(v_1-\xi_3v_w(t))^2}\Big].
\end{align*}
with $\xi_3\in(0,1)$. Therefore, using \eqref{drb-f}, \eqref{taylor.add} and the {\it a priori} assumption \eqref{apas}, one obtains
\begin{align}
\int_{v_1-v_w(t)<0}&|v_1-v_w(t)|f^2(t,1,v){\rm d}v\notag\\
=&\int_{v_1-v_w(t)<0}|v_1-v_w(t)|\Big[\sqrt{2\pi}\frac{\mu_w(v)}{\sqrt{\mu}(v)}\int_{u_1-v_w(t)>0}f(1)\sqrt{\mu(u)}|u_1-v_w(t)|{\rm d}u\notag\\
&\qquad+\sqrt{\mu(v)}(v_1-\frac{\sqrt{2\pi}}{2})v_w(t)+\mathcal T(t,v)|v_w(t)|^2\Big]{\rm d}v\notag\\
=&\int_{v_1-v_w(t)<0}|v_1-v_w(t)|\Big(\sqrt{2\pi}\frac{\mu_w(v)}{\sqrt{\mu}(v)}\Big)^2\Big(\int_{u_1-v_w(t)>0}f(1)\sqrt{\mu(u)}|u_1-v_w(t)|{\rm d}u\Big)^2{\rm d}v\notag\\
&+\int_{v_1-v_w(t)<0}|v_1-v_w(t)|(v_1-\frac{\sqrt{2\pi}}{2})^2|v_w(t)|^2\mu(v){\rm d}v\notag\\
&+|v_w(t)|^4\int_{v_1-v_w(t)<0}|v_1-v_w(t)|\mathcal T^2(t,v){\rm d}v\notag\\
&+2\int_{v_1-v_w(t)<0}|v_1-v_w(t)|\sqrt{2\pi}\Big((v_1-\frac{\sqrt{2\pi}}{2})v_w(t)\mu_w(v)+\frac{\mu_w(v)}{\sqrt{\mu(v)}}\mathcal T(t,v)|v_w(t)|^2\Big){\rm d}v\notag\\
&\quad\times\int_{v_1-v_w(t)>0}f(1)\sqrt{\mu(v)}|v_1-v_w(t)|{\rm d}v\notag\\
&+2v_w|v_w(t)|^2\int_{v_1-v_w(t)<0}|v_1-v_w(t)|\sqrt{\mu(v)}(v_1-\frac{\sqrt{2\pi}}{2})\mathcal T(t,v){\rm d}v\notag\\
=&\int_{v_1-v_w(t)<0}|v_1-v_w(t)|2\pi\mu_w(v)\Big(\int_{u_1-v_w(t)>0}f(1)\sqrt{\mu(u)}|u_1-v_w(t)|{\rm d}u\Big)^2{\rm d}v\notag\\
&+2\pi\int_{v_1-v_w(t)<0}|v_1-v_w(t)|\Big[\Big(\frac{\mu_w(v)}{\sqrt{\mu}(v)}\Big)^2-\mu_w(v)\Big]\notag\\
&\quad\times\Big(\int_{u_1-v_w(t)>0}f(1)\sqrt{\mu(u)}|u_1-v_w(t)|{\rm d}u\Big)^2{\rm d}v+\frac{1}{\sqrt{2\pi}}(2+\frac{3\pi}{2})|v_w(t)|^2\notag\\
&+|v_w(t)|^2\int_{v_1-v_w(t)<0}|v_1-v_w(t)|\Big[(v_1-\frac{\sqrt{2\pi}}{2})^2\mu(v)-(v_1-v_w(t)-\frac{\sqrt{2\pi}}{2})^2\mu_w(v)\Big]{\rm d}v\notag\\
&+|v_w(t)|^4\int_{v_1-v_w(t)<0}|v_1-v_w(t)|\mathcal T^2(t,v){\rm d}v\notag\\
&+(-2\sqrt{2\pi}v_w+2|v_w(t)|^2)\int_{v_1-v_w(t)>0}f(1)\sqrt{\mu(v)}|v_1-v_w(t)|{\rm d}v\notag\\
&+2\sqrt{2\pi}|v_w(t)|^2\int_{v_1-v_w(t)<0}|v_1-v_w(t)|\frac{\mu_w(v)}{\sqrt{\mu(v)}}\mathcal T(t,v){\rm d}v\notag\\
&\quad\times\int_{v_1-v_w(t)>0}f(1)\sqrt{\mu(v)}|v_1-v_w(t)|{\rm d}v\notag\\
&+2v_w|v_w(t)|^2\int_{v_1-v_w(t)<0}|v_1-v_w(t)|\mu_w(v)(v_1-\frac{\sqrt{2\pi}}{2})\mathcal T(t,v){\rm d}v,\label{1bd-cp}
\end{align}
Then \eqref{beng-f} follows by plugging \eqref{0bd}, \eqref{1bd}, \eqref{drb-f} and \eqref{1bd-cp} into \eqref{bcl2}.

Now combining \eqref{es x_w^2}  with $\mathcal M\times$\eqref{beng-f} gives that
\begin{align}\label{dis-vw}
\mathcal M\frac{{\rm d}}{{\rm d}t}&\left\| (1+x_w)^{\frac{1}{2}}f(t)\right\|_{2}^2+\frac{{\rm d}}{{\rm d}t}(\kappa|x_w(t)|^2+|v_w(t)|^2)
+\mathcal M\left(\left|\left(\FI-\frac{\sqrt{\mu_{w}}}{\sqrt{\mu}} P_\gamma \right)f\right|_{2,+,1}^2+|(\mathbf I-P_\gamma)f|_{2,+,0}^2\right)\notag\\
&+\delta\mathcal M\left\|(\nu(1+x_w))^{\frac{1}{2}}(\textbf I-\textbf P)f\right\|_{2}^2+\mathcal M\left[\left(\frac{8}{\sqrt{2\pi}}+\sqrt{2\pi}\right)-\frac{1}{\sqrt{2\pi}}(2+\frac{3\pi}{2})\right]|v_w(t)|^2\notag\\
&-2\mathcal Mv_w\int_{v_1-v_w(t)>0}[v_1-v_w(t)]^2\sqrt{\mu(v)}\left(\FI-\frac{\sqrt{\mu_w}}{\sqrt{\mu}}P_\gamma\right)f{\rm d}v\notag\\
\leq& C\left\|\nu^{-\frac{1}{2}} \Gamma(f,f)\right\|_{2}^2
+CP(|x_w(t)|,|v_w(t)|,\|wf(t)\|_\infty),
\end{align}
in which the problematic term $2\sqrt{2\pi}\mathcal Mv_w\int_{v_1-v_w(t)>0}f(1)\sqrt{\mu(v)}|v_1-v_w(t)|{\rm d}v$ has been cancelled.

Next, Cauchy-Schwarz's inequality directly gives
\begin{align*}
\Big|v_w&\int_{v_1-v_w(t)>0}[v_1-v_w(t)]^2\sqrt{\mu(v)}\left(\FI-\frac{\sqrt{\mu_{w}}}{\sqrt{\mu}} P_\gamma \right)f{\rm d}v\Big|\notag\\
\leq& |v_w(t)|\Big(\int_{v_1-v_w(t)>0}[v_1-v_w(t)]^3\mu(v){\rm d}v\Big)^{\frac{1}{2}}\Big(\int_{v_1-v_w(t)>0}[v_1-v_w(t)]\Big[\left(\FI-\frac{\sqrt{\mu_{w}}}{\sqrt{\mu}} P_\gamma \right)f\Big]^2{\rm d}v\Big)^{\frac{1}{2}}\\
\leq&\Big(\frac{2}{\sqrt{2\pi}}\Big)^{\frac{1}{2}}|v_w(t)|\Big(\int_{v_1-v_w(t)>0}[v_1-v_w(t)]\left[\left(\FI-\frac{\sqrt{\mu_{w}}}{\sqrt{\mu}} P_\gamma \right)f\right]^2{\rm d}v\Big)^{\frac{1}{2}}\notag\\
&+CP(|x_w(t)|,|v_w(t)|,\|wf(t)\|_\infty).
\end{align*}
Since
\begin{equation*}
\Big[\frac{8}{\sqrt{2\pi}}+\sqrt{2\pi}-\frac{1}{\sqrt{2\pi}}(2+\frac{3\pi}{2})\Big]-\frac{2}{\sqrt{2\pi}}=\frac{8+\pi}{2\sqrt{2\pi}}>0,
\end{equation*}
there exists a constant $C_0>0$ such that
\begin{align*}
&\left|\left(\FI-\frac{\sqrt{\mu_{w}}}{\sqrt{\mu}} P_\gamma \right)f\right|_{2,+,1}^2+\Big[\frac{8}{\sqrt{2\pi}}+\sqrt{2\pi}-\frac{1}{\sqrt{2\pi}}(2+\frac{3\pi}{2})\Big]|v_w(t)|^2\notag\\
&\qquad-2v_w\int_{v_1-v_w(t)>0}[v_1-v_w(t)]^2\sqrt{\mu(v)}\left(\FI-\frac{\sqrt{\mu_{w}}}{\sqrt{\mu}} P_\gamma \right)f{\rm d}v\notag\\
&\quad\geq C_0 \left(|v_w(t)|^2+\left|\left(\FI-\frac{\sqrt{\mu_{w}}}{\sqrt{\mu}} P_\gamma \right)f\right|_{2,+,1}^2\right)-CP(|v_w(t)|,\|wf(t)\|_\infty).
\end{align*}
Hence \eqref{dis-vw} is now reduced to
\begin{align}\label{dis-vw-2}
\mathcal M\frac{{\rm d}}{{\rm d}t}&\left\| (1+x_w)^{\frac{1}{2}}f(t)\right\|_{2}^2+\frac{{\rm d}}{{\rm d}t}(\kappa|x_w(t)|^2+|v_w(t)|^2)
+\mathcal M|(\mathbf I-P_\gamma)f|_{2,+,0}^2\notag\\
&+\de\mathcal M\left\|(\nu(1+x_w))^{\frac{1}{2}}(\textbf I-\textbf P)f\right\|_{2}^2+C_0\mathcal M |v_w(t)|^2+C_0\mathcal M\left|\left(\FI-\frac{\sqrt{\mu_{w}}}{\sqrt{\mu}} P_\gamma \right)f\right|_{2,+,1}^2\notag\\
\leq& C\left\|\nu^{-\frac{1}{2}} \Gamma(f,f)\right\|_{2}^2
+CP(|x_w(t)|,|v_w(t)|,\|wf(t)\|_\infty).
\end{align}
Finally, let us define
\begin{equation*}
\mathcal E_0(t):=\mathcal M	\left\| (1+x_w)^{\frac{1}{2}}f(t)\right\|_{2}^2+\kappa|x_w(t)|^2+|v_w(t)|^2,
\end{equation*}
then it is straightforward to verify that $\mathcal E_0(t)\sim\|f(t)\|_2^2+\kappa|x_w(t)|^2+|v_w(t)|^2$. Hence \eqref{dis-xv-mif} follows directly from \eqref{dis-vw-2}.

\noindent\underline{{\it Step 3. Dissipation of $x_w$.}} If $\kappa>0$, the dissipation of $x_w(t)$ is derived directly from the interaction energy estimate \eqref{d_t(x_wv_w)}. In order to treat the boundary terms in \eqref{d_t(x_wv_w)}, we proceed as in \eqref{dis-vw}, employing the mass flux of $f$ to cancel the troublesome contributions such as
\begin{equation*}
	x_w\sqrt{2\pi}\mathcal M\int_{u_1-v_w(t)>0}f(t,1,u)\sqrt{\mu(u)}|u_1-v_w(t)|{\rm d}u.
\end{equation*}
Fortunately, by choosing a suitable test function,  we can eliminate the boudary term in \eqref{d_t(x_wv_w)}, and derive the dissipation of $x_w(t)$ at the same time. Therefore, the dissipation of $x_w(t)$ is available even though $\kappa=0$.
To do this, we multiply $x_w(t)(1+x_w(t))v_1x_1\sqrt{\mu(v)}$ on both sides of \eqref{PBE-se2} to have
\begin{align}
	0=&\int_{\Omega\times\mathbb R^3}x_w(t)v_1x_1\sqrt{\mu(v)}[(1+x_w(t))\pa_tf+(v_1-v_w(t)x_1)\pa_{x_1}f]{\rm d}x_1{\rm d}v\notag\\
	=&\frac{{\rm d}}{{\rm d}t}\int_{\Omega\times\mathbb R^3}x_w(t)(1+x_w(t))x_1v_1\sqrt{\mu(v)}f{\rm d}x_1{\rm d}v\notag\\
    &-\int_{\Omega\times\mathbb R^3}\pa_t[x_w(t)(1+x_w(t))v_1x_1]\sqrt{\mu(v)}\textbf Pf{\rm d}x_1{\rm d}v\notag\\
	&+\int_{\mathbb R^3}x_w(t)v_1(v_1-v_w(t))\sqrt{\mu(v)}f(1){\rm d}v\notag\\
	&+\int_{\Omega\times\mathbb R^3} x_w(t)v_w(t)v_1x_1\sqrt{\mu(v)}f{\rm d}x_1{\rm d}v-\int_{\Omega\times\mathbb R^3} x_w(t)(v_1-v_w(t)x_1)v_1\sqrt{\mu(v)}f{\rm d}x_1{\rm d}v.\notag
\end{align}
This gives that
\begin{align}\label{x_wf1}
\frac{{\rm d}}{{\rm d}t}&\int_{\Omega\times\mathbb R^3}x_w(t)(1+x_w(t))x_1v_1\sqrt{\mu(v)}f{\rm d}x_1{\rm d}v\notag\\	
&\qquad+x_w(t)\int_{\mathbb R^3}(v_1-v_w(t))^2\sqrt{\mu(v)}f(1){\rm d}v\notag\\
&\qquad-x_w(t)\int_{\Omega\times\mathbb R^3} v_1^2\sqrt{\mu(v)}f{\rm d}x_1{\rm d}v\notag\\
&=-x_w(t)v_w(t)\int_{\mathbb R^3}(v_1-v_w(t))\sqrt{\mu(v)}f(1){\rm d}v-2x_w(t)v_w(t)\int_{\Omega\times\mathbb R^3} v_1x_1\sqrt{\mu(v)}f{\rm d}x_1{\rm d}v\notag\\
&\qquad+\int_{\Omega\times\mathbb R^3}\pa_t[x_w(t)(1+x_w(t))v_1x_1]\sqrt{\mu(v)}\textbf Pf{\rm d}x_1{\rm d}v.
\end{align}
For the boundary term with $x_1=1$
\begin{align}\label{x_wf x_1=1}
	x_w&\int_{\mathbb R^3}(v_1-v_w(t))^2\sqrt{\mu(v)}f(1){\rm d}v\notag\\
	=&x_w\int_{\mathbb R^3}(v_1-v_w(t))^2\sqrt{\mu(v)}[P_\gamma f+{(\FI-P_\gamma)f\textbf 1_{\gamma^1_+}+r\textbf 1_{\gamma_-^1}}]{\rm d}v\notag\\
	=&\sqrt{2\pi} x_w\int_{\mathbb R^3}(v_1-v_w(t))^2\mu_w(v)\Big(\int_{u_1-v_w(t)>0}f(1)\sqrt{\mu(u)}|u_1-v_w(t)|{\rm d}u\Big){\rm d}v\notag\\
	&+\sqrt{2\pi} x_w\int_{\mathbb R^3}(v_1-v_w(t))^2\big(\mu(v)-\mu_w(v)\big)\Big(\int_{u_1-v_w(t)>0}f(1)\sqrt{\mu(u)}|u_1-v_w(t)|{\rm d}u\Big){\rm d}v\notag\\
	&+x_w\int_{v_1-v_w(t)>0}(v_1-v_w(t))^2\sqrt{\mu(v)}\left(\FI-\frac{\sqrt{\mu_w}}{\sqrt{\mu}}P_\gamma\right)f(1){\rm d}v\notag\\
	&+x_w\int_{v_1-v_w(t)>0}(v_1-v_w(t))^2(\sqrt{\mu_w(v)}-\sqrt{\mu(v)})P_\gamma f(1){\rm d}v\notag\\
    &+x_w\int_{v_1-v_w(t)<0}(v_1-v_w(t))^2\sqrt{\mu_w(v)}r{\rm d}v\notag\\
    &+x_w\int_{v_1-v_w(t)>0}(v_1-v_w(t))^2(\sqrt{\mu(v)}-\sqrt{\mu_w(v)})r{\rm d}v\notag\\
	=&\sqrt{2\pi} x_w\int_{u_1-v_w(t)>0}f(1)\sqrt{\mu(u)}|u_1-v_w(t)|{\rm d}u\notag\\
    &+x_w\int_{v_1-v_w(t)>0}(v_1-v_w(t))^2\sqrt{\mu(v)}\left(\FI-\frac{\sqrt{\mu_w}}{\sqrt{\mu}}P_\gamma\right)f(1){\rm d}v\notag\\
	&+x_w\int_{v_1-v_w(t)<0}(v_1-v_w(t))^2\sqrt{\mu_w(v)}r{\rm d}v\notag\\
	&+\sqrt{2\pi} x_w\int_{\mathbb R^3}(v_1-v_w(t))^2\big(\mu(v)-\mu_w(v)\big)\Big(\int_{u_1-v_w(t)>0}f(1)\sqrt{\mu(u)}|u_1-v_w(t)|{\rm d}u\Big){\rm d}v\notag\\
	&+x_w\int_{v_1-v_w(t)>0}(v_1-v_w(t))^2(\sqrt{\mu_w(v)}-\sqrt{\mu(v)})P_\gamma f(1){\rm d}v\notag\\
	&+x_w\int_{v_1-v_w(t)>0}(v_1-v_w(t))^2(\sqrt{\mu(v)}-\sqrt{\mu_w(v)})r{\rm d}v\notag\\
	=&\sqrt{2\pi} x_w\int_{u_1-v_w(t)>0}f(1)\sqrt{\mu(u)}|u_1-v_w(t)|{\rm d}u\notag\\
	&\quad+x_w\int_{v_1-v_w(t)>0}(v_1-v_w(t))^2\sqrt{\mu(v)}\left(\FI-\frac{\sqrt{\mu_w}}{\sqrt{\mu}}P_\gamma\right)f(1){\rm d}v\notag\\
	&\quad+x_w\int_{v_1-v_w(t)<0}(v_1-v_w(t))^2\sqrt{\mu_w(v)}r{\rm d}v+\mathcal I_3(t)x_w(t)v_w(t),
\end{align}
where we have defined
\begin{align*}
\mathcal I_3(t)&x_w(t)v_w(t)\notag\\
=&\sqrt{2\pi} x_w\int_{\mathbb R^3}(v_1-v_w(t))^2\big(\mu(v)-\mu_w(v)\big)\Big(\int_{u_1-v_w(t)>0}f(1)\sqrt{\mu(u)}|u_1-v_w(t)|{\rm d}u\Big){\rm d}v\notag\\
&+x_w\int_{v_1-v_w(t)>0}(v_1-v_w(t))^2(\sqrt{\mu_w(v)}-\sqrt{\mu(v)})P_\gamma f(1){\rm d}v\notag\\
&+x_w\int_{v_1-v_w(t)>0}(v_1-v_w(t))^2(\sqrt{\mu(v)}-\sqrt{\mu_w(v)})r{\rm d}v.	
\end{align*}
Using the  decomposition
\begin{equation*}
f=\sqrt{\mu}\left(a+\textbf b\cdot v+\frac{|v|^2-3}{2}c\right)+(\mathbf I-\mathbf P)f,
\end{equation*}
and $\int_0^1a{\rm d}x_1=-\frac{x_w}{1+x_w}$ (see \eqref{a-con} in next section for the details),
we can bound the third line on the left hand side of \eqref{x_wf1} as
\begin{align}\label{damp-x}
	-x_w(t)&\int_{\Omega\times\mathbb R^3} v_1^2\sqrt{\mu(v)}f{\rm d}x_1{\rm d}v\notag\\
	&=-x_w(t)\int_0^1 a{\rm d}x_1-x_w\int_{\Omega\times\mathbb R^3} v_1^2\sqrt{\mu(v)}\left(\sqrt{\mu}\frac{|v|^2-3}{2}c+(\mathbf I-\mathbf P)f\right){\rm d}x_1{\rm d}v\notag\\
	&=\frac{|x_w(t)|^2}{1+x_w}-x_w\int_{\Omega\times\mathbb R^3} v_1^2\sqrt{\mu(v)}\left(\sqrt{\mu}\frac{|v|^2-3}{2}c+(\mathbf I-\mathbf P)f\right){\rm d}x_1{\rm d}v\notag\\
	&\geq\frac{1}{2}|x_w(t)|^2-C|x_w(t)|(\|c\|_2+\|(\mathbf I-\mathbf P)f\|_2).
\end{align}
The last term on the right hand side  of \eqref{x_wf1} can be bounded by
\begin{align}\label{last-x_wf1}
	\int_{\Omega\times\mathbb R^3}&\pa_t[x_w(t)(1+x_w(t))v_1x_1]\sqrt{\mu(v)}\textbf Pf{\rm d}x_1{\rm d}v\notag\\
	&=\int_\Omega\pa_t[x_w(t)(1+x_w(t))x_1]b_1{\rm d}x_1\leq C|v_w(t)|\|b_1\|_2.
\end{align}
Plugging \eqref{x_wf x_1=1}, \eqref{damp-x} and \eqref{last-x_wf1} into \eqref{x_wf1} yields that
\begin{align}\label{d_t(x_wf)}
	\frac{{\rm d}}{{\rm d}t}&\int_{\Omega\times\mathbb R^3}x_w(t)(1+x_w(t))x_1v_1\sqrt{\mu(v)}f{\rm d}x_1{\rm d}v\notag\\
	&\quad+\sqrt{2\pi} x_w\int_{u_1-v_w(t)>0}f(1)\sqrt{\mu(u)}|u_1-v_w(t)|{\rm d}u\notag\\
	&\quad+x_w\int_{v_1-v_w(t)>0}(v_1-v_w(t))^2\sqrt{\mu(v)}\left(\FI-\frac{\sqrt{\mu_{w}}}{\sqrt{\mu}} P_\gamma \right)f(1){\rm d}v+\frac{1}{2}|x_w(t)|^2\notag\\
	&\leq {C|x_w(t)|\left(|v_w(t)|+\|c\|_2+\|(\mathbf I-\mathbf P)f\|_2\right)+C|v_w(t)|\|b_1\|_2}+CP(|x_w|,|v_w|,\|wf\|_\infty).
\end{align}
Then \eqref{xwf-int} follows from \eqref{d_t(x_wv_w)} and \eqref{d_t(x_wf)}.
This completes the proof of Proposition \ref{mix-eng}.
\end{proof}

\section{Macroscopic estimates and $L^2$ decay}\label{mac-l2-sec}
In this section, we derive an $L^2$ estimate for the macroscopic component of $f$. Combined with Proposition \ref{mix-eng} in Section \ref{inter-bd-sec}, this will lead to the exponential decay in time of both the $L^2$ norm of $f$ and the point-wise behavior of the free boundary pair $[x_w,v_w](t)$. The analysis is based on the duality argument developed in \cite{EGKM-13}, with an additional difficulty arising from the non-null integral of $a$.

Our first result is stated as follows.

\begin{lemma}\label{lem es Pf}
Suppose that $f\in L^2$ is a solution to \eqref{PBE}, \eqref{trb-ode}, \eqref{ic f}, \eqref{drbl-f} and \eqref{drbl-f}, and that it satisfies
\eqref{apas} for any $t\in[0,\infty)$. Then there exists a functional $N_f(t)$ such that, for all  $t\geq0$, the following holds
	
	({\romannumeral1}) $|N_f(t)|\leq C\left\|f(t)\right\|_{2}^2$;
	
	({\romannumeral2}) there exists $\la_2>0$ such that
	\begin{align}\label{es Pf}
\frac{{\rm d}}{{\rm d}t}&(x_wv_w)
+\mathcal M\frac{{\rm d}}{{\rm d}t}\int_{\Omega\times\mathbb R^3}x_w(1+x_w)x_1v_1\sqrt{\mu(v)}f{\rm d}x_1{\rm d}v+\frac{{\rm d}}{{\rm d}t}N_f(t)\notag\\
&\qquad
+\frac{1}{4}\left(\kappa+\frac{\mathcal M}{2}\right)|x_w(t)|^2+\la_2\|\FP f\|_{2}^2\notag\\
	&	\leq C|v_{w}(t)|^2+C\|(\FI-\FP)f\|_{2}^2+C\|\nu^{-\frac{1}{2}}\Gamma(f,f)\|_{2}^2\notag\\
		&\qquad+C\left|\left( \FI-\frac{\sqrt{\mu_w}}{\sqrt{\mu}} P_\gamma\right)f\right|_{2,+,1}^2+C|(\mathbf I-P_\gamma)f|_{2,+,0}^2.
	\end{align}
\end{lemma}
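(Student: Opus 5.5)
The plan is to follow the duality argument of \cite{EGKM-13} on the thirteen-moment equations, adapted to the moving-coefficient transport operator $\frac{v_1-v_w x_1}{1+x_w}\pa_{x_1}$, and organized in the hierarchy sketched in Subsection 2.1: first $c$, then $b_1$, then $b_2,b_3$, and finally $a$. I will combine these with \eqref{xwf-int} of Proposition \ref{mix-eng}. For each macroscopic component I take the weak formulation of \eqref{PBE}, multiplied by $(1+x_w)$, against a test function $\psi(t,x_1,v)=\phi_{\cdot}(x_1)\,\chi(v)\sqrt{\mu}$. Here $\phi$ solves a one-dimensional elliptic problem, $-\pa_{x_1}^2\phi=c$, $b_1$, $b_i$ or $a$ on $(0,1)$, and $\chi$ is one of the standard moments: $(|v|^2-5)v_1$ for $c$, $v_1^2-\tfrac13|v|^2$ for $b_1$, $v_1v_i$ for $b_i$, and $(|v|^2-10)v_1$ for $a$. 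The resulting time-derivative terms $\int\psi (1+x_w) f$ define $N^{(c)}_f$, $N^{(b)}_f$ and $N^{(a)}_f$. Elliptic estimates $\|\phi\|_{H^2}\lesssim\|\cdot\|_2$ give $|N^{(\cdot)}_f|\lesssim\|f\|_2^2$, which proves (i) for $N_f=\sum \eta_j N^{(j)}_f$.

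For $c$, $b_1$ and $b_i$ the boundary conditions on $\phi$ are the usual ones: Dirichlet or Neumann, chosen so that the boundary terms reduce to $(\FI-P_\gamma)f$ at $x_1=0$, to $(\FI-\frac{\sqrt{\mu_w}}{\sqrt\mu}P_\gamma)f$ at $x_1=1$, and to $r$. The remainder $r$ is $O(|v_w|)$ plus quadratic terms. For $\pa_t\phi$ terms I use the time derivative of the elliptic problem, which gives $\|\pa_t\phi\|_{H^1}\lesssim\|\pa_t(\cdot)\|_{H^{-1}}$. I then control $\pa_t$ of the macroscopic quantities via the local conservation laws, which is where the $o(1)\|b_1\|_2^2$ and $o(1)\|a\|_2^2$ cross terms arise. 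The extra transport contributions $-v_wx_1\pa_{x_1}f$ and $x_w$ factors only produce terms of size $(|x_w|+|v_w|)\|f\|_2^2$ or $|v_w|\|f\|_2$. Young's inequality absorbs these into $C|v_w|^2$ plus small multiples of the dissipation.

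The main obstacle is $a$. Mass conservation gives $\int_0^1a\,{\rm d}x_1=-\frac{x_w}{1+x_w}\neq0$, so the Neumann problem $-\phi''=a$ is not solvable. My first choice is to solve $-\phi_a''=a-\int_0^1 a$ with $\phi_a'(0)=\phi_a'(1)=0$ and to write $\|a\|_2^2=\|a-\bar a\|_2^2+|\bar a|^2$ with $|\bar a|\le C|x_w|$. The weak formulation then yields
\[
\frac{\rd}{\rd t}N^{(a)}_f+\|a\|_2^2\le C(|x_w|^2+|v_w|^2+\|b_1\|_2^2+\|c\|_2^2+\|(\FI-\FP)f\|_2^2+\text{boundary dissipation}+\|\nu^{-1/2}\Gamma\|_2^2).
\]
The delicate point is that the boundary flux at $x_1=1$ involves the moving-frame normal velocity $v_1-v_w$. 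Because $\mu_w\neq\mu$, the integral $\int (v_1-v_w)\chi\sqrt\mu\,P_\gamma f$ does not vanish exactly. I will expand $\mu_w=\mu+O(v_w)$ and bound the discrepancy by $|v_w|\,|P_\gamma f|$, which I then split using the trace controlled by the energy.

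Finally I form the linear combination $\eta_a\times(a)+\eta_{b_1}\times(b_1)+(b_i)+\eta_c^{-1}\times(c)$ with $1\gg\eta_a\gg\eta_{b_1}\gg\eta_c$ appropriately ordered. The $o(1)$ cross terms are then absorbed, and the right-hand side carries $C\eta_a|x_w|^2$ and $C(\|b_1\|^2+\|c\|^2)$ only with small weights relative to the left. I add this combination to \eqref{xwf-int}. To absorb $C_1(\|b_1\|_2^2+\|c\|_2^2)$, the weights on the $b_1$ and $c$ estimates must dominate $C_1$. The $\eta_a$ weight on $a$ must be small enough that its $C\eta_a|x_w|^2$ term is less than $\frac14(\kappa+\frac{\CM}{2})|x_w|^2$. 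Since the estimates for $b_1$ and $c$ do not involve $x_w$, this ordering is consistent. The outcome is \eqref{es Pf}, after bounding the quadratic $P(\cdot)$ terms of \eqref{xwf-int} by the smallness \eqref{apas}.
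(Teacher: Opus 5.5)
Your plan follows the paper's proof: the weak formulation multiplied by $(1+x_w)$, the test functions $(|v|^2-10)v_1\partial_{x_1}\phi_a\sqrt{\mu}$ and $(|v|^2-5)v_1\partial_{x_1}\phi_c\sqrt{\mu}$, the Neumann problem $-\partial_{x_1}^2\phi_a=a+\frac{x_w}{1+x_w}$ made solvable by \eqref{a-con}, and the final coupling with \eqref{xwf-int}. Your $b$ test functions $v_1^2-\frac13|v|^2$ and $v_1v_i$ also work. The traceless one even makes the $b_1$-estimate carry only $o(1)(\|a\|_2^2+\|c\|_2^2)$.

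\textbf{The closing step fails as written.} You put the weight $\eta_{b_1}$ on the $b_1$-estimate with $\eta_{b_1}\ll\eta_a\ll1$. But the $a$-estimate contains $C\|b_1\|_2^2$ with an $O(1)$ constant that cannot be made small. The macroscopic part of $S_3=\int(1+x_w)f\,\partial_t\Psi_a$ is $-5(1+x_w)\int_0^1 b_1\,\partial_t\partial_{x_1}\phi_a\,{\rm d}x_1$, and $\|\partial_t\partial_{x_1}\phi_a\|_2\lesssim\|b_1\|_2+\cdots$.
\begin{itemize}
\item After weighting, $C\eta_a\|b_1\|_2^2$ must be absorbed by $\eta_{b_1}\|b_1\|_2^2$.
\item In the last step, $C_1\|b_1\|_2^2$ from \eqref{xwf-int} must be absorbed as well.
\end{itemize}
Neither is possible when $\eta_{b_1}\ll\eta_a\ll1$, since the $c$-estimate only adds $\|b_1\|_2^2$ on the right. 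It also contradicts your own requirement that the $b_1$ weight dominate $C_1$.

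The $b_1$ weight has to be large, like the $c$ weight. One valid choice is $\eta_a\times(a)+\eta_{b_1}^{-1}\times(b_1)+(b_i)+\eta_c^{-1}\times(c)$, with:
\begin{itemize}
\item $\eta_c\ll\eta_{b_1}\ll1$ and $\eta_{b_1}^{-1},\eta_c^{-1}\gg C_1$;
\item the $o(1)$ in the $b_1$-estimate below $\eta_a\eta_{b_1}$, and the $o(1)$ in the $c$-estimate below $\eta_c/\eta_{b_1}$;
\item $\eta_a$ small enough that $C\eta_a\le\frac14(\kappa+\frac{\CM}{2})$.
\end{itemize}
This is the structure of the paper's choice $(a)+C_2(b_1)+(b_i)+C_3(c)$ with $C_a\ll C_2\ll C_3$, multiplied by $\eta_2\in\bigl(\frac{16C_1}{C_2},\frac{2\kappa+\CM}{8C_a}\bigr)$. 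That interval is nonempty once $C_2$ is large.

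\textbf{Two smaller corrections.}
\begin{enumerate}
\item In the $a$-estimate itself the boundary term vanishes identically, since $\partial_{x_1}\phi_a=0$ at $x_1=0,1$. The moving-wall flux enters only through $\|\partial_t\partial_{x_1}\phi_a\|_2$. Because $\phi_a$ solves a Neumann problem, this needs $\|\partial_t a+\frac{v_w}{(1+x_w)^2}\|_{(H^1_{x_1})^*}$, tested against functions that do not vanish at the boundary, not an $H^{-1}$ norm.
\item The boundary contribution in that dual estimate is exactly $v_w\phi(1)$, by the zero mass flux $\int_{\mathbb R^3}(v_1-v_w)\sqrt{\mu}f(t,1,v)\,{\rm d}v=v_w$; the paper bounds it by $C|v_w|$ using \eqref{apas}. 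The trace $P_\gamma f$ is not among the quantities dissipated in \eqref{dis-xv-mif}. So use this identity or \eqref{apas}, not a trace bound by the energy.
\end{enumerate}
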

Before proving Lemma \ref{lem es Pf}, we first establish the following conservation law of mass.
\begin{lemma}\label{mass-lem}
	Suppose that $F$ is a strong solution to \eqref{BE}, \eqref{ODE}, \eqref{+0-0 data->0} and \eqref{initial F}. Let $f$ be the corresponding solution to \eqref{PBE}, \eqref{trb-ode}, \eqref{ic f}, \eqref{drbl-f} and \eqref{drbl-f}, with initial data satisfying \eqref{to-id-drbcl}, Then it holds that
	\begin{equation}
		\label{lem.mceq}
		\frac{{\rm d}}{{\rm d}t}\int_{-1}^{x_w(t)}{\rm d}X_1 \int_{\R^3}{\rm d}v\,F(t,X_1,v)=0,
	\end{equation}
and
\begin{equation}\label{a-con}
\int_{\Omega}a(t,x_1){\rm d}x_1=	\int_{\Omega\times\mathbb R^3}\sqrt{\mu}f(t,x_1,v){\rm d}v{\rm d}x_1=-\frac{x_w}{1+x_w}.
\end{equation}
\end{lemma}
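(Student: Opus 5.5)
The proof has two parts. First we verify the mass identity \eqref{lem.mceq} directly in the physical variables. Then we translate it to the fixed domain through the change of variables $x_1=(1+X_1)/(1+x_w(t))$ to obtain \eqref{a-con}.

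\emph{Mass conservation.} Set $m(t)=\int_{-1}^{x_w(t)}\int_{\R^3}F\,{\rm d}v\,{\rm d}X_1$. By the Leibniz rule,
$$m'(t)=v_w(t)\int_{\R^3}F(t,x_w(t),v)\,{\rm d}v+\int_{-1}^{x_w(t)}\int_{\R^3}\pa_tF\,{\rm d}v\,{\rm d}X_1.$$
We use \eqref{BE} to replace $\pa_tF$ by $-v_1\pa_{X_1}F+Q(F,F)$. The collision term integrates to zero in $v$, since $1$ is a collision invariant. Integrating $-v_1\pa_{X_1}F$ in $X_1$ gives
$$-\int_{\R^3}v_1F(t,x_w(t),v)\,{\rm d}v+\int_{\R^3}v_1F(t,-1,v)\,{\rm d}v.$$
Hence
$$m'(t)=-\int_{\R^3}(v_1-v_w)F(t,x_w,v)\,{\rm d}v+\int_{\R^3}v_1F(t,-1,v)\,{\rm d}v.$$

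At $X_1=-1$, the diffuse condition \eqref{drbcl} together with $\sqrt{2\pi}\int_{v_1>0}v_1\mu\,{\rm d}v=1$ shows that the incoming flux equals the outgoing flux, so the second integral vanishes. At the moving wall we split the first integral into $v_1-v_w>0$ and $v_1-v_w<0$ and use \eqref{drbcr}. This requires $\sqrt{2\pi}\int_{v_1-v_w<0}|v_1-v_w|\mu_w\,{\rm d}v=1$, which follows from the translation $v_1\mapsto v_1-v_w$. Therefore the relative flux vanishes and $m'(t)=0$. This argument is rigorous for strong solutions with enough integrability for the traces, which we take as given by the hypothesis.

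\emph{Translation.} Since ${\rm d}X_1=(1+x_w(t))\,{\rm d}x_1$ and $\bar F=\mu+\sqrt\mu f$,
$$m(t)=(1+x_w(t))\Big[\int_0^1\!\!\int_{\R^3}\mu\,{\rm d}v\,{\rm d}x_1+\int_{\Om\times\R^3}\sqrt\mu f\Big]=(1+x_w(t))\Big[1+\int_{\Om\times\R^3}\sqrt\mu f\Big].$$
The compatibility condition \eqref{cond.comid} gives $m(0)=1$; equivalently, this is the assumption $\int\sqrt\mu f_0=-x_{w0}/(1+x_{w0})$ in \eqref{to-id-drbcl}. Combined with $m'(t)=0$ this yields $m(t)=1$, and solving gives $\int\sqrt\mu f=\frac{1}{1+x_w}-1=-\frac{x_w}{1+x_w}$. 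Finally, $\int_{\R^3}\sqrt\mu f\,{\rm d}v=a$, because $\sqrt\mu$ is orthogonal to $v\sqrt\mu$ and to $(|v|^2-3)\sqrt\mu$ and has unit norm, and $(\FI-\FP)f$ is orthogonal to $\sqrt\mu$. This proves \eqref{a-con}.

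The computation can equally be done in the fixed domain by integrating \eqref{PBE} multiplied by $(1+x_w)\sqrt\mu$. Then the term $\frac{{\rm d}}{{\rm d}t}x_w\int\sqrt\mu f$ cancels against the $-v_wx_1\pa_{x_1}$ part after integration by parts, and $r(t,v)$ supplies the needed boundary flux. The only delicate point in either approach is the flux bookkeeping at the moving wall: one must check that the normalization of $\mu_w$ produces exactly zero net relative flux. In the fixed-domain version this amounts to verifying that $\int_{v_1-v_w<0}|v_1-v_w|\sqrt\mu\,r\,{\rm d}v$ cancels the $\mu$-contribution to the flux.
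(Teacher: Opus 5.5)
Your proof is correct and follows the paper's argument. You use the Leibniz rule and \eqref{BE} to reduce $m'(t)$ to the boundary fluxes $\int v_1F(t,-1,v)\,{\rm d}v$ and $\int (v_1-v_w)F(t,x_w(t),v)\,{\rm d}v$, kill both with the diffuse reflection conditions, and then obtain \eqref{a-con} from $m(t)=1$ via ${\rm d}X_1=(1+x_w)\,{\rm d}x_1$; you also verify explicitly the flux normalizations $\sqrt{2\pi}\int_{v_1>0}v_1\mu\,{\rm d}v=1$ and its translated version for $\mu_w$, which the paper leaves implicit.
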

\begin{proof}
A direct calculation gives
	\begin{align}
		&\frac{{\rm d}}{{\rm d}t}\int_{-1}^{x_w(t)}{\rm d}X_1 \int_{\R^3}{\rm d}v\,F(t,X_1,v)   \notag\\
		& =\int_{-1}^{x_w(t)} {\rm d}X_1 \int_{\R^3}{\rm d}v\pa_t F(t,X_1,v) +\int_{\R^3}{\rm d}v\,F(t,x_w(t),v) \dot{x}_w(t).\label{mcp-td1}
	\end{align}
	For the first term on the right hand side of \eqref{mcp-td1}, applying \eqref{BE}, we obtain
	\begin{align}
		&\int_{-1}^{x_w(t)} {\rm d}X_1 \int_{\R^3}{\rm d}v\pa_t F(t,X_1,v)   \notag\\
		& =\int_{-1}^{x_w(t)} {\rm d}X_1 \int_{\R^3}{\rm d}v [-v_1\pa_{X_1}F(t,X_1,v)+Q(F,F)(t,X_1,v)] \notag\\
		&=\int_{\R^3}v_1F(t,-1,v){\rm d}v -\int_{\R^3}v_1F(t,x_w(t),v){\rm d}v.\notag
	\end{align}
Substituting this into \eqref{mcp-td1} and using $\dot{x}_w(t)=v_w(t)$, we further obtain
	\begin{align}
		&\frac{{\rm d}}{{\rm d}t}\int_{-1}^{x_w(t)}{\rm d}X_1 \int_{\R^3}{\rm d}v\,F(t,X_1,v)
		 =\int_{\R^3}v_1F(t,-1,v){\rm d}v-\int_{\R^3} [v_1-v_w(t)]F(t,x_w(t),v){\rm d}v.\notag
	\end{align}
Next, the diffusive reflection boundary conditions \eqref{drbcl} and \eqref{drbcr} imply
	\begin{equation}
		\int_{\R^3}v_1F(t,-1,v){\rm d}v=0,\
		\int_{\R^3} [v_1-v_w(t)]F(t,x_w(t),v){\rm d}v=0.\notag
	\end{equation}
Therefore, \eqref{lem.mceq} follows. As a consequence of \eqref{lem.mceq} and \eqref{to-id-drbcl}, we have
\begin{equation*}
\int_{-1}^{x_w(t)}\int_{\mathbb R^3}F(t,X_1,v){\rm d}v{\rm d}X_1=1=\int_{\Omega\times\mathbb R^3}(1+x_w(t))(\mu+\sqrt{\mu}f){\rm d}v{\rm d}x_1.
\end{equation*}
which yields \eqref{a-con}.
This completes the proof of Lemma \ref{mass-lem}.
\end{proof}

With \eqref{a-con} in hand, we now turn to complete the proof of Lemma \ref{lem es Pf}.

\begin{proof}[The proof of Lemma \ref{lem es Pf}]
Let $\Psi=\Psi(t,x_1,v)\in C^\infty((0,+\infty)\times\Om\times\R^3)$ be a test function, then from \eqref{PBE}, it follows
\begin{align}\label{weak-f-eq}
		\frac{{\rm d}}{{\rm d}t}&((1+x_w(t))f,\Psi)-(f,\pa_t[(1+x_w(t))\Psi])
+((1+x_w(t))Lf,\Psi)-((v_1-v_w(t)x_1)f,\partial_{x_1}\Psi)\notag\\
		&+(v_w(t)f,\Psi)+\int_\gamma f\Psi{\rm d}\tilde\gamma=((1+x_w(t))\Gamma(f,f),\Psi).
	\end{align}
By the decomposition $f=\FP f+(\FI-\FP)f$, we thus have
\begin{equation}\label{Psi_i}
	-\int_{\Omega\times\mathbb R^3}(v_1-v_w(t)x_1)\FP f\partial_{x_1}\Psi{\rm d}x_1{\rm d}v
=-\frac{{\rm d}}{{\rm d}t}((1+x_w(t))f,\Psi)+S_1+S_2+S_3+S_4+S_5,
\end{equation}
with
\begin{align*}
S_1&=\int_{\Omega\times\mathbb R^3}(v_1-v_w(t)x_1)(\textbf I-\textbf P) f\partial_{x_1}\Psi{\rm d}x_1{\rm d}v,\
S_2=-\int_{\Omega\times\mathbb R^3}(1+x_w(t))Lf\Psi{\rm d}x_1{\rm d}v,\\
S_3&=\int_{\Omega\times\mathbb R^3}(1+x_w(t))f\partial_t \Psi{\rm d}x_1{\rm d}v,\
S_4=\int_{\Omega\times\mathbb R^3}(1+x_w(t))\Gamma(f,f)\Psi{\rm d}x_1{\rm d}v,
\end{align*}
and
\begin{align*}
S_5&=-\int_\gamma f\Psi{\rm d}\tilde\gamma,
\end{align*}
where we have used a cancellation:
\begin{align}
\int_{\Omega\times\mathbb R^3}f\partial_tx_w(t)\Psi{\rm d}x_1{\rm d}v-\int_{\Omega\times\mathbb R^3}v_w(t)f\Psi{\rm d}x_1{\rm d}v=0.\notag
\end{align}

Recalling the definition \eqref{mac-p}, we have
\begin{equation*}
	\textbf P f=[a(t,x_1)+\textbf b(t,x_1)\cdot v+c(t,x_1)\frac{|v|^2-3}{2}]\sqrt{\mu(v)},\ \Fb=(b_1,b_2,b_3).
\end{equation*}
We now proceed to estimate $a$, $\mathbf{b}$, and $c$ separately.

\noindent\textbf {Estimate for $a$.}
We choose the test function $\Psi$ as
\begin{equation}\label{Psi_a}
	\Psi=\Psi_a=(|v|^2-10)v_1\partial_{x_1}\phi_a\sqrt{\mu},
\end{equation}
where $\phi_a$ solves
\begin{equation}\label{phi-a-ep}
	-\partial_{x_1}^2\phi_a=a+\frac{x_w}{1+x_w},\ \pa_{x_1}\phi_a(t,0)=\pa_{x_1}\phi_a(t,1)=0,\ \int_0^1\phi_a{\rm d}x_1=0.
\end{equation}
By standard elliptic estimate with $\int_0^1\left(a+\frac{x_w}{1+x_w}\right){\rm d}x_1=0$, it follows that
\begin{equation}\label{es phi_a}
	\left\|\phi_a(t)\right\|_{H_{x_1}^2}+|\phi_a(t,1)|+|\phi_a(t,0)|\lesssim \left\|a(t)\right\|_{2}+|x_w(t)|.
\end{equation}
Substituting \eqref{Psi_a} into \eqref{Psi_i} and using the oddness in $v$ together with the {\it a priori} assumption \eqref{apas}, we obtain
\begin{align}\label{Psi_a,left}
	\int_{\Omega\times\mathbb R^3}&(v_1-v_w(t)x_1)\FP f\partial_{x_1}\Psi_a{\rm d}x_1{\rm d}v\notag\\
	&=-5\int_{\Omega}a(s,x_1)\partial_{x_1}^2\phi_a{\rm d}x_1+5\int_{\Omega}b_1(s,x_1)v_w(t)x_1\partial_{x_1}^2\phi_a{\rm d}x_1\notag\\
	&=5\|a\|_{2}^2-\frac{|x_w(t)|^2}{1+x_w}-5\int_{\Omega}\left(a+\frac{x_w}{1+x_w}\right)b_1v_w(t)x_1{\rm d}x_1\notag\\
	&\geq \frac{5}{2}\|a\|_{2}^2-C\vps\| b_1\|_{2}^2-C|x_w(t)|^2.
\end{align}
Next, using \eqref{es phi_a} and Cauchy-Schwarz's inequality, $S_1$ and $S_2$ can be bounded by
\begin{align}\label{Psi_a,23}
|S_1|+|S_2|&\leq C(\left\|a\right\|_{2}+|x_w(t)|)\left\|(\textbf I-\textbf P)f\right\|_{2}
\leq \frac{5}{64}\left\|a\right\|_{2}^2+C\left\|(\textbf I-\textbf P)f\right\|_{2}^2+C|x_w(t)|^2.
	\end{align}
Similarly, for $S_4$, it follows
\begin{equation}\label{Psi_a,5}
|S_4|\leq \frac{5}{64}\left\|a\right\|_{2}^2+C\left\|\nu^{-\frac{1}{2}}\Gamma(f,f)\right\|_{2}^2+C|x_w(t)|^2.	
\end{equation}
For the boundary term $S_5$, using \eqref{phi-a-ep} we have
\begin{equation}\label{s5-a}
	S_5=-\int_{\mathbb R^3}f\Psi_a(1,v) (v_1-v_w(t)){\rm d}v+\int_{\mathbb R^3}f\Psi_a(0,v) v_1{\rm d}v=0.
\end{equation}
For $S_3$, applying \eqref{mac-p} and using the oddness in $v$, we obtain
\begin{equation}\label{s3a}
	|S_3|\leq C[\left\| b_1\right\|_{2}+\left\|(\textbf I-\textbf P)f\right\|_{2}]\left\|\partial_{x_1}\partial_t\phi_a\right\|_{2}.
\end{equation}
To estimate $\|\partial_{x_1}\partial_t \phi_a\|_{2}^2$, we take $\Psi(x_1,v) = \phi(x_1)\sqrt{\mu(v)} \in H_{x_1}^1$ in \eqref{weak-f-eq}, integrate the resulting identity over the interval $[t, t+\eps]$ for small $\eps \neq 0$, and then divide by $\eps$ to obtain
\begin{align}
	\frac{1}{\eps}&\left\{\int_{\Omega}[(1+x_w)a](t+\eps)\phi{\rm d}x_1-\int_{\Omega}[(1+x_{w})a](t)\phi{\rm d}x_1\right\}\notag\\
	=&\frac{1}{\eps}\int_t^{t+\eps}\int_{\Omega}(-v_w(s)x_1a+b_1)\partial_{x_1}\phi{\rm d}x_1{\rm d}s+\frac{1}{\eps}\int_t^{t+\eps}\int_\gamma f\sqrt{\mu(v)}\phi{\rm d}\tilde\gamma{\rm d}s\notag\\
	&+\frac{1}{\eps}\int_t^{t+\eps}\int_{\Omega\times\mathbb R^3}(v_1-v_w(s)x_1)\textbf (\textbf I-\textbf P)f\sqrt{\mu(v)}\partial_{x_1}\phi{\rm d}x_1{\rm d}v{\rm d}s.\label{dual-pta}
\end{align}
Using the boundary decomposition
\begin{equation}
f(t,0,v)=P_\gamma f+(\FI-P_\gamma)f \textbf1_{\gamma^0_+},\	f(t,1,v)=P_\gamma f+(\FI-P_\gamma)f \textbf1_{\gamma^1_+}+r\textbf 1_{\gamma^1_-},\label{bd-dec}
\end{equation}
one has
\begin{align}
\int_\gamma f\sqrt{\mu(v)}\phi{\rm d}\tilde\gamma=&\int_{\R^3}f(1)(v_1-v_w(s))\sqrt{\mu(v)}\phi(1){\rm d}v-\int_{\R^3}f(0)v_1\sqrt{\mu(v)}\phi(0){\rm d}v\notag\\
=&-v_w\int_{\R^3}P_\gamma f\sqrt{\mu(v)}\phi(1){\rm d}v+\int_{v_1-v_w(s)>0}(\FI-P_\ga)f(1)(v_1-v_w(s))\sqrt{\mu(v)}\phi(1){\rm d}v\notag\\
&+\int_{v_1-v_w(s)<0}r(v_1-v_w(s))\sqrt{\mu(v)}\phi(1){\rm d}v\notag\\
&-\int_{v_1<0}(\mathbf I-P_\gamma)f(0)v_1\sqrt{\mu(v)}\phi(0){\rm d}v.\notag
\end{align}
Letting $\eps\rightarrow0$ and applying Cauchy-Schwarz's inequality, we get
\begin{align}
	\Big|\int_0^1(1+x_w)\partial_ta\phi{\rm d}x_1\Big|
	\leq& C\left\|\phi\right\|_{H_{x_1}^1}\Bigg(|v_w(t)|\left\|a\right\|_{2}+\left\|b_1\right\|_{2}+\left\|(\textbf I-\textbf P)f\right\|_{2}+\left|\left(\FI-\frac{\sqrt{\mu_w}}{\sqrt{\mu}} P_\gamma \right)f\right|_{2,+,1}\notag\\
	&\qquad\qquad\quad+|(\mathbf I-P_\gamma)f|_{2,+,0}+|v_{w}(t)|\Bigg),\label{a-dual}
\end{align}
where we have also used the trace inequality
$$
|[\phi(0),\phi(1)]|\lesssim\|\phi\|_{H_{x_1}^1}.
$$
Consequently, \eqref{a-dual} and \eqref{apas} imply
\begin{align}
\left\|\partial_ta\right\|_{( H_{x_1}^1)^*}\leq& C\Bigg(|v_w(t)|\left\|a\right\|_{2}+\left\|b_1\right\|_{2}+\left\|(\textbf I-\textbf P)f\right\|_{2}+\left|\left(\FI-\frac{\sqrt{\mu_w}}{\sqrt{\mu}} P_\gamma \right)f\right|_{2,+,1}\notag\\
	&\qquad+|(\mathbf I-P_\gamma)f|_{2,+,0}+|v_{w}(t)|\Bigg),\label{a-dual-2}
\end{align}
where $(H_{x_1}^1)^*$ denotes the dual of the set of functions in $H_{x_1}^1$.

Next, note that $\Phi=\partial_t\phi_a$ satisfies
\begin{equation*}
	-\partial_{x_1}^2\Phi(t,x_1)=\partial_ta(t,x_1)+\frac{v_w}{(1+x_w)^2},\ \pa_{x_1}\Phi(t,0)=\pa_{x_1}\Phi(t,1)=0,\ \int_{\Om}\Phi(t,x_1){\rm d}x_1=0.
\end{equation*}
Thus, it follows from \eqref{a-dual-2} that
\begin{align*}
\left\|\partial_t\partial_{x_1}\phi_a\right\|_{2}\leq& C\left\|\Phi\right\|_{H_{x_1}^1}\leq C\left\|\partial_ta+\frac{v_w}{(1+x_w)^2}\right\|_{( H_{x_1}^1)^*}\\
	\leq& C\Bigg(|v_w(t)|\left\|a\right\|_{2}+\left\|b_1\right\|_{2}+\left\|(\textbf I-\textbf P)f\right\|_{2}+\left|\left(\FI-\frac{\sqrt{\mu_w}}{\sqrt{\mu}} P_\gamma \right)f\right|_{2,+,1}\\
	&\qquad+|(\mathbf I-P_\gamma)f|_{2,+,0}+|v_{w}(t)|\Bigg).
\end{align*}
Hence, we get by plugging the above into \eqref{s3a} that
\begin{align}\label{Psi_a,4}
|S_3|
\leq& C\Bigg(|v_w(t)|^2\left\|a\right\|_{2}^2+\left\|b_1\right\|_{2}^2+\left\|(\textbf I-\textbf P)f\right\|_{2}^2+\left|\left(\FI-\frac{\sqrt{\mu_w}}{\sqrt{\mu}} P_\gamma \right)f\right|_{2,+,1}^2\notag\\
&\qquad+|(\mathbf I-P_\gamma)|_{2,+,0}^2+|v_{w}(t)|^2\Bigg).
\end{align}
Finally,
inserting \eqref{Psi_a,left}, \eqref{Psi_a,23}, \eqref{Psi_a,5}, \eqref{s5-a}, and \eqref{Psi_a,4} into \eqref{Psi_i}, one sees that there exist constants $C_a>0$ and $C>0$ such that
\begin{align}\label{es a}
\frac{{\rm d}}{{\rm d}t}N_f^{(a)}(t)&+\frac{5}{4}\|a(t)\|_{2}^2\notag\\
\leq&C_a(|x_w(t)|^2+\|b_1(t)\|_{2}^2+\|c(t)\|_{2}^2)+C\left(|v_w(t)|^2+\left\|(\textbf I- \textbf P)f\right\|_{2}^2+\left\|\nu^{-\frac{1}{2}}\Gamma(f,f)\right\|_{2}^2\right)\notag\\
&+C\left(\left|\left(\FI-\frac{\sqrt{\mu_w}}{\sqrt{\mu}} P_\gamma \right)f\right|_{2,+,1}^2+|(\mathbf I-P_\gamma)f|_{2,+,0}^2
\right),
\end{align}
where
\[N_f^{(a)}(t)=-((1+x_w(t))f,\Psi_a)=-((1+x_w(t))f,(|v|^2-10)v_1\partial_{x_1}\phi_a\sqrt{\mu}).
\]

\noindent\textbf{Estimate for $b$.} Note that $\mathbf{b} = [b_1, b_2, b_3]$, and the estimate for $b_1$ differs slightly from those for $b_2$ and $b_3$ due to the presence of the relative velocity $v_1 - v_w$. To handle this, we choose the test function
\begin{equation}
	\Psi=\Psi_{b_1}=(v_1^2-1)\partial_{x_1}\phi_{b_1}\sqrt{\mu},\notag
\end{equation}
where $\phi_{b_1}$ satisfies
\begin{equation*}
	-\partial_{x_1}^2\phi_{b_1}=b_1,\ \phi_{b_1}(t,1)=\phi_{b_1}(t,0)=0.
\end{equation*}
By the standard  elliptic estimate, it follows that
\begin{equation}\label{es phi_b1}
	\left\|\phi_{b_1}(t)\right\|_{H_{x_1}^2}+|\pa_{x_1}\phi_{b_1}(t,0)|+|\pa_{x_1}\phi_{b_1}(t,1)|\lesssim \left\|b_1(t)\right\|_{2}.
\end{equation}
We now compute the left-hand side of \eqref{Psi_i} with $\Phi$ replaced by $\Psi_{b_1}$ as
\begin{align}
	-\int_{\Omega\times\mathbb R^3}&\textbf Pf(v_1-v_w(t)x_1)\partial_{x_1}\Psi_{b_1}{\rm d}x_1{\rm d}v\notag\\
	=&-\int_{\Omega}\textbf b(t,x_1)\cdot\Big(\int_{\mathbb R^3}v(v_1^2-1)(v_1-v_w(t)x_1)\mu(v){\rm d}v\Big)\partial_{x_1}^2\phi_{b_1}{\rm d}x_1\notag\\
	&-\int_{\Omega}c(t,x_1)\Big(\int_{\mathbb R^3}\frac{|v|^2-3}{2}(v_1^2-1)(v_1-v_w(t)x_1)\mu(v){\rm d}v\Big)\partial_{x_1}^2\phi_{b_1}{\rm d}x_1\notag\\
\geq& \|b_1\|_{2}^2-C\left\| c\right\|_{2}^2.\label{b1-dis}
\end{align}
Next, we estimate the right hand side of \eqref{Psi_i} term by term.
By \eqref{es phi_b1} and Cauchy-Schwarz's inequality, $S_1$ and $S_2$ can be dominated by
\begin{align}\label{Psi_b1,23}
	|S_1|+|S_2|&\leq C\left\|b_1\right\|_{2}\left\|(\textbf I-\textbf P)f\right\|_{2}
	\leq \frac{1}{8}\left\|b_1\right\|_{2}^2+C \left\|(\textbf I-\textbf P)f\right\|_{2}^2.	
\end{align}
For $S_4$, using Cauchy-Schwarz's inequality again, we obtain
\begin{equation}\label{Psi_b1,5}
	|S_4|\leq \frac{1}{8}\left\|b_1\right\|_{2}^2+C\left\|\nu^{-\frac{1}{2}}\Gamma(f,f)\right\|_{2}^2.	
\end{equation}
For the boundary term $S_5$, applying \eqref{bd-dec} once again together with the estimate \eqref{es phi_b1}, we obtain
\begin{align}\label{s5-b1}
	S_5=&\int_{\mathbb R^3}f\Psi_{b_1}(t,1,v) (v_1-v_w(t)){\rm d}v-\int_{\mathbb R^3}f\Psi_{b_1}(t,0,v)v_1{\rm d}v\notag\\
=&\int_{\mathbb R^3}(v_1^2-1)\sqrt{\mu(v)}\partial_{x_1}\phi_{b_1}(t,1)((\FI-P_\gamma)f \textbf1_{\gamma^1_+}+r\textbf 1_{\gamma^1_-})(v_1-v_w(t)){\rm d}v-\int_{\mathbb R^3}f\Psi_{b_1}(t,0,v)v_1{\rm d}v\notag\\
\leq& \frac{1}{8}\left\|b_1\right\|_{2}^2+C\left(|v_w(t)|^2+\left|\left(\FI-\frac{\sqrt{\mu_w}}{\sqrt{\mu}} P_\gamma \right)f\right|_{2,+,1}^2+|(\mathbf I-P_\gamma)f|_{2,+,0}^2\right),
\end{align}
where, as in the derivation of \eqref{Psi_a,4}, the following estimates are used:
\begin{align*}
	\Big|\int_{\mathbb R^3}f\Psi_{b_1}(t,0,v)v_1{\rm d}v\Big|
	\leq& C\|b_1\|_{2}|(\mathbf I-P_\gamma)f|_{2,+,0}
	\leq \frac{1}{32}\|b_1\|_{2}^2+C|(\mathbf I-P_\gamma)f|_{2,+,0}^2,
\end{align*}
\begin{align*}
\int_{\mathbb R^3}&(v_1^2-1)\sqrt{\mu(v)}\partial_{x_1}\phi_{b_1} (v_1-v_w(t))P_\gamma f
	{\rm d}v=0,
\end{align*}
\begin{align*}
\int_{\mathbb R^3}&(v_1^2-1)\sqrt{\mu(v)}\partial_{x_1}\phi_{b_1}(\FI-P_\gamma)f \textbf1_{\gamma^1_+}
	 (v_1-v_w(t)){\rm d}v\\
	=&\int_{\mathbb R^3}(v_1^2-1)\sqrt{\mu(v)}\partial_{x_1}\phi_{b_1}\left(\FI-\frac{\sqrt{\mu_w}}{\sqrt{\mu}}P_\gamma\right)f \textbf1_{\gamma^1_+}
	 (v_1-v_w(t)){\rm d}v\\
	&-\int_{\mathbb R^3}(v_1^2-1)\sqrt{\mu(v)}\partial_{x_1}\phi_{b_1}\left(\FI-\frac{\sqrt{\mu_w}}{\sqrt{\mu}} \right)P_\gamma f \textbf1_{\gamma^1_+}
	 (v_1-v_w(t)){\rm d}v\\
	\leq& \frac{1}{32}\left\|b_1\right\|_{2}^2
+C\left|\left(\FI-\frac{\sqrt{\mu_w}}{\sqrt{\mu}}  P_\gamma\right) f\right|_{2,+,1}^2+C|v_w(t)|^2,
\end{align*}
and the contribution of $r$
\begin{align*}
\int_{\mathbb R^3}&v_1(|v|^2-1)\sqrt{\mu(v)}\partial_{x_1}\phi_{b_1}r\textbf 1_{\gamma^1_-}
	 (v_1-v_w(t)){\rm d}v\leq C|v_w(t)|\left\|b_1\right\|_{2}\leq \frac{1}{32}\left\|b_1\right\|_{2}^2
+C|v_w(t)|^2.
\end{align*}
For $S_3$, using the oddness in $v$, one has
\begin{align*}
	S_3=&\int_{\Omega\times\mathbb R^3}(1+x_w(t))(v_1^2-1)\sqrt{\mu}\{\FP f+(\textbf I-\textbf P)f\}\partial_t\partial_{x_1}\phi_{b_1}{\rm d}x_1{\rm d}v\notag\\
\leq& C\left(\|c\|_{2}+\left\|(\textbf I-\textbf P)f\right\|_{2}\right)\left\|\partial_{x_1}\partial_t\phi_{b_1}\right\|_{2}.
\end{align*}
It remains to estimate $\left\|\partial_{x_1}\partial_t\phi_{b_1}\right\|_{2}$. To this end, following an argument similar to the derivation of \eqref{dual-pta}, we apply \eqref{weak-f-eq} with $\Psi(x_1,v) = \phi(x_1)v_1\sqrt{\mu(v)} \in H_{x_1}^1$ to obtain
\begin{align*}
	\frac{1}{\eps}&\left\{\int_{\Omega}[(1+x_w)b_1](t+\eps)\phi{\rm d}x_1-\int_{\Omega}[(1+x_{w})b_1](t)\phi{\rm d}x_1\right\}\\
	=&\frac{1}{\eps}\int_t^{t+\eps}\int_{\Omega}(a-v_w(s)x_1b_1+2c)\partial_{x_1}\phi{\rm d}x_1{\rm d}s+\frac{1}{\eps}\int_t^{t+\eps}\int_{\Omega\times\mathbb R^3}\textbf (\textbf I-\textbf P)fv^2_1\sqrt{\mu(v)}\partial_{x_1}\phi{\rm d}x_1{\rm d}v{\rm d}s\\
	&+\frac{1}{\eps}\int_t^{t+\eps}\int_\gamma fv_1\sqrt{\mu(v)}\phi{\rm d}\tilde\gamma{\rm d}s.
\end{align*}
Letting $\eps\rightarrow0$ gives
\begin{align*}
	\Big|\int_0^1&(1+x_w(t))\partial_tb_1\phi{\rm d}x_1\Big|\\
	\leq& C\left\|\phi\right\|_{H_{x_1}^1}\Bigg(\left\|a\right\|_{2}+|v_w(t)|\left\|b_1\right\|_{2}+\left\|c\right\|_{2}+\left\|(\textbf I-\textbf P)f\right\|_{2}+\left|\left(\FI-\frac{\sqrt{\mu_w}}{\sqrt{\mu}} P_\gamma \right)f\right|_{2,+,1}\\
	&\qquad\qquad\quad+|(\mathbf I-P_\gamma)f|_{2,+,0}+|v_{w}(t)|\Bigg).
\end{align*}
Therefore,
\begin{align*}
	\left\|\partial_tb_1\right\|_{(H_{x_1}^1)^*}\leq& C\Bigg(\left\|a\right\|_{2}+|v_w(t)|\left\|b_1\right\|_{2}+\left\|c\right\|_{2}+\left\|(\textbf I-\textbf P)f\right\|_{2}+\left|\left(\FI-\frac{\sqrt{\mu_w}}{\sqrt{\mu}} P_\gamma \right)f\right|_{2,+,1}\\
	&\qquad\qquad\quad+|(\mathbf I-P_\gamma)f|_{2,+,0}+|v_{w}(t)|\Bigg).
\end{align*}
Combining this with
\begin{align*} \left\|\partial_t\partial_{x_1}\phi_{b_1}\right\|_{2}\leq C\left\|\partial_tb_1\right\|_{(H_{x_1}^1)^*},
\end{align*}
we obtain that $S_3$ can be controlled by
\begin{align}\label{Psi_b1,4}
	|S_3|\leq&\eta\left\|a\right\|_{2}^2+C_\eta\left(\left\|c\right\|_{2}^2+\left\|(\textbf I-\textbf P)f\right\|_{2}^2\right) +C|v_w|\left\|b_1\right\|_{2}^2+C\left|\left(\FI-\frac{\sqrt{\mu_w}}{\sqrt{\mu}} P_\gamma \right)f\right|_{2,+,1}^2\notag\\
	&+C|(\mathbf I-P_\gamma)f|_{2,+,0}^2+C|v_{w}(t)|^2.
\end{align}
Plugging \eqref{b1-dis}, \eqref{Psi_b1,23}, \eqref{Psi_b1,5}, \eqref{s5-b1}, and \eqref{Psi_b1,4} into \eqref{Psi_i}, we have
\begin{align}\label{es b_1}
	\frac{{\rm d}}{{\rm d}t}N_f^{(b_1)}(t)+\frac{1}{4}\left\|b_1(t)\right\|_{2}^2\leq& \eta\left\|a\right\|_{2}^2+C_\eta(\left\|c\right\|_{2}^2+\left\|(\textbf I-\textbf P)f\right\|_{2}^2)+C\left\|\nu^{-\frac{1}{2}}\Gamma(f,f)\right\|_{2}^2\notag\\&+C\left|\left(\FI-\frac{\sqrt{\mu_w}}{\sqrt{\mu}} P_\gamma \right)f\right|^2_{2,+,1}+C|(\mathbf I-P_\gamma)f|^2_{2,+,0}+C|v_{w}(t)|^2,
\end{align}
where
$$
N_f^{(b_1)}(t)=((1+x_w(t))f,\Psi_{b_1})=((1+x_w(t))f,(v_1^2-1)\partial_{x_1}\phi_{b_1}\sqrt{\mu}).
$$
To estimate $b_i$ with $i=2,3$, we choose a test function
\begin{equation}
	\Psi=\Psi_{b_i}=(v_1^2-1)v_1v_i\partial_{x_1}\phi_{b_i}\sqrt{\mu}.\notag
\end{equation}
Here, $\phi_{b_i}$ is a solution to
\begin{equation*}
	-\partial_{x_1}^2\phi_{b_i}=b_i,\ \phi_{b_i}(t,1)=\phi_{b_i}(t,0)=0.
\end{equation*}
From the standard  elliptic estimate, we have
\begin{equation}\label{es phi_bi}
	\left\|\phi_{b_i}(t)\right\|_{H_{x_1}^2}+|\pa_{x_1}\phi_{b_i}(t,1)|+|\pa_{x_1}\phi_{b_i}(t,0)|\lesssim \left\|b_i(t)\right\|_{2}.
\end{equation}
By the oddness in $v$, we compute the left-hand side of \eqref{Psi_i} with $\Phi$ replaced by $\Psi_{b_i}$ as follows
\begin{align*}
	-\int_{\Omega\times\mathbb R^3}&\textbf Pf(v_1-v_w(t)x_1)\partial_{x_1}\Psi{\rm d}x_1{\rm d}v=\|b_i\|_2^2,\ i=2,3.
\end{align*}
Next, we estimate the right hand side of \eqref{Psi_i} individually.
By \eqref{es phi_bi} and Cauchy-Schwarz's inequality, $S_1$ and $S_2$ can be bounded as
\begin{align}\label{Psi_bi,23}
	|S_1|+|S_2|&\leq C\left\|b_i\right\|_{2}\left\|(\textbf I-\textbf P)f\right\|_{2}\leq \frac{1}{8}\left\|b_i\right\|_{2}^2+C \left\|(\textbf I-\textbf P)f\right\|_{2}^2.	
\end{align}
For $S_4$, it follows
\begin{equation}\label{Psi_bi,5}
	|S_4|\leq \frac{1}{8}\left\|b_i\right\|_{2}^2+C\left\|\nu^{-\frac{1}{2}}\Gamma(f,f)\right\|_{2}^2.	
\end{equation}
For $S_5$ and $S_3$, by arguments similar to those used for $b_1$ in \eqref{s5-b1} and \eqref{Psi_b1,4}, we obtain
\begin{align}\label{Psi_bi 6}
	|S_5|	\leq& \frac{1}{8}\left\|b_i\right\|_{2}^2+C|v_w(t)|^2+C\left(\left|\left(\FI-\frac{\sqrt{\mu_w}}{\sqrt{\mu}} P_\gamma \right)f\right|_{2,+,1}^2+|(\mathbf I-P_\gamma)f|_{2,+,0}^2\right),
\end{align}
and
\begin{align}\label{Psi_bi,4}
	|S_3|\leq& C\left\|(\textbf I-\textbf P)f\right\|_{2}\left\|\partial_{x_1}\partial_t\phi_{b_i}\right\|_{2}\notag\\
	\leq& C\Bigg(|v_w(t)|^2\left\|b_i\right\|_{2}^2+\left\|(\textbf I-\textbf P)f\right\|_{2}^2+\left|\left(\FI-\frac{\sqrt{\mu_w}}{\sqrt{\mu}} P_\gamma \right)f\right|_{2,+,1}^2\notag\\
	&+|(\mathbf I-P_\gamma)f|_{2,+,0}^2+|v_{w}(t)|^2\Bigg).
\end{align}
Substituting \eqref{Psi_bi,23}, \eqref{Psi_bi,5}, \eqref{Psi_bi 6}, and \eqref{Psi_bi,4} into \eqref{Psi_i}, we conclude that
\begin{align}\label{es b_i}
	\frac{{\rm d}}{{\rm d}t}N_f^{(b_i)}(t)+\frac{1}{2}\left\|b_i(t)\right\|_{2}^2
	\leq &C\left\|(\textbf I-\textbf P)f\right\|_{2}^2+C\left\|\nu^{-\frac{1}{2}}\Gamma(f,f)\right\|_{2}^2\notag\\&+C\left(\left|\left(\FI-\frac{\sqrt{\mu_w}}{\sqrt{\mu}} P_\gamma \right)f\right|_{2,+,1}^2+|(\mathbf I-P_\gamma)f|_{2,+,0}^2+|v_w(t)|^2\right),\ i=2,3,
\end{align}
with
$$
N_f^{(b_i)}(t)=((1+x_w(t))f,\Psi_{b_i})=((1+x_w(t))f,(v_1^2-1)v_1v_i\partial_{x_1}\phi_{b_i}\sqrt{\mu}).
$$

\noindent\textbf {Estimate for $c$}. We choose a test function
\begin{equation}
	\Psi=\Psi_c=(|v|^2-5)v_1\partial_{x_1}\phi_c\sqrt{\mu}.\notag
\end{equation}
Here, $\phi_c$ solves
\begin{equation*}
	-\partial_{x_1}^2\phi_c=c,\ \phi_c(t,1)=\phi_c(t,0)=0.
\end{equation*}
With this choice,
by performing calculations similar to those used in deriving the estimates for $a$ and $\mathbf{b}$ above, we obtain
\begin{align}\label{es c}
\frac{{\rm d}}{{\rm d}t}N_f^{(c)}(t)+\frac{1}{2}\left\|c(t)\right\|_{2}^2\leq& \eta_1\left\|b_1(t)\right\|_{2}^2
+C_{\eta_1}\left\|(\textbf I-\textbf P)f\right\|_{2}^2+C\left|\left(\FI-\frac{\sqrt{\mu_w}}{\sqrt{\mu}} P_\gamma \right)f\right|_{2,+,1}^2\notag\\ &+C\left(|(\mathbf I-P_\gamma)f|_{2,+,0}^2+\left\|\nu^{-\frac{1}{2}}\Gamma(f,f)\right\|_{2}^2+|v_{w}(t)|^2\right),
\end{align}
where
$$
N_f^{(c)}(t)=((1+x_w(t))f,\Psi_{c})=((1+x_w(t))f,(|v|^2-5)v_1\partial_{x_1}\phi_c\sqrt{\mu}).
$$
Choosing constants $1<C_1, C_a\ll C_2\ll C_3,$
we get from the summation of  \eqref{es a}, $C_2\times\eqref{es b_1}$, \eqref{es b_i}, and $C_3\times\eqref{es c}$ that
\begin{align}\label{d_t(N^a+N^b)}
\frac{{\rm d}}{{\rm d}t}&\left(N_f^{(a)}(t)+C_2N_f^{(b_1)}(t)+N_f^{(b_2)}(t)+N_f^{(b_3)}(t)+C_3N_f^{(c)}(t)\right)\notag\\
&+\left(\|a(t)\|_2^2+\frac{C_2}{8}\|b_1(t)\|_2^2+\frac{1}{2}(\|b_2(t)\|_2^2+\|b_3(t)\|_2^2)+\frac{C_3}{4}\|c\|_2^2\right)\notag\\
	\leq&C_a|x_w(t)|^2+ C\left\|(\textbf I-\textbf P)f\right\|_{2}^2+C\left|\left(\FI-\frac{\sqrt{\mu_w}}{\sqrt{\mu}} P_\gamma \right)f\right|_{2,+,1}^2\notag\\ &+C\left(|(\mathbf I-P_\gamma)f|_{2,+,0}^2+\left\|\nu^{-\frac{1}{2}}\Gamma(f,f)\right\|_{2}^2+|v_{w}(t)|^2\right).
\end{align}
Next we choose $\frac{16C_1}{C_2}<\eta_2<\frac{2\kappa+\mathcal M}{8C_a}$. Then the linear combination of $\eta_2\times\eqref{d_t(N^a+N^b)}+\eqref{xwf-int}$ gives that
\begin{align}
	\frac{{\rm d}}{{\rm d}t}&(x_wv_w)
	+\mathcal M\frac{{\rm d}}{{\rm d}t}\int_{\Omega\times\mathbb R^3}x_w(1+x_w)x_1v_1\sqrt{\mu(v)}f{\rm d}x_1{\rm d}v+\frac{{\rm d}}{{\rm d}t}N_f(t)\notag\\
	&
	\qquad+\frac{1}{4}\left(\kappa+\frac{\mathcal M}{2}\right)|x_w(t)|^2+\lambda_2\|[a,\mathbf b,c](t)\|_2^2\notag\\
	&\leq C\left\|(\textbf I-\textbf P)f\right\|_{2}^2
+C\left|\left(\FI-\frac{\sqrt{\mu_w}}{\sqrt{\mu}} P_\gamma \right)f\right|_{2,+,1}^2\notag\\ &\qquad+C\left(|(\mathbf I-P_\gamma)f|_{2,+,0}^2+\left\|\nu^{-\frac{1}{2}}\Gamma(f,f)\right\|_{2}^2+|v_{w}(t)|^2\right),\notag
\end{align}
where
\begin{align}
N_f(t)=\eta_2\left[N_f^{(a)}(t)+C_2N_f^{(b_1)}(t)+N_f^{(b_2)}(t)+N_f^{(b_3)}(t)+C_3N_f^{(c)}(t)\right].\notag
\end{align}
Thus, the desired estimate \eqref{es Pf} follows. This completes the proof of Lemma \ref{lem es Pf}.
\end{proof}

With Proposition \ref{mix-eng} and Lemma \ref{lem es Pf} in hand, we can now derive the following decay estimate for the solution $[f, x_w, v_w]$ to the system \eqref{PBE-se2}, \eqref{trb-ode-se2},
\eqref{icf-se2}, and \eqref{bc-f-se2}.

\begin{proposition}\label{decay-l2}
Let the conditions listed in Theorem \ref{main result} be satisfied. Assume $[f,x_w,v_w]$ is a strong solution of \eqref{PBE-se2}, \eqref{trb-ode-se2},
\eqref{icf-se2}, and \eqref{bc-f-se2}. Moreover, suppose that
\begin{align}
\|wf(t)\|_{\infty}+|v_w(t)|+|x_w(t)|\leq 2\vps_0,\notag
\end{align}
for $t\in[0,\infty)$, then there exists a constant $\la_0>0$ and an energy functional $\CE_1$ which satisfies $$\CE_{1}(t)\thicksim \| f(t)\|_{2}^2+|x_w(t)|^2+|v_w(t)|^2$$
 such that
\begin{align}\label{ex-decay}
\CE_1(t)&+\int_{0}^te^{-\la_0(t-s)}\left\{\left|\left(\FI-\frac{\sqrt{\mu_w}}{\sqrt{\mu}} P_\gamma \right)f\right|_{2,+,1}^2+|(\mathbf I-P_\gamma)f|_{2,+,0}^2\right\}{\rm d}s\notag\\
\leq& e^{-\la_0t}\CE_1(0)+C\int_0^te^{-\la_0(t-s)}\left\{\left\| \nu^{-\frac{1}{2}}\Gamma(f,f)\right\|_{2}^2+P(|x_w(s)|,|v_w(s)|,\|wf(s)\|_\infty)\right\}{\rm d}s,
\end{align}
for $t\in[0,\infty)$.
\end{proposition}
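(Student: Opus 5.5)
The proof takes a small multiple of \eqref{es Pf} from Lemma \ref{lem es Pf}, adds it to \eqref{dis-xv-mif} from Proposition \ref{mix-eng}, and then applies a Gronwall argument with an exponential weight.

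For a constant $0<\eta\ll1$ to be fixed, define
\begin{equation*}
\CE_1(t):=\CE_0(t)+\eta\Big\{x_wv_w+\mathcal M\int_{\Omega\times\mathbb R^3}x_w(1+x_w)x_1v_1\sqrt{\mu}f\,{\rm d}x_1{\rm d}v+N_f(t)\Big\}.
\end{equation*}

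\textbf{Step 1: equivalence of $\CE_1$.} By Lemma \ref{lem es Pf}(i), $|N_f|\le C\|f\|_2^2$. By Cauchy--Schwarz and \eqref{apas}, the mixed terms are bounded by $C\eta(|x_w|^2+|v_w|^2+\|f\|_2^2)$.

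This gives the upper bound $\CE_1\lesssim\|f\|_2^2+|x_w|^2+|v_w|^2$. The lower bound is the delicate point when $\kappa=0$, because then $\CE_0$ contains no $|x_w|^2$. In that case I would add $C\eta|x_w|^2$ to $\CE_1$ explicitly. Its time derivative is $2C\eta x_wv_w$, which is absorbed by $\tfrac{\eta}{8}(\kappa+\tfrac{\mathcal M}{2})|x_w|^2+C\eta|v_w|^2$. The first of these comes from the dissipation in \eqref{es Pf}; the second from the $|v_w|^2$ dissipation in \eqref{dis-xv-mif}, once $\eta$ is small.

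Alternatively, $|x_w|$ can be recovered from the mass relation \eqref{a-con}, which gives $|x_w|\lesssim\|a\|_2\le\|f\|_2$. This means $\|f\|_2^2$ already controls $|x_w|^2$ up to constants, so $\CE_0\sim\|f\|_2^2+|x_w|^2+|v_w|^2$ even when $\kappa=0$. This route looks cleanest, and I would use it.

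\textbf{Step 2: combined dissipation inequality.} Take \eqref{dis-xv-mif} plus $\eta\times$\eqref{es Pf}. The right-hand side of \eqref{es Pf} contains
\begin{equation*}
\eta C\Big(|v_w|^2+\|(\FI-\FP)f\|_2^2+|(\FI-\tfrac{\sqrt{\mu_w}}{\sqrt\mu}P_\gamma)f|_{2,+,1}^2+|(\FI-P_\gamma)f|_{2,+,0}^2\Big).
\end{equation*}
For $\eta$ small, these terms are absorbed by the $\la_1$-dissipation of \eqref{dis-xv-mif}; here $\nu\ge\nu_0$ and $1+x_w\ge\tfrac12$. The result is
\begin{align*}
\frac{\rm d}{{\rm d}t}\CE_1+\la\Big\{&|v_w|^2+|x_w|^2+\|\FP f\|_2^2+\|(\FI-\FP)f\|_2^2\\
&+\big|(\FI-\tfrac{\sqrt{\mu_w}}{\sqrt\mu}P_\gamma)f\big|_{2,+,1}^2+|(\FI-P_\gamma)f|_{2,+,0}^2\Big\}\\
\le\; &C\|\nu^{-1/2}\Gamma(f,f)\|_2^2+CP.
\end{align*}

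\textbf{Step 3: exponential weight.} The bulk part of the dissipation dominates $\CE_1$ by Step 1. Hence for some $\la_0\le\la/(2C')$ we get
\begin{equation*}
\tfrac{\rm d}{{\rm d}t}\CE_1+\la_0\CE_1+\tfrac{\la}{2}\{\text{boundary terms}\}\le C\|\nu^{-1/2}\Gamma\|_2^2+CP.
\end{equation*}
Multiply by $e^{\la_0 s}$ and integrate over $[0,t]$ to obtain \eqref{ex-decay}; rescale constants so the boundary term carries coefficient $1$.

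\textbf{Main obstacle.} I expect the only real difficulty to be the equivalence $\CE_1\sim\|f\|_2^2+|x_w|^2+|v_w|^2$ for $\kappa=0$, which I would handle through \eqref{a-con} as described in Step 1. Everything else is routine bookkeeping of constants, choosing $\eta$ after $\la_1$ and $\la_2$ are fixed.
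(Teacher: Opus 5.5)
Your proposal is correct, and its skeleton is exactly the paper's: $\CE_1=\CE_0+\eta\{x_wv_w+\mathcal M\int x_w(1+x_w)x_1v_1\sqrt{\mu}f+N_f\}$, absorption of $\eta$ times the right side of \eqref{es Pf} into the $\la_1$-dissipation of \eqref{dis-xv-mif}, then an exponential Gronwall argument.

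The one genuine difference is how you get coercivity when $\kappa=0$. The paper adds a multiple of $|x_w|^2$ to the functional and uses \eqref{xw-eng-p}, $\frac12\frac{\rm d}{{\rm d}t}|x_w|^2=x_wv_w$, whose contribution is absorbed by the $|x_w|^2$ and $|v_w|^2$ dissipation; this is your first alternative. Your preferred route instead uses \eqref{a-con} to get $|x_w|\le(1+x_w)\|a\|_2\le 2\|f\|_2$. Then $\CE_0\sim\|f\|_2^2+|x_w|^2+|v_w|^2$ already holds for every $\kappa\ge0$, and both cases are handled at once.

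Your route costs nothing extra, since \eqref{a-con} is already indispensable in Proposition \ref{mix-eng} and Lemma \ref{lem es Pf}. As a bonus, the same inequality is what justifies Lemma \ref{lem es Pf}(i). By \eqref{es phi_a}, $N_f^{(a)}$ is a priori only bounded by $C\|f\|_2(\|a\|_2+|x_w|)$, and $|x_w|\le 2\|f\|_2$ is what upgrades this to $C\|f\|_2^2$.
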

\begin{proof}
If $\kappa>0$, a linear combination of \eqref{dis-xv-mif} and \eqref{es Pf} implies that there exists a constant $\la_0>0$ such that
\begin{align}
\frac{{\rm d}}{{\rm d}t}\CE_1(t)&+\la_0 \CE_1(t)+\la_0 \left\{\left|\left(\FI-\frac{\sqrt{\mu_w}}{\sqrt{\mu}} P_\gamma \right)f\right|_{2,+,1}^2+|(\mathbf I-P_\gamma)f|_{2,+,0}^2\right\}\notag\\
\leq& C\left\{\left\| \nu^{-\frac{1}{2}}\Gamma(f,f)\right\|_{2}^2+P(|x_w(t)|,|v_w(t)|,\|wf(t)\|_\infty)\right\},\notag
\end{align}
which directly gives \eqref{ex-decay}.

If $\kappa=0$, we let $\eta_3>0$ be suitably small. By taking the sum of \eqref{dis-xv-mif} and $\eta_3\times \eqref{es Pf}$, we obtain that there exists a constant $\lambda_3>0$ such that
\begin{align}
	\frac{{\rm d}}{{\rm d}t}(\|f(t)\|_2^2&+|v_w(t)|^2)+\la_3 \CE_1(t)+\la_3
\left\{\left|\left(\FI-\frac{\sqrt{\mu_w}}{\sqrt{\mu}} P_\gamma \right)f\right|_{2,+,1}^2+|(\mathbf I-P_\gamma)f|_{2,+,0}^2\right\}\notag\\
	\leq&-\eta_3\frac{{\rm d}}{{\rm d}t}\left(x_wv_w+\mathcal M\int_{\Omega\times\mathbb R^3}x_w(1+x_w)x_1v_1\sqrt{\mu(v)}f{\rm d}v{\rm d}x_1+N_f(t)\right)\notag\\&+ C\left\{\left\| \nu^{-\frac{1}{2}}\Gamma(f,f)\right\|_{2}^2+P(|x_w(t)|,|v_w(t)|,\|wf(t)\|_\infty)\right\}.\label{ka=0-eng}
\end{align}
On the other hand, it follows that
\begin{equation}
	\frac{1}{2}\frac{{\rm d}}{{\rm d}t}|x_w(t)|^2=x_w(t)v_w(t).\label{xw-eng-p}
\end{equation}
Combining \eqref{ka=0-eng} and \eqref{xw-eng-p}, we conclude that \eqref{ex-decay} also holds for $\kappa=0$. This completes the proof of Proposition \ref{decay-l2}.
\end{proof}

\section{$L^\infty$ estimate}\label{lif-sec}
In this section, we derive the velocity-weighted $L^\infty$ estimate of the solution $f$. The argument is based on the method of characteristics. To this end,  we define
\begin{equation*}
	h(t,x_1,v):=w(v)f(t,x_1,v),
\end{equation*}
and introduce
\begin{equation*}
	K_{w}(\ \cdot \ ):=wK(\frac{1}{w}\ \cdot\ ),\ \tilde w=\frac{1}{w(v)\sqrt{\mu(v)}}.
\end{equation*}
Then \eqref{PBE}, \eqref{ic f}, \eqref{drbl-f} and \eqref{drb-f} can be rewritten as
\begin{align}
	\left\{	\begin{array}{l}
		\partial_th+\frac{v_1-v_w(t)x_1}{1+x_w(t)}\partial_{x_1}h+\nu h=K_{w}h+w\Gamma(\frac{h}{w},\frac{h}{w}),\\[2mm]
		h(0,x_1,v)=h_0(x_1,v),\\[2mm]
		h(t,0,v)=w(v)P_\gamma f(t,0,v),\ v_1>0, \\[2mm]
		h(t,1,v)=w(v) P_\gamma f(t,1,v)+w(v)r(t,v),\  v_1-v_w(t)<0.
	\end{array}\right.\notag
\end{align}
The main result of this section is stated as follows.

\begin{proposition}\label{prop es L^infty-drcl}
	Suppose that $[f,x_w,v_w]$ is a solution of \eqref{PBE}, \eqref{trb-ode}, \eqref{ic f},  \eqref{drbl-f}, and \eqref{drb-f}, and satisfies
	\begin{align}\label{apas-2-drbcl}
		\mathop{\sup}_{0\leq s\leq t}e^{\la_1 s}\{|x_w(s)|+|v_w(s)|+\left\|wf(s)\right\|_{\infty}\}\leq2\vps,
	\end{align}
	for any $t\in[0,+\infty)$ and suitably small $\vps>0$, where $\la_1<\frac{1}{2}\min\{\la_0,\nu_0\}$. Then, for all $t \ge 0$, it holds that
	\begin{equation}\label{L^inf es-drcl}
		\mathop{\sup}_{0\leq s\leq t}e^{\la_1 s}\|h(s)\|_\infty+\mathop{\sup}_{0\leq s\leq t}e^{\la_1 s}|h(s)|_\infty\leq C\Big\{ \|h_0\|_\infty
		+|x_{w0}|+|v_{w0}|\Big\}.
	\end{equation}
\end{proposition}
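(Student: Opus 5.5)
We follow the $L^\infty$–$L^2$ method of Guo and Esposito–Guo–Kim–Marra: write $h=wf$ along the backward characteristics $X_1^f(s;t,x_1,v_1)$ of \eqref{ODE X} and iterate the diffuse reflection boundary conditions finitely many times. For fixed $(t,x_1,v)$, Duhamel's formula along the trajectory gives three pieces. The first is the initial datum, $e^{-\nu t}h_0$ if $t_{\mathbf b}>t$. The second is the boundary datum $e^{-\nu t_{\mathbf b}}h(t-t_{\mathbf b},x_{1\mathbf b},v)$ if $t_{\mathbf b}\le t$. The third is the integral $\int_{\max(t-t_{\mathbf b},0)}^t e^{-\nu(t-s)}[K_wh+w\Gamma(h/w,h/w)](s,X_1^f(s),v)\,ds$. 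Because the coefficient $(v_1-v_wx_1)/(1+x_w)$ differs from $v_1$ only by $O(\vps)$ under \eqref{apas-2-drbcl}, the trajectories (see \eqref{X}) are uniformly comparable to free straight lines. Hence the standard geometric facts still hold, with constants uniform in the small $x_w,v_w$: the exit times are well defined, and the Jacobian of $v\mapsto X_1^f(s)$ is bounded below by $|t-s|$ up to a factor $1+O(\vps)$.

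At a boundary point we substitute \eqref{drbl-f} or \eqref{drb-f}. This gives $w P_\gamma f+wr$, where $|wr|\lesssim |v_w|(1+|f|_\infty)\lesssim |v_w|$ carries the decay $e^{-\la_1 s}$ from the a priori assumption. The term $wP_\gamma f$ is an integral over outgoing velocities $u$, with measure $\sqrt{\mu}|u_1-v_w x_1|\,du$ (times $w\sqrt\mu$), of $h(t-t_{\mathbf b},x_{1\mathbf b},u)/w(u)$. We then continue backward along the characteristic from $(t-t_{\mathbf b},x_{1\mathbf b},u)$. Iterating $k$ times yields the usual multiple-reflection expansion. We take $k=C T_0^{5/4}$ and split the time interval into pieces of length $T_0$. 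The measure of velocity configurations that have not reached the initial time after $k$ bounces is small, $\lesssim (1/2)^{k}$, by the one-dimensional version of the EGKM lemma. The velocity $\mu_w$ appears in place of $\mu$ on the right wall, but it is a probability-type measure with total mass $1+O(|v_w|)$, so the lemma survives. Every contribution is multiplied by $e^{\la_1 s}$; since $\la_1<\nu_0/2$, the exponential factors are absorbed.

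For the integral terms we insert the Duhamel formula a second time into $K_wh$, which gives the double integral $\int\int \mathbf k_w(v,v')\mathbf k_w(v',v'')h(s',X_1^f(s'),v'')$. On the region $|v|\le N$, $|v'|,|v''|\le 2N$, $s-s'\ge\kappa_*$, we change variables $v'_1\mapsto X_1^f(s')$; the Jacobian bound above makes this valid. This bounds the term by $C_N\int\|f(s')\|_{L^2}ds'$, and Proposition \ref{decay-l2} controls that by $e^{-\la_0 s'}(\CE_1(0))^{1/2}$ plus nonlinear contributions. The complementary regions (large velocities, short time) contribute $o(1)\sup e^{\la_1 s}\|h\|_\infty$. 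The nonlinear terms satisfy $|w\Gamma(h/w,h/w)|\lesssim \nu\|h\|_\infty^2$, which is quadratic and therefore $O(\vps)\sup e^{\la_1s}\|h\|_\infty$. In the $L^2$ decay, $\|\nu^{-1/2}\Gamma\|_2^2+P$ are likewise bounded by $\vps^2e^{-2\la_1s}\cdot$(norms)$^2$. Finally, $\CE_1(0)\lesssim \|h_0\|_\infty^2+|x_{w0}|^2+|v_{w0}|^2$. We also need $|x_w|+|v_w|$ to decay; this follows from $\CE_1$ together with the $r$-term bound. Collecting terms gives
\begin{equation*}
\sup_{s\le t}e^{\la_1 s}\|h(s)\|_\infty\le C\{\|h_0\|_\infty+|x_{w0}|+|v_{w0}|\}+(o(1)+C\vps)\sup_{s\le t}e^{\la_1 s}\|h(s)\|_\infty,
\end{equation*}
and the last term can be absorbed. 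The boundary norm $|h|_\infty$ is handled by the same representation evaluated at $x_1\in\{0,1\}$.

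The main obstacle is the multiple-reflection step on the moving wall. The reflected measure is $\mu_w$ rather than $\mu$, and the flux weight is $|u_1-v_w(s)|$, both depending on time. We must verify that the smallness estimate for the set of long bounce sequences, and the change of variables in the double $K$-iteration, hold uniformly despite this time dependence. Our plan is to bound $\mu_w\le (1+C|v_w|)\mu^{1-\theta}$-type quantities and to absorb the $O(\vps)$ deviations into constants, choosing $T_0$ first and $\vps$ afterwards.
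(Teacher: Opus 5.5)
Your proposal is correct and follows essentially the paper's argument. The paper first proves a finite-time bound on $[0,T_0]$ (Lemma~\ref{lem es ft}): with $k=CT_0^{5/4}$, the initial datum enters with a contraction factor $e^{-\lambda_2T_0}$, and the double $K_w$-iteration is converted into $\int_0^{T_0}\|f\|_2$. The moving-wall issue you identify is handled exactly by the condition $\sup|v_w|\le 1/(\sqrt{2\pi}k)$ in Lemma~\ref{lem es}, so $T_0$ is fixed before $\varepsilon$. The paper then glues the intervals $[mT_0,(m+1)T_0]$ by induction on $m$, using the $L^2$ decay \eqref{l2-decay-r}. That gluing is the one step you should make explicit: the restart term at $mT_0$ carries a factor $Ck^2e^{-\nu_0(t-mT_0)/2}$, which is small only at the end of each piece. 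So your final absorbed inequality must come from this discrete contraction at the times $mT_0$, not from a single supremum over $[0,t]$.
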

Before proving Proposition \ref{prop es L^infty-drcl}, let us first introduce the following notations.

Let $(x_1,v)\notin\gamma_0$ and $(t^0,x_1^0,v^0)=(t,x_1,v)$, define
\begin{align*}
	&t^1(t,x_1,v_1)=\inf\{s<t: X_1^f(s;t,x_1,v_1)\in\Omega\},\\
	&x_1^1=X_1^f(t^1(t,x_1,v_1);t,x_1,v_1),\\
	&v^1\in\mathcal V^1:=\{x^1_1=0,v_1^1<0,v^1\in\mathbb R^3\}\cup\{x^1_1=1,v_1^1-v_w(t^1)>0,v^1\in\mathbb R^3\},
\end{align*}
and inductively for $l\geq1$
\begin{align*}
	&t^l=\inf\{s<t^{l-1}: X_1^f(s;t^{l-1},x^{l-1}_1,v^{l-1}_1)\in\Omega\},\\
	&x_1^l=X_1^f(t^l;t^{l-1},x_1^{l-1},v_1^{l-1}),\\
	&v^l\in\mathcal V^l:=\{x^l_1=0,v_1^l<0,v^l\in\mathbb R^3\}\cup\{x^l_1=1,v_1^l-v_w(t^l)>0,v^l\in\mathbb R^3\}.
\end{align*}
Moreover, we define the following stochastic cycles
\begin{align}\label{def X_cl}
\left\{\begin{array}{rll}
	&X_{1,{\mathbf {cl}}}(s;t,x_1,v_1)=\sum_l \mathbf 1_{[t^{l+1},t^l)}(s)X_1^f(s;t^l,x_1^l,v^l_1),\\[2mm]
	&V_{{\mathbf {cl}}}(s;t,x_1,v_1)=\sum_l \mathbf 1_{[t^{l+1},t^l)}(s)v^l.
\end{array}\right.
\end{align}

Based on the above definitions, we obtain the following elementary estimates.
\begin{lemma}\label{lem es h-drcl}
	If $t^1\leq0$, then
	\begin{equation}\label{t1<0-drcl}
		|h(t,x_1,v)|\leq e^{-\nu(v) t}|h(0,X_1^f(0),v)|+\int_{0}^te^{-\nu(v)(t-s)}|[K_{w}h+w\Gamma(\frac{h}{w},\frac{h}{w})](s,X_1^f(s;t,x_1,v_1),v)|{\rm d}s.
	\end{equation}
	If $t^1\geq0$, we have
	\begin{align}\label{t1>0-drcl}
		|h(t,x_1,v)|&\leq e^{-\nu(v)(t-t^1)}|wr(t^1,x_1^1,v)|\mathbf 1_{\{x_1^1=1\}}\notag\\
		&\quad+\int_{t^1}^te^{-\nu(v)(t-s)}|[K_{w}h+w\Gamma(\frac{h}{w},\frac{h}{w})](s,X_1^f(s;t,x_1,v_1),v)|{\rm d}s\notag\\
		&\quad+\frac{e^{-\nu(v)(t-t^1)}}{\tilde w(v)}\int_{\prod_{j=1}^{k-1}\mathcal V^j}|H|,
	\end{align}
	where $|H|$ is bounded by
	\begin{align}\label{es H1}
		\sum_{l=1}^{k-1}&\mathbf 1_{\{t^{l+1}\leq0<t^l\}}|h(0,X_1^f(0;t^l,x_1^l,v_1^l),v^l)|{\rm d}\Sigma_l(0)\\\label{es H2}
		&+\sum_{l=1}^{k-1}\int_0^{t^l}\mathbf 1_{\{t^{l+1}\leq0<t^l\}}|[K_{w}h+w\Gamma(\frac{h}{w},\frac{h}{w})](s,X_1^f(s;t^l,x_1^l,v_1^l),v^l)|{\rm d}\Sigma_l(s){\rm d}s\\\label{es H3}
		&+\sum_{l=1}^{k-1}\int_{t^{l+1}}^{t^l}\mathbf 1_{\{0<t^l\}}|[K_{w}h+w\Gamma(\frac{h}{w},\frac{h}{w})](s,X_1^f(s;t^l,x_1^l,v_1^l),v^l)|{\rm d}\Sigma_l(s){\rm d}s\\\label{es H4}
		&+\sum_{l=1}^{k-1}\mathbf 1_{\{0<t^l\}}{\rm d}\Sigma_l^r\\\label{es H5}
		&+\mathbf 1_{\{0<t^k\}}|h(t^k,x_1^k,v^{k-1})|{\rm d}\Sigma_{k-1}(t^k).
	\end{align}
	We have denoted
	\begin{align}\label{dSigma}
		{\rm d}\sigma^l&=-\sqrt{2\pi}\mu(v^l)v^l_1{\rm d}v^l,\ {\rm if}\ x^l_1=0,\ {\rm and}\ {\rm d}\sigma^l=\sqrt{2\pi}\mu(v^l)(v^l_1-v_w(t^l)){\rm d}v^l,\ {\rm if}\ x_1^l=1,\notag\\
		{\rm d}\Sigma_l&=\{\prod_{j=l+1}^{k-1}{\rm d}\sigma^j\}\times\{\tilde w(v^l){\rm d}\sigma^l\}\times\{\prod_{j=1}^{l-1}{\rm d}\sigma^j\},\notag\\
		{\rm d}\Sigma_l(s)&=\{\prod_{j=l+1}^{k-1}{\rm d}\sigma^j\}\times\{e^{-\nu(v^l)(t^l-s)}\tilde w(v^l){\rm d}\sigma^l\}\times\{\prod_{j=1}^{l-1}e^{-\nu(v^j)(t^j-t^{j+1})}{\rm d}\sigma^j\},\notag\\	
		{\rm d}\Sigma_l^r&=\{\prod_{j=l+1}^{k-1}{\rm d}\sigma^j\}\times\{e^{-\nu(v^l)(t^l-t^{l+1})}\tilde w(v^l)w(v^l)r(t^{l+1},x_1^{l+1},v^l){\rm d}\sigma^l\}\notag\\
		&\quad\times\{\prod_{j=1}^{l-1}e^{-\nu(v^j)(t^j-t^{j+1})}{\rm d}\sigma^j\}.
	\end{align}
\end{lemma}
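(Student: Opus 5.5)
The plan is to integrate the equation for $h$ along the backward characteristic $s\mapsto X_1^f(s;t,x_1,v_1)$ from \eqref{X} and then iterate the diffuse boundary conditions through the stochastic cycles \eqref{def X_cl}. Along this curve $v$ is constant. By \eqref{ODE X} the transport operator $\partial_t+\frac{v_1-v_w x_1}{1+x_w}\partial_{x_1}$ becomes $\frac{{\rm d}}{{\rm d}s}$, so
\[
\frac{{\rm d}}{{\rm d}s}\Big[e^{-\nu(v)(t-s)}h(s,X_1^f(s),v)\Big]=e^{-\nu(v)(t-s)}\Big[K_wh+w\Gamma(\tfrac hw,\tfrac hw)\Big](s,X_1^f(s),v).
\]
If $t^1\le 0$, the trajectory stays in $\Omega$ on $[0,t]$. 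Integrating from $0$ to $t$ and taking absolute values then gives \eqref{t1<0-drcl} directly.

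If $t^1>0$, I integrate only over $[t^1,t]$. This produces the $K_w h+w\Gamma$ integral in \eqref{t1>0-drcl} together with the boundary value $e^{-\nu(v)(t-t^1)}h(t^1,x_1^1,v)$. The point $(x_1^1,v)$ lies on $\gamma_-$, so I replace this value using the boundary conditions. At $x_1^1=0$ it is $wP_\gamma f$. At $x_1^1=1$ it is $wP_\gamma f+wr$, and the $wr$ part is exactly the first term of \eqref{t1>0-drcl}.

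Next I rewrite $w(v)P_\gamma f(t^1,x_1^1,v)$ in terms of the outgoing data. Writing $f=h/w$, we get
\[
w(v)P_\gamma f=\frac{1}{\tilde w(v)}\int_{\mathcal V^1}h(t^1,x_1^1,v^1)\,\tilde w(v^1)\,{\rm d}\sigma^1 ,
\]
with ${\rm d}\sigma^1$ as in \eqref{dSigma}. The factor $\sqrt{2\pi\mu(v)}\,w(v)=1/\tilde w(v)$ combines with $\sqrt{\mu(u)}/w(u)$ to give $\sqrt{2\pi}\mu\,\tilde w$. The relative flux $|u_1-v_w(t^1)x_1^1|$ reduces to $-v_1^1$ at $x_1=0$ and to $v_1^1-v_w(t^1)$ at $x_1=1$, which is why the measure carries the moving-wall factor.

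Now $(x_1^1,v^1)\in\gamma_+$, so $h(t^1,x_1^1,v^1)$ can itself be represented by integrating backward from $(t^1,x_1^1,v^1)$ to $t^2$. I repeat this inductively $k-1$ times. At step $l$ there are two cases:
\begin{itemize}
\item if $t^{l+1}\le 0<t^l$, the trajectory reaches time $0$ inside $\Omega$, which produces the initial-data term \eqref{es H1} and the collision integral over $[0,t^l]$ in \eqref{es H2};
\item if $t^{l+1}>0$, we obtain the collision integral over $[t^{l+1},t^l]$ in \eqref{es H3}, the $r$ contribution at the wall $x_1^{l+1}=1$ in \eqref{es H4}, and a further $P_\gamma$ integral over $\mathcal V^{l+1}$.
\end{itemize}
The exponential factors $e^{-\nu(v^j)(t^j-t^{j+1})}$ and the weights $\tilde w(v^l)$ accumulate exactly as in ${\rm d}\Sigma_l$, ${\rm d}\Sigma_l(s)$ and ${\rm d}\Sigma_l^r$. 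For indices $j>l$ I insert factors ${\rm d}\sigma^j$, whose integrals over $\mathcal V^j$ equal one, so that all terms live on the common product space $\prod_{j=1}^{k-1}\mathcal V^j$. After $k-1$ bounces, whatever is not yet resolved is the remainder \eqref{es H5}, restricted to $\{0<t^k\}$.

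The main obstacle is the bookkeeping of the measures at the moving wall $x_1=1$. There, ${\rm d}\sigma^l$ has density $\sqrt{2\pi}\mu(v^l)(v^l_1-v_w(t^l))$ on the half-space $v^l_1>v_w(t^l)$. I need to check that this is still a probability measure, so that the padding by ${\rm d}\sigma^j$ is legitimate, and if it is not, to carry the defect as an $O(|v_w|)$ error. To check it, I will compute $\sqrt{2\pi}\int_{v_1>v_w}\mu(v_1-v_w)\ldots$ with respect to $\mu$ rather than $\mu_w$, note that \eqref{drb-f} is written with $P_\gamma$ built from $\mu$ and the $\mu_w$ discrepancy placed in $r$, and confirm the normalization up to terms absorbed into ${\rm d}\Sigma^r_l$. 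Since only an upper bound on $|h|$ is claimed, any such defect can be handled by the $r$-terms. The remaining steps are routine: the sign conditions defining $\mathcal V^l$ and the fact that $t^l$ is well-defined because $|x_w|\ll1$.
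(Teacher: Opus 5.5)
Your argument follows the paper's route. The paper omits this proof and refers to the inductive argument of Lemma \ref{lem es h^ell+1}, which consists of:
\begin{itemize}
\item integrating $\frac{{\rm d}}{{\rm d}s}[e^{-\nu(v)(t-s)}h(s,X_1^f(s),v)]$ along the characteristic;
\item substituting the boundary condition at the backward exit point;
\item rewriting $wP_\gamma f=\tilde w(v)^{-1}\int_{\mathcal V^1}h\,\tilde w\,{\rm d}\sigma^1$;
\item inducting on the number of bounces.
\end{itemize}
Your identification of ${\rm d}\sigma^l$, including the moving-wall flux factor, and the case split $t^{l+1}\le 0<t^l$ versus $t^{l+1}>0$ are correct.

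The step that fails is the padding by ${\rm d}\sigma^j$ for $j>l$. At $x_1^j=1$ one has
\[
\int_{\mathcal V^j}{\rm d}\sigma^j=e^{-|v_w(t^j)|^2/2}-v_w(t^j)\int_{v_w(t^j)}^{\infty}e^{-\tau^2/2}\,{\rm d}\tau=1-\frac{\sqrt{2\pi}}{2}v_w(t^j)+O(|v_w(t^j)|^2),
\]
which is the same computation as in Lemma \ref{lem es}. So these integrals are not one. When $v_w(t^j)>0$ they are below one, and inserting them can make the right-hand side smaller, which is the wrong direction for an upper bound.

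Your proposed remedy of absorbing the defect into the $r$-terms does not work, for two reasons:
\begin{itemize}
\item The defect multiplies each entire nonnegative term of $|H|$, whereas $r$ enters only additively, in \eqref{es H4} and the first term of \eqref{t1>0-drcl}.
\item The $\mu_w$-versus-$\mu$ discrepancy that \eqref{drb-f} places in $r$ sits in the prefactor of the velocity leaving the wall. The defect comes instead from the flux weight $\mu(v^j)(v^j_1-v_w(t^j))$ on $\{v^j_1>v_w(t^j)\}$, which $r$ does not touch.
\end{itemize}

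Either of two fixes works:
\begin{itemize}
\item Drop the padding, keeping in ${\rm d}\Sigma_l$, ${\rm d}\Sigma_l(s)$, ${\rm d}\Sigma^r_l$ only the factors with $j\le l$. This is what the induction actually produces, and it suffices for the bounds in Lemma \ref{lem es}.
\item Keep the padding and pay a factor $\prod_{j>l}\big(\int_{\mathcal V^j}{\rm d}\sigma^j\big)^{-1}\le(1-C\sup_s|v_w(s)|)^{-k}$. This is bounded by an absolute constant under a smallness condition of the type \eqref{vw-k}. For the terms \eqref{es H1}--\eqref{es H2}, all later bounces occur at nonpositive times, where the convention $v_w\equiv0$ makes the padding exact.
\end{itemize}
The paper's own induction passes over this point silently. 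With either fix, your proof gives \eqref{t1>0-drcl} up to a harmless constant, which is all that Lemma \ref{lem es ft} uses.
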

\begin{proof}
The proof of Lemma \ref{lem es h-drcl} is simpler than that of Lemma \ref{lem es h^ell+1} later, and is therefore omitted for brevity.
\end{proof}

With Lemma \ref{lem es h-drcl} in hand, we now establish a finite-time $L^\infty$ estimate.
\begin{lemma}\label{lem es ft}
Under the conditions listed in Proposition \ref{prop es L^infty-drcl},
	there exist constants $C>0$ and $T_0>0$ such that, for $k = C T_0^{5/4}$ in Lemma \ref{lem es h-drcl}, we have
	\begin{equation}\label{es ft}
	\|h(T_0)\|_\infty+|h(T_0)|_\infty\leq e^{-\la_2T_0}\|h_0\|_\infty+C(T_0)e^{-\frac{\nu_0}{2}T_0}\left(\mathop{\sup}_{0\leq s\leq T_0}|v_w(s)|+\int_0^{T_0}\|f(s)\|_2{\rm d}s\right),
	\end{equation}
where $0<\lambda_1<\lambda_2<\min\left\{\frac{\nu_0}{2},\frac{\la_0}{2}\right\}$.
\end{lemma}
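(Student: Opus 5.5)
The plan is the standard Guo/EGKM stochastic-cycle $L^\infty$--$L^2$ argument, applied at the fixed time $t=T_0$ on top of the representation in Lemma \ref{lem es h-drcl}. The free boundary enters only through the trajectory $X_1^f$ in \eqref{X}, which stays uniformly close to straight lines $x_1+v_1(s-t)$ because $|x_w|,|v_w|\le 2\vps$, and through the source term $r$, which is $O(|v_w|)$ pointwise with Gaussian decay.

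\emph{Step 1: reduce to the $K_w$-terms.} Fix $(x_1,v)$ and apply \eqref{t1<0-drcl} or \eqref{t1>0-drcl}. The terms are handled as follows.
\begin{itemize}
\item The initial-data terms \eqref{es H1} and the first term of \eqref{t1<0-drcl} are bounded by $e^{-\nu_0 T_0}\|h_0\|_\infty\le e^{-\la_2T_0}\|h_0\|_\infty$. This uses that the $\mathrm d\sigma^j$ are probability measures up to factors $1+O(|v_w|)$, whose product over $k=CT_0^{5/4}$ cycles is harmless since $\vps$ is chosen after $T_0$.
\item The $r$-terms, namely the first term of \eqref{t1>0-drcl} and \eqref{es H4}, are bounded by $C\sup_s|v_w(s)|$, using $|w r|\lesssim |v_w|(1+\|h\|_\infty)$ with $\zeta<1/4$.
\item The $\Gamma$ contributions are bounded by $C\vps\sup_s\|h(s)\|_\infty$ via $|w\Gamma(h/w,h/w)|\lesssim\nu\|h\|_\infty^2$.
\item The tail term \eqref{es H5} is controlled by the lemma that for $T_0$ large and $k=CT_0^{5/4}$, $\int\mathbf 1_{\{t^k>0\}}\prod\mathrm d\sigma^j\le (1/2)^{CT_0^{5/4}}$. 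I would prove this exactly as in \cite{Guo-2010}: split the velocity into $|v_1^j|<\delta$ or $|v^j|>1/\delta$ versus the rest. On the rest each bounce consumes time $\gtrsim\delta^3$, and the near-straightness of $X_1^f$ keeps this geometric fact stable. The contribution is thus $\lesssim 2^{-CT_0^{5/4}}\sup_s\|h(s)\|_\infty$.
\end{itemize}

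\emph{Step 2: iterate $K_w$ (main obstacle).} The remaining terms are \eqref{es H2}, \eqref{es H3} and the $K_w h$ part of the second term, all of the form $\int e^{-\nu(t-s)}|K_wh(s,X_{1,\mathbf{cl}}(s),v)|\,\mathrm ds$. Plug the same representation for $h(s,X_{1,\mathbf{cl}}(s),v')$ into $K_wh=\int \mathbf k_w(v,v')h(v')\,\mathrm dv'$ once more. This produces a double integral $\int\int\int\int \mathbf k_w(v,v')\mathbf k_w(v',v'')|h(s',X',v'')|$ plus already-controlled terms. Then split as follows.
\begin{itemize}
\item If $|v|\ge N$, or $|v'|\ge 2N$, or $|v''|\ge 3N$, or $s-s'\le\eta$, the $\mathbf k_w$ decay gives $(\frac1N+\eta)\sup\|h\|_\infty$.
\item In the remaining bounded region, replace $\mathbf k_w$ by a compactly supported smooth kernel and change variables $v_1'\mapsto X'=X_{1,\mathbf{cl}}(s';s,X_{1,\mathbf{cl}}(s),v')$.
\end{itemize}
The key point is the Jacobian: $|\partial X'/\partial v_1'|=|s-s'|/(1+x_w(s'))\ge\eta/2$ on a single free-flight segment, from \eqref{X}. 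Here the bounce structure in the stochastic cycles must be handled by restricting to $(s',v')$ where no bounce occurs between $s'$ and $s$ (the bounce pieces are re-expanded or absorbed by the small-measure argument above). This converts $|h|$ into $C_{N,\eta}\|f(s')\|_{L^2}$, using $h=wf$ and bounded $v''$. Summing over $k$ cycles gives at most a polynomial-in-$T_0$ loss, so this part contributes $C(T_0)\int_0^{T_0}\|f(s)\|_2\,\mathrm ds$. The exponential factors $e^{-\nu_0(T_0-s)}$ in the $\mathrm d\Sigma_l(s)$ supply the $e^{-\nu_0 T_0/2}$ prefactor, after trading half of the exponent for the $e^{\la}$ growth allowed by $\la_2<\nu_0/2$.

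\emph{Step 3: close.} Take the supremum over $(x_1,v)$, including boundary points, where the same representation holds and which gives $|h(T_0)|_\infty$. Choose $T_0$ large, then $N$ large and $\eta$ small, then $\vps$ small. The accumulated term $(\frac1N+\eta+\vps+2^{-CT_0^{5/4}})\sup_{s\le T_0}\|h(s)\|_\infty$ can be bounded, by the crude bound $\sup_{s\le T_0}\|h\|_\infty\lesssim C(T_0)(\|h_0\|_\infty+\sup|v_w|+\int\|f\|_2)$ obtained from the same representation and Gronwall, by $e^{-\la_2T_0}\|h_0\|_\infty$ plus the stated error terms. This yields \eqref{es ft}.
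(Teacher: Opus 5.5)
Your overall architecture matches the paper's: stochastic cycles from Lemma \ref{lem es h-drcl}, the small-measure bound on $\{t^k>0\}$, a second $K_w$ iteration with velocity and time cutoffs, and a change of variables producing $\|f\|_2$. However, Step 2 has a genuine gap exactly where the $L^2$ norm must be produced. You change variables $v_1'\mapsto X_1^f(s';s,X_{1,\mathbf{cl}}(s),v_1')$ only on the part of the second expansion with no bounce between $s'$ and $s$. The bounce pieces you declare ``re-expanded or absorbed by the small-measure argument''. Neither option works:
\begin{itemize}
\item The small-measure estimate controls only the event of at least $k$ bounces. The pieces with $1\le l'\le k-1$ bounces carry most of the mass when $T_0$ is large.
\item Re-expanding them only generates new $K_w$ terms along further post-bounce segments, so the procedure never closes.
\end{itemize}
In these pieces $h$ is evaluated at $X_1^f(s_1;\bar t^{l'},\bar x_1^{l'},\bar v_1^{l'})$, which depends on $v'$ only through the first exit time $\bar t^1$. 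So $v_1'$ is the wrong variable there. The missing idea, used in \eqref{l1-l2-drcl}, is to change variables in the diffuse-reflection velocity $\bar v_1^{l'}$ of the segment containing $s_1$. Since $\bar t^{l'},\bar x_1^{l'}$ do not depend on $\bar v^{l'}$, and $\mathrm d\sigma^{l'}$ has a bounded Gaussian density, the map $\bar v_1^{l'}\mapsto y_1=X_1^f(s_1;\bar t^{l'},\bar x_1^{l'},\bar v_1^{l'})$ has Jacobian $(\bar t^{l'}-s_1)/(1+x_w(s_1))\ge\eta_0/2$ whenever $\bar t^{l'}-s_1\ge\eta_0$. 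The truncated kernel $\mathbf k_m(\bar v^{l'},v_{**})\le C_N$ is harmless in this step. Accordingly, the small-time cutoff must be on the time since the last bounce, $\bar t^{l'}-s_1\le\eta_0$, not on $s-s'$.

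Your Step 3 is also problematic as written. To control $2^{-CT_0^{5/4}}\sup_{s\le T_0}\|h(s)\|_\infty$ through a crude bound $C(T_0)$, you need $C(T_0)\,2^{-CT_0^{5/4}}\le e^{-\la_2T_0}$. A Gronwall bound extracted from the same $k$-cycle representation has a rate that grows with $k$: the $K_w$ terms on the $k-1$ segments naturally cost a factor $k$, cf.\ \eqref{int-ds^j2}. This gives $C(T_0)\sim e^{CkT_0}=e^{CT_0^{9/4}}$, which swamps the tail factor. The paper avoids this as follows:
\begin{itemize}
\item It proves the pointwise bound for every $t\in[0,T_0]$ and multiplies by $e^{\frac{\nu_0}{2}t}$.
\item It absorbs $Ck^2\big[(\tfrac12)^k+\eta_0^2+\vps+\tfrac1N+e^{-\eps N^2/8}\big]\sup_{s\le T_0}e^{\frac{\nu_0}{2}s}\|h(s)\|_\infty$ into the left side. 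This works because $k^2(\tfrac12)^k$ is small and $\eta_0,N,\vps$ are chosen after $k$.
\item Only then does it take $T_0$ large so that $2C_Kk^2e^{-\frac{\nu_0}{2}T_0}\le e^{-\la_2T_0}$.
\end{itemize}
You should close your argument the same way.
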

\begin{proof}
To prove \eqref{es ft}, it suffices to estimate the terms on the right hand side of \eqref{t1<0-drcl} and \eqref{t1>0-drcl}. For the $r$-contribution in  \eqref{t1>0-drcl}, it is straightforward to obtain
	\begin{equation}
		e^{-\nu(v)(t-t^1(t,x_1,v_1))}	|w r(t^1,x_1^1,v)|\leq e^{-\nu_0(t-t^1)}|w r(t^1,x_1^1,v)|.	\notag
	\end{equation}
Since the exponential time-decay factor in ${\rm d}\Sigma_l^r$ is bounded by $e^{-\nu_0 (t - t^{l+1})}$, it follows from \eqref{dSigma}, together with the choice $k = C T_0^{5/4}$ ensuring that \eqref{int-ds^j3} in Lemma \ref{lem es} holds, that the $r$-contribution in \eqref{es H4} can be bounded by
\begin{align}
		Ck&\Big(1+\frac{1}{k}\Big)^{k-1}e^{-\frac{\nu_0}{2}t}\frac{1}{\tilde w(v)}|e^{\frac{\nu_0}{2}s}wr(s)|_\infty\leq Cke^{-\frac{\nu_0}{2}t}\mathop{\sup}_{0\leq s\leq t}|e^{\frac{\nu_0}{2}s}wr(s)|_\infty.\notag
	\end{align}
Next, we consider the $\Gamma(\frac{h}{w},\frac{h}{w})$-contribution in \eqref{t1<0-drcl}, \eqref{t1>0-drcl}, \eqref{es H2} and \eqref{es H3}. The exponential time-decay factor in $d\Sigma_l(s)$ is bounded by $e^{-\nu_0(t^1-s)}$. The $\Gamma(\frac{h}{w},\frac{h}{w})$-contribution can be bounded by
	\begin{align*}
		2\int_0^t&e^{-\nu(v)(t-s)}\left\| \frac{w \Gamma(\frac{h}{w},\frac{h}{w})}{\langle v\rangle}(s)\right\|_\infty{\rm d}s+\frac{e^{-\nu_0(t-t^1)}}{\tilde w(v)}\int_0^te^{-\nu_0(t^1-s)}\left\|\frac{w \Gamma(\frac{h}{w},\frac{h}{w})}{\langle v\rangle}(s)\right\|_\infty\\
		&\times\sum_{l=1}^{k-1}\Big\{\int \textbf 1_{\{t^{l+1}\leq0< t^l\}}\tilde w(v^l)\langle v^l\rangle\prod_{j=1}^{k-1}{\rm d}\sigma^j+\mathop{\max}_l\int \textbf 1_{\{0<t^{l+1}\}}\tilde w(v^l)\langle v^l\rangle\prod_{j=1}^{k-1}{\rm d}\sigma^j\Big\}{\rm d}s,
	\end{align*}
	where we have used
	\begin{align*}
		|w \Gamma(\frac{h}{w},\frac{h}{w})(s,X_1^f(s;t^l,x_1^l,v_1^l),v^l)|&=\frac{w \Gamma(\frac{h}{w},\frac{h}{w})(s,X_1^f(s;t^l,x_1^l,v_1^l),v^l)}{\langle v^l\rangle}\times\langle v^l\rangle\\
		&\leq\left\|\frac{w \Gamma(\frac{h}{w},\frac{h}{w})}{\langle v\rangle}(s,v)\right\|_\infty\langle v^l\rangle.
	\end{align*}
	Using Lemma \ref{lem es}, the second term can be bounded by
	\begin{align*}
		Ck& e^{-\nu_0(t-t^1)}\int_0^t e^{-\nu_0(t^1-s)}\left\|\frac{w \Gamma(\frac{h}{w},\frac{h}{w})}{\langle v\rangle}(s)\right\|_\infty{\rm d}s\\
		&\leq Ck e^{-\nu_0(t-t^1)}\int_0^t e^{-\frac{\nu_0}{2}(t^1-s)}e^{-\frac{\nu_0}{2}t^1}\mathop{\sup}_{0\leq s\leq t}\Big\{e^{\frac{\nu_0}{2}s}\left\|\frac{w \Gamma(\frac{h}{w},\frac{h}{w})}{\langle v\rangle}(s)\right\|_\infty\Big\}{\rm d}s\\
		&\leq Ck e^{-\frac{\nu_0}{2}t}\mathop{\sup}_{0\leq s\leq t}\Big\{e^{\frac{\nu_0}{2}s}\left\|\frac{w \Gamma(\frac{h}{w},\frac{h}{w})}{\langle v\rangle}(s)\right\|_\infty\Big\}.
	\end{align*}
	Therefore, the $\Gamma(\frac{h}{w},\frac{h}{w})$-contribution can be further bounded by
	\begin{equation}
		Cke^{-\frac{\nu_0}{2}t}\mathop{\sup}_{0\leq s\leq t}\Big\{e^{\frac{\nu_0}{2}s}\left\|\frac{w \Gamma(\frac{h}{w},\frac{h}{w})}{\langle v\rangle}(s)\right\|_\infty\Big\}.\notag
	\end{equation}
	
Next applying Lemma \ref{lem es} again, we can bound the first term on the right hand side of \eqref{t1<0-drcl}, \eqref{es H1} and \eqref{es H5} as
	\begin{align*}
		e^{-\nu(v)t}&|h(0,X_1^f(0;t,x_1,v_1),v)|\textbf 1_{\{t^1<0\}}\\
		&\quad+\frac{e^{-\nu(v)(t-t^1)}}{\tilde w(v)}\int_{\prod_{j=1}^{k-1}\mathcal V^j}\Big\{\sum_{l=1}^{k-1}\textbf 1_{\{t^{l+1}\leq0<t^l\}}|h(0,X_1^f(0;t^l,x_1^l,v_1^l),v^l)|{\rm d}\Sigma_l(0)\\
		&\qquad+\textbf 1_{\{0<t^k\}}|h(t^k.x_1^k,v^{k-1})|{\rm d}\Sigma_{k-1}(t^k)\Big\}\\
		&\leq C e^{-\nu_0t}\left\|h(0)\right\|_\infty+ e^{-\nu_0t}\int \sum_{l=1}^{k-1}\textbf 1_{\{t^{l+1}\leq0<t^l\}}{\rm d}\Sigma_l\left\|h(0)\right\|_\infty\\
		&\quad+ e^{-\frac{\nu_0}{2}t}\mathop{\sup}_{0\leq s\leq t}\big\{e^{\frac{\nu_0}{2}s}\left\| h(s)\right\|_\infty\Big\}\int\textbf 1_{\{0<t^k\}}{\rm d}\Sigma_{k-1}\\
		&\leq Ce^{-\frac{\nu_0}{2}t}\Big\{k\left\|h(0)\right\|_\infty+\Big\{\frac{1}{2}\Big\}^{k}\mathop{\sup}_{0\leq s\leq t}\big\{e^{\frac{\nu_0}{2}s}\left\| h(s)\right\|_\infty\big\}\Big\}.
	\end{align*}
Consequently, we arrive at
	\begin{align}\label{es 1-drcl}
		|h(t,x_1,v)|&\leq \textbf 1_{\{t^1<0\}}\int_{0}^te^{-\nu(v)(t-s)}|K_{w}h(s,X_1^f(s;t,x_1,v_1),v)|{\rm d}s\notag\\
		&\quad+\textbf 1_{\{t^1>0\}}\int_{t^1}^te^{-\nu(v)(t-s)}|K_{w}h(s,X_1^f(s;t,x_1,v_1),v)|{\rm d}s\notag\\
		&\quad+\frac{e^{-\nu(t-t^1)}}{\tilde w}\int_{\prod_{j=1}^{k-1}\mathcal V^j}\sum_{l=1}^{k-1}\Big\{\int_0^{t^l}\textbf 1_{\{t^{l+1}\leq0<t^l\}}|K_{w}h(s,X_{1,\mathbf{cl}}(s),v^l)|{\rm d}\Sigma_l(s){\rm d}s\notag\\
		&\qquad+\int_{t^{l+1}}^{t^l}\textbf 1_{\{0<t^{l+1}\}}|K_{w}h(s,X_{1,\mathbf{cl}}(s),v^l)|{\rm d}\Sigma_l(s){\rm d}s\Big\}\notag\\
		&\quad+e^{-\frac{\nu_0}{2}t}A(t,x_1,v_1),
	\end{align}
	where
	\begin{align}\label{def.A.dd}
		A(t,x_1,v_1)=&Ck\left\|h(0)\right\|_\infty+e^{\frac{\nu_0}{2}t^1}	|w r(t^1,x_1^1,v)|+Ck\mathop{\sup}_{0\leq s\leq t}|e^{\frac{\nu_0}{2}s}wr(s)|_\infty\notag\\
		&+Ck\mathop{\sup}_{0\leq s\leq t}\Big\{e^{\frac{\nu_0}{2}s}\left\|w \Gamma(\frac{h}{w},\frac{h}{w})(s)\right\|_\infty\Big\}+C\Big\{\frac{1}{2}\Big\}^{k}\mathop{\sup}_{0\leq s\leq t}\big\{e^{\frac{\nu_0}{2}s}\left\| h(s)\right\|_\infty\big\}.
	\end{align}
	Recalling the definition of $X_{1,\mathbf {cl}}(s;t,x_1,v_1)$ and $V_{\mathbf {cl}}(s;t,x_1,v_1)$ in \eqref{def X_cl}, we further iterate the $K_{w}$ operator in \eqref{es 1-drcl} as
	\begin{align}\label{Kh-drcl}
		K_{w}&h(s,X_{1,\mathbf {cl}}(s),v^l)\leq\int_{\mathbb R^3}\textbf k_w(v^l,v_*)|h(s,X_{1,\mathbf {cl}}(s),v_*){\rm d}v_\ast\notag\\
		&\leq\iint \textbf 1_{\{\bar t^1<0\}}\int_{0}^se^{-\nu(v_*)(s-s_1)}\textbf k_w(v^l,v_*)\textbf k_w(v_*,v_{**})\notag\\
		&\qquad\times|h(s_1,X_1^f(s_1;s,X_{1,{\bf cl}}(s),v_{*1}),v_{**})|{\rm d}s_1{\rm d}v_*{\rm d}v_{**}\notag\\
		&\quad+\iint \textbf 1_{\{\bar t^1>0\}}\int_{\bar t^1}^se^{-\nu(v_*)(s-s_1)}\textbf k_w(v^l,v_*)\textbf k_w(v_*,v_{**})\notag\\
		&\qquad\times|h(s_1,X_1^f(s_1;s,X_{1,\rm cl}(s),v_{*1}),v_{**})|{\rm d}s_1{\rm d}v_*{\rm d}v_{**}\notag\\
		&\quad+\iint\int_{\prod_{ j=1}^{k-1}\bar{\mathcal V}^j}\frac{e^{-\nu(v_*)(s-\bar t^1)}}{\tilde w(v_*)}\sum_{ l'=1}^{k-1}\Big\{\int_0^{\bar{t}^{l'}}\textbf 1_{\{\bar t^{l'+1}\leq0<\bar t^{l'}\}}\notag\\
		&\qquad\times\textbf k_w(v^l,v_*)\textbf k_w(\bar v^{l'},v_{**}) |h(s_1,X_1^f(s_1;\bar t^{l'},\bar x_1^{l'},\bar v_1^{l'}),v_{**})|{\rm d}\Sigma_{l'}(s_1){\rm d}s_1{\rm d}v_*{\rm d}v_{**}\notag\\
		&\quad+\iint\int_{\prod_{ j=1}^{k-1}\bar{\mathcal V}^j}\frac{e^{-\nu(v_*)(s-\bar t^1)}}{\tilde w(v_*)}\sum_{ l'=1}^{k-1}\Big\{\int_{\bar t^{l'+1}}^{\bar{t}^{l'}}\textbf 1_{\{\bar t^{l'+1}>0\}}\notag\\
		&\qquad\times\textbf k_w(v^l,v_*)\textbf k_w(\bar v^{l'},v_{**}) |h(s_1,X_1^f(s_1;\bar t^{l'},\bar x_1^{l'},\bar v_1^{l'}),v_{**})|{\rm d}\Sigma_{l'}(s_1){\rm d}s_1{\rm d}v_*{\rm d}v_{**}\notag\\
		&\quad+e^{-\frac{\nu_0}{2}s}\int_{\mathbb R^3}\textbf k_w(v^l,v_*)A(s,X_{1,\mathbf{cl}}(s),v_*){\rm d}v_*,
	\end{align}
where
\begin{align*}
	&\bar t^1=t^1(s,X_{1,\mathbf{cl}}(s),v_{*1}),\\
	&\bar x_1^1=X_1^f(\bar t^1;s,X_{1,\mathbf{cl}}(s),v_{*1}),\\
	&\bar v^1\in\bar{\mathcal V}^1:=\{\bar x^1_1=0,\bar v_1^1<0,\bar v^1\in\mathbb R^3\}\cup\{\bar x^1_1=1,\bar v_1^1-v_w(\bar t^1)>0,\bar v^1\in\mathbb R^3\},
\end{align*}
and inductively for $l\geq1$
\begin{align*}
	&\bar t^l=\sup\{0\leq s<\bar t^{l-1}: X_1^f(s;\bar t^{l-1},\bar x^{l-1}_1,\bar v^{l-1}_1)\in\Omega\},\\
	&\bar x_1^l=X_1^f(\bar t^l;\bar t^{l-1},\bar x_1^{l-1},\bar v_1^{l-1}),\\
	&\bar v^l\in\bar{\mathcal V}^l:=\{\bar x^l_1=0,\bar v_1^l<0,\bar v^l\in\mathbb R^3\}\cup\{\bar x^l_1=1,\bar v_1^l-v_w(\bar t^l)>0,\bar v^l\in\mathbb R^3\}.
\end{align*}
Now, plugging \eqref{Kh-drcl} into \eqref{es 1-drcl}, we estimate the right hand side of the resulting inequality as follows. By Lemma \ref{k-op}, the contribution from $A=A(t,x_1,v_1)$ by \eqref{def.A.dd} can be bounded by
	\begin{align}\label{es A-con-drcl}
		2A(t)&\int^t_0 e^{-\nu_0(t-s)}e^{-\frac{\nu_0}{2}s}{\rm d}s+e^{-\frac{\nu_0}{2}t} A\notag\\&+e^{-\frac{\nu_0}{2}t} A(t)\int_{\prod_{j=1}^{k-1}\mathcal V^j}\sum_{l=1}^{k-1}\Big\{\int_0^{t^l}\textbf 1_{\{t^{l+1}\leq0<t^l\}}+\int_{t^{l+1}}^{t^l}\textbf 1_{\{0<t^{l+1}\}}\Big\} e^{-\frac{\nu_0}{2}(t-s)}\tilde w(v^l){\rm d}\Sigma_l{\rm d}s\notag\\
		&\lesssim e^{-\frac{\nu_0}{2}t}\Big[k+\Big\{\frac{1}{2}\Big\}^{k}\Big]A(t)\notag\\
		&\lesssim e^{-\frac{\nu_0}{2}t}k\left[k\mathop{\sup}_{0\leq s\leq t}|e^{\frac{\nu_0}{2}s}wr(s)|_\infty+k\mathop{\sup}_{0\leq s\leq t}\Big\{e^{\frac{\nu_0}{2}s}\left\|\frac{w \Gamma(\frac{h}{w},\frac{h}{w})}{\langle v\rangle}(s)\right\|_\infty\Big\}\right]\notag\\
		&\quad+ ke^{-\frac{\nu_0}{2}t}\left\|h(0)\right\|_\infty+\Big\{\frac{1}{2}\Big\}^{k}e^{-\frac{\nu_0}{2}t}\mathop{\sup}_{0\leq s\leq t}\big\{e^{\frac{\nu_0}{2}s}\left\| h(s)\right\|_\infty\big\}.
	\end{align}
For the $K_wh$ contribution,  we focus on only the following delicate term:
	\begin{align}\label{es kkh-drcl}
		&\frac{e^{-\nu(t-t^1)}}{\tilde w}\int_{\prod_{j=1}^{k-1}\mathcal V^j}\sum_{l=1}^{k-1}\int_0^{t^l}\textbf 1_{\{t^{l+1}\leq0<t^l\}}\iint\int_{\prod_{ j=1}^{k-1}\bar{\mathcal V}^j}\frac{e^{-\nu(v_*)(s-\bar t^1)}}{\tilde w(v_*)}\sum_{ l'=1}^{k-1}\Big\{\int_0^{\bar{t}^{l'}}\textbf 1_{\{\bar t^{l'+1}\leq0<\bar t^{l'}\}}\notag\\
		&\times\textbf k_w(v^l,v_*)\textbf k_w(\bar v^{l'},v_{**}) |h(s_1,X_1^f(s_1;\bar t^{l'},\bar x_1^{l'},\bar v_1^{l'}),v_{**})|{\rm d}\Sigma_{l'}(s_1){\rm d}s_1{\rm d}v_*{\rm d}v_{**}{\rm d}\Sigma_l(s){\rm d}s,
	\end{align}
and the remaining terms can be treated in a similar manner.
To estimate \eqref{es kkh-drcl},  we first split the integral into two parts according  to whether $s_1\leq \bar t^{l'}-\eta_0$ and $s_1\geq \bar t^{l'}-\eta_0$ for $\eta_0>0$ suitably small. In the latter case, we obtain
	\begin{equation}\label{es h-con1}
		\eta_0^2 k^2e^{-\frac{\nu_0}{2}t}\mathop{\sup}_{0\leq s\leq t}\big\{e^{\frac{\nu_0}{2} s}\left\| h(s)\right\|_\infty\big\}.
	\end{equation}
	Next we turn to the case $s_1\leq \bar t^{l'}-\eta_0$, for which the computation is divided into the following subcases.
	
	\noindent{\textbf {Case 1:}} $|v^l|\geq N,$ or $|\bar v^{l'}|\geq N$.  Since
	\begin{equation*}
		\iint 	\textbf k_w(v^l,v_*)\textbf k_w(\bar v^{l'},v_{**}){\rm d}v_*{\rm d}v_{**}\lesssim \frac{1}{(1+|v^l|)(1+|\bar v^{l'}|)}\leq \frac{C}{N},
	\end{equation*}
	and
	\begin{equation*}
		\int_{\prod_{j=1}^{k-1}\mathcal V^j}{\rm d}\Sigma_l<\infty,\ 	\int_{\prod_{j=1}^{k-1}\bar{\mathcal V}^j}{\rm d}\Sigma_{l'}<\infty,
	\end{equation*}
	the contribution of \eqref{es kkh-drcl} for $|v^l|\geq N$ or $|\bar v^{l'}|\geq N$ can be bounded by
	\begin{align}\label{es case1}
		\frac{C}{N}k^2e^{-\frac{\nu_0}{2}t}\mathop{\sup}_{0\leq s\leq t}\big\{e^{\frac{\nu_0}{2}s}\left\| h(s)\right\|_\infty\big\}.
	\end{align}
	\noindent{\textbf{Case 2:}} $|v^l|\leq N$ with $|v_*|\geq 2N$ or  $|\bar v^{l'}|\leq N$ with $|v_{**}|\geq 2N$.   In this case, we have $|v^l-v_*|\geq N$ or $|\bar v^{l'}-v_{**}|\geq N$. Hence
	\begin{align*}
		|\textbf k_w(v^l,v_*)|\leq  e^{-\frac{\eps}{8}N^2}\left|\textbf k_w(v^l,v_*)e^{\frac{\eps}{8}|v^l-v_*|^2}\right|,|\textbf k_w(\bar v^{l'},v_{**})|\leq  e^{-\frac{\eps}{8}N^2}\left|\textbf k_w(\bar v^{l'},v_{**})e^{\frac{\eps}{8}|\bar v^{l'}-v_{**}|^2}\right|.
	\end{align*}
	Consequently, we get from Lemma \ref{k-op} that
	\begin{align}
		&\int_{\prod_{j=1}^{k-1}\mathcal V^j\cap\{|v^l|\leq N\}}\int_{|v_*|\geq2N}\int_{\prod_{ j=1}^{k-1}\bar{\mathcal V}^j}\textbf k_w(v^l,v_*) {\rm d}\Sigma_{l'}{\rm d}v_*{\rm d}\Sigma_l\notag\\
		&\qquad+\int_{\prod_{j=1}^{k-1}\mathcal V^j}\int_{|v_{**}|\geq2N}\int_{\prod_{ j=1}^{k-1}\bar{\mathcal V}^j\cap\{|v^l|\leq N\}}\textbf k_w(\bar v^{l'},v_{**}) {\rm d}\Sigma_{l'}{\rm d}v_*{\rm d}\Sigma_l\notag\\
		&\quad\lesssim e^{-\frac{\eps}{8}N^2}\int_{\prod_{j=1}^{k-1}\mathcal V^j\cap\{|v^l|\leq N\}}\int_{|v_*|\geq2N}\int_{\prod_{ j=1}^{k-1}\bar{\mathcal V}^j}\textbf k_w(v^l,v_*)e^{\frac{\eps}{8}|v^l-v_*|^2} {\rm d}\Sigma_{l'}{\rm d}v_*{\rm d}\Sigma_l\notag\\
		&\qquad+e^{-\frac{\eps}{8}N^2}\int_{\prod_{j=1}^{k-1}\mathcal V^j}\int_{|v_{**}|\geq2N}\int_{\prod_{ j=1}^{k-1}\bar{\mathcal V}^j\cap\{|v^l|\leq N\}}\textbf k_w(v^l,v_*)e^{\frac{\eps}{8}|\bar v^{l'}-v_{**}|^2} {\rm d}\Sigma_{l'}{\rm d}v_*{\rm d}\Sigma_l
		\lesssim e^{-\frac{\eps}{8}N^2}.\notag
	\end{align}
	Therefore, the contribution of \eqref{es kkh-drcl} corresponding to $|v^l-v_*|\geq N$ or $|\bar v^{l'}-v_{**}|\geq N$ can be bounded by
	\begin{equation}\label{es case2}
		e^{-\frac{\eps}{8}N^2} e^{-\frac{\nu_0}{2}t}\mathop{\sup}_{0\leq s\leq t}\big\{e^{\frac{\nu_0}{2}s}\left\| h(s)\right\|_\infty\big\}.
	\end{equation}
	\noindent{\textbf {Case 3:}}  $|v^l|\leq N$ with $|v_*|\leq 2N$ and  $|\bar v^{l'}|\leq N$ with $|v_{**}|\leq 2N$. In this stage, firstly, for any large $N\geq 1$, we choose a number $m=m(N)$ and define
	\begin{equation*}
		\textbf k_m(\bar v^{l'},v_{**})=\mathbf 1_{|\bar v^{l'}-v_{**}|\geq\frac{1}{m}, |v_{**}|\leq m}\textbf k_w(\bar v^{l'},v_{**}),
	\end{equation*}
	such that
	\begin{equation*}
		\mathop{\sup}_{\bar v^{l'}}\int_{\mathbb R^3}|\textbf k_m(\bar v^{l'},v_{**})-\textbf k_w(\bar v^{l'},v_{**})|{\rm d}v_{**}\leq\frac{1}{N}.
	\end{equation*}
	We split $\textbf k_w(\bar v^{l'},v_{**})=\{\textbf k_w(\bar v^{l'},v_{**})-\textbf k_m(\bar v^{l'},v_{**})\}+\textbf k_m(\bar v^{l'},v_{**})$, for which,  the first part leads to the following bound for the corresponding contribution of \eqref{es kkh-drcl}
	\begin{equation}\label{es h-con2}
		\frac{C}{N}k^2 e^{-\frac{\nu_0}{2}t}\mathop{\sup}_{0\leq s\leq t}\big\{e^{\frac{\nu_0}{2}s}\left\| h(s)\right\|_\infty\big\}.
	\end{equation}
	For the second part, note that in this situation
	\begin{equation*}
		|\textbf k_m(\bar v^{l'},v_{**})|\leq C_N,
	\end{equation*}
	we may make a change of variable $y_1=\frac{1+x_w(\bar t^{l'})}{1+x_w(s_1)}\bar x_1^{l'}+\frac{\bar v_1^{l'}(s_1-\bar t^{l'})}{1+x_w(s_1)}$ to obtain the estimate
	\begin{align}\label{l1-l2-drcl}
		\int_{|v_{**}|\leq m}&\int_{\bar{\mathcal  V}^{l'}}|h(s_1,X_1^f(s_1;\bar t^{l'},\bar x_1^{l'},\bar v_1^{l'}),v_{**})|e^{-(\frac{1}{4}+\zeta)|\bar v^{l'}|^2}|\bar v_1^{l'}-v_w(\bar t^{l'})\textbf 1_{\{\bar x_1^{l'}=1\}}|{\rm d}\bar v^{l'}{\rm d}v''\notag\\&\lesssim\int_{\mathbb R^2} e^{-(\frac{1}{4}+\zeta)(|\bar v_2^{l'}|^2+|\bar v_3^{l'}|^2)}{\rm d}\bar v_2^{l'}{\rm d}\bar v_3^{l'}\int_{|v_{**}|\leq m}{\rm d}v_{**} \int_{\mathbb R}|h(s_1,X_1^f(s_1;\bar t^{l'},\bar x_1^{l'},\bar v_1^{l'}),v_{**})|\textbf 1_{\{X_1^f\in\bar\Omega\}}{\rm d}\bar v_1^{l'}\notag\\
		&\lesssim {\frac{C}{\eta_0}}\int_{\Omega}\int_{|v_{**}|\leq m}|h(s_1,y_1,v_{**})|{\rm d}y_1{\rm d}v_{**}{\lesssim_{m,\eta_0}}\left\|\frac{h(s_1)}{w}\right\|_2,
	\end{align}
	for $s_1$ fixed, where we have used the fact that   $|\frac{d y_1}{d \bar v_1^{l'}}|\geq\frac{\eta_0}{2}$ for $\bar t^{l'}-s_1\geq\eta_0$. Combining \eqref{es h-con1}, \eqref{es case1}, \eqref{es case2}, \eqref{es h-con2} and \eqref{l1-l2-drcl}, we can bound \eqref{es kkh-drcl} as
	\begin{align}\label{B-drcl}
		\eqref{es kkh-drcl}
	&	\lesssim k^2e^{-\frac{\nu_0}{2}t}\left\{\Big[{\eta_0^2}+\frac{1}{N}+e^{-\frac{\eps N^2}{8}}\Big]\mathop{\sup}_{0\leq s\leq t}\big\{e^{\frac{\nu_0}{2}s}\left\| h(s)\right\|_\infty\big\}+{C_{\eta_0,m,N}}e^{\frac{\nu_0}{2}t}\int_0^t\left\|\frac{h(s)}{w}\right\|_2{\rm d}s\right\}\notag\\
	&	=:e^{-\frac{\nu_0}{2}t}B.
	\end{align}
Note that, after substituting \eqref{Kh-drcl}, all remaining terms in \eqref{es 1-drcl}, except for the contribution from $A$, admit the same bound as above.

	Plugging \eqref{es A-con-drcl} and \eqref{B-drcl} into \eqref{es 1-drcl} and applying Lemma \ref{Ga}, we obtain
	\begin{align*}
		|h(t,x_1,v)|&\lesssim e^{-\frac{\nu_0}{2}t}[A(t)+B(t)]\notag\\
		&\leq C_Kk^2e^{-\frac{\nu_0}{2} t}\left\|h(0)\right\|_\infty+C e^{-\frac{\nu_0}{2}t}k^2\Big[\mathop{\sup}_{0\leq s\leq t}\Big\{e^{\frac{\nu_0}{2}s}|w r(s)|_\infty\Big\}+\notag\\
		&\qquad+\big[\Big\{\frac{1}{2}\Big\}^{k}+{\eta_0^2}+\vps+\frac{1}{N}+e^{-\frac{\eps N^2}{8}}\big]\mathop{\sup}_{0\leq s\leq t}\big\{e^{\frac{\nu_0}{2}s}\left\| h(s)\right\|_\infty\big\}\Big]\notag\\
		&\qquad+{C_{\eta_0,m, N}}k^2\int_0^t\left\|\frac{h(s)}{w}\right\|_2{\rm d}s,
	\end{align*}
where $C_K>0$.

Choosing $\vps>0$, ${\eta_0>0}$ sufficiently small, and $N$, $k$ sufficiently large such that $2C_Kk^2 e^{-\frac{\nu_0}{2}T_0}\leq e^{-\la_2T_0}$, we further obtain
\begin{align}
&\mathop{\sup}_{0\leq t\leq T_0}\left\{e^{\frac{\nu_0}{2}t}\|h(t)\|_\infty\right\}+\mathop{\sup}_{0\leq t\leq T_0}\left\{e^{\frac{\nu_0}{2}t}|h(t)|_\infty\right\}\notag\\
&\quad\leq e^{(\frac{\nu_0}{2}-\la_2)T_0}\|h_0\|_\infty+C(T_0)\left(\mathop{\sup}_{0\leq t\leq T_0}|v_w(t)|+\int_0^{T_0}\left\|\frac{h(t)}{w}\right\|_2{\rm d}t\right).\label{h-if-l2}
\end{align}
Here we have used the estimate $|w r(t)| \lesssim |v_w(t)|$. Noting that $k \sim T_0^{\frac{5}{4}}$, it follows from \eqref{h-if-l2} that \eqref{es ft} holds. This completes the proof of Lemma \ref{lem es ft}.
\end{proof}

We now turn to complete the proof of Proposition \ref{prop es L^infty-drcl}.
\begin{proof}[The proof of Proposition \ref{prop es L^infty-drcl}]
Let us first define
	\begin{equation*}
		\mathcal R\equiv\|wf_0\|_\infty+|x_{w0}|+|v_{w0}|+\mathop{\sup}_{0\leq s\leq\infty}\left\{e^{\frac{3\lambda_1}{2}s}[|x_w(s)|^{\frac{1}{2}}+|v_w(s)|^{\frac{1}{2}}]\|wf(s)\|_\infty\right\}.
	\end{equation*}
By Proposition \ref{decay-l2} and the {\it a priori} assumption \eqref{apas-2-drbcl}, one has
\begin{equation}
	\|f(t)\|_2+|x_w(t)|+|v_w(t)|\lesssim e^{-\lambda_1 t}\mathcal R.\label{l2-decay-r}
\end{equation}
Let $m \in \mathbb{N}$. By induction on $m$, and using \eqref{es ft} together with \eqref{l2-decay-r}, we obtain
\begin{align}
&	\|h(\{m+1\}T_0)\|_\infty+|h(\{m+1\}T_0)|_\infty\notag\\
&\quad	\leq e^{-\la_2T_0}\|h(mT_0)\|_\infty+C(T_0)e^{-\la_1T_0}\left(\mathop{\sup}_{0\leq s\leq T_0}|v_w(mT_0+s)|+\int_0^{T_0}\|f(mT_0+s)\|_2{\rm d}s\right)\notag\\
&\quad	\leq e^{-\la_2T_0}\|h(mT_0)\|_\infty+C(T_0)e^{-\la_1T_0}e^{-m\la_1T_0}\Big(\mathop{\sup}_{0\leq s\leq T_0}\left\{e^{m\la_1T_0}|v_w(mT_0+s)|\right\}\notag\\
&\qquad+\int_0^{T_0}e^{m\la_1T_0}\|f(mT_0+s)\|_2{\rm d}s\Big)\notag\\
&\quad\leq e^{-\la_2T_0}\|h(mT_0)\|_\infty+C(T_0)(1+T_0)e^{-\la_1T_0}e^{-m\la_1T_0}\mathcal R\notag\\
&\quad\leq e^{-\la_2 T_0}\left[e^{-\lambda_2T_0}\|h(\{m-1\}T_0)\|_\infty+C(T_0)(1+T_0)e^{-\lambda_1T_0}e^{-(m-1)\la_1T_0}\mathcal R\right]\notag\\
&\qquad+C(T_0)(1+T_0)e^{-\la_1T_0}e^{-m\la_1T_0}\mathcal R\notag\\
&\quad\leq e^{-2\la_2T_0}\|h(\{m-1\}T_0)\|_\infty+C(T_0)(1+T_0)e^{-\lambda_1T_0}e^{-m\la_1T_0}\mathcal R\left(1+e^{-(\la_2-\la_1)T_0}\right)\notag\\
&\quad\leq e^{-3\la_2T_0}\|h(\{m-2\}T_0)\|_\infty+C(T_0)(1+T_0)e^{-\lambda_1T_0}e^{-m\la_1T_0}
\left(1+e^{-(\la_2-\la_1)T_0}+e^{-2(\la_2-\la_1)T_0}\right)\mathcal R\notag\\
&\quad\leq e^{-(m+1)\la_2T_0}\|h(0)\|_\infty+C(T_0)(1+T_0)e^{-\lambda_1T_0}e^{-m\la_1T_0}\sum\limits_{j\geq0}e^{-j(\la_2-\la_1)T_0}\mathcal R\notag\\
&\quad\leq C(T_0)e^{-(m+1)\la_1T_0}\mathcal R,\label{mp1-decay}
\end{align}
where we have used $0<\lambda_1<\lambda_2$.

On the other hand, \eqref{es ft} can be extended to
\begin{equation}\label{es ft-1}
	\|h(t_1)\|_\infty+|h(t_1)|_\infty\leq e^{-\la_2t_1}\|h_0\|_\infty+C(T_0)\left(\mathop{\sup}_{0\leq s\leq t_1}|v_w(s)|+\int_0^{t_1}\|f(s)\|_2{\rm d}s\right),
	\end{equation}
for any $0\leq t_1\leq T_0$.

Therefore, for any $t=mT_0+t_1$ with $t_1\in[0,T_0]$, it follows from \eqref{mp1-decay} and \eqref{es ft-1} that
$$
\|h(t)\|_\infty+|h(t)|_\infty\leq Ce^{-\lambda_1t}\mathcal R.
$$
This, together with the {\it a priori} assumption \eqref{apas-2-drbcl}, yields \eqref{L^inf es-drcl}. This finishes the proof of Proposition \ref{prop es L^infty-drcl}.
	\end{proof}

\section{Local-in-time weighted $W^{1,p}$ estimate}\label{sec-loc-reg}
In this section, we derive weighted first-order derivative estimates for the solution $[f, x_w, v_w]$. As demonstrated in \cite{guo-17-inv}, such estimates for the first-order derivatives can be constructed, at least locally in time. Moreover, these weighted $W^{1,p}$ estimate plays a crucial role in the proof of the $L^{1+\de}$ stability establish in Section \ref{sec-sta}.
We first present two basic estimates as follows.
\begin{proposition}\label{al-es}
Let $\alpha_\vps(s,x_1,u_1)$ be given as \eqref{def alpha}.
	For $0<\vho<1$, $N>1$, $s\geq0$ and $x_1\in\bar\Omega$, there exists a constant $C_{\vho,N}$, depending only on $\vho$ and $N$, such that
	\begin{equation}\label{u<N}
		\int_{|u_1|\leq N}\frac{1}{[\alpha_\vps(s,x_1,u_1)]^{\vho}}{\rm d}u_1\leq C_{\vho,N}(s+1),
	\end{equation}
and
\begin{equation}\label{u>N}
	\int_{|u_1|\geq N}\frac{e^{-C|v_1-u_1|^2}}{[\alpha_\vps(s,x_1,u_1)]^{\vho}}{\rm d}u_1\leq C_{\vho,N}.
\end{equation}
\end{proposition}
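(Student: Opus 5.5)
Since $\alpha_\vps$ is a convex combination (via $\chi\in[0,1]$) of $|u_1-v_w(s-t_{\mathbf b})x_{1\mathbf b}|$ and $1$, we have $\alpha_\vps\geq \min\{1,|u_1-v_w(s-t_{\mathbf b})x_{1\mathbf b}|\}$. Where the cutoff is not equal to one we have $\alpha_\vps\ge 1-\chi$, but on the set where $\chi$ is close to $1$, $\alpha_\vps$ is close to $|u_1-v_w x_{1\mathbf b}|$. It therefore suffices to bound integrals of $[\min\{1,|u_1-v_w(s-t_{\mathbf b})x_{1\mathbf b}|\}]^{-\vho}$ in $u_1$. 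The plan follows the appendix of \cite{cao kim}: express the backward exit velocity $u_1-v_w(s-t_{\mathbf b})x_{1\mathbf b}$ in terms of $(x_1,u_1)$ via the explicit trajectory \eqref{X}, and change variables.

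\emph{Step 1: geometry of the exit.} The case $x_{1\mathbf b}=0$ (exit through the left wall) requires $u_1>0$ and gives $\alpha_\vps\ge \min\{1,|u_1|\}$, so $\int_{|u_1|\le N}|u_1|^{-\vho}du_1\le C_{\vho,N}$ since $\vho<1$. In the case $x_{1\mathbf b}=1$, the quantity is $|u_1-v_w(\tau)|$ with $\tau=s-t_{\mathbf b}$. Under \eqref{apas}, $|v_w|\le 2\vps_0$, and by \eqref{X} the exit time solves $(1+x_w(s))x_1+u_1(\tau-s)=1+x_w(\tau)$. The singular set is therefore $\{u_1\approx v_w(\tau(u_1))\}$. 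The main obstacle is that $\tau$ depends on $u_1$, so $u_1\mapsto u_1-v_w(\tau(u_1))$ is not a translation. I would show that this map is monotone with derivative bounded below by $1/2$ whenever $\chi\neq0$, i.e.\ when $\tau\ge -\vps$. Implicit differentiation gives $\partial_{u_1}\tau=(s-\tau)/(u_1-v_w(\tau))$ (note $\dot x_w=v_w$), so
\[
\partial_{u_1}[u_1-v_w(\tau)]=1-\dot v_w(\tau)\frac{s-\tau}{u_1-v_w(\tau)}.
\]
This derivative blows up near the singular set, so instead of a naive change of variables I will argue directly.

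\emph{Step 2: direct argument.} For $u_1$ near $v_w$, the grazing characteristic from $x_1=1$ has $|u_1-v_w(\tau)|\,(s-\tau)\approx$ the distance travelled, which is at least $1-x_1-O(\vps_0)(s-\tau)$. Hence $s-\tau\lesssim (1-x_1)/|u_1-v_w(\tau)|$, and since $\tau\in[-\vps,s]$, we get $s-\tau\le s+\vps$. Combining this with $|\dot v_w|\lesssim 1$ (from the ODE and the trace bound), I would split $u_1$ into dyadic shells $|u_1-v_w(s)|\sim 2^{-j}$. On each shell, $|v_w(\tau)-v_w(s)|\le C(s+1)\sup|\dot v_w|$, but this bound is not small. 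So instead I would bound the measure of $\{u_1:|u_1-v_w(\tau(u_1))|\le\delta\}$ by $C(s+1)\delta$, using that along such $u_1$ the trajectory is nearly grazing and the time spent satisfies $|\partial_{u_1}(s-\tau)|\sim (s-\tau)/\delta$. This distribution-function bound $|\{\alpha\le\delta\}\cap\{|u_1|\le N\}|\le C_N(s+1)\delta$, together with the layer-cake formula and $\vho<1$, yields \eqref{u<N}. This measure estimate is the step I expect to be delicate; I would first try to prove it via the monotonicity of $u_1\mapsto (1+x_w(s))x_1+u_1(\tau-s)$ at fixed $\tau$, counting the exit times.

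\emph{Step 3: the large-velocity bound \eqref{u>N}.} For $|u_1|\ge N\ge 1\gg|v_w|$, we have $|u_1-v_w x_{1\mathbf b}|\ge N/2\ge 1/2$, so $\alpha_\vps\ge 1/2$ on the whole range. Then $\int e^{-C|v_1-u_1|^2}\alpha^{-\vho}du_1\le 2^{\vho}\sqrt{\pi/C}$, which gives a constant independent of $s$.
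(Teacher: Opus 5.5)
Your reduction $\alpha_\vps\ge\min\{1,|u_1-v_w(s-t_{\mathbf b})x_{1\mathbf b}|\}$, the restriction to $t_{\mathbf b}\le s+\vps$, the exit through $x_1=0$, and your Step 3 for \eqref{u>N} are fine. Step 3 is essentially the paper's argument.

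The gap is Step 2, which carries all of \eqref{u<N}. The distribution bound $|\{\alpha_\vps\le\delta\}\cap\{|u_1|\le N\}|\le C_N(s+1)\delta$ is only asserted, and the heuristic behind it fails exactly where it is needed. Write $y_1=u_1-v_w(\tau)$ and $t_{\mathbf b}=s-\tau$. Then the exit condition at $x_1=1$ reads
\[
|y_1|\,t_{\mathbf b}=(1+x_w(s))(1-x_1)-\int_\tau^s\big(v_w(r)-v_w(\tau)\big)\,{\rm d}r .
\]
For $|y_1|\lesssim\vps_0$ this gives only a \emph{lower} bound on $t_{\mathbf b}$, not $t_{\mathbf b}\lesssim(1-x_1)/|y_1|$.

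In fact a bound linear in $\delta$ is false uniformly. Take
\begin{itemize}
\item $\dot v_w(s-r)=-\delta/(2r)$ for $r\in[\delta/(2M),s]$ and $\dot v_w(s-r)=0$ for $r<\delta/(2M)$, which is admissible if $|\dot v_w|\le M$;
\item $(1+x_w(s))(1-x_1)=\delta^2/(4M)$.
\end{itemize}
The identity then gives $|y_1|\equiv\delta/2$ along genuine first exits at $x_1=1$, for every $t_{\mathbf b}\in[\delta/(2M),s]$. Since ${\rm d}u_1=(|y_1|/t_{\mathbf b})\,{\rm d}t_{\mathbf b}$, this set of $u_1$ has measure $\frac{\delta}{2}\log\frac{2Ms}{\delta}$. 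A $\delta\log(1/\delta)$ bound would still close your layer-cake argument, since $\vho<1$. Proving even that, however, requires the idea below.

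The missing idea, used in the paper, is to split the $x_{1\mathbf b}=1$ branch according to whether $|y_1|>t_{\mathbf b}$ or $|y_1|\le t_{\mathbf b}$, and to change variables differently on each part.

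On $D_2=\{|y_1|>t_{\mathbf b}\}$, your own derivative formula gives $|\partial_{u_1}y_1-1|=|\dot v_w(\tau)|\,t_{\mathbf b}/|y_1|\le|\dot v_w|\le\frac12$, using $|\dot v_w|\lesssim\vps_0$. So $u_1\mapsto y_1$ has Jacobian in $[\frac12,2]$, and $\int_{D_2}|y_1|^{-\vho}{\rm d}u_1\le 2\int_{|y_1|\le N+1}|y_1|^{-\vho}{\rm d}y_1$.

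On $D_1=\{|y_1|\le t_{\mathbf b}\le s+1\}$, parametrize by the exit time itself. For a fixed exit wall and fixed $t_{\mathbf b}$, the exit condition determines $u_1$ uniquely. Hence $u_1\mapsto t_{\mathbf b}$ is injective with ${\rm d}u_1=(|y_1|/t_{\mathbf b})\,{\rm d}t_{\mathbf b}$, and
\[
\int_{D_1}|y_1|^{-\vho}\,{\rm d}u_1=\int\frac{|y_1|^{1-\vho}}{t_{\mathbf b}}\,{\rm d}t_{\mathbf b}\le\int_0^{s+1}t_{\mathbf b}^{-\vho}\,{\rm d}t_{\mathbf b}\le C_\vho(s+1).
\]
Your remark about counting exit times at fixed $\tau$ points in this direction. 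The key point is that on $D_1$ the Jacobian factor $|y_1|/t_{\mathbf b}$ absorbs the singularity $|y_1|^{-\vho}$ directly, so no level-set bound for $|y_1|$ is needed.
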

\begin{proof}
First, we prove \eqref{u<N}. For $x_1\in\bar\Omega$ and $s\geq0$, we can split
\begin{equation*}
	\{u_1\in\mathbb R: t_{\textbf b}(s,x_1,u_1)\leq s+1\}
\end{equation*}
into two parts
\begin{align*}
D_1:=\{u_1\in\mathbb R: t_{\textbf b}(s,x_1,u_1)\leq s+1,\ |u_1-v_w(s-t_{\textbf b})x_{1\textbf b}|\leq t_{\textbf b}\},\\
D_2:=\{u_1\in\mathbb R: t_{\textbf b}(s,x_1,u_1)\leq s+1,\ |u_1-v_w(s-t_{\textbf b})x_{1\textbf b}|> t_{\textbf b}\}.
\end{align*}
In part $D_1$, we define a map
\begin{equation}\label{map 8}
	u_1\in D_1\mapsto t_{\textbf b}\in\mathbb R.
\end{equation}
From \eqref{X} and \eqref{par t_b}, $t_{\textbf b}$ satisfies
\begin{equation}\label{Jac 8}
	|\frac{\partial t_{\textbf b}}{\partial u_1}|=|\frac{t_{\textbf b}}{u_1-v_w(s-t_{\textbf b})x_{1\textbf b}}|\geq 1.
\end{equation}
We use the map \eqref{map 8} and \eqref{Jac 8} to deduce that
\begin{align}
	\int_{\{|u_1|\leq N\}\cap D_1}\frac{1_{t_{\textbf b}\leq s+1}}{\alpha^\vho_\vps(s,x_1,u_1)}{\rm d}u_1&\leq \int\frac{|u_1-v_w(s-t_{\textbf b})x_{1\textbf b}|^{1-\vho}}{t_{\textbf b}}1_{|u_1-v_w(s-t_{\textbf b})x_{1\textbf b}|\leq t_{\textbf b}\leq s+1}{\rm d}t_{\textbf b}\notag\\
	&\leq \int\frac{1_{|u_1-v_w(s-t_{\textbf b})x_{1\textbf b}|\leq t_{\textbf b}\leq s+1}}{t_{\textbf b}^\vho}{\rm d}t_{\textbf b}\leq C_{\vho}(s+1).\notag
\end{align}
In part $D_2$, we define $y_1=u_1-v_w(s-t_{\textbf b})x_{1\textbf b}$, and deduce
\begin{align*}
2\geq\frac{{\rm d} y_1}{\rm d u_1}=1-v'_w(s-t_{\textbf b})\frac{t_{\textbf b}x_{1\textbf b}}{u_1-v_w(s-t_{\textbf b})x_{1\textbf b}}\geq\frac{1}{2}.
\end{align*}
By this, we then get the bound
\begin{equation*}
\int_{\{|u_1|\leq N\}\cap D_2}\frac{1_{t_{\textbf b}\leq s+1}}{[\alpha_\vps(s,x_1,u_1)]^{\vho}}{\rm d}u_1\leq 2\int_{|y_1|\leq N+1}\frac{1}{y_1^\vho}{\rm d}y_1\leq C_{N,\vho}.
\end{equation*}

Now we prove \eqref{u>N}. Note that for $|u_1|\geq N$,
\begin{equation}
	\alpha_\vps(s,x_1,u_1)\geq \textbf 1_{s-t_{\textbf b}\geq-\frac{\vps}{2}}|u_1-v_w(s-t_{\textbf b})x_{1\textbf b}|+\textbf 1_{s-t_{\textbf b}<-\frac{\vps}{2}},\notag
\end{equation}
which implies
\begin{align*}
	\int_{|u_1|\geq N}\frac{e^{-C|v_1-u_1|^2}}{[\alpha_\vps(s,x_1,u_1)]^{\vho}}{\rm d}u_1&\lesssim1+\int_{|u_1|\geq N}\frac{e^{-C|v_1-u_1|^2}}{|u_1-v_w(s-t_{\textbf b})x_{1\textbf b}|^\vho}{\rm d}u_1\lesssim1.
\end{align*}
This ends the proof of Proposition \ref{al-es}.
\end{proof}

The following lemma is devoted to the exponential weighted estimates for collision operators $K$ and $\Ga$.

\begin{lemma}[\cite{cao kim}]\label{es K}
For $0<\theta<\frac{1}{8}$, $0<\frac{\zeta}{2}<\theta$, $0<\tilde\theta<\theta-\frac{\zeta}{2}$, $0<\zeta'<\zeta$, it holds that for $\al\geq0$,
\begin{equation*}
	|e^{\zeta'|v|^2} K\partial^{\al}_{x_1}f(v)|\lesssim \int_{\mathbb R^3}{\bf k}_{\tilde\theta}(v,u)e^{\zeta'|u|^2}|\partial^{\al}_{x_1}f(u)|{\rm d}u,
\end{equation*}
\begin{equation*}
|e^{\zeta'|v|^2}\Gamma_{+}(\partial^{\al}_{x_1}f,f)|+|e^{\zeta'|v|^2}\Gamma_{+}(f,\partial^{\al}_{x_1}f)|\lesssim \left\|wf\right\|_\infty\int_{\mathbb R^3}{\bf k}_{\tilde\theta}(v,u)e^{\zeta|u|^2}|\partial^{\al}_{x_1}f(u)|{\rm d}u,	
\end{equation*}
\begin{equation*}
	|e^{\zeta'|v|^2}\Gamma_{-}(\partial^{\al}_{x_1}f,f)|\lesssim
\left\|wf\right\|_\infty\int_{\mathbb R^3}{\bf k}_{\tilde\theta}(v,u)e^{\zeta'|u|^2}|\partial^{\al}_{x_1}f(u)|{\rm d}u,
\end{equation*}	
\begin{equation*}
	|e^{\zeta'|v|^2}\Gamma_{-}(f,\partial^{\al}_{x_1}f)|\lesssim \langle v\rangle \left\|wf\right\|_\infty e^{\zeta'|v|^2} |\partial^{\al}_{x_1}f(v)|.
\end{equation*}
Here,
\begin{align*}
	{\bf k}_\theta(v,u):=\frac{1}{|v-u|}\exp\Big\{-\theta|v-u|^2-\theta\frac{||v|^2-|u|^2|^2}{|v-u|^2}\Big\}.
\end{align*}
\end{lemma}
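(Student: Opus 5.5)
We use the convention fixed by the definition of $Q(G,F)$ at the beginning of the paper. The loss part is $\int\!\int B(v-u,\omega)G(u)F(v)$, so the first argument is evaluated at the integration variable $u$ and the second at $v$. Hence
\[
\Gamma_-(g,h)(v)=h(v)\int_{\mathbb R^3}\int_{\mathbb S^2}|(v-u)\cdot\omega|\sqrt{\mu(u)}\,g(u)\,{\rm d}\omega{\rm d}u .
\]
In particular, $\Gamma_-(\partial^\alpha_{x_1}f,f)(v)=f(v)\int\!\int B\sqrt{\mu(u)}\,\partial^\alpha_{x_1}f(u)$ is an integral operator in $\partial^\alpha_{x_1}f$. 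By contrast, $\Gamma_-(f,\partial^\alpha_{x_1}f)(v)=\partial^\alpha_{x_1}f(v)\int\!\int B\sqrt{\mu(u)}f(u)$ is a multiplication operator. This is exactly the distinction the lemma records, and I will treat the four bounds in that order: $K$, the gain terms, $\Gamma_-(\partial^\alpha f,f)$, and $\Gamma_-(f,\partial^\alpha f)$.

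\textbf{The $K$ bound.} Since $\partial^\alpha_{x_1}$ commutes with $K$, we have $K\partial^\alpha_{x_1}f=\int(\mathbf k_2-\mathbf k_1)(v,u)\,\partial^\alpha_{x_1}f(u)\,{\rm d}u$. I would then show the pointwise kernel inequality
\[
e^{\zeta'|v|^2-\zeta'|u|^2}\big(\mathbf k_1+\mathbf k_2\big)(v,u)\lesssim \mathbf k_{\tilde\theta}(v,u).
\]
For $\mathbf k_2$ this is Guo's computation. Writing $|v|^2-|u|^2=(v-u)\cdot(v+u)$, the exponent $\zeta'(|v|^2-|u|^2)$ is split by Cauchy--Schwarz between the two Gaussian factors $e^{-|v-u|^2/8}$ and $e^{-||v|^2-|u|^2|^2/(8|v-u|^2)}$. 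This leaves the rates $\frac18-O(\zeta')$, which exceed $\tilde\theta$ since $\zeta'<\zeta<2\theta<\frac14$. For $\mathbf k_1$, the factor $e^{-(|v|^2+|u|^2)/4}$ dominates $e^{\zeta'|v|^2}$. It also dominates $|v-u|^2\,\mathbf k_{\tilde\theta}^{-1}$, because $\frac{||v|^2-|u|^2|^2}{|v-u|^2}\le(|v|+|u|)^2$.

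\textbf{The gain terms.} I write $\Gamma_+(g,h)=\mu^{-1/2}Q_+(\sqrt\mu g,\sqrt\mu h)$ and use $\sqrt{\mu(v)}^{-1}\sqrt{\mu(u)}\sqrt{\mu(u')}\sqrt{\mu(v')}=\sqrt{\mu(u)}$. The factor that is not differentiated, $f$ evaluated at $u'$ or $v'$, is bounded by $\|wf\|_\infty e^{-\zeta|\cdot|^2}$. The remaining factor $\partial^\alpha f$, evaluated at $v'$ or $u'$, is then converted by the Carleman change of variables into an integral against a Grad-type kernel. This kernel has the same structure as $\mathbf k_2$, with Gaussian rate $\theta$ reduced by the extra weights. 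Multiplying by $e^{\zeta'|v|^2}$ and inserting $e^{\zeta|u|^2}e^{-\zeta|u|^2}$ costs $\zeta/2$ in the exponent. This is why $\tilde\theta<\theta-\zeta/2$ appears, and why the weight $e^{\zeta|u|^2}$ (not $e^{\zeta'|u|^2}$) sits on $\partial^\alpha f$. This step follows \cite{cao kim} and Guo essentially verbatim, and I would cite it.

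\textbf{The $\Gamma_-(\partial^\alpha f,f)$ bound.} Using $|f(v)|\le\|wf\|_\infty e^{-\zeta|v|^2}$ gives
\[
e^{\zeta'|v|^2}|\Gamma_-(\partial^\alpha f,f)|\lesssim\|wf\|_\infty\int e^{-(\zeta-\zeta')|v|^2}|v-u|\sqrt{\mu(u)}e^{-\zeta'|u|^2}\,e^{\zeta'|u|^2}|\partial^\alpha f(u)|\,{\rm d}u .
\]
It remains to show $e^{-(\zeta-\zeta')|v|^2}|v-u|\mu^{1/4}(u)\lesssim \mathbf k_{\tilde\theta}(v,u)$, using the lower bound $\mathbf k_{\tilde\theta}\ge|v-u|^{-1}e^{-4\tilde\theta(|v|^2+|u|^2)}$. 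The $u$-decay is abundant. The $v$-decay requires roughly $\zeta-\zeta'\ge4\tilde\theta$, and this is the main obstacle. I would first try to recover it without shrinking $\tilde\theta$, by splitting into two regions. For $|v|\le2|u|$, all the decay can be charged to $\mu^{1/4}(u)$. For $|v|>2|u|$, one has $|v-u|\sim|v|$ and $\frac{||v|^2-|u|^2|^2}{|v-u|^2}\sim|v|^2$, so there $\mathbf k_{\tilde\theta}\sim|v|^{-1}e^{-c\tilde\theta|v|^2}$. If this still does not suffice, I would use the freedom in $\tilde\theta$: it is only required to lie in $(0,\theta-\zeta/2)$, so I take $\tilde\theta$ small depending on $\zeta-\zeta'>0$, which is harmless for every later use.

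\textbf{The $\Gamma_-(f,\partial^\alpha f)$ bound.} This is immediate:
\[
\int\!\int|(v-u)\cdot\omega|\sqrt{\mu(u)}|f(u)|\,{\rm d}\omega{\rm d}u\le\|wf\|_\infty\int|v-u|\sqrt{\mu(u)}\,{\rm d}u\lesssim\langle v\rangle\|wf\|_\infty .
\]
Multiplying by $e^{\zeta'|v|^2}|\partial^\alpha f(v)|$ gives the last inequality of the lemma.
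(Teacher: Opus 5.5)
Your $K$ bound and your bound for $\Gamma_-(f,\partial^\alpha_{x_1}f)$ are fine. Your choice of convention is also the one under which the last two inequalities make sense.

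\textbf{The gap is in the gain step.} You assert that Carleman turns
$\Gamma_+(\partial^\alpha f,f)(v)=\int\!\int B\sqrt{\mu(u)}\,\partial^\alpha f(u')f(v')\,{\rm d}\omega{\rm d}u$
into a kernel ``of the same structure as $\mathbf k_2$'' with rate reduced by $\zeta/2$. It does not.
\begin{itemize}
\item The Gaussian decay of $\mathbf k_2$ comes from a factor $\sqrt{\mu(u')}$ or $\sqrt{\mu(v')}$, which is absent here. The only decay at $v'$ is $|f(v')|\le\|wf\|_\infty e^{-\zeta|v'|^2}$.
\item Inserting $e^{\zeta|u'|^2}e^{-\zeta|u'|^2}$ costs nothing, since $|u'|^2+|v'|^2=|u|^2+|v|^2$.
\item The hard-sphere Carleman formula is $\int\!\int B\,\Phi\,{\rm d}\omega{\rm d}u=2\int\frac{{\rm d}u'}{|v-u'|}\int_{z\perp(u'-v)}\Phi\,{\rm d}z$, with $u=u'+z$ and $v'=v+z$. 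Applied here, it gives the kernel
\[
\frac{C}{|v-u'|}\exp\Big\{-(\zeta-\zeta')|v|^2-\big(\tfrac14+\zeta\big)\frac{(|u'|^2-|v|^2+|v-u'|^2)^2}{4|v-u'|^2}\Big\}
\]
acting on $e^{\zeta|u'|^2}|\partial^\alpha f(u')|$.
\end{itemize}
The second exponent vanishes on the sphere $u'\cdot(u'-v)=0$. Near $u'=0$ this is the grazing regime $u\approx u'\approx 0$, $v'\approx v$. There the gain term behaves like the loss term, and the only decay is $e^{-(\zeta-\zeta')|v|^2}$. Meanwhile $\mathbf k_{\tilde\theta}(v,u')\approx |v|^{-1}e^{-2\tilde\theta|v|^2}$.

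Concretely, take a point $x_1$ where $f=\|wf\|_\infty e^{-\zeta|v|^2}$ and $\partial^\alpha_{x_1}f=\mathbf 1_{\{|v|\le 1\}}$. Then:
\begin{itemize}
\item the left side of the second inequality is $\gtrsim\|wf\|_\infty|v|^{-1}e^{-(\zeta-\zeta')|v|^2}$;
\item the left side of the third inequality is $\gtrsim\|wf\|_\infty|v|\,e^{-(\zeta-\zeta')|v|^2}$;
\item both right sides are $\lesssim\|wf\|_\infty|v|^{-1}e^{-2\tilde\theta(|v|-1)^2}$.
\end{itemize}
So both inequalities fail whenever $2\tilde\theta>\zeta-\zeta'$. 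The hypotheses allow this, e.g.\ $\theta=\frac1{10}$, $\zeta=\frac1{100}$, $\zeta'=\frac1{200}$, $\tilde\theta=\frac1{20}$. In particular, your region splitting for $\Gamma_-(\partial^\alpha f,f)$ cannot work, because this test function sits exactly in your region $|v|>2|u|$.

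\textbf{Consequence.} The lemma is not provable as stated. What is true is the version with the extra hypothesis $2\tilde\theta<\zeta-\zeta'$. This must be imposed for the gain terms as well, not only for $\Gamma_-(\partial^\alpha f,f)$. With it, both bounds close.
\begin{itemize}
\item For the loss term, use $\mathbf k_{\tilde\theta}(v,u)\ge |v-u|^{-1}e^{-2\tilde\theta(|v|^2+|u|^2)}$. This follows from $\frac{(|v|^2-|u|^2)^2}{|v-u|^2}\le|v+u|^2$ and $|v-u|^2+|v+u|^2=2|v|^2+2|u|^2$.
\item For the gain terms, set $a=|v-u'|$ and $b=(|u'|^2-|v|^2)/|v-u'|$, so $ab=|u'|^2-|v|^2$. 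The exponent of the kernel above minus that of $\mathbf k_{\tilde\theta}$ then equals
\[
-(\zeta-\zeta'-2\tilde\theta)|v|^2-2\tilde\theta|u'|^2-\big(\tfrac1{16}+\tfrac\zeta4-\tilde\theta\big)(a+b)^2\le 0 .
\]
\item $\Gamma_+(f,\partial^\alpha f)=\Gamma_+(\partial^\alpha f,f)$ for hard spheres, so the same bound covers it.
\end{itemize}
I agree this restriction is harmless downstream, since the paper only uses integrability properties of $\mathbf k_{\tilde\theta}$ for some $\tilde\theta>0$. But you should state the corrected hypothesis explicitly rather than claim the bounds for all $\tilde\theta<\theta-\frac\zeta2$.

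A minor point on the $K$ bound: in the $\mathbf k_1$ part you need the same parallelogram identity. Bounding each of the two exponents in $\mathbf k_{\tilde\theta}^{-1}$ by $(|v|+|u|)^2$ does not close for $\tilde\theta$ near $\frac18-\frac\zeta2$.
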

Based on Proposition \ref{al-es} and Lemma \ref{es K}, we can now derive the following weighted local regularity.
\begin{proposition}\label{loc-reg}
Under the conditions listed in Theorem \ref{main result},
suppose that
$f$ is a solution  to the system \eqref{PBE}, \eqref{trb-ode}, \eqref{ic f}, \eqref{drbl-f} and \eqref{drb-f}, and satisfies \eqref{L^inf es-drcl}. Then there exists a constant $C>0$ such that
\begin{equation}\label{loc-reg-f}
\left\|{W_c}\partial_{x_1}f(t)\right\|_{p}\lesssim e^{Ct^{p}},\ 2<p<\infty.
\end{equation}

\end{proposition}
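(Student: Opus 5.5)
We differentiate \eqref{PBE} in $x_1$ and work with $g:=\partial_{x_1}f$. Since the transport coefficient depends on $x_1$, differentiation produces an extra zeroth-order term:
\begin{equation*}
\partial_t g+\frac{v_1-v_w(t)x_1}{1+x_w(t)}\partial_{x_1}g+\Big(\nu-\frac{v_w(t)}{1+x_w(t)}\Big)g=\partial_{x_1}\big(Kf+\Gamma(f,f)\big).
\end{equation*}
The term $\frac{v_w}{1+x_w}g$ is harmless because $|v_w|\lesssim\vps$ by \eqref{L^inf es-drcl}. We multiply by $W_c$ and use \eqref{par alpha}, which says that $W_c$ is invariant along the characteristics \eqref{ODE X}. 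This gives a transport equation for $G:=W_c g$ with the same structure. We then run the $L^p$ energy method. We multiply by $|G|^{p-2}G$ and integrate over $\Omega\times\R^3$, using the identity $\partial_{x_1}\big(\frac{v_1-v_w x_1}{1+x_w}\big)=-\frac{v_w}{1+x_w}$ for the divergence term. The result is
\begin{equation*}
\frac{\rm d}{{\rm d}t}\|(1+x_w)^{1/p}G\|_p^p+|G|_{p,+}^p\le |G|_{p,-}^p+C\|G\|_p^p+C\int |G|^{p-1}W_c\,|\partial_{x_1}(Kf+\Gamma(f,f))|.
\end{equation*}
The collision terms are handled by Lemma \ref{es K}. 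The $K$-part and the $\Gamma_\pm$ parts are bounded by $\int \mathbf k_{\tilde\theta}(v,u)e^{\zeta'|u|^2}|g(u)|\,{\rm d}u$. The loss part $\Gamma_-(f,\partial f)$ has a $\langle v\rangle$ factor, but its coefficient $\|wf\|_\infty$ is small, so it is absorbed by $\nu$. What remains is the mismatch between $\alpha_\vps^\vho(v)$ and $\alpha_\vps^\vho(u)$ inside the integral. We write $e^{\zeta'|u|^2}|g(u)|=\alpha_\vps^{-\vho}(u)|G(u)|$ and apply Hölder in $u$. This leaves factors $\int \mathbf k_{\tilde\theta}(v,u)\alpha_\vps^{-\vho p'}(u)\,{\rm d}u$. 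Proposition \ref{al-es} bounds these by $C(t+1)$, provided $\vho p'<1$, which is exactly the condition $\vho<1-\frac1p$. Hence the $K$ and $\Gamma$ terms are $\lesssim (1+t)\big(\|G\|_p^p+\|wf\|_\infty^p\big)$.

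The main obstacle is the incoming boundary term $|G|_{p,-}^p$. On $\gamma_-$ we have $\alpha_\vps=|v_1-v_wx_1|$ by \eqref{al on gamma}, so we must bound $\partial_{x_1}f$ at the boundary through the boundary conditions \eqref{drbl-f} and \eqref{drb-f}. These involve only traces and the time-dependent $v_w(t)$, not normal derivatives. So we express $\partial_{x_1}f|_{\gamma_-}$ from the equation itself:
\begin{equation*}
\frac{v_1-v_wx_1}{1+x_w}\partial_{x_1}f=-\partial_t f-Lf+\Gamma(f,f).
\end{equation*}
On $\gamma_-$ we then use $\partial_t f=\partial_t(P_\gamma f+r)$. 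The time derivative of the $P_\gamma$ integral over $\gamma_+$ is converted back, again through the equation, into $\int_{\gamma_+}(v_1-v_wx_1)\partial_{x_1}f\,\sqrt{\mu}\,{\rm d}u$ plus lower-order terms. The derivative $\partial_t r$ only involves $\dot v_w$, which is bounded by \eqref{trb-ode} and \eqref{L^inf es-drcl}. After this, the weight $\alpha_\vps^\vho=|v_1-v_wx_1|^\vho$ combined with the factor $|v_1-v_wx_1|^{-1}$ leaves $|v_1-v_wx_1|^{(\vho-1)p+1}$ against the measure ${\rm d}\tilde\gamma$. This is integrable exactly when $\vho>1-\frac2p$. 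The outgoing side yields $\int_{\gamma_+}|\partial_{x_1}f|\,|u_1-v_wx_1|\sqrt{\mu}\,{\rm d}u$. By Hölder we bound it by $\varepsilon|G|_{p,+}^p$ plus a constant, using the integrability of $\alpha_\vps^{-\vho p'}|u_1-v_w x_1|^{p'}\mu^{p'/2}$. This $\varepsilon|G|_{p,+}^p$ is absorbed into the left-hand side. If the naive absorption fails because the reflection constant is not small, the fallback is to iterate along stochastic cycles as in Lemma \ref{lem es h-drcl}, tracking $G$ backward in time. This costs a factor growing in time, consistent with the allowed bound $e^{Ct^p}$.

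Collecting the pieces, we obtain
\begin{equation*}
\frac{\rm d}{{\rm d}t}\|G\|_p^p\le C(1+t)^{p-1}\big(\|G\|_p^p+1\big),
\end{equation*}
where the time dependence comes from Proposition \ref{al-es} raised to the power $p/p'=p-1$, together with $\|wf\|_\infty\lesssim1$ and $|x_w|,|v_w|,|\dot v_w|\lesssim\vps$. Grönwall then gives $\|G(t)\|_p^p\lesssim e^{C t^{p}}\big(\|W_c\partial_{x_1}f_0\|_p^p+1\big)$, which is \eqref{loc-reg-f}. The computation is carried out first for smooth approximate solutions, namely mollified data with the iteration scheme, so that all traces make sense. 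The uniform bound then passes to the limit by weak lower semicontinuity.
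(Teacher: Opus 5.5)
Your architecture is the paper's: differentiate in $x_1$, weight by $W_c$ via \eqref{par alpha}, use the $L^p$ Green identity, treat the collision terms with Lemma \ref{es K}, H\"older and Proposition \ref{al-es}, recover $\pa_{x_1}f|_{\ga_-}$ from the equation and the time-differentiated boundary conditions, impose the two constraints $\vho p'<1$ and $(\vho-1)p+1>-1$, and close by Gr\"onwall with weight $(1+t)^{p-1}$. However, the step you yourself call the main obstacle does not go through as written.

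The diffuse reflection feeds $\int_{\ga_+}\langle u_1\rangle|\pa_{x_1}f|\sqrt{\mu}\,{\rm d}\tilde\ga$ back into $|W_c\pa_{x_1}f|_{p,-}^p$. H\"older cannot turn this into ``$\vps|G|_{p,+}^p$ plus a constant'', because both sides are homogeneous of degree $p$ in $G$. What you actually get is $C_*|G|_{p,+}^p$ with $C_*=C_1C_2^{p-1}$, where, up to polynomial weights and absolute constants, $C_1=\int_{\ga_-}|v_1-v_wx_1|^{(\vho-1)p+1}e^{p\zeta'|v|^2}\mu^{p/2}\,{\rm d}v$ and $C_2=\int_{\ga_+}\alpha_\vps^{-\vho p'}e^{-p'\zeta'|u|^2}\mu^{p'/2}\,{\rm d}\tilde\ga$. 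The leading feedback map is rank one, so this is essentially its operator norm. There is no reason for it to be below $1$; indeed $C_1\to\infty$ as $\vho\downarrow 1-\frac2p$, which the hypotheses allow. So nothing can be absorbed, and your fallback via stochastic cycles is only named, not carried out.

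The missing idea is to split $\ga_+=\ga_+^\vps\cup(\ga_+\setminus\ga_+^\vps)$, with $\ga_+^\vps$ as in \eqref{def gamma_+^vps}.

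On the grazing/large-velocity part $\ga_+^\vps$, the H\"older constant is $o(1)$ as $\vps\to0$ by dominated convergence. This requires $\int_{\ga_+}\alpha_\vps^{-\vho p'}\cdots<\infty$. Note that on $\ga_+$ one has $\alpha_\vps\neq|u_1-v_wx_1|$ in general, so this must be checked through the change of variables $u_1\mapsto t_{\textbf b}$. Only this piece is absorbed into $\int_0^t|G|_{p,+}^p{\rm d}s$.

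On $\ga_+\setminus\ga_+^\vps$ one has $t_{\textbf b}\ge\sqrt{2/5}\,\vps$ by \eqref{lb-tb}. The trace Lemma \ref{lem trace}, applied to $\mu^{p/8}|G|^p$, then bounds $\int_0^t\int_{\ga_+\setminus\ga_+^\vps}|G|^p\mu^{p/8}{\rm d}\tilde\ga{\rm d}s$ by $\|G(0)\|_p^p+\int_0^t\|G\|_p^p{\rm d}s$ plus the time integral of the transport derivative of $\mu^{p/8}|G|^p$. The $G$-equation controls that last term by $\int_0^t(1+s)^{p-1}\|G\|_p^p{\rm d}s+o(1)\int_0^t\|\nu^{1/p}G\|_p^p{\rm d}s$; see \eqref{5.9}--\eqref{5.10}. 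Because this control is only time-integrated, Gr\"onwall must be run on the integrated inequality \eqref{5.26}, not on a pointwise differential inequality.

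Two smaller corrections. First, the factor $\alpha_\vps^\vho(v)\lesssim\langle v\rangle^\vho$ in front of the $K$- and $\Gamma$-terms has to be absorbed by the $\nu$-dissipation, using $\alpha_\vps^\vho/\nu^{1-1/p}\lesssim1$. Second, $\pa_tr$ is not only a $\dot v_w$ term: $r$ contains $\frac{\mu_w-\mu}{\sqrt{\mu}}\int f(t,1,u)\sqrt{\mu}|u_1-v_w|{\rm d}u$, whose time derivative brings back $\pa_{x_1}f$ on $\ga_+$ with an $O(|v_w|)$ coefficient.
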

\begin{proof}
We first derive the basic $L^p$ estimate for the solution $f$. By Lemmas \ref{lem green fun} and \ref{es K}, one directly has
\begin{align}\label{inequ L^p}
	&\|f(t)\|_{p}^p+\int_0^t\|\nu^{1/p}f\|_{p}^p{\rm d}s+\int_0^t|f|_{p,+}^p{\rm d}s\notag\\
	&\quad\lesssim \|f(0)\|_{p}^p+(1+\|wf\|_\infty)\int_0^t\int_{\Omega\times\mathbb R^3}|f(v)|^{p-1}\int_{\mathbb R^3}\textbf k_{\tilde\theta}(v,u)|f(u)|{\rm d}u{\rm d}x_1{\rm d}v{\rm d}s+\int_0^t|f|_{p,-}^p{\rm d}s.
\end{align}
Applying H\"{o}lder's inequality, it follows that
\begin{align*}
	\int_{\mathbb R^3}&|f(v)|^{p-1}\int_{\mathbb R^3}\textbf k_{\tilde\theta}(v,u)|f(u)|{\rm d}u{\rm d}v\notag\\
	&\lesssim \|f\|_{L^p_v}^{\frac{1}{q-1}}\Big\|\int_{\mathbb R^3}\textbf k_{\tilde\theta}^{1/q}(v,u)\textbf k_{\tilde\theta}^{1/p}(v,u)|f(u)|{\rm d}u\Big\|_{L^p_v}\notag\\
	&\lesssim \|f\|_{L^p_v}^{\frac{1}{q-1}}\Big(\int_{\mathbb R^3}\textbf k_{\tilde\theta}(v,u){\rm d}u\Big)^{1/q}\Big\|\Big(\int_{\mathbb R^3}\textbf k_{\tilde\theta}(v,u)|f(u)|^p{\rm d}u\Big)^{1/p}\Big\|_{L^p_v}\notag\\
	&\lesssim \|f\|_{L^p_v}^{p}\Big(\int_{\mathbb R^3}\textbf k_{\tilde\theta}(v,u){\rm d}u\Big)^{1/q}\Big(\int_{\mathbb R^3}\textbf k_{\tilde\theta}(v,u){\rm d}u\Big)^{1/p}\lesssim \|f\|_{L^p_v}^{p},
\end{align*}
where $\frac{1}{p}+\frac{1}{q}=1.$ Thus,
the second term on the right hand side of \eqref{inequ L^p} is controlled by
\begin{equation}\label{es second term}
	(1+\|wf\|_\infty)\int_0^t\|f\|_{p}^p{\rm d}s.
\end{equation}
Next, we estimate $\int_0^t|f|_{p,-}^p{\rm d}s$. Decomposing the set $\{0<v_1-v_w(s)\}$ into $\{0<v_1-v_w(s)<\vps\}\cup\{v_1-v_w(s)\geq\vps\}$ and applying \eqref{es second term} together with Lemma \ref{lem trace}, we obtain
\begin{align}\label{p,-,1}
\int_0^t|f|_{p,-,1}^p{\rm d}s&\lesssim \int_0^t\Big(\int_{0<u_1-v_w(s)<\vps}f(s,1,u)\sqrt{\mu(u)}|u_1-v_w(s)|{\rm d}u\Big)^p{\rm d}s\notag\\
&\quad+\int_0^t\Big(\int_{u_1-v_w(s)\geq\vps}f(s,1,u)\sqrt{\mu(u)}|u_1-v_w(s)|{\rm d}u\Big)^p{\rm d}s+\int_0^t|v_w(s)|^p{\rm d}s\notag\\
&\lesssim o(1)\int_0^t|f|_{p,+,1}^p{\rm d}s+\int_0^t|v_w(s)|^p{\rm d}s\notag\\
&\qquad+\int_0^t\Big(\int_{u_1-v_w(s)\geq\vps}f(s,1,u)\sqrt{\mu(u)}|u_1-v_w(s)|{\rm d}u\Big)^p{\rm d}s\notag\\
&\lesssim o(1)\int_0^t|f|_{p.+,1}^p{\rm d}s+\int_0^t|v_w(s)|^p{\rm d}s+\|f(0)\|_{p}^p+	(1+\|wf\|_\infty)\int_0^t\|f\|_{p}^p{\rm d}s.
\end{align}
Similarly, by using the decomposition $\{-\vps<v_1<0\}\cup\{v_1\leq-\vps\}$, we have
\begin{equation*}
	\int_0^t|f|_{p,-,0}^p{\rm d}s
	\lesssim o(1)\int_0^t|f|_{p,+,0}^p{\rm d}s+\|f(0)\|_{p}^p+	(1+\|wf\|_\infty)\int_0^t\|f\|_{p}^p{\rm d}s.
\end{equation*}
Combining the above estimate yields
\begin{align}\label{es third term}
\int_0^t|f|_{p,-}^p{\rm d}s&=\int_0^t|f|_{p,-,0}^p{\rm d}s+\int_0^t|f|_{p,-,1}^p{\rm d}s\notag\\
&\lesssim 	o(1)\int_0^t|f|_{p,+}^p{\rm d}s+\int_0^t|v_w(s)|^p{\rm d}s
+\|f(0)\|_{p}^p+	(1+\|wf\|_\infty)\int_0^t\|f\|_{p}^p{\rm d}s.
\end{align}
Substituting \eqref{es second term} and \eqref{es third term} into \eqref{inequ L^p}, we conclude that
\begin{align}
	&\|f(t)\|_{p}^p+\int_0^t\|\nu^{1/p}f\|_{p}^p{\rm d}s+\int_0^t|f|_{p,+}^p{\rm d}s\lesssim\|f(0)\|_{p}^p+\int_0^t|v_w(s)|^p{\rm d}s+	(1+\|wf\|_\infty)\int_0^t\|f\|_{p}^p{\rm d}s,\notag
\end{align}
which, by Gr\"{o}nwall's inequality, further implies
\begin{align}\label{es L^p}
	&\|f(t)\|_{p}^p+\int_0^t\|\nu^{1/p}f\|_{p}^p{\rm d}s+\int_0^t|f|_{p,+}^p{\rm d}s\lesssim e^{Ct},
\end{align}
for some constant $C>0$.

We next perform the weighted $x_1$ derivative estimate for $f$. To this end,
using \eqref{par alpha}, we deduce from \eqref{PBE} that
\begin{align*}
	&\frac{1}{p}|{W_c}\partial_{x_1}f|^{p-1}[\partial_t+\frac{v_1-v_w(t)x_1}{1+x_w(t)}\pa_{x_1}+\nu]{W_c}\partial_{x_1}f\\
&\quad=\frac{1}{p}\alpha_\vps^{\vho p}e^{p\zeta'|v|^2}|\pa_{x_1}f|^{p-1}	[\partial_t+\frac{v_1-v_w(t)x_1}{1+x_w(t)}\pa_{x_1}+\nu]\partial_{x_1}f\\
&\quad=\frac{1}{p}\alpha_\vps^{\vho p}e^{p\zeta'|v|^2}|\partial_{x_1}f|^{p-1}\Big(\frac{v_w(t)}{1+x_w(t)}\partial_{x_1}f+\Gamma(\partial_{x_1}f,f)
+\Gamma(f,\partial_{x_1}f)+K\partial_{x_1}f\Big).
\end{align*}
Applying Lemma \ref{lem green fun} to the above equality yields
\begin{align}\label{1}
&\left\|{W_c}\partial_{x_1}f(t)\right\|_{p}^p+\int_0^t\left\|\nu^{1/p}{W_c}\partial_{x_1}f\right\|_{p}^p{\rm d}s
+\int_0^t|{W_c}\partial_{x_1}f|_{p,+}^p{\rm d}s\notag\\
&\quad \leq \left\|{W_c}\partial_{x_1}f(0)\right\|_{p}^p+\underbrace{\int_0^t|{W_c}\partial_{x_1}f|_{p,-}^p}_{\eqref{1}_{\gamma_-}}{\rm d}s\notag\\
&\qquad+\underbrace{\frac{1}{p}\int_0^t\int_{\Omega\times\mathbb R^3}e^{p\zeta'|v|^2}\alpha_\vps^{\vho p}|\partial_{x_1}f|^{p-1}
\Big(\frac{v_w(t)}{1+x_w(t)}\partial_{x_1}f+\Gamma(\partial_{x_1}f,f)+\Gamma(f,\partial_{x_1}f)
-K\partial_{x_1}f\Big){\rm d}x_1{\rm d}v{\rm d}s}_{\eqref{1}_{\mathcal R}}.
\end{align}
We now estimate \eqref{1}$_{\mathcal R}$. The contribution of term involving $|\partial_{x_1}f|$ in \eqref{1}$_{\mathcal R}$ can be bounded by
\begin{equation}\label{es R1}
\int_0^t|v_w|\left\|{W_c}\partial_{x_1}f\right\|_{p}^p{\rm d}s.	
	\end{equation}
In view of Lemma \ref{es K}, the contribution of $\Gamma(\partial_{x_1}f,f)+\Gamma_{+}(f,\partial_{x_1}f)-K\partial_{x_1}f$
is bounded by
\begin{align}\label{5.1}
(1+\mathop{\sup}_{0\leq s\leq t}&\|w f(s)\|_\infty)\int_0^t\int_{\Omega\times\mathbb R^3}|{W_c}\partial_{x_1}f(v)|^{p-1}\int_{\mathbb R^3}{\bf k}_\theta(v,u) {W_c}(v)|\partial_{x_1}f(u)|{\rm d}u{\rm d}v{\rm d}x_1{\rm d}s\notag\\
=&(1+\mathop{\sup}_{0\leq s\leq t}\|w f(s)\|_\infty)\int_0^t\int_{\Omega\times\mathbb R^3}
|{W_c}\partial_{x_1}f(v)|^{p-1}\int_{|u_1|\leq N}\textbf k_\theta(v,u) {W_c}(v)|\partial_{x_1}f(u)|{\rm d}u{\rm d}v{\rm d}x_1{\rm d}s\notag\\
&+(1+\mathop{\sup}_{0\leq s\leq t}\|w f(s)\|_\infty)\int_0^t\int_{\Omega\times\mathbb R^3}
|{W_c}\partial_{x_1}f(v)|^{p-1}\int_{|u_1|\geq N}\textbf k_\theta(v,u) {W_c}(v)|\partial_{x_1}f(u)|{\rm d}u{\rm d}v{\rm d}x_1{\rm d}s.
\end{align}
For the region $\{|u_1|\geq N\}$ and $0<\tilde\theta\ll\theta$, applying H\"{o}lder's inequality with $\frac{1}{q}+\frac{1}{p}=1$, we obtain
\begin{align*}
\int_{|u_1|\geq N}&\textbf k_{\tilde\theta}(v,u) \alpha_\vps^{\vho}(v_1)|\partial_{x_1}f(u)|{\rm d}u\\
&\leq \alpha_\vps^{\vho}(v_1)\Big(\int_{|u_1|\geq N}\textbf k_{\tilde\theta}(v,u)\frac{1}{[\alpha_\vps(u_1)]^{\vho q}}{\rm d}u \Big)^{1/q}\Big(\int_{|u_1|\geq N}\textbf k_{\tilde\theta}(v,u)|\alpha_\vps^\vho\partial_{x_1}f(u)|^p{\rm d}u\Big)^{1/p}\\
&\lesssim \alpha_\vps^{\vho}(v_1) \Big(\int_{|u_1|\geq N}\textbf k_{\tilde\theta}(v,u)|\alpha_\vps^\vho\partial_{x_1}f(u)|^p{\rm d}u\Big)^{1/p},
\end{align*}
where we have used \eqref{u>N} with $0<\vho q<1$. Hence, the contribution of the region $\{|u_1|\geq N\}$ in \eqref{5.1} can be bounded as
\begin{align}\label{5.2}
\int_0^t&\int_{\Omega\times\mathbb R^3}|\nu^{1/p}{W_c}\partial_{x_1}f(v)|^{p-1}\frac{\alpha_\vps^{\vho}(v_1)}{\nu(v)^{\frac{p-1}{p}}}\int_{|u_1|\geq N}\textbf k_\theta(v,u)\frac{e^{\zeta'|v|^2}}{e^{\zeta'|u|^2}} |e^{\zeta'|u|^2}\partial_{x_1}f(u)|{\rm d}u{\rm d}v{\rm d}x_1{\rm d}s	\notag\\
&\leq\int_0^t\int_\Omega\Big(\int_{\mathbb R^3}|\nu^{1/p}{W_c}\partial_{x_1}f(v)|^p{\rm d}v\Big)^{1/q}\Big(\int_{|u_1|\geq N}|{W_c}\partial_{x_1}f(u)|^p{\rm d}u\int_{\R^3}\textbf k_{\tilde\theta}(v,u){\rm d}v\Big)^{1/p}{\rm d}x_1\notag\\
&\leq \eta\int_0^t\|\nu^{1/p}{W_c}\partial_{x_1}f(s)\|_{p}^p{\rm d}s+C_\eta\int_0^t\|{W_c}\partial_{x_1}f(s)\|_{p}^p{\rm d}s,
\end{align}
due to $\frac{\alpha_\vps^{\vho}(v_1)}{\nu^{\frac{p-1}{p}}(v)}\lesssim 1$ for $0<\vho<\frac{p-1}{p}$ and Lemma \ref{es K}.

The contribution of $\{|u_1|\leq N\}$ in \eqref{5.1} is bounded by
\begin{align}\label{5.3}
	\int_0^t&\int_{\Omega\times\mathbb R^3}|\nu^{1/p}{W_c}\partial_{x_1}f(v)|^{p-1}\int_{|u_1|\leq N}\textbf k_\theta(v,u)\frac{e^{\zeta'|v|^2}}{e^{\zeta'|u|^2}}\frac{\alpha_\vps^\vho(v_1)}{\nu^{\frac{p-1}{p}}(v)}\frac{ |{W_c}\partial_{x_1}f(u)|}{\alpha_\vps^{\vho}(u_1)}{\rm d}u{\rm d}v{\rm d}x_1{\rm d}s\notag	\\
	\leq& \int_0^t\|\nu^{1/p}{W_c}\partial_{x_1}f(s)\|_p^{p-1}\times\Big[\int_{\Omega\times\mathbb R^3}\Big(\underbrace{\int_{|u_1|\leq N}\textbf k_{\tilde\theta}(v,u)\frac{|{W_c}\partial_{x_1}f(u)|}{\alpha_\vps^{\vho}(u_1)}{\rm d}u}_{\eqref{5.3}_*}\Big)^p{\rm d}v{\rm d}x_1\Big]^{1/p}{\rm d}s.
\end{align}
Applying H\"{o}lder's inequality again, \eqref{5.3}$_*$ can be bounded by
\begin{align*}
	\|{W_c}\partial_{x_1}f(\cdot)\|_{L^p(\mathbb R^3)}\Big(\int_{\mathbb R^3}\frac{e^{-q\tilde{\theta}|v-u|^2}}{|v-u|^{q}}\frac{1_{|u_1|\leq N}}{\alpha_\vps^{\vho q}(u_1)}{\rm d}u\Big)^{1/q}
	\lesssim \|{W_c}\partial_{x_1}f(\cdot)\|_{L^p(\mathbb R^3)}\Big\{\int_{\mathbb R}\frac{1_{|u_1|\leq N}}{\alpha_\vps^{\vho q}(u_1)}{\rm d}u_1\Big\}^{1/q},
\end{align*}
where we have used $\int_{\mathbb R^2}\frac{e^{-q\tilde{\theta}|v'-u'|^2}}{|v'-u'|^{q}}{\rm d}u'<\infty$ for $q=\frac{p}{p-1}<2$.
On the other hand, since $\varrho q<1$, from \eqref{u<N}, it follows that
\begin{equation*}
\Big(\int_{\mathbb R}\frac{1_{|v_1|\leq N}}{\alpha_\vps^{\vho q}(s,x_1,v_1)}{\rm d}v_1\Big)^{1/q}\lesssim (s+1)^{1/q}.	
	\end{equation*}
Consequently, we bound \eqref{5.3}$_*$ by
\begin{equation}
	\|{W_c}\partial_{x_1}f(\cdot)\|_{L^p(\mathbb R^3)}(s+1)^{1/q}.\notag
\end{equation}
Therefore,
\begin{align}\label{5.5}
\int_0^t&\int_{\Omega\times\mathbb R^3}|{W_c}\partial_{x_1}f(v)|^{p-1}\int_{|u_1|\leq N}\textbf k_\theta(v,u) {W_c}(v)|\partial_{x_1}f(u)|{\rm d}u{\rm d}v{\rm d}x_1{\rm d}s\notag\\
\leq&	 \eta\int_0^t\|\nu^{1/p}{W_c}\partial_{x_1}f(s)\|_{p}^p{\rm d}s+C_\eta\int_0^t(s+1)^{p-1}\|{W_c}\partial_{x_1}f(s)\|_{p}^p{\rm d}s.
	\end{align}
It remains now to compute the contribution of $|\Gamma_{-}(f,\partial_{x_1}f)|$. By using Proposition \ref{es K}, this term can be bounded by
\begin{equation}\label{5.6}
\mathop{\sup}_{0\leq s\leq t}\left\|w f(s)\right\|_\infty\int_0^t\|\nu^{1/p}{W_c}\partial_{x_1}f(s)\|_{p}^p{\rm d}s.
\end{equation}
Combining \eqref{es R1}, \eqref{5.2} \eqref{5.5}, and \eqref{5.6} together, we conclude
\begin{align}\label{es R}
\eqref{1}_{\mathcal R}&\lesssim (\eta+\mathop{\sup}_{0\leq s\leq t}\left\|w f(s)\right\|_\infty)\int_0^t\|\nu^{1/p}{W_c}\partial_{x_1}f(s)\|_{p}^p{\rm d}s\notag\\
&\qquad+C_\eta\int_0^t(s+1)^{p-1}\|{W_c}\partial_{x_1}f(s)\|_{p}^p{\rm d}s.
\end{align}

Now we turn to the estimate of $\eqref{1}_{\gamma_-}$. We first compute the $x_1$-derivative of the solution. Differentiating \eqref{drbl-f} and \eqref{drb-f} with respect to $t$ and using \eqref{PBE}, we obtain
\begin{align}\label{bc par_tf(0)}
	\partial_tf(t,0,v)|_{v_1>0}=&\sqrt{2\pi\mu (v)}\int_{u_1<0}\Big\{-\frac{u_1}{1+x_w}\partial_{x_1}f-Lf+\Gamma(f,f)\Big\}(t,0,u)\notag\\
&\qquad\qquad\times\sqrt{\mu(u)}|u_1|{\rm d}u,
\end{align}
and\begin{align}\label{bc par_tf(1)}
	\partial_tf(t,1,v)|_{v_1-v_w(t)<0}
	=&\sqrt{2\pi}\frac{\partial_t\mu_{w}(v)}{\sqrt{\mu (v)}}\int_{u_1-v_w(t)>0}f(t,1,u)\sqrt{\mu(u)}|u_1-v_w(t)|{\rm d}u\notag\\
	&-\sqrt{2\pi}\frac{\mu_{w}(v)}{\sqrt{\mu (v)}}v_w'(t)\int_{u_1-v_w(t)>0}f(t,1,u)\sqrt{\mu(u)}{\rm d}u\notag\\
	&+\sqrt{2\pi}\frac{\mu_{w}(v)}{\sqrt{\mu (v)}}\int_{u_1-v_w(t)>0}\Big\{-\frac{u_1-v_w}{1+x_w}\partial_{x_1}f-Lf+\Gamma(f,f)\Big\}(t,1,u)\notag\\
	&\qquad\qquad\times\sqrt{\mu(u)}|u_1-v_w(t)|{\rm d}u\notag\\ &+\sqrt{2\pi}\frac{\partial_t\mu_{w}(v)}{\sqrt{\mu (v)}}\int_{u_1-v_w(t)>0}\mu(u)|u_1-v_w(t)|{\rm d}u\notag\\
    &-\sqrt{2\pi}\frac{\mu_{w}(v)}{\sqrt{\mu (v)}}v_w'(t)\int_{u_1-v_w(t)>0}\mu(u){\rm d}u.
\end{align}
From \eqref{bc par_tf(0)}, \eqref{bc par_tf(1)} and\begin{equation*}
\partial_{x_1}f=-\frac{1+x_w}{v_1-v_wx_1}\Big\{\partial_t f+Lf-\Gamma(f,f)\Big\},
\end{equation*}
we have that for $(x_1,v)\in\gamma_-$
\begin{align}\label{es par_x f(0)}
\left|\partial_{x_1}f(t,0,v)|_{v_1>0}\right|&\lesssim \frac{1}{|v_1|}\left\{|(\pa_tf+\nu f)(t,0,v)|_{v_1>0}|+|\Gamma(f,f)|+|Kf|\right\}\notag\\
&\lesssim  \langle v\rangle\sqrt{\mu(v)}\frac{1}{|v_1|}
\int_{\gamma^0_+}\Big\{\langle u_1\rangle|\partial_{x_1}f|+ \langle u\rangle|f|\notag\\
&\qquad\qquad\qquad+(1+\left\|w f\right\|_\infty)\int_{\mathbb R^3}\textbf k_\theta(u,u_*)|f(u_*)|{\rm d}u_*\Big\}\sqrt{\mu(u)}{{\rm d}\tilde\gamma}\notag\\
&\quad+\langle v\rangle w^{-1}(v)\frac{1}{|v_1|}\|wf(t)\|_\infty,
\end{align}
and
\begin{align}\label{es par_x f(1)}
	\left|\partial_{x_1}f(t,1,v)|_{v_1-v_w(t)<0}\right|&\lesssim \frac{1}{|v_1-v_w(t)|}\left\{|(\pa_tf+\nu f)(t,1,v)|_{v_1-v_w(t)<0}|+|\Gamma(f,f)|+|Kf|\right\}\notag\\
	&\lesssim  \langle v\rangle\sqrt{\mu(v)}\frac{1}{|v_1-v_w(t)|}
	\int_{\gamma^1_+}\Big\{\langle u_1\rangle|\partial_{x_1}f|+ \langle u\rangle|f|\notag\\
	&\qquad\qquad\qquad+(1+\left\|w f\right\|_\infty)\int_{\mathbb R^3}\textbf k_\theta(u,u_*)|f(u_*)|{\rm d}u_*\Big\}\sqrt{\mu(u)}{\rm d}\tilde\gamma\notag\\
	&\quad+\frac{1}{|v_1-v_w(t)|}\left\{\langle v\rangle w^{-1}(v)\|wf(t)\|_\infty+\sqrt{\mu(v)}|v_w'(t)|\right\}.
\end{align}
As a consequence of \eqref{al on gamma}, \eqref{es par_x f(0)}, and \eqref{es par_x f(1)}, and by applying Lemma \ref{es K}, we obtain
\begin{align}\label{5.7}
	\int_{\gamma_-}&|v_1-v_w(t)x_1|^{\vho p}|e^{\zeta'|v|^2}\partial_{x_1}f(t,x_1,v)|^p|v_1-v_w(t)x_1|{\rm d}v\notag\\
	&\lesssim\int_{\gamma_-}e^{p\zeta'|v|^2} \mu^{\frac{p}{2}}(v)|v_1-v_w(t)x_1|^{(\vho-1) p+1}\Big|\int_{\gamma_+}\Big\{\langle u_1\rangle|\partial_{x_1}f|+{\langle u\rangle |f|}\notag\\
	&\qquad+(1+\left\|w f\right\|_\infty)\int_{\mathbb R^3}\textbf k_\theta(u,u_*)|f(u_*)|{\rm d}u_*\Big\}\sqrt{\mu(u)}{{\rm d}\tilde\gamma+\langle v\rangle\mu^{-\frac{1}{2}}w^{-1}\|wf(t)\|_\infty}+|v_w'(t)|\Big|^p{\rm d}v.
\end{align}
Note that since $(\vho-1)p+1>-1$, we have $|v_1-v_w(t)x_1|^{(\vho-1)p+1}\in L^1_{loc}(\mathbb R^3)$.
For the first term on the right hand side of \eqref{5.7}, we split the region $\ga_+$ into $\gamma_+^\vps\cup\gamma_+\setminus \gamma_+^\vps$,  where $\gamma_+^\vps$ is defined as \eqref{def gamma_+^vps}. By H\"{o}lder's inequality,
\begin{align}\label{5.8}
\int_{\gamma_-}&e^{p\zeta'|v|^2} \mu^{\frac{p}{2}}(v)|v_1-v_w(t)x_1|^{(\vho-1) p+1}\Big|\int_{\gamma_+}\langle u_1\rangle|\partial_{x_1}f|\sqrt{\mu(u)}{\rm d}\tilde\gamma\Big|^p{\rm d}v\notag\\	\lesssim&\Big\{\int_{\gamma_+}|{W_c}\partial_{x_1}f(s,x_1,u)|W^{-1}_c(s,x_1,u)\langle u_1\rangle\sqrt{\mu(u)}|u_1-v_w(t)x_1|{\rm d}u\Big\}^p\notag\\
	\lesssim& \Big\{\int_{\gamma_+^\vps}|{W_c}\partial_{x_1}f(s,x_1,u)|^p|u_1-v_w(t)x_1|{\rm d}u\Big\}\notag\\
	&\qquad\quad\times\Big\{\int_{\gamma_+^\vps}\alpha^{-\vho q}_\vps(u_1)|u_1-v_w(t)x_1|\mu^{\frac{q}{4}}(u){\rm d}u\Big\}^{p/q}\notag\\
	&+\Big\{\int_{\gamma_+\setminus\gamma_+^\vps}|{W_c}\partial_{x_1}f(s,x_1,u)|^p\mu^{\frac{p}{8}}(u)|u_1-v_w(t)x_1|{\rm d}u\Big\}\notag\\
	&\qquad\quad\times \Big\{\int_{\gamma_+\setminus\gamma_+^\vps}\alpha^{-\vho q}_\vps(u_1)|u_1-v_w(t)x_1|\mu^{\frac{q}{8}}(u){\rm d}u\Big\}^{p/q},
\end{align}
where $q=\frac{p}{p-1}$. Note that, in general, $\alpha_\vps(s,x_1,u_1) \neq |u_1 - v_w(s)x_1|$ for $(x_1,u)\in \gamma_+$.
By \eqref{lb-tb} in Lemma \ref{lem trace}, for $(x_1,u)\in \gamma_+ \setminus \gamma_+^\vps$, we have
\[
t_b(s,x_1,u_1) \geq \sqrt{\tfrac{2}{5}}\,\vps =: \vps_1.
\]
Using the change of variable
\begin{equation*}
	u_1\in\{u_1\in\mathbb R:t_{\textbf b}(s,x_1,u_1)\leq s+1\}\rightarrow t_{\textbf b}\in\mathbb R,
\end{equation*}
we obtain, for $0<\vho<1$,
\begin{align*}
\int_{\gamma_+\setminus\gamma_+^\vps\cap\{|u_1|\leq N\}}\frac{1_{t_{\textbf b}\leq s+1}}{\alpha^\vho_\vps(s,x_1,u_1)}{\rm d}u_1&\leq \int\frac{|u_1-v_w(s-t_{\textbf b})x_{1\textbf b}|^{1-\vho}}{t_{\textbf b}}1_{\vps_1\leq t_{\textbf b}\leq s+1}{\rm d}t_{\textbf b}\\
&\leq \int_{\vps_1}^{s+1}\Big[\frac{1}{t_{\textbf b}^{2-\vho}}+\frac{|v_w(s-t_{\textbf b})|}{t_{\textbf b}}\Big]{\rm d}t_{\textbf b}\lesssim C_\vps,
\end{align*}
where we have used the fact that
\begin{equation*}
u_1=\frac{1}{t_{\textbf b}}	\big((1+x_w(s-t_{\textbf b}))x_{1\textbf b}-(1+x_w(s))x_1\big).
	\end{equation*} Since $1_{\gamma_+^\vps}\downarrow0$ and $0<\vho q<1$, applying the dominant convergence theorem and the above inequality, we further have
\begin{align}\label{5.9}
	\eqref{5.8}&\lesssim o(1)\int_{\gamma_+^\vps}|{W_c}\partial_{x_1}f(s,x_1,u)|^p|u_1-v_w(t)x_1|{\rm d}u\notag\\
	&\quad+\int_{\gamma_+\setminus\gamma_+^\vps}|{W_c}\partial_{x_1}f(s,x_1,u)|^p\mu^{\frac{p}{8}}(u)|u_1-v_w(t)x_1|{\rm d}u.
\end{align}
By Lemma \ref{lem trace}, we bound the second term on the right hand side of \eqref{5.9} as 
\begin{align}\label{5.10}
\int_0^t&\int_{\gamma_+\setminus\gamma_+^\vps}|{W_c}\partial_{x_1}f(s,x_1,u)|^p\mu^{\frac{p}{8}}(u)|u_1-v_w(t)x_1|{\rm d}u{\rm d}s\notag\\
&\lesssim \|{W_c}\partial_{x_1}f(0)\mu^{\frac{1}{8}}\|_{p}^p	+\int_0^t\|{W_c}\partial_{x_1}f\mu^{\frac{1}{8}}\|_{p}^p{\rm d}s\notag\\
&\quad+\int_0^t\int_{\Omega\times\mathbb R^3}(1+x_w(s))[\partial_s+\frac{v_1-v_w(s)x_1}{1+x_w(s)}\partial_{x_1}+\nu](\mu^{\frac{p}{8}}|{W_c}\partial_{x_1}f|^p){\rm d}v{\rm d}x_1{\rm d}s\notag\\
&\lesssim \|{W_c}\partial_{x_1}f(0)\mu^{\frac{1}{8}}\|_{p}^p	+\int_0^t\|{W_c}\partial_{x_1}f\mu^{\frac{1}{8}}\|_{p}^p{\rm d}s\notag\\
&\quad+\int_0^t\int_{\Omega\times\mathbb R^3}p{W_c}^p\mu^{\frac{p}{8}}|\partial_{x_1}f|^{p-1}\Big|\frac{v_w(s)}{1+x_w(s)}\partial_{x_1}f+\Gamma(\partial_{x_1}f,f)+\Gamma(f,\partial_{x_1}f)
-K\partial_{x_1}f\Big|{\rm d}v{\rm d}x_1{\rm d}s\notag\\
&\lesssim \|{W_c}\partial_{x_1}f(0)\mu^{\frac{1}{8}}\|_{p}^p	+\int_0^t\|{W_c}\partial_{x_1}f\mu^{\frac{1}{8}}\|_{p}^p{\rm d}s
+\int_0^t(s+1)^{p-1}\|{W_c}\partial_{x_1}f(s)\|_{p}^p{\rm d}s\notag\\
&\quad+(o(1)+\mathop{\sup}_{0\leq s\leq t}\left\|w f(s)\right\|_\infty)\int_0^t\|\nu^{1/p}{W_c}\partial_{x_1}f(s)\|_{p}^p{\rm d}s,
\end{align}
where we have employed a similar calculation as in the derivation of \eqref{5.5}.

For the remaining terms in \eqref{5.7}, by H\"{o}lder's inequality with $\frac{1}{p}+\frac{1}{q}=1$, we obtain
\begin{align}\label{5.11}
	\Big\{\int_{\gamma_+}&\Big({\langle u\rangle} |f|
+(1+\left\|w f\right\|_\infty)\int_{\mathbb R^3}\frac{\textbf k^{1/{q}}_\theta(u,u_*)}{|u_{*1}-v_w(s)x_1|^{1/p}}\textbf k^{1/p}_\theta(u,u_*)|f(u_*)|\notag\\
&\quad\times|u_{*1}-v_w(s)x_1|^{1/p}{\rm d}u_*\Big)\sqrt{\mu(u)}{\rm d}\tilde\gamma\Big\}^p\notag\\
&\lesssim \int_{\gamma_+}|f|^p|u_1-v_w(s)x_1|{\rm d}u+(1+\left\|w f\right\|_\infty)\Big(\int_{\mathbb R^3}\textbf k_\theta(u,u_*)|u_{*1}-v_w(s)x_1|^{-q/p}{\rm d}u_*\Big)^{p/{q}}\notag\\
&\quad\times\int_{\mathbb R^3}\int_{\mathbb R^3}\textbf k_\theta(u,u_*)|f(u_*)|^p|u_{*1}-v_w(s)x_1|{\rm d}u_*{\rm d}u\notag\\
&\lesssim (1+\left\|w f\right\|_\infty)\int_{\gamma_+}|f|^p|u_1-v_w(s)x_1|{\rm d}u.
\end{align}
Combining \eqref{5.7}, \eqref{5.9}, \eqref{5.10} and \eqref{5.11}, we arrive at
\begin{align}\label{es gamma_-}
\eqref{1}_{\gamma_-}\lesssim& \|{W_c}\partial_{x_1}f(0)\|_p^p\notag\\&+ o(1)\int_0^t\int_{\gamma_+^\vps}|{W_c}\partial_{x_1}f(s,x_1,u)|^p|u_1-v_w(s)x_1|{\rm d}u{\rm d}s\notag\\
&+(o(1)+\mathop{\sup}_{0\leq s\leq t}\left\|w f(s)\right\|_\infty)\int_0^t\|\nu^{1/p}{W_c}\partial_{x_1}f(s)\|_p^p{\rm d}s+\int_0^t(s+1)^{p-1}\|{W_c}\partial_{x_1}f(s)\|_p^p{\rm d}s\notag\\&+(1+\left\|w f\right\|_\infty)\int_0^t\int_{\gamma_+}|f|^p|u_1-v_w(s)x_1|{\rm d}u{\rm d}s+{\int_0^t\|wf(s)\|_\infty^p{\rm d}s}+\int_0^t|v_w'(s)|^p{\rm d}s.
\end{align}
From  \eqref{es R} and \eqref{es gamma_-}, we get from \eqref{1} that
\begin{align}\label{5.26}
&\left\|{W_c}\partial_{x_1}f(t)\right\|_{p}^p+\int_0^t\left\|\nu^{1/p}{W_c}\partial_{x_1}f\right\|_{p}^p{\rm d}s+\int_0^t|{W_c}\partial_{x_1}f|_{p,+}^p{\rm d}s\notag\\
&\quad\lesssim \|{W_c}\partial_{x_1}f(0)\|_p^p+\int_0^t(s+1)^{p-1}\|{W_c}\partial_{x_1}f(s)\|_{p}^p{\rm d}s\notag\\
&\qquad+(1+\left\|w f\right\|_\infty)\int_0^t\int_{\gamma_+}|f|^p|u_1-v_w(s)x_1|{\rm d}u{\rm d}s+{\int_0^t\|wf(s)\|_\infty^p{\rm d}s}	+\int_0^t|v_w'(s)|^p{\rm d}s.
\end{align}
Combining this with the bound for $\int_0^t|f|^p_{p,+}{\rm d}s$ obtained in \eqref{es L^p} and applying Gr\"{o}nwall's inequality, we deduce \eqref{loc-reg-f}.
Hence, the proof of Proposition \ref{loc-reg} is complete.
\end{proof}

\section{$L^{1+\de}$ stability}\label{sec-sta}
In this section, we derive an {\it a priori} $L^{1+\de}$ stability estimate for the solution of the system \eqref{PBE}, \eqref{trb-ode}, \eqref{ic f}, \eqref{drbl-f} and \eqref{drb-f}, where $0<\de\ll 1$. As a consequence, this stability estimate implies the uniqueness of the solution constructed in Section~\ref{glex-sec}.
We begin by establishing the following $L^{1+\de}$ stability result for the free boundary $[x_w,v_w]$.
\begin{lemma}\label{lem es px_w}
	Assume $[x_{w,f},v_{w,f}]$ satisfies
	\begin{align}\label{equ x_wf}
		\frac{{\rm d}}{{\rm d}t}x_{w,f}(t)&=v_{w,f}(t),\notag\\
		\frac{{\rm d}}{{\rm d}t}v_{w,f}(t)&=-\kappa x_{w,f}(t)-\mathcal M\left(P_r[\mu+\sqrt{\mu}f]-P_l[\mu+\sqrt{\mu}f] \right),\notag\\
		x_{w,f}(0)&=x_{w,f,0}, v_{w,f}(0)=v_{w,f,0},
	\end{align}
where $f$ is given.
Then for $0<\delta\ll 1$, there exists a constant $C>0$ such that
\begin{align}\label{es px_w1}
|x_{w,f}(t)&-x_{w,\tilde f}(t)|^{1+\delta}+|v_{w,f}(t)-v_{w,\tilde f}(t)|^{1+\delta}\notag\\
&\lesssim e^{Ct}(|x_{w,f,0}-x_{w,\tilde f,0}|^{1+\delta}+|v_{w,f,0}-v_{w,\tilde f,0}|^{1+\delta})\notag\\
&\quad+e^{Ct}\int_0^t\int_{v_1-v_{w,f}(s)>0}|f(s,1,v)-\tilde f(s,1,v)|^{1+\delta}|v_1-v_{w,f}(s)|{\rm d}v{\rm d}s,
\end{align}
where $[x_{w,\tilde{f}},v_{w,\tilde{f}}]$ denotes the solution to \eqref{equ x_wf} with $f$ replaced by $\tilde{f}.$
\end{lemma}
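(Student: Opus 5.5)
We treat \eqref{equ x_wf} as a Lipschitz ODE system driven by the boundary trace $f(s,1,\cdot)$, and estimate the difference of the two solutions with a Grönwall argument. Set $X=x_{w,f}-x_{w,\tilde f}$ and $V=v_{w,f}-v_{w,\tilde f}$. Then $\dot X=V$ and
\[
\dot V=-\kappa X-\mathcal M\Big(G(v_{w,f},f)-G(v_{w,\tilde f},\tilde f)\Big),
\]
where, by the representation of $P_r-P_l$ in \eqref{P_rf-P_lf},
\begin{align*}
G(v_w,f)={}&\Phi(v_w)-\int_{v_1-v_w>0}\sqrt{\mu(v)}f(1,v)\Big(|v_1-v_w|^2+\tfrac{\sqrt{2\pi}}{2}|v_1-v_w|\Big){\rm d}v .
\end{align*}
Here $\Phi$ collects the explicit Maxwellian integrals and is smooth in $v_w$. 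We split the difference as
\[
G(v_{w,f},f)-G(v_{w,\tilde f},\tilde f)=\big[G(v_{w,f},f)-G(v_{w,f},\tilde f)\big]+\big[G(v_{w,f},\tilde f)-G(v_{w,\tilde f},\tilde f)\big].
\]

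For the first bracket we use Hölder's inequality with respect to the measure $|v_1-v_{w,f}|{\rm d}v$ on $\{v_1-v_{w,f}>0\}$, putting the weight $\sqrt\mu\,(|v_1-v_w|+1)$ into $L^{(1+\delta)'}$. This gives
\[
\big|G(v_{w,f},f)-G(v_{w,f},\tilde f)\big|\lesssim\Big(\int_{v_1-v_{w,f}>0}|f-\tilde f|^{1+\delta}(s,1,v)\,|v_1-v_{w,f}|\,{\rm d}v\Big)^{\frac1{1+\delta}}.
\]

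For the second bracket, the integrands $(v_1-v_w)_+^2$ and $(v_1-v_w)_+$ are Lipschitz in $v_w$ with constant $\lesssim\langle v\rangle$. Moreover, the boundary term $\{v_1=v_w\}$ produced by moving the domain of integration vanishes, because the weights vanish there. Hence
\[
\big|G(v_{w,f},\tilde f)-G(v_{w,\tilde f},\tilde f)\big|\lesssim |V|\big(1+\|w\tilde f\|_\infty\big),
\]
where we use the $L^\infty$ bound on the trace $|w\tilde f|_\infty$ from Proposition \ref{prop es L^infty-drcl}. The term $\Phi$ is treated in the same way and is Lipschitz by the Taylor expansions already computed for $I_1$ and $I_2$.

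Next we set $E=|X|^{1+\delta}+|V|^{1+\delta}$ and compute
\[
\frac{{\rm d}}{{\rm d}t}E\le(1+\delta)\big(|X|^\delta|\dot X|+|V|^\delta|\dot V|\big).
\]
By Young's inequality, $|X|^\delta|V|\lesssim E$ and $|V|^\delta|X|\lesssim E$. The source term is handled with $ab\le \frac{\delta}{1+\delta}a^{\frac{1+\delta}{\delta}}+\frac1{1+\delta}b^{1+\delta}$:
\[
|V|^\delta D^{\frac1{1+\delta}}\lesssim |V|^{1+\delta}+D,\qquad D:=\int_{v_1-v_{w,f}>0}|f-\tilde f|^{1+\delta}(s,1,v)\,|v_1-v_{w,f}|\,{\rm d}v .
\]
This yields $\dot E\le CE+CD$, and Grönwall's inequality then gives \eqref{es px_w1}.

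The only delicate point is the non-differentiability of $|\cdot|^{1+\delta}$ at $0$. We handle it by working with $(\epsilon^2+|X|^2)^{\frac{1+\delta}2}$ and letting $\epsilon\to0$, or by using the absolute continuity of $E$. We also need the moving half-space in the trace integral to be the one associated with $v_{w,f}$, which is exactly what the splitting above arranges.
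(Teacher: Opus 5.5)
Your proposal is correct and follows the paper's proof essentially step by step. You use the same split of the pressure difference: a H\"older-controlled trace term at fixed $v_{w,f}$, plus a term that is Lipschitz in $v_w$ and bounded by $\|w\tilde f\|_\infty|v_{w,f}-v_{w,\tilde f}|$. You then test with $|X|^{\delta-1}X$ and $|V|^{\delta-1}V$ (equivalently, differentiate your $E$), and close with Young's inequality and Gr\"onwall, as the paper does.

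One small correction: $|\cdot|^{1+\delta}$ is in fact $C^1$ on $\mathbb R$, since its derivative $(1+\delta)|x|^{\delta-1}x$ vanishes continuously at $0$. So the regularization you propose for that ``delicate point'' is unnecessary, though harmless.
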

\begin{proof}
Note that
\begin{align}\label{pequ x_wf}
	\frac{{\rm d}}{{\rm d}t}[x_{w,f}-x_{w,\tilde f}](t)&=[v_{w,f}-v_{w,\tilde f}](t),\notag\\
	\frac{{\rm d}}{{\rm d}t}[v_{w,f}-v_{w,\tilde f}](t)&=-\kappa[x_{w,f}-x_{w,\tilde f}](t)-\mathcal M\left(P_r[\mu+\sqrt{\mu}f]-P_l[\mu+\sqrt{\mu}f] \right)\notag\\
	&\quad+\mathcal M\left(P_r[\mu+\sqrt{\mu}\tilde f]-P_l[\mu+\sqrt{\mu}\tilde f] \right),\notag\\
	[x_{w,f}-x_{w,\tilde f}](0)&=x_{w,f,0}-x_{w,\tilde f,0}, [v_{w,f}-v_{w,\tilde f}](0)=v_{w,f,0}-v_{w,\tilde f,0}.
\end{align}
We now compute the boundary terms on the right hand side of \eqref{pequ x_wf}.
By \eqref{P_rf-P_lf}, we have
\begin{align*}
	P_r&[\mu+\sqrt{\mu}f]-P_l[\mu+\sqrt{\mu}f]\notag\\
	&=\int_{v_1-v_{w,f}(t)<0}[v_1-v_{w,f}(t)]^2\mu(v){\rm d}v-\frac{\sqrt{2\pi}}{2}\int_{v_1-v_{w,f}(t)<0}[v_1-v_{w,f}(t)]\mu(v){\rm d}v\notag\\
	&\quad-\int_{v_1-v_{w,f}(t)>0}[v_1-v_{w,f}(t)]^2\mu(v){\rm d}v-\frac{\sqrt{2\pi}}{2}\int_{v_1-v_{w,f}(t)>0}[v_1-v_{w,f}(t)]\mu(v){\rm d}v\notag\\
	&\quad-\int_{v_1-v_{w,f}(t)>0}[v_1-v_{w,f}(t)]^2\sqrt{\mu(v)}f(t,1,v){\rm d}v\notag\\
	&\quad-\frac{\sqrt{2\pi}}{2}\int_{v_1-v_{w,f}(t)>0}[v_1-v_{w,f}(t)]\sqrt{\mu(v)}f(t,1,v){\rm d}v.
\end{align*}
Hence, we get the following bound
\begin{align}
|P_r&[\mu+\sqrt{\mu}f]-P_l[\mu+\sqrt{\mu}f]-P_r[\mu+\sqrt{\mu}\tilde f]+P_l[\mu+\sqrt{\mu}\tilde f]|\notag\\&\leq C|v_{w,f}(t)-v_{w,\tilde f}(t)|\notag\\
&\quad+\Big|\int_{v_1-v_{w,f}(t)>0}[v_1-v_{w,f}(t)]^2\sqrt{\mu(v)}f(t,1,v){\rm d}v\notag\\
&\qquad-\int_{v_1-v_{w,\tilde f}(t)>0}[v_1-v_{w,\tilde f}(t)]^2\sqrt{\mu(v)}\tilde f(t,1,v){\rm d}v\Big|\notag\\
&\quad+\frac{\sqrt{2\pi}}{2}\Big|\int_{v_1-v_{w,f}(t)>0}[v_1-v_{w,f}(t)]\sqrt{\mu(v)}f(t,1,v){\rm d}v\notag\\
&\qquad-\int_{v_1-v_{w,\tilde f}(t)>0}[v_1-v_{w,\tilde f}(t)]\sqrt{\mu(v)}\tilde f(t,1,v){\rm d}v\Big|.\label{prl-bd}
\end{align}
For the second term on the right hand side of \eqref{prl-bd}, by h\"{o}lder's inequality, it follows that
\begin{align*}
	&\Big|\int_{v_1-v_{w,f}(t)>0}[v_1-v_{w,f}(t)]^2\sqrt{\mu(v)}f(t,1,v){\rm d}v\notag\\
	&\qquad\qquad-\int_{v_1-v_{w,\tilde f}(t)>0}[v_1-v_{w,\tilde f}(t)]^2\sqrt{\mu(v)}\tilde f(t,1,v){\rm d}v\Big|\\
	&\quad\leq 	\Big|\int_{v_1-v_{w,f}(t)>0}[v_1-v_{w,f}(t)]^2\sqrt{\mu(v)}f(t,1,v){\rm d}v\notag\\
	&\qquad\qquad-\int_{v_1-v_{w,f}(t)>0}[v_1-v_{w,f}(t)]^2\sqrt{\mu(v)}\tilde f(t,1,v){\rm d}v\Big|\\
	&\qquad+\Big|\int_{v_1-v_{w,f}(t)>0}[v_1-v_{w,f}(t)]^2\sqrt{\mu(v)}\tilde f(t,1,v){\rm d}v\notag\\
	&\qquad\qquad-\int_{v_1-v_{w,\tilde f}(t)>0}[v_1-v_{w,\tilde f}(t)]^2\sqrt{\mu(v)}\tilde f(t,1,v){\rm d}v\Big|\\
	&\qquad\lesssim\left\|w \tilde f(t)\right\|_\infty|v_{w,f}(t)-v_{w,\tilde f}(t)|+C\Big\{\int_{v_1-v_{w,f}(t)>0}|f(t,1,v)-\tilde f(t,1,v)|^{1+\delta}|v_1-v_{w,f}(t)|{\rm d}v\Big\}^{\frac{1}{1+\delta}}.
\end{align*}
Similarly, we bound the third term on the right hand side of \eqref{prl-bd} as
\begin{align*}
	&\frac{\sqrt{2\pi}}{2}\Big|\int_{v_1-v_{w,f}(t)>0}[v_1-v_{w,f}(t)]\sqrt{\mu(v)}f(t,1,v){\rm d}v\notag\\
	&\qquad-\int_{v_1-v_{w,\tilde f}(t)>0}[v_1-v_{w,\tilde f}(t)]\sqrt{\mu(v)}\tilde f(t,1,v){\rm d}v\Big|\\
	&\quad\lesssim \left\|w \tilde f(t)\right\|_\infty|v_{w,f}(t)-v_{w,\tilde f}(t)|+C\Big\{\int_{v_1-v_{w,f}(t)>0}|f(t,1,v)-\tilde f(t,1,v)|^{1+\delta}|v_1-v_{w,f}(t)|{\rm d}v\Big\}^{\frac{1}{1+\delta}}.
\end{align*}
Consequently, it follows that
\begin{align}
|P_r&[\mu+\sqrt{\mu}f]-P_l[\mu+\sqrt{\mu}f]-P_r[\mu+\sqrt{\mu}\tilde f]+P_l[\mu+\sqrt{\mu}\tilde f]|\notag\\&\leq C|v_{w,f}(t)-v_{w,\tilde f}(t)|\notag\\
&\quad+	C\Big\{\int_{v_1-v_{w,f}(t)>0}|f(t,1,v)-\tilde f(t,1,v)|^{1+\delta}|v_1-v_{w,f}(t)|{\rm d}v\Big\}^{\frac{1}{1+\delta}}.\label{prl-bd2}
\end{align}
Multiplying \eqref{pequ x_wf}$_1$ by $|x_{w,f}-x_{w,\tilde f}|^{\delta-1}(x_{w,f}-x_{w,\tilde f})$ and \eqref{pequ x_wf}$_2$ by $|v_{w,f}-v_{w,\tilde f}|^{\delta-1}(v_{w,f}-v_{w,\tilde f})$, and applying Young's inequality together with \eqref{prl-bd2}, we obtain
\begin{align}
\frac{1}{1+\delta}&\frac{{\rm d}}{{\rm d}t}(|x_{w,f}(t)-x_{w,\tilde f}(t)|^{1+\delta}+|v_{w,f}(t)-v_{w,\tilde f}(t)|^{1+\delta})\notag\\
&\lesssim (|x_{w,f}(t)-x_{w,\tilde f}(t)|^{1+\delta}+|v_{w,f}(t)-v_{w,\tilde f}(t)|^{1+\delta})\notag\\
&\quad+\int_{v_1-v_{w,f}(t)>0}|f(t,1,v)-\tilde f(t,1,v)|^{1+\delta}|v_1-v_{w,f}(t)|{\rm d}v.\notag
\end{align}
An application of Gr\"onwall's inequality then yields \eqref{es px_w1}. This completes the proof of Lemma \ref{lem es px_w}.
\end{proof}
\begin{proposition}\label{prop es f-g}
Suppose that both $[f,x_{w_f},v_{w,f}]$ and $[\tilde{f},x_{w,\tilde{f}},v_{w,\tilde{f}}]$ solve \eqref{PBE}, \eqref{trb-ode}, \eqref{drbl-f}, and \eqref{drb-f}, and satisfy \eqref{es decay-drbcl} and \eqref{es par f-drbcl}, with the initial data
$[f_0(x_1,v),(x_{w,f,0},v_{w,f,0})]$ and $[\tilde{f}_0(x_1,v),(x_{w,\tilde{f},0},v_{w,\tilde{f},0})]$, respectively. Then it holds that
for $0<\de\ll1$
\begin{align}
&\left\|f(t)-\tilde f(t)\right\|_{1+\delta}^{1+\delta}+\int_0^t\int_{\gamma_{+,f}}|f-\tilde f|^{1+\delta}{\rm d}\tilde{\gamma}_f{\rm d}s\notag\\
&\quad\lesssim\left\|f_0-\tilde f_0\right\|_{1+\delta}^{1+\delta}+e^{Ct}(|x_{w,f,0}-x_{w,\tilde f,0}|^{1+\delta}+|v_{w,f,0}-v_{w,\tilde f,0}|^{1+\delta})\notag\\
&\quad\quad+te^{Ct}\int_0^t\int_{\gamma_{+,f}}|f-\tilde f|^{1+\delta}{\rm d}\tilde{\gamma}_f{\rm d}s+\int_0^t e^{C(s+1)^{p}}\|f-\tilde f\|_{1+\delta}^{1+\delta}{\rm d}s,\notag
\end{align}
where $C>0$,
\begin{equation*}
	\gamma_{+,f}=\{x_1\in\{0, 1\}, v\in\mathbb R^3:(v_1-v_{w,f}(t)x_1)n(x_1)>0\},
\end{equation*}
and the boundary measure ${\rm d}\tilde{\gamma}_f$ is defined by
\begin{equation*}
{\rm d}\tilde{\gamma}_f=|v_1-v_{w,f}(t)x_1|{\rm d}v, \ x_1=0,1.
\end{equation*}
\end{proposition}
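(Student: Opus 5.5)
We will write the equation for the difference $g:=f-\tilde f$ in the frame attached to $f$. The equation for $\tilde f$ has transport coefficient $\frac{v_1-v_{w,\tilde f}x_1}{1+x_{w,\tilde f}}$, and we rewrite it with the coefficient of $f$, moving the discrepancy to the right-hand side. This gives
\begin{align*}
\partial_t g+\frac{v_1-v_{w,f}(t)x_1}{1+x_{w,f}(t)}\partial_{x_1}g+Lg=\Gamma(f,g)+\Gamma(g,\tilde f)+\mathcal D(t,x_1,v)\,\partial_{x_1}\tilde f,
\end{align*}
where
\begin{align*}
\mathcal D=\frac{v_1-v_{w,\tilde f}x_1}{1+x_{w,\tilde f}}-\frac{v_1-v_{w,f}x_1}{1+x_{w,f}},\qquad |\mathcal D|\lesssim \langle v\rangle\big(|x_{w,f}-x_{w,\tilde f}|+|v_{w,f}-v_{w,\tilde f}|\big).
\end{align*}
Multiplying by $|g|^{\delta-1}g$ and applying the Green identity (Lemma \ref{lem green fun}) along the $f$-characteristics produces $\|g(t)\|_{1+\delta}^{1+\delta}+\int_0^t|g|_{1+\delta,+}^{1+\delta}\,{\rm d}s$ on the left. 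The right-hand side then contains the initial term, the incoming flux $\int_0^t|g|_{1+\delta,-}^{1+\delta}\,{\rm d}s$, the collision terms, and the source term carrying $\partial_{x_1}\tilde f$.

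The collision terms are handled as in \eqref{inequ L^p}--\eqref{es second term}, using Lemma \ref{es K} and the smallness of $\|wf\|_\infty$ and $\|w\tilde f\|_\infty$ from \eqref{es decay-drbcl}; they are bounded by $C\int_0^t\|g\|_{1+\delta}^{1+\delta}\,{\rm d}s$. For the incoming boundary at $x_1=0$, the condition is $g=P_\gamma g$. For the incoming boundary at $x_1=1$, we subtract the two conditions \eqref{drb-f}. This gives $P_\gamma g$, plus $r_f-r_{\tilde f}$, plus terms arising because the two sets $\{v_1-v_{w,f}<0\}$ and $\{v_1-v_{w,\tilde f}<0\}$ and the Maxwellians $\mu_w$ differ. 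All these extra terms are bounded by $C|v_{w,f}-v_{w,\tilde f}|$ times quantities controlled by $\|wf\|_\infty$ and $\|w\tilde f\|_\infty$. The $P_\gamma g$ part is split into near-grazing and non-grazing pieces, exactly as in \eqref{p,-,1}. The near-grazing piece is absorbed as $o(1)\int_0^t|g|_{1+\delta,+}^{1+\delta}\,{\rm d}s$. The non-grazing piece is controlled through the trace Lemma \ref{lem trace}, which gives $\|g_0\|_{1+\delta}^{1+\delta}$ plus the interior terms already listed.

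The free-boundary differences are then converted by Lemma \ref{lem es px_w}, using \eqref{es px_w1}, into
\begin{align*}
e^{Ct}\big(|x_{w,f,0}-x_{w,\tilde f,0}|^{1+\delta}+|v_{w,f,0}-v_{w,\tilde f,0}|^{1+\delta}\big)+e^{Ct}\int_0^t\int_{\gamma_{+,f}}|g|^{1+\delta}\,{\rm d}\tilde\gamma_f\,{\rm d}s.
\end{align*}
Integrating such a bound in time produces the factor $te^{Ct}$ in front of the outgoing-flux term, as in the statement. In the proposition that outgoing-flux term is not absorbed but left explicit on the right-hand side; it will be closed later by taking $t$ small.

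The main obstacle is the source term $\int_0^t\int|\mathcal D\,\partial_{x_1}\tilde f|\,|g|^{\delta}$. Here $\partial_{x_1}\tilde f$ is known only in the weighted space $\|W_c\partial_{x_1}\tilde f\|_p$, with the singular weight $\alpha_\varepsilon^{\varrho}$. The plan is to apply H\"older's inequality in $(x_1,v)$ with exponents $p$, $\frac{1+\delta}{\delta}$ and a third exponent $r$ chosen so that the three reciprocals sum to one. This leaves
\begin{align*}
\int\Big(\langle v\rangle e^{-\zeta'|v|^2}\alpha_\varepsilon^{-\varrho}\Big)^{r}\,{\rm d}v\,{\rm d}x_1 .
\end{align*}
This integral must be finite, uniformly in $s$ up to polynomial growth. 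We will use Proposition \ref{al-es} with $\varrho r<1$; the range $1-\frac2p<\varrho<1-\frac1p$ together with $\delta\ll1$ is exactly what should permit such a choice, and checking that the exponents fit is the delicate point. Once this is done, the term is bounded by
\begin{align*}
\|W_c\partial_{x_1}\tilde f\|_p\,(s+1)\,\big(|\Delta x_w|+|\Delta v_w|\big)\,\|g\|_{1+\delta}^{\delta}.
\end{align*}
By Young's inequality, Lemma \ref{lem es px_w} and \eqref{es par f-drbcl}, this is at most
\begin{align*}
e^{C(s+1)^p}\|g\|_{1+\delta}^{1+\delta}+e^{C(s+1)^p}\big(|\Delta x_w|+|\Delta v_w|\big)^{1+\delta}.
\end{align*}
The second part reduces to the boundary and initial-data terms already present. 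Collecting all the contributions then yields the stated inequality.
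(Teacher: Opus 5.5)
Your proposal is correct and follows essentially the same route as the paper: the difference equation written along the $f$-characteristics, the Green identity with exponent $1+\delta$, the three-exponent H\"older bound on the $\tilde{\mathcal A}\,\partial_{x_1}\tilde f$ term via Proposition \ref{al-es} with $\varrho\frac{p(1+\delta)}{p-1-\delta}<1$ (only the upper bound $\varrho<1-\frac1p$ matters here), the treatment of the mismatched incoming sets at $x_1=1$, the grazing/non-grazing split with Lemma \ref{lem trace}, and finally Lemma \ref{lem es px_w}. One small adjustment is needed in the Young step. Put the whole factor $(s+1)\|W_c\partial_{x_1}\tilde f\|_p\lesssim e^{C(s+1)^p}$ on the $\|f-\tilde f\|_{1+\delta}^{\delta}$ side, as in \eqref{es mathA}, so that the free-boundary differences enter as $\int_0^t(|x_{w,f}-x_{w,\tilde f}|^{1+\delta}+|v_{w,f}-v_{w,\tilde f}|^{1+\delta})\,{\rm d}s$ with no exponential weight; otherwise Lemma \ref{lem es px_w} gives $te^{C(t+1)^p}$ rather than the stated $te^{Ct}$ in front of the outgoing-flux and initial free-boundary terms.
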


\begin{proof}
	Assume that both $f$ and $\tilde f $ are solutions of \eqref{PBE} and \eqref{trb-ode}. Then it holds
 \begin{align}
\pa_t [f-\tilde f]+\frac{v_1-v_{w,f}(t)x_1}{1+x_{w,f}(t)}\pa_{x_1} [f-\tilde f]+\nu[f-\tilde f]&=\Big[\frac{v_1-v_{w,\tilde f}(t)x_1}{1+x_{w,\tilde f}(t)}-\frac{v_1-v_{w,f}(t)x_1}{1+x_{w,f}(t)}\Big]\pa_{x_1}\tilde f\notag\\
&\quad+K[f-\tilde f]+\Ga(f,f)-\Ga(\tilde f,\tilde f).	\notag	
	\end{align}
Define
\begin{equation*}
	\tilde{\mathcal A}:=\frac{v_1-v_{w,\tilde f}(t)x_1}{1+x_{w,\tilde f}(t)}-\frac{v_1-v_{w,f}(t)x_1}{1+x_{w,f}(t)}.
\end{equation*}
By Lemma \ref{lem green fun} with $p=1+\delta$, we obtain
\begin{align}
	&\left\|f(t)-\tilde f(t)\right\|_{1+\delta}^{1+\delta}+\int_0^t\int_{\gamma_{+,f}}|f-\tilde f|^{1+\delta}{\rm d}\tilde{\gamma}_f{\rm d}s+\int_0^t\left\|\nu^{\frac{1}{1+\delta}}[f-\tilde f]\right\|_{1+\delta}^{1+\delta}{\rm d}s\notag\\
	&\quad\lesssim \left\|f_0-\tilde f_0\right\|_{1+\delta}^{1+\delta}+\int_0^t\int_{\gamma_{-,f}}|f-\tilde f|^{1+\delta}{\rm d}\tilde{\gamma}_f{\rm d}s\notag\\
	&\qquad+\int_0^t\int_{\Omega\times\mathbb R^3}|\tilde{\mathcal A}\pa_{x_1}\tilde f+K[f-\tilde f]+\Ga(f,f)-\Ga(\tilde f,\tilde f)||f-\tilde f|^\delta{\rm d}x_1{\rm d}v{\rm d}s,\label{f-fti}
\end{align}
where
\begin{equation*}
	\gamma_{-,f}=\{x_1\in\{0, 1\}, v\in\mathbb R^3:(v_1-v_{w,f}(t)x_1)n(x_1)<0\}.
\end{equation*}
For $0<\delta\ll1$, by H\"{o}lder's inequality with $\frac{p-1-\delta}{p(1+\delta)}+\frac{1}{p}+\frac{\delta}{1+\delta}=1$ $(2<p<\infty)$, we have
\begin{align*}
\int_0^t&\int_{\Omega\times\mathbb R^3}|\tilde{\mathcal A}|\pa_{x_1}\tilde f|	|f-\tilde f|^\delta{\rm d}x_1{\rm d}v{\rm d}s\notag\\
&\lesssim \int_0^t\int_{\Omega\times\mathbb R^3}\langle v_1\rangle(|x_{w,f}(s)-x_{w,\tilde f}(s)|+|v_{w,f}(s)-v_{w,\tilde f}(s)|)|\pa_{x_1}\tilde f||f-\tilde f|^{\delta}{\rm d}x_1{\rm d}v{\rm d}s\notag\\
	&\lesssim \int_0^t(|x_{w,f}(s)-x_{w,\tilde f}(s)|+|v_{w,f}(s)-v_{w,\tilde f}(s)|)\left\|\frac{\langle v_1\rangle}{{W_c}(v)}\right\|_{L^{\frac{p(1+\delta)}{p-1-\delta}}_{x_1,v}}\|{W_c}(v)\pa_{x_1}\tilde f\|_{p}\|f-\tilde f\|_{1+\delta}^\delta{\rm d}s.\notag
\end{align*}
Since $0<\varrho\frac{p(1+\delta)}{p-1-\delta}<1$ for $0<\delta\ll1$, by \eqref{u<N}, we obtain
\begin{equation*}
\left\|\frac{\langle v_1\rangle}{{W_c}(v)}\right\|_{L^{\frac{p(1+\delta)}{p-1-\delta}}_{x_1,v}}\leq C(s+1)^{	\frac{p-1-\delta}{p(1+\delta)}}\lesssim s+1.
\end{equation*}
Therefore, employing \eqref{es par f-drbcl}, we get
\begin{align}\label{es mathA}
	\int_0^t\int_{\Omega\times\mathbb R^3}|\tilde{\mathcal A}|\pa_{x_1}\tilde f|	|f-\tilde f|^\delta{\rm d}x_1{\rm d}v{\rm d}s\lesssim&\int_0^t(|x_{w,f}(s)-x_{w,\tilde f}(s)|^{1+\delta}+|v_{w,f}(s)-v_{w,\tilde f}(s)|^{1+\delta}){\rm d}s\notag\\&+\int_0^t e^{C(s+1)^p}\|f-\tilde f\|_{1+\delta}^{1+\delta}{\rm d}s.
\end{align}
Applying Lemma \ref{es K} with $\al=0$ and using \eqref{es decay-drbcl}, one has
\begin{align}\label{es kf-g}
	\int_0^t&\int_{\Omega\times\mathbb R^3}|K[f-\tilde f]+\Ga(f,f)-\Ga(\tilde f,\tilde f)||f-\tilde f|^\delta{\rm d}x_1{\rm d}v{\rm d}s\notag\\
	&\lesssim \mathop{\sup}_{0\leq s\leq t}(1+\|w f(s)\|_\infty+\|w \tilde f(s)\|_\infty)\int_0^t\|f-\tilde f\|_{1+\delta}^{1+\delta}{\rm d}s.
\end{align}
Finally we turn to estimate the incoming boundary term on the right hand side of \eqref{f-fti}. For the boundary at $x_1=0$, by using Lemma \ref{lem trace}, it is clear that
\begin{align}\label{es bi x=0}
\int_0^t&\int_{v_1>0}|(f-\tilde f)(0)|^{1+\delta}|v_1|{\rm d}v{\rm d}s\notag\\
&\leq o(1)\int_0^t\int_{v_1<0}|(f-\tilde f)(t,0,v)|^{1+\delta}|v_1|{\rm d}v+\|(f-\tilde f)(0)\|_{1+\delta}^{1+\delta}\notag\\
&\qquad+\mathop{\sup}_{0\leq s\leq t}(1+\|w f(s)\|_\infty+\|w \tilde f(s)\|_\infty)\int_0^t\|f-\tilde f\|_{1+\delta}^{1+\delta}{\rm d}s.
\end{align}
For the boundary at $x_1=1$, we decompose as
\begin{align*}
	\int_0^t&\int_{v_1-v_{w,f}(s)<0}|(f-\tilde f)(1)|^{1+\delta}|v_1-v_{w,f}(s)|{\rm d}v{\rm d}s\notag\\
	&=\int_0^t\textbf 1_{\{s: v_{w,f}(s)\leq v_{w,\tilde f}(s)\}}\int_{v_1-v_{w,f}(s)<0}|(f-\tilde f)(1)|^{1+\delta}|v_1-v_{w,f}(s)|{\rm d}v{\rm d}s\notag\\
	&\quad+\int_0^t\textbf 1_{\{s: v_{w,f}(s)> v_{w,\tilde f}(s)\}}\int_{v_1-v_{w,f}(s)<0}|(f-\tilde f)(1)|^{1+\delta}|v_1-v_{w,f}(s)|{\rm d}v{\rm d}s.
\end{align*}
Next, in the case $v_{w,f}(s)\leq v_{w,\tilde f}(s)$, using a similar analysis as in the proof of Lemma \ref{lem es px_w}, we have
\begin{align}\label{ftf-bd}
	|(f-\tilde f)(1)|_{v_1-v_{w,f}(s)<0}|&\lesssim \sqrt{\mu(v)}|v_{w,f}(s)-v_{w,\tilde f}(s)|\notag\\
	&\quad+\sqrt{\mu(v)}\Big\{\int_{u_1-v_{w,f}(s)>0}|f(s,1,u)-\tilde f(s,1,u)|\sqrt{\mu(u)}|u_1-v_{w,f}(s)|{\rm d}u\Big\}.
\end{align}
For the case $v_{w,f}(s)>v_{w,\tilde f}(s)$, it follows that
\begin{align}
	\int_0^t&\int_{v_1-v_{w,f}(s)<0}|(f-\tilde f)(1)|^{1+\delta}|v_1-v_{w,f}(s)|{\rm d}v{\rm d}s\notag\\
=&\int_0^t\int_{v_1-v_{w,\tilde f}(s)<0}|(f-\tilde f)(1)|^{1+\delta}|v_1-v_{w,f}(s)|{\rm d}v{\rm d}s
\notag\\
&+\int_0^t\int_{v_{w,\tilde f}(s)<v_1<v_{w,f}(s)}|(f-\tilde f)(1)|^{1+\delta}|v_1-v_{w,f}(s)|{\rm d}v{\rm  d}s\notag\\
\leq&\int_0^t\int_{v_1-v_{w,\tilde f}(s)<0}|(f-\tilde f)(1)|^{1+\delta}|v_1-v_{w,f}(s)|{\rm d}v{\rm d}s\notag\\
&+\int_0^t\int_{v_{w,\tilde f}(s)<v_1<v_{w,f}(s)}|(f-\tilde f)(1)|^{1+\delta}|v_{w,f}-v_{w,\tilde f}(s)|{\rm d}v{\rm  d}s\notag\\
\leq&\int_0^t\textbf 1_{\{s: v_{w,f}(s)> v_{w,\tilde f}(s)\}}\int_{v_1-v_{w,\tilde f}(s)<0}|(f-\tilde f)(1)|^{1+\delta}|v_1-v_{w,f}(s)|{\rm d}v{\rm d}s\notag\\
&+(\|wf\|^{1+\delta}_\infty+\|w\tilde f\|^{1+\delta}_\infty)\int_0^t\textbf 1_{\{s: v_{w,f}(s)> v_{w,\tilde f}(s)\}}\int_{v_{w,\tilde f}(s)<v_1<v_{w,f}(s)}e^{-\zeta(v_2^2+v_3^2)}|v_{w,f}(s)-v_{w,\tilde f}(s)|{\rm d}v{\rm  d}s\notag\\
\lesssim& \int_0^t|v_{w,f}(s)-v_{w,\tilde f}(s)|^{1+\delta}{\rm  d}s+\int_0^t\Big\{\int_{u_1-v_{w,f}(s)>0}|f(s,1,u)-\tilde f(s,1,u)|\sqrt\mu|u_1-v_{w,f}(s)|{\rm d}u\Big\}^{1+\delta}{\rm d}s,\label{ftf-bd2}
\end{align}
where, in the last inequality, we have applied the same bound for $|(f-\tilde f)|_{v_1-v_{w,\tilde f}<0}|$ as in \eqref{ftf-bd}. In addition, we also use the fact that $|v_{w,f}-v_{w,\tilde f}|^2\leq |v_{w,f}-v_{w,\tilde f}|^{1+\delta}$, which follows from $|v_{w,f}-v_{w,\tilde f}|\leq |v_{w,f}|+|v_{w,\tilde f}|\ll1$ and $\delta\ll1$.

Utilizing \eqref{ftf-bd} and \eqref{ftf-bd2},  we then deuce
\begin{align}\label{es bi x=1}
\int_0^t&\int_{v_1-v_{w,f}(s)<0}|(f-\tilde f)(1)|^{1+\delta}|v_1-v_{w,f}(s)|{\rm d}v{\rm d}s\notag\\
&\lesssim \int_0^t|v_{w,f}(s)-v_{w,\tilde f}(s)|^{1+\delta}{\rm d}s\notag\\
&\quad+\int_0^t\Big\{\int_{u_1-v_{w,f}(s)>0}|f(s,1,u)-\tilde f(s,1,u)|\sqrt\mu|u_1-v_{w,f}(s)|{\rm d}u\Big\}^{1+\delta}{\rm d}s\notag\\
&\lesssim \int_0^t|v_{w,f}(s)-v_{w,\tilde f}(s)|^{1+\delta}{\rm d}s\notag\\
&\quad+\int_0^t\Big\{\int_{v_{w,f}(s)<u_1<v_{w,f}(s)+\vps}|f(s,1,u)-\tilde f(s,1,u)|\sqrt{\mu(u)}|u_1-v_{w,f}(s)|{\rm d}u\Big\}^{1+\delta}{\rm d}s\notag\\
&\quad+\int_0^t\Big\{\int_{u_1-v_{w,f}(s)-\vps>0}|f(s,1,u)-\tilde f(s,1,u)|\sqrt{\mu(u)}|u_1-v_{w,f}(s)|{\rm d}u\Big\}^{1+\delta}{\rm d}s\notag\\
&\lesssim o(1)\int_0^t\int_{v_1-v_{w,f}(s)>0}|(f-\tilde f)(1)|^{1+\delta}|v_1-v_{w,f}(s)|{\rm d}v{\rm d}s+\|f_0-\tilde f_0\|_{1+\delta}^{1+\delta}\notag\\
&\quad+\int_0^t|v_{w,f}(s)-v_{w,\tilde f}(s)|^{1+\delta}{\rm d}s\notag\\
&\quad+\mathop{\sup}_{0\leq s\leq t}(1+\|w f(s)\|_\infty
+\|w \tilde f(s)\|_\infty)\int_0^te^{C(s+1)^{p}}\|(f-\tilde f)(s)\|_{1+\delta}^{1+\delta}{\rm d}s,
	\end{align}
where we have used Lemma \ref{lem trace} to control the integral over the domain $v_{w,f}(s)<u_1<v_{w,f}(s)+\vps$.

Now putting \eqref{es mathA}, \eqref{es kf-g}, \eqref{es bi x=0} and \eqref{es bi x=1} together, and using Lemma \ref{lem es px_w}, we obtain
\begin{align*}
	\|f(t)&-\tilde f(t)\|_{1+\delta}^{1+\delta}+\int_0^t\int_{\gamma_{+,f}}|f-\tilde f|^{1+\delta}{\rm d}\tilde{\gamma}_f{\rm d}s+\int_0^t\left\|\nu^{\frac{1}{1+\delta}}[f-\tilde f]\right\|_{1+\delta}^{1+\delta}{\rm d}s\notag\\
\lesssim& \left\|f_0-\tilde f_0\right\|_{1+\delta}^{1+\delta}+\int_0^t(|x_{w,f}(s)-x_{w,\tilde f}(s)|^{1+\delta}+|v_{w,f}(s)-v_{w,\tilde f}(s)|^{1+\delta}){\rm d}s\notag\\
&+\int_0^t e^{C(s+1)^{p}}\|f-\tilde f\|_{1+\delta}^{1+\delta}{\rm d}s\notag\\
	\lesssim&\left\|f_0-\tilde f_0\right\|_{1+\delta}^{1+\delta}+e^{Ct}(|x_{w,f,0}-x_{w,\tilde f,0}|^{1+\delta}+|v_{w,f,0}-v_{w,\tilde f,0}|^{1+\delta})\notag\\
	&+te^{Ct}\int_0^t\int_{\gamma_{+,f}}|f-\tilde f|^{1+\delta}{\rm d}\tilde{\gamma}_f{\rm d}s+\int_0^t e^{C(s+1)^{p}}\|f-\tilde f\|_{1+\delta}^{1+\delta}{\rm d}s.
\end{align*}
This completes the proof of Proposition \ref{prop es f-g}.
\end{proof}

\section{The global existence}\label{glex-sec}
We are able to complete the proof of Theorem \ref{main result} regarding the global existence of the free boundary problem \eqref{PBE}, \eqref{trb-ode}, \eqref{ic f}, \eqref{drbl-f} and \eqref{drb-f} by using the {\it a priori} estimate \eqref{L^inf es-drcl} together with the local existence and the standard continuation argument.

For completeness, it suffices to prove the following result for the local existence.

\begin{theorem}\label{loc}
	Under the conditions listed in Theorem \ref{main result}, there exists a unique non-negative solution $F(t,X_1,v)=\bar F(t,x_1,v)=\mu+\sqrt{\mu}f\geq0$	to the problem \eqref{BE}, \eqref{initial F}, \eqref{drbcl} and \eqref{drbcr} in $[0,T^*(\vps_0))\times\Omega\times\mathbb R^3$ for some $T^*(\vps_0)>0$. Additionally, the solution to the ODEs \eqref{ODE}, \eqref{initial F} exists on the interval on $[0,T^*(\vps_0))$.  Moreover, the pair $[f,x_w,v_w]$ satisfies
	\begin{align}
		\mathop{\sup}_{0\leq t\leq T^*}\{\left\|f(t)\right\|_{2}+\left\|	w f(t)\right\|_\infty+|x_w(t)|+|v_{w}(t)|\}\leq 2\vps_0,\notag
	\end{align}
	and
	\begin{equation}\label{loc es par f}
		\mathop{\sup}_{0\leq t\leq T^*}\Big\{\left\|{W_c}\partial_{x_1}f(t)\right\|_{p}^p+\int_0^t|{W_c}\pa_{x_1}f(s)|_{p,+}^p{\rm d}s\Big\}< \infty.
	\end{equation}
Furthermore, the quantities $\|w f(t)\|_\infty$, $\|{W_c}\pa_{x_1}f(t)\|_{p}^p$, and $\int_0^t|{W_c}\pa_{x_1}f|_{p,+}^p{\rm d}s$ are continuous in $t$.
\end{theorem}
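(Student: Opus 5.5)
We will construct the local solution by an iteration that decouples the kinetic equation from the piston ODE and treats the boundary data explicitly. We set $[f^0,x_w^0,v_w^0]=[f_0,x_{w0},v_{w0}]$. Given $[f^\ell,x_w^\ell,v_w^\ell]$, we first solve the linear ODE system \eqref{trb-ode} with $P_r-P_l$ evaluated at the trace $f^\ell(t,1,\cdot)$; this gives $[x_w^{\ell+1},v_w^{\ell+1}]$. We then solve the linear transport problem
\[
\partial_t f^{\ell+1}+\frac{v_1-v_w^{\ell}(t)x_1}{1+x_w^{\ell}(t)}\partial_{x_1}f^{\ell+1}+\nu f^{\ell+1}=Kf^{\ell}+\Gamma_+(f^\ell,f^\ell)-\Gamma_-(f^\ell,f^{\ell+1}),
\]
with incoming data $P_\gamma f^{\ell}$ at $x_1=0$ and $P_\gamma f^{\ell}+r^\ell$ at $x_1=1$, where $r^\ell$ and the sign conventions use $v_w^\ell$. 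For the original distribution $F^{\ell+1}=\mu+\sqrt\mu f^{\ell+1}$ this scheme is written as $\partial_tF^{\ell+1}+\cdots+F^{\ell+1}\int B\,F^\ell=Q_+(F^\ell,F^\ell)$ with diffuse boundary data built from $F^\ell\ge0$. Solving along characteristics therefore preserves $F^{\ell+1}\ge0$ inductively. The trace $f^{\ell+1}|_{\gamma_+}$ and its integrability come from Lemma \ref{lem trace} and Lemma \ref{lem green fun}.

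The next step is uniform bounds on a short interval $[0,T^*]$. For $h^{\ell+1}=wf^{\ell+1}$ we run the Duhamel estimate of Lemma \ref{lem es h-drcl} with a finite number $k$ of bounces. For short time the $L^2$ term is not needed: the $K_w$ and $\Gamma$ contributions are $O(T^*)\sup\|h^\ell\|_\infty$ plus $O(\sup\|h^\ell\|^2_\infty)$. The $r$ term is bounded by $|v_w^\ell|$. The last-bounce remainder is $(1/2)^k$ times the sup, by Lemma \ref{lem es}. The ODE step gives $|x_w^{\ell+1}|+|v_w^{\ell+1}|\le |x_{w0}|+|v_{w0}|+CT^*(1+\sup\|h^\ell\|_\infty)$. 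Choosing $T^*$ small then yields $\sup_{[0,T^*]}\{\|wf^{\ell}\|_\infty+|x_w^\ell|+|v_w^\ell|\}\le 2\vps_0$ uniformly in $\ell$, which also controls $\|f^\ell\|_2$. Repeating the derivation of Proposition \ref{loc-reg} for the linear iterate gives a uniform bound for $\|W_c\partial_{x_1}f^{\ell+1}\|_p^p+\int_0^t|W_c\partial_{x_1}f^{\ell+1}|_{p,+}^p$. Here $W_c$ is built from the characteristics of $[x_w^\ell,v_w^\ell]$, so \eqref{par alpha} holds for the $\ell$-th transport field. The ingredients are Proposition \ref{al-es} for the $\alpha_\vps^{-\vho q}$ integrals, Lemma \ref{es K}, and the boundary bounds \eqref{es par_x f(0)}--\eqref{es par_x f(1)} with $|v_w'|\lesssim \vps_0$. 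Because the right-hand side involves $\partial_{x_1}f^\ell$ through $K$, this becomes a linear recursive inequality with an $O(T^*)$ factor, and it closes for small $T^*$.

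Then comes convergence, which I expect to be the main obstacle. The transport coefficient depends on $[x_w^\ell,v_w^\ell]$, so the difference $f^{\ell+1}-f^{\ell}$ contains the term $\tilde{\mathcal A}\,\partial_{x_1}f^{\ell}$. This term can only be controlled through the weighted $W^{1,p}$ bound, and only in a weak norm. I will therefore mimic Lemma \ref{lem es px_w} and Proposition \ref{prop es f-g} for consecutive iterates in $L^{1+\de}$. The term $\tilde{\mathcal A}\partial_{x_1}f^\ell$ is handled by H\"older with $\|\langle v_1\rangle W_c^{-1}\|_{L^{p(1+\de)/(p-1-\de)}}$, which is finite by \eqref{u<N}. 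The incoming boundary differences at $x_1=1$ are treated by the splitting according to the sign of $v_w^{\ell}-v_w^{\ell-1}$ as in \eqref{ftf-bd2}, and the grazing part is absorbed by Lemma \ref{lem trace}. The expected estimate is
\[
\sup_{[0,T^*]}\big(\|f^{\ell+1}-f^\ell\|_{1+\de}^{1+\de}+|x_w^{\ell+1}-x_w^\ell|^{1+\de}+|v_w^{\ell+1}-v_w^\ell|^{1+\de}\big)\le \tfrac12 \sup_{[0,T^*]}\big(\cdots\big)_{\ell\to\ell-1}.
\]
This is a contraction for small $T^*$, where the factor $te^{Ct}$ on the outgoing boundary term is also made small. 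Hence the iterates are Cauchy in $L^{1+\de}$ and in $C([0,T^*])$ for the ODE variables. Weak-$*$ compactness in $L^\infty$ and weak compactness in the $W^{1,p}$ norm then pass the uniform bounds and nonnegativity to the limit, and the limit solves the full coupled problem. Uniqueness follows from Proposition \ref{prop es f-g} together with Lemma \ref{lem es px_w} and Gr\"onwall, after absorbing the $te^{Ct}$ boundary term for short time and iterating over time.

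Finally, continuity in $t$ of $\|wf\|_\infty$, $\|W_c\partial_{x_1}f\|_p^p$ and $\int_0^t|W_c\partial_{x_1}f|^p_{p,+}$ follows from the Duhamel representation and from the energy identity of Lemma \ref{lem green fun}. That identity expresses these quantities as integrals in time of integrable functions, and $\|wf(t)\|_\infty$ is continuous by the standard characteristic argument of \cite{Guo-2010}.
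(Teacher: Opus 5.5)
Your proposal is correct and follows essentially the same route as the paper's proof. The paper likewise uses a decoupled iteration, with the transport field frozen at $[x_w^\ell,v_w^\ell]$ and the piston ODE driven by the boundary trace of $f^\ell$. It then proves positivity via the gain/loss splitting, uniform $L^\infty$ bounds through the iterated stochastic cycles of Lemma \ref{lem es h^ell+1} and Lemma \ref{lem es}, uniform bounds on the iterate-dependent weighted $W^{1,p}$ norm, and finally an $L^{1+\de}$ contraction modeled on Lemma \ref{lem es px_w} and Proposition \ref{prop es f-g}.

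You pass quickly over two points that the paper makes explicit. First, the contracted functional must contain the outgoing trace $\int_0^t\int_{\gamma_+^\ell}|f^{\ell+1}-f^\ell|^{1+\de}{\rm d}\tilde\gamma^\ell{\rm d}s$, because it feeds both the next incoming data and the ODE difference. Second, passing the weighted derivative bound to the limit requires identifying the weak limit of ${W_c}^{f^{\ell+1}}\pa_{x_1}f^{\ell+1}$ with ${W_c}\pa_{x_1}f$, that is, showing $(\alpha_\vps^{f^{\ell+1}})^\vho\to\alpha_\vps^\vho$ as in \eqref{alpha weak*-drbcl}.
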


To prove Theorem \ref{loc} above, we begin with the following approximation regime
	\begin{equation}\label{equ bar F^ell+1}
		\left\{	\begin{array}{l}
			\partial_t\bar F^{\ell+1}+\frac{v_1-v^{\ell}_w(t)x_1}{1+x^{\ell}_w(t)}\partial_{x_1}\bar F^{\ell+1}=Q_{+}(\bar F^\ell,\bar F^\ell)-Q_{-}(\bar F^\ell,\bar F^{\ell+1}),\ \ell\geq0,\vspace{1ex}\\ \bar F^{\ell+1}(0,x_1,v)=\bar F_0(x_1,v),\vspace{1ex}\\
		\end{array}\right.
	\end{equation}
	with the following boundary condition on $\gamma_-$:
	\begin{align*}
		\bar F^{\ell+1}(t,0,v)|_{v_1>0}&=\sqrt{2\pi}\mu(v)\int_{u_1<0}\bar F^\ell(t,0,u)|u_1|{\rm d}u,\\
		\bar	F^{\ell+1}(t,1,v)|_{v_1-v_w^\ell(t)<0}&=\sqrt{2\pi}\mu(v_1-v^\ell_w(t),v_2,v_3)\int_{u_1-v^\ell_w(t)>0}\bar F^\ell(t,1,u)|u_1-v^\ell_w(t)|{\rm d}u,
	\end{align*}
	and the free boundary pair $[x^{\ell+1},v^{\ell+1}]$ satisfies the iteration scheme
	\begin{align*}
		\left\{	\begin{array}{l}
			\frac{{\rm d}}{{\rm d}t}x^{\ell+1}_w(t)=v^{\ell+1}_w(t),\vspace{1ex}\\
			\frac{{\rm d}}{{\rm d}t}v^{\ell+1}_w(t)=-\kappa x_w^{\ell+1}(t)-\mathcal M\left(P_r^{\ell+1}[\bar{F}^{\ell}]-P_l^{\ell+1}[\bar{F}^\ell] \right),\vspace{1ex}\\
			x^{\ell+1}_w(0)=x_{w0}, v^{\ell+1}_w(0)=v_{w0},
		\end{array}\right.
	\end{align*}
	where
	\begin{align*}
		P_r^{\ell+1}[\bar{F}^{\ell}]&-P_l^{\ell+1}[\bar{F}^\ell]\notag\\ &=\int_{v_1-v_w^{\ell+1}(t)<0}\mu(v)|v_1-v_w^{\ell+1}|^2{\rm d}v+\frac{\sqrt{2\pi}}{2}\int_{v_1-v_w^{\ell+1}(t)<0}\mu(v)|v_1-v_w^{\ell+1}(t)|{\rm d}v\notag\\
		&\quad-\int_{\mathbb R^3}\bar F^\ell(t,1,v)|v_1-v_w^{\ell+1}(t)|^2{\rm d}v.
	\end{align*}
	Additionally, we set $\bar F^0(t,x_1,v)=\bar F_0(x_1,v)$, $x^{0}_w(0)=x_{w0}$ and $v^{0}_w(0)=v_{w0}$. The proof is then divided into the following four steps.

\medskip
	\noindent {\bf Step 1. Non-negativity.} We aim to prove that
	\begin{equation}\label{posi F^ell+1}
		\textrm{if}\ \bar F^\ell\geq0,\ \textrm{then}\ \bar F^{\ell+1}\geq0.
	\end{equation}
	We define the functional $\nu(F)$ as
	\begin{equation*}
		\nu(F):=\int_{\mathbb R^3\times\mathbb S^2}|(v-u)\cdot\omega|F(u){\rm d}\omega{\rm d}u.
	\end{equation*}
	Since $\bar F^\ell\geq 0$, we have
	\begin{equation}\label{gain>0-drbcl}
		\nu(F^\ell)\geq0,\ Q_{+}(\bar F^\ell,\bar F^\ell)\geq0\ \textrm{and}\ \bar F^{\ell+1}\geq0\ \textrm{on}\ \gamma_-.
	\end{equation}
	Next, consider the characteristic line defined by
	\begin{equation}
		X_1^\ell(s;t,x_1,v_1)=\frac{1+x_w^\ell(t)}{1+x_w^\ell(s)}x_1+\frac{v_1(s-t)}{1+x_w^\ell(s)},\notag
	\end{equation}
	which solves
	\begin{equation}
		\frac{\rm d}{\rm ds}X_1^\ell(s;t,x_1,v_1)=\frac{v_1-v_w^\ell(s)X_1^\ell(s;t,x_1,v_1)}{1+x_w^\ell(s)}.\label{l-ch}
	\end{equation}
	From \eqref{equ bar F^ell+1}, one has by using \eqref{l-ch}
	\begin{align}
		\frac{\rm d}{\rm ds}&\Big\{e^{-\int_s^t\nu(\bar F^\ell)(\tau,X_1^\ell(\tau),v){\rm d}\tau}\bar F^{\ell+1}(s,X_1^\ell(s),v)\Big\}
		=e^{-\int_s^t\nu(\bar F^\ell)(\tau,X_1^\ell(\tau),v){\rm d}\tau}Q_{+}(\bar F^\ell,\bar F^\ell)(s,X_1^\ell(s),v).\notag
	\end{align}
This together with \eqref{gain>0-drbcl} and $\bar F_0(x_1,v)\geq0$ implies $\bar F^{\ell+1}\geq0$. Thus \eqref{posi F^ell+1} holds true.
	
\medskip
\noindent {\bf Step 2. Uniform bound.} By choosing $\vps_0\ll1$ and $T=T(\vps_0)\ll 1$, we will show that
	\begin{equation}\label{es L^infty h^ell}
		\mathop{\sup}_{0\leq t\leq T}\mathop{\max}_n\|w f^n(t)\|_\infty+\mathop{\sup}_{0\leq t\leq T}\mathop{\max}_n|x_w^n(t)|_\infty+\mathop{\sup}_{0\leq t\leq T}\mathop{\max}_n|v_w^n(t)|_\infty\leq 2\vps_0.
	\end{equation}
	To this end, set $\bar F^{\ell+1}=\mu+\sqrt{\mu}f^{\ell+1}$, then $(f^{\ell+1},x_w^{\ell+1},v_w^{\ell+1})$ solves
	\begin{equation}\label{PBE F^ell+1}
		\left\{	\begin{array}{l}
			\partial_tf^{\ell+1}+\frac{v_1-v^{\ell}_w(t)x_1}{1+x^{\ell}_w(t)}\partial_{x_1}f^{\ell+1}+\nu f^{\ell+1}=K f^\ell+\Gamma_{+}(f^\ell,f^\ell)-\Gamma_{-}(f^\ell,f^{\ell+1}),\vspace{1ex}\\ f^{\ell+1}(0,x_1,v)=f_0(x_1,v),
		\end{array}\right.
	\end{equation}
	and
	\begin{align}\label{ODE x^ell+1}
		\left\{	\begin{array}{l}
			\frac{{\rm d}}{{\rm d}t}x^{\ell+1}_w(t)=v^{\ell+1}_w(t),\vspace{1ex}\\
			\frac{{\rm d}}{{\rm d}t}v^{\ell+1}_w(t)=-\kappa x_w^{\ell+1}(t)-\mathcal M\left(P_r^{\ell+1}[\bar{F}^{\ell}]-P_l^{\ell+1}[\bar{F}^\ell] \right),\vspace{1ex}\\
			x^{\ell+1}_w(0)=x_{w0},\ v^{\ell+1}_w(0)=v_{w0},
		\end{array}\right.
	\end{align}
	with the boundary conditions
	\begin{align}
		f^{\ell+1}(t,0,v)|_{v_1>0}=\sqrt{2\pi\mu(v)}\int_{u_1<0}f^\ell(t,0,u)\sqrt{\mu(u)}|u_1|{\rm d}u\eqdef:P_{\gamma^\ell} f^\ell(0,v),\notag
	\end{align}
	and
	\begin{align}
		&f^{\ell+1}(t,1,v)|_{v_1-v^\ell_w(t)<0}\notag\\
		&\quad=\sqrt{2\pi\mu(v)}\int_{u_1-v^\ell_w(t)>0}f^\ell(t,1,u)\sqrt{\mu(u)}|u_1-v^\ell_w(t)|{\rm d}u\notag\\
		&\qquad +\sqrt{2\pi}\frac{\mu(v_1-v_w^{\ell}(t),v_2,v_3)-\mu(v)}{\sqrt{\mu (v)}}\int_{u_1-v_w^{\ell}(t)>0}f^{\ell}(t,1,u)\sqrt{\mu(u)}|u_1-v_w^{\ell}(t)|{\rm d}u\notag\\
		&\qquad+\sqrt{2\pi}\frac{\mu(v_1-v^{\ell}_w(t),v_2,v_3)-\mu(v)}{\sqrt{\mu (v)}}\int_{u_1-v_w^\ell(t)>0}\mu(u)|u_1-v^{\ell}_w(t)|{\rm d}u\notag\\
		&\qquad+\sqrt{2\pi \mu(v)}\int_{u_1-v_w^{\ell}(t)>0}[\mu(u)-\mu(u_1-v^{\ell}_w(t),u_2,u_3)]|u_1-v^{\ell}_w(t)|{\rm d}u\notag\\
		&\quad \eqdef:P_{\gamma^\ell} f^\ell(1,v)+r^\ell,\label{fk1-bd-drbcl}
	\end{align}
	Here,
	$$
	\gamma^\ell=\gamma_+^\ell	\cup\gamma_-^\ell, \ \gamma^0=\gamma_+	\cup\gamma_-, \ \ell\geq0,
	$$
	with
	\begin{align*}
		&\gamma_-^\ell=\{x_1\in\{0,1\}, v\in\R^3: [v_1-v_w^\ell(t)x_1]n(x_1)<0\},\\
		&\gamma_+^\ell=\{x_1\in\{0,1\}, v\in\R^3: [v_1-v_w^\ell(t)x_1]n(x_1)>0\}.
	\end{align*}
	Moreover, we denote
	$P_{\gamma^\ell} f^\ell(1,v)$ as the first term on the right hand side of \eqref{fk1-bd-drbcl} and $r^\ell$ as the remaining terms.

	We prove \eqref{es L^infty h^ell} by the argument of induction. First, \eqref{es L^infty h^ell} is true for $n=0$ by the initial condition given in \eqref{to-id-drbcl}.
	We then assume \eqref{es L^infty h^ell} is valid for $n\leq \ell$, and we aim to prove that \eqref{es L^infty h^ell} holds for $n=\ell+1$. Let $(x_1,v)\notin\gamma_0$ and $(t^0,x_1^0,v^0)=(t,x_1,v)$, define the following quantities:
	\begin{align*}
		&t^\ell_1(t,x_1,v_1)=\inf\{s<t: X_1^\ell(s;t,x_1,v_1)\in\Omega\},\\
		&x_{1,1}^\ell=X_1^\ell(t^\ell_1(t,x_1,v_1);t,x_1,v_1),\\
		&v^\ell_{1}\in\mathcal V^\ell_1:=\{x^\ell_{1,1}=0,v_{1,1}^\ell<0,v^\ell_1\in\mathbb R^3\}\cup\{x^\ell_{1,1}=1,v_{1,1}^\ell-v^{\ell}_w(t_1^\ell)>0,v^\ell_1\in\mathbb R^3\},\ \ell\geq1,\\
		&t^{\ell-1}_2(t,x_1,v_1,v_{1,1}^\ell)=\inf\{s<t_1^\ell: X_1^{\ell-1}(s;t^\ell_1,x^\ell_{1,1},v^\ell_{1,1})\in\Omega\},\\
		&x_{2,1}^{\ell-1}=X_1^{\ell-1}(t^{\ell-1}_2(t,x_1,v_1,v_{1,1}^\ell);t^\ell_1,x^\ell_{1,1},v^\ell_{1,1}),\\
		&v^{\ell-1}_{2}\in\mathcal V^{\ell-1}_2:=\{x^{\ell-1}_{2,1}=0,v_{2,1}^{\ell-1}<0,v^{\ell-1}_2\in\mathbb R^3\}\cup\{x^{\ell-1}_{1}=1,v_{2,1}^{\ell-1}-v^{\ell-1}_w(t_2^{\ell-1})>0,v^{\ell-1}_2\in\mathbb R^3\},
	\end{align*}
	and inductively
	\begin{align*}
		&t^{\ell-(l-1)}_l(t,x_1,v_1,v_{1,1}^\ell,\cdots,v_{l-1,1}^{\ell-(l-2)})\\
		&\quad=\inf\{s<t_{l-1}^{\ell-(l-2)}: X_1^{\ell-(l-1)}(s;t_{l-1}^{\ell-(l-2)},x_{l-1,1}^{\ell-(l-2)},v_{l-1,1}^{\ell-(l-2)})\in\Omega\},\ l\geq2,\\
		&x_{l,1}^{\ell-(l-1)}(t,x_1,v_1,v_{1,1}^\ell,\cdots,v_{l-1,1}^{\ell-(l-2)})\\
		&\quad=X_1^{\ell-(l-1)}(t^{\ell-(l-1)}_l;t_{l-1}^{\ell-(l-2)},x_{l-1,1}^{\ell-(l-2)},v_{l-1,1}^{\ell-(l-2)}),\\
		&v_{l}^{\ell-(l-1)}(t,x_1,v_1,v_{1,1}^\ell,\cdots,v_{l-1,1}^{\ell-(l-2)})\in\mathcal V^{\ell-(l-1)}_k\\
		&\quad:=\{x_{l,1}^{\ell-(l-1)}=0,v_{l,1}^{\ell-(l-1)}<0,v_{l}^{\ell-(l-1)}\in\mathbb R^3\}\\
		&\qquad\cup\{x_{l,1}^{\ell-(l-1)}=1,v_{l,1}^{\ell-(l-1)}-v^{\ell-(l-1)}_w(t_l^{\ell-(l-1)})>0,v_{l}^{\ell-(l-1)}\in\mathbb R^3\}.
	\end{align*}
	Next, setting
	\begin{equation*}
		h^{\ell+1}(t,x_1,v):=w(v)f^{\ell+1}(t,x_1,v),
	\end{equation*}
	one can rewrite \eqref{PBE F^ell+1} as
	\begin{align}\label{equ h^ell+1}
		\left\{	\begin{array}{l}
			\partial_th^{\ell+1}+\frac{v_1-v^\ell_w(t)x_1}{1+x^\ell_w(t)}\partial_{x_1}h^{\ell+1}+\nu h^{\ell+1}=K_{w}h^\ell+w\Gamma_{+}(\frac{h^\ell}{w},\frac{h^\ell}{w})-w\Gamma_{-}(\frac{h^\ell}{w},\frac{h^{\ell+1}}{w}),\vspace{1ex}\\ h^{\ell+1}(0,x_1,v)=h_0(x_1,v),\vspace{1ex}\\
			h^{\ell+1}(t,0,v)=w(v)P_{\gamma^\ell} f^\ell(t,0,v),\ v_1>0,\vspace{1ex}\\
			h^{\ell+1}(t,1,v)=w(v) P_{\gamma^\ell} f^\ell(t,1,v)+w(v)r^\ell(t,v),\ v_1-v_w^\ell(t)<0.
		\end{array}\right.
	\end{align}
	We then define
	\begin{equation}\label{es nu^ell}
		\nu^\ell(t,x_1,v)=\nu+\nu(\sqrt{\mu}\frac{h^\ell}{w})=\nu(v)+\int_{\R^3}\int_{\S^2}B(v-u,\theta)\sqrt{\mu}\frac{h^\ell}{w}{\rm d}\omega {\rm d}u.
	\end{equation}
	Since $\left\|wf^\ell(s)\right\|_{\infty}\leq 2\vps_0\ll1$, we have $\nu^\ell(t,x_1,v)\geq\la_1 \langle v\rangle$.
	
	Denote $H^\ell=\Gamma_{+}(\frac{h^\ell}{w},\frac{h^\ell}{w})$, it follows from Lemma \ref{Ga} that
	\begin{equation}\label{es |Gamma|-drbcl}
		|w H^\ell|\lesssim \langle v\rangle \|h^\ell\|_\infty^2.
	\end{equation}
	The following lemma is crucial for estimating $|h^{\ell+1}|$.
	\begin{lemma}\label{lem es h^ell+1}
	Let $\ell\geq0$. If $t^\ell_1\leq0$, then
		\begin{equation}\label{t^ell_1<0}
			|h^{\ell+1}(t,x_1,v)|\leq e^{-\int_0^t\nu^\ell {\rm d}s}|h^{\ell+1}(0,X_1^\ell(0),v)|+\int_{0}^te^{-\int_s^t\nu^\ell {\rm d}\tau}|[K_{w}h^\ell+wH^\ell](s,X_1^\ell(s;t,x_1,v_1),v)|{\rm d}s.
		\end{equation}
		If $t^\ell_1\geq0$, we have
		\begin{align}\label{t^ell_1>0}
			|h^{\ell+1}(t,x_1,v)|&\leq {\bf 1}_{t^\ell_1\geq0}e^{-\int^t_{t^\ell_1}\nu^\ell{\rm d}s}|wr^\ell(t^\ell_1,x^\ell_{1,1},v)|\mathbf 1_{\{x^\ell_{1,1}=1\}}\notag\\
			&\quad+\int_{t^\ell_1}^te^{-\int^t_{s}\nu^\ell{\rm d}\tau}|[K_{w}h^\ell+wH^\ell](s,X_1^\ell(s;t,x_1,v_1),v)|{\rm d}s\notag\\
			&\quad+\frac{e^{-\int^t_{t^\ell_1}\nu^\ell{\rm d}s}}{\tilde w(v)}\int_{\prod_{j=1}^{k-1}\mathcal V^{\ell-(j-1)}_j}|\tilde H|,
		\end{align}
		where $|\tilde H|$ is bounded by
		\begin{align}
			\sum_{l=1}^{k-1}&\mathbf 1_{\{t^{\ell-l}_{l+1}\leq0<t^{\ell-(l-1)}_l\}}|h^{\ell+1-l}(0,X_1^{\ell-l}(0;t^{\ell-{l-1}}_l,x^{\ell-(l-1)}_{l,1},v^{\ell-(l-1)}_{l,1}),v^{\ell-(l-1)}_{l})|{\rm d}\tilde\Sigma_l(0)\notag\\
			&+\sum_{l=1}^{k-1}\int_0^{t^{\ell-(l-1)}_{l}}\mathbf 1_{\{t^{\ell-l}_{l+1}\leq0<t^{\ell-(l-1)}_{l}\}}\notag\\
			&\quad\times|[K_{w}h^{\ell-l}+wH^{\ell-l}](s,X_1^{\ell-l}(s;t^{\ell-(l-1)}_{l},x^{\ell-(l-1)}_{l,1},v^{\ell-(l-1)}_{l,1}),v^{\ell-(l-1)}_{l})|{\rm d}\tilde\Sigma_l(s){\rm d}s\notag\\
			&+\sum_{l=1}^{k-1}\int_{t^{\ell-l}_{l+1}}^{t^{\ell-(l-1)}_{l}}\mathbf 1_{\{0<t^{\ell-(l-1)}_{l}\}}\notag\\ &\quad\times|[K_{w}h^{\ell-l}+wH^{\ell-l}](s,X_1^{\ell-l}(s;t^{\ell-(l-1)}_{l},x^{\ell-(l-1)}_{l,1},v^{\ell-(l-1)}_{l,1}),v^{\ell-(l-1)}_{l})|{\rm d}\tilde\Sigma_l(s){\rm d}s\notag\\
			&+\sum_{l=1}^{k-2}\mathbf 1_{\{0<t^{\ell-l}_{l+1}\}}{\rm d}\tilde\Sigma_l^r\notag\\
			&+\mathbf 1_{\{0<x^{\ell-(k-1)}_{k}\}}|h^{\ell+2-k}(t^{\ell-(k-1)}_{k},x^{\ell-(k-1)}_{k,1},v^{\ell-(k-2)}_{k-1})|{\rm d}\tilde\Sigma_{k-1}(t^{\ell-(k-1)}_{k}).\label{es tildeH5}
		\end{align}
		We define
		\begin{align}
			{\rm d}\tilde\sigma^l&=-\sqrt{2\pi}\mu(v^{\ell-(l-1)}_{l})v^{\ell-(l-1)}_{l,1}{\rm d}v^{\ell-(l-1)}_{l}\ {\rm if}\ x^{\ell-(l-1)}_{l,1}=0,\notag\\
			{\rm d}\tilde\sigma^l&=\sqrt{2\pi}\mu(v^{\ell-(l-1)}_{l})(v^{\ell-(l-1)}_{l,1}-v^{\ell+1-l}_w(t^{\ell-(l-1)}_{l})){\rm d}v^{\ell-(l-1)}_{l}\ {\rm if}\ x^{\ell-(l-1)}_{l,1}=1,\notag\\
			{\rm d}\tilde\Sigma_l&=\{\prod_{j=l+1}^{k-1}{\rm d}\tilde{\sigma}^j\}\times\{\tilde w(v^{\ell-(l-1)}_{l}){\rm d}\tilde{\sigma}^l\}\times\{\prod_{j=1}^{l-1}{\rm d}\tilde{\sigma}^j\},\notag\\
			{\rm d}\tilde\Sigma_l(s)&=\{\prod_{j=l+1}^{k-1}{\rm d}\tilde{\sigma}^j\}\times\{e^{-\int_s^{t^{\ell-(l-1)}_{l}}\nu^{\ell-l}{\rm d}\tau}\tilde w(v^{\ell-(l-1)}_{l}){\rm d}\tilde{\sigma}^l\}\times\{\prod_{j=1}^{l-1}e^{-\int_{t^{\ell-j}_{j+1}}^{t^{\ell-(j-1)}_{j}}\nu^{\ell-(j-1)}{\rm d}\tau}{\rm d}\tilde{\sigma}^j\},\notag\\	
			{\rm d}\tilde{\Sigma}_l^r&=\{\prod_{j=l+1}^{k-1}{\rm d}\tilde{\sigma}^j\}\times\{e^{-\int_{t^{\ell-l}_{l+1}}^{t^{\ell-(l-1)}_{l}}\nu^{\ell-l}{\rm d}\tau}\tilde w(v^{\ell-(l-1)}_l)w(v^{\ell-(l-1)}_l)r^{\ell-l}(t^{\ell-l}_{l},x^{\ell-l}_{l+1,1},v^{\ell-(l-1)}_{l}){\bf 1}_{x^{\ell-l}_{l+1,1}=1}{\rm d}\tilde{\sigma}^l\}\notag\\
			&\quad\times\{\prod_{j=1}^{l-1}e^{-\int_{t^{\ell-j}_{j+1}}^{t^{\ell-(j-1)}_{j}}\nu^{\ell-(j-1)}{\rm d}\tau}{\rm d}\tilde{\sigma}^j\},\  \ k\geq3,\ l\geq1.\notag
		\end{align}
	\end{lemma}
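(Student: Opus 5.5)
The plan is to integrate the iterated equation \eqref{equ h^ell+1} along the characteristic $X_1^\ell(s;t,x_1,v_1)$ from \eqref{l-ch}, using the damping factor $\nu^\ell$ defined in \eqref{es nu^ell}. Along this curve, $\frac{\rm d}{{\rm d}s}\{e^{-\int_s^t\nu^\ell{\rm d}\tau}h^{\ell+1}(s,X_1^\ell(s),v)\}$ equals $e^{-\int_s^t\nu^\ell{\rm d}\tau}[K_wh^\ell+wH^\ell]$, up to the loss term $-w\Gamma_-(h^\ell/w,h^{\ell+1}/w)$. That loss term has already been absorbed into $\nu^\ell$, because $\nu(\sqrt\mu h^\ell/w)$ is exactly the coefficient multiplying $h^{\ell+1}$.

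If $t_1^\ell\le0$, the backward characteristic reaches $s=0$ without touching $\partial\Omega$. Integrating over $[0,t]$ and taking absolute values then gives \eqref{t^ell_1<0} directly.

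If $t_1^\ell\ge0$, I integrate only over $[t_1^\ell,t]$. This yields the $K_wh^\ell+wH^\ell$ integral, plus the boundary value $h^{\ell+1}(t_1^\ell,x_{1,1}^\ell,v)$ weighted by $e^{-\int_{t_1^\ell}^t\nu^\ell}$. I split that boundary value with \eqref{equ h^ell+1}. The part $wr^\ell$ (present only when $x_{1,1}^\ell=1$) gives the first term of \eqref{t^ell_1>0}. The part $wP_{\gamma^\ell}f^\ell$ I rewrite as $\frac{1}{\tilde w(v)}\int_{\mathcal V_1^\ell}h^\ell(t_1^\ell,x_{1,1}^\ell,v_1^\ell)\,\tilde w(v_1^\ell){\rm d}\tilde\sigma^1$, using $\sqrt{2\pi\mu(v)}\sqrt{\mu(u)}|u_1-\cdot|{\rm d}u=\sqrt{2\pi}\mu(u)|u_1-\cdot|{\rm d}u\cdot\sqrt{\mu(v)}/\sqrt{\mu(u)}$ and $\tilde w=1/(w\sqrt\mu)$. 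Note that the boundary operator for $h^{\ell+1}$ involves $f^\ell$, so the index drops by one at each bounce. The relative velocity in ${\rm d}\tilde\sigma^1$ is measured against $v_w^\ell(t_1^\ell)$, consistent with the definition of $\mathcal V_1^\ell$. (The statement's ${\rm d}\tilde\sigma^l$ has subscript $v_w^{\ell+1-l}$, which is the same wall velocity.)

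Next I apply the same representation to $h^\ell(t_1^\ell,x_{1,1}^\ell,v_1^\ell)$, now along $X_1^{\ell-1}$ starting from $(t_1^\ell,x_{1,1}^\ell,v_1^\ell)$. Two cases arise. Either $t_2^{\ell-1}\le0$, which gives the initial-data term with $h^{\ell}(0,\cdot)$ and the source integral over $[0,t_1^\ell]$; or $t_2^{\ell-1}>0$, which gives the source integral over $[t_2^{\ell-1},t_1^\ell]$, an $r^{\ell-1}$ term, and a further boundary integral over $\mathcal V_2^{\ell-1}$. I iterate this $k-1$ times by induction on $l$.

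At step $l$ the index is $\ell+1-l$, the characteristic is $X_1^{\ell-l}$, and the accumulated measure is the product of the ${\rm d}\tilde\sigma^j$ with their exponential factors $e^{-\int\nu^{\ell-(j-1)}}$. This produces the measures ${\rm d}\tilde\Sigma_l(s)$ and ${\rm d}\tilde\Sigma_l^r$. At the $l$-th stage, the remaining integrals over $\mathcal V_j$ with $j>l$ are inserted trivially, since $\int{\rm d}\tilde\sigma^j\le1$ up to $O(|v_w|)$ corrections. The indicator sets $\{t_{l+1}\le0<t_l\}$ and $\{0<t_{l+1}\}$ partition the cases, exactly as in the five lines of \eqref{es tildeH5}. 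The last line is the leftover boundary value $h^{\ell+2-k}$ after $k-1$ bounces. The estimate requires $\ell\ge k-1$; otherwise I stop at $h^0=h_0$, which is absorbed into the initial-data terms.

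The main obstacle is bookkeeping rather than analysis: tracking the shifting superscripts together with the consistency of the wall velocities and exponential factors across bounces. To keep this under control, I would state and prove the one-bounce identity for a general index $m$ and then substitute it recursively. Positivity of $\nu^\ell$ (from $\|wf^\ell\|_\infty\le2\vps_0$) ensures all exponential weights are at most $1$. The remaining care is that ${\rm d}\tilde\sigma^l$ is a probability-type measure only up to $O(|v_w^{\ell-l}|)$, and this is harmless here since only pointwise upper bounds are claimed.
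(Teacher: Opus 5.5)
Your route is the paper's: Duhamel along $X_1^\ell$ with the loss term absorbed into $\nu^\ell$, the boundary value rewritten as $\frac{1}{\tilde w(v)}\int_{\mathcal V_1^\ell}h^\ell\,\tilde w\,{\rm d}\tilde\sigma^1$ so that the iteration index drops by one per bounce, and induction on the number of bounces. Your observation that the expansion must stop at $h^0\equiv h_0$ when $k-1>\ell$ is a point the paper does not address.

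The one step that fails as you justify it is inserting the dummy integrals over $\mathcal V_j$, $j>l$, into the stage-$l$ terms. The paper's induction on $k$ also does this, silently. The iteration produces the stage-$l$ term integrated only against $\prod_{j\le l}{\rm d}\tilde\sigma^j$. Rewriting it against the full product multiplies its integrand by $M=\int\cdots\int\prod_{j=l+1}^{k-1}{\rm d}\tilde\sigma^j$, and for the result to remain an upper bound you need $M\ge 1$. An upper bound of the form $\int{\rm d}\tilde\sigma^j\le 1+O(|v_w|)$ gives nothing here. Your remark ``harmless since only pointwise upper bounds are claimed'' is exactly backwards. At the right wall, $\int_{\mathcal V_j}{\rm d}\tilde\sigma^j=1-\frac{\sqrt{2\pi}}{2}v_w(t_j)+O(|v_w|^2)$, as computed in the proof of Lemma \ref{lem es}. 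Hence $M<1$ as soon as a later right-wall bounce sees $v_w>0$. This differs from the fixed-wall setting, where each ${\rm d}\sigma^j$ is a probability measure and the insertion is an identity. The upper bound $\int{\rm d}\tilde\sigma^j\le 1+\sqrt{2\pi}|v_w|$ is what Lemma \ref{lem es} needs downstream; the representation itself needs the lower bound.

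The repair is cheap; either of two options works.
\begin{itemize}
\item[(a)] State the stage-$l$ terms with integration over $\prod_{j\le l}\mathcal V_j$ only. This is what your iteration literally yields, and it also avoids having to define $v_w$ at dummy bounce times $t_j\le 0$.
\item[(b)] Use the lower bound $\int_{\mathcal V_j}{\rm d}\tilde\sigma^j\ge 1-\sqrt{2\pi}|v_w(t_j)|$. The mass is exactly $1$ at $x_1=0$ and exceeds $1$ when $v_w<0$. Under \eqref{vw-k} this gives $M\ge(1-1/k)^{k}\ge\frac14$, so \eqref{t^ell_1>0} holds after multiplying its right-hand side by $4$, which is all that is used later.
\end{itemize}
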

	\begin{proof}
		If $t^\ell_1\leq0$, then the estimate \eqref{t^ell_1<0} directly follows from \eqref{equ h^ell+1}.
		
		If $t^\ell_1\geq0$, we have
		\begin{equation*}
			|h^{\ell+1}(t,x_1,v)|\leq e^{-\int_{t_1^\ell}^t\nu^\ell{\rm d}s}|h^{\ell+1}(t^\ell_1,x_{1,1}^\ell,v)|+\int_{t^\ell_1}^te^{-\int_s^t\nu^\ell{\rm d}\tau}|[K_{w}h^\ell+wH^\ell]
(s,X_1^\ell(s;t,x_1,v_1),v)|{\rm d}s.	
		\end{equation*}
	At this stage,	we have $v_1\geq0, x_{1,1}^\ell=0$ or $v_1-v^\ell_w(t^\ell_1)\leq0, x_{1,1}^\ell=1$, then $h^{\ell+1}$ satisfies
		\begin{equation*}
			h^{\ell+1}(t^\ell_1,x_{1,1}^\ell,v)=wf^{\ell+1}(t^\ell_1,x_{1,1}^\ell,v)=w P_{\gamma^\ell} f^\ell+wr^\ell\textbf 1_{x_{1,1}^\ell=1},
		\end{equation*}
		with
		\begin{align*}
			w P_{\gamma^\ell} f^{\ell}&=\sqrt{2\pi} w\sqrt{\mu}(v)\int_{\mathcal V^\ell_1}\sqrt{\mu}f^\ell(t^\ell_1,x_{1,1}^\ell,v^\ell_1)|v^\ell_{1,1}-v^\ell_w(t^\ell_1)\textbf 1_{x_{1,1}^\ell=1}|{\rm d}v^\ell_1\\
			&=\frac{\sqrt{2\pi}}{ \tilde w(v)}\int_{\mathcal V^\ell_1}(\tilde w\mu)(v^\ell_1) h^\ell(t^\ell_1,x_{1,1}^\ell,v^\ell_1)|v^\ell_{1,1}-v^\ell_w(t^\ell_1)\textbf 1_{x_{1,1}^\ell=1}|{\rm d}v^\ell_1.
		\end{align*}
		Next we continue to express $h^\ell(t^\ell_1,x_{1,1}^\ell,v^\ell_1)$ as
		\begin{align*}
			|h^\ell(t^\ell_1,x_{1,1}^\ell,v^\ell_1)|&\leq \textbf 1_{\{t^{\ell-1}_2\leq0<t^\ell_1\}}\Bigg\{e^{-\int_0^{t_1^\ell}\nu^{\ell-1}{\rm d}s }|h^\ell(0,X_1^{\ell-1}(0;t^\ell_1,x_{1,1}^\ell,v_{1,1}^\ell),v^\ell_1)|\\
			&\qquad+\int_{0}^{t^\ell_1}e^{-\int_s^{t^\ell_1}\nu^{\ell-1}{\rm d}\tau}|[K_{w}h^{\ell-1}+wH^{\ell-1}](s,X_1^{\ell-1}(s;t^\ell_1,x_{1,1}^\ell,v^\ell_{1,1}),v^\ell_1)|{\rm d}s\Bigg\}\\
			&\quad+\textbf 1_{\{t^{\ell-1}_2\geq0\}}\Bigg\{e^{-\int_{t^{\ell-1}_2}^{t_1^\ell}\nu^{\ell-1}{\rm d}s}|h^\ell(t^{\ell-1}_2,x_{2,1}^{\ell-1},v^\ell_1)|\\
			&\qquad+\int_{t^{\ell-1}_2}^{t^\ell_1}e^{-\int^{t^\ell_1}_{s}\nu^{\ell-1}{\rm d}\tau}|[K_{w}h^{\ell-1}+wH^{\ell-1}](s,X_1^{\ell-1}(s;t^\ell_1,x^\ell_{1,1},v_{1,1}^\ell),v^\ell_1)|{\rm d}s\Bigg\}.
		\end{align*}
		Therefore, $h^{\ell+1}(t,x_1,v)$ can be further bounded as
		\begin{align}
			&h^{\ell+1}(t,x_1,v)\notag\\
			&\leq \int_{t^\ell_1}^te^{-\int_s^t\nu^\ell{\rm d}\tau}|[K_{w}h^\ell+wH^\ell](s,X_1^\ell(s;t,x_1,v_1),v)|{\rm d}s\notag\\
			&\quad+e^{-\int_{t_1^\ell}^t\nu^\ell{\rm d}s}|wr^\ell(t_1^\ell,x_{1,1}^\ell,v)|\textbf 1_{x_{1,1}^\ell=1}\notag\\
			&\quad+\sqrt{2\pi}\frac{e^{-\int_{t_1^\ell}^t\nu^\ell{\rm d}s}}{ \tilde w(v)}\int_{\mathcal V^\ell_1}\textbf 1_{\{t^{\ell-1}_2\leq0<t^\ell_1\}}e^{-\int_0^{t_1^\ell}\nu^{\ell-1}{\rm d}\tau}|h^\ell(0,X_1^{\ell-1}(0;t^\ell_1,x_{1,1}^\ell,v_{1,1}^\ell),v^\ell_1)|\notag\\
			&\qquad\times(\tilde w\mu)(v^\ell_1)|v^\ell_{1,1}-v^\ell_w(t^\ell_1)\textbf 1_{x_{1,1}^\ell=1}|{\rm d}v^\ell_1\notag\\
			&\quad+\sqrt{2\pi}\frac{e^{-\int_{t_1^\ell}^t\nu^\ell{\rm d}s}}{ \tilde w(v)}\int_{\mathcal V^\ell_1}\textbf 1_{\{t^{\ell-1}_2\leq0<t^\ell_1\}}\int_{0}^{t^\ell_1}e^{-\int_s^{t^\ell_1}\nu^{\ell-1}{\rm d}\tau}|[K_{w}h^{\ell-1}+wH^{\ell-1}](s,X_1^{\ell-1}(s;t^\ell_1,x_{1,1}^\ell,v^\ell_{1,1}),v^\ell_1)|\notag\\
			&\qquad\times(\tilde w\mu)(v^\ell_1)|v^\ell_{1,1}-v^\ell_w(t^\ell_1)\textbf 1_{x_{1,1}^\ell=1}|{\rm d}v^\ell_1{\rm d}s\notag\\
			&\quad+\sqrt{2\pi}\frac{e^{-\int_{t_1^\ell}^t\nu^\ell{\rm d}s}}{ \tilde w(v)}\int_{\mathcal V^\ell_1}\textbf 1_{\{t^{\ell-1}_2\geq0\}}\int_{t^{\ell-1}_2}^{t^\ell_1}e^{-\int^{t^\ell_1}_{s}\nu^{\ell-1}{\rm d}\tau}|[K_{w}h^{\ell-1}+wH^{\ell-1}](s,X_1^{\ell-1}(s;t^\ell_1,x^\ell_{1,1},v_{1,1}^\ell),v^\ell_1)|\notag\\
			&\qquad\times(\tilde w\mu)(v^\ell_1)|v^\ell_{1,1}-v^\ell_w(t^\ell_1)\textbf 1_{x_{1,1}^\ell=1}|{\rm d}v^\ell_1{\rm d}s\notag\\
			&\quad+\sqrt{2\pi}\frac{e^{-\int_{t_1^\ell}^t\nu^\ell{\rm d}s}}{ \tilde w(v)}\int_{\mathcal V^\ell_1}\textbf 1_{\{t^{\ell-1}_2\geq0\}}|h^\ell(t^{\ell-1}_2,x_{2,1}^{\ell-1},v^\ell_1)|e^{-\int_{t^{\ell-1}_2}^{t_1^\ell}\nu^{\ell-1}{\rm d}\tau}(\tilde w\mu)(v^\ell_1)|v^\ell_{1,1}-v^\ell_w(t^\ell_1)\textbf 1_{x_{1,1}^\ell=1}|{\rm d}v^\ell_1.\notag
		\end{align}
	Thus \eqref{t^ell_1>0} holds for $k=2$.
		
		Now we assume that the inequality \eqref{t^ell_1>0} holds for an arbitrary $k\in\mathbb N$. We proceed to verify its validity for $k+1$.
		Note that $v_{k-1,1}^{\ell-(k-2)}\leq0, x_{k,1}^{\ell-(k-1)}=0$ or $v_{k-1,1}^{\ell-(k-2)}-v^{\ell+1-k}_w(t_{k}^{\ell-(k-1)})\geq0, x_{k,1}^{\ell-(k-1)}=1$, we apply the boundary condition from \eqref{equ h^ell+1} to the final term \eqref{es tildeH5} to obtain
		\begin{align}\label{h(t^l)}
			h^{\ell+2-k}&(t_{k}^{\ell-(k-1)},x_{k,1}^{\ell-(k-1)},v_{k-1}^{\ell-(k-2)})\notag\\
			&=wf^{\ell+2-k}(t_{k}^{\ell-(k-1)},x_{k,1}^{\ell-(k-1)},v_{k-1}^{\ell-(k-2)})\notag\\
			&=w(v_{k-1}^{\ell-(k-2)}) P_{\gamma^{\ell+2-(k+1)}} f^{\ell+2-(k+1)}+w(v_{k-1}^{\ell-(k-2)})r^{\ell+2-(k+1)}\textbf 1_{x_{k,1}^{\ell-(k-1)}=1},
		\end{align}
		with
		\begin{align*}
			wP_{\gamma^{\ell+2-(k+1)}} f^{\ell+2-(k+1)}&=\frac{\sqrt{2\pi}}{ \tilde w(v_{k-1,1}^{\ell-(k-2)})}\int_{\mathcal V^{\ell-(k-1)}_k}\tilde w\mu h^{\ell+2-(k+1)}(t_{k}^{\ell-(k-1)},x_{k,1}^{\ell-(k-1)},v_{k}^{\ell-(k-1)})\\
			&\qquad\times|v_{k,1}^{\ell-(k-1)}-v^{\ell+2-(k+1)}_w(t_{k}^{\ell-(k-1)})\textbf 1_{x_{k,1}^{\ell-(k-1)}=1}|{\rm d}v_{k}^{\ell-(k-1)}.
		\end{align*}
		Next we continue to express $h^{\ell+2-(k+1)}(t_{k}^{\ell-(k-1)},x_{k,1}^{\ell-(k-1)},v_{k}^{\ell-(k-1)})$ as
		\begin{align}\label{es h(t^l)}
			|h^{\ell+2-(k+1)}&(t_{k}^{\ell-(k-1)},x_{k,1}^{\ell-(k-1)},v_{k}^{\ell-(k-1)})|\notag\\
			&\leq \textbf 1_{\{t_{k+1}^{\ell-k}\leq0<t_{k}^{\ell-(k-1)}\}}\Bigg\{e^{-\int_0^{t_{k}^{\ell-(k-1)}}\nu^{\ell-k} {\rm d}s}\notag\\
			&\quad\qquad\times|h^{\ell+2-(k+1)}(0,X_1^{\ell-k}(0;t_{k}^{\ell-(k-1)},x_{k,1}^{\ell-(k-1)},v_{k,1}^{\ell-(k-1)}),v_{k}^{\ell-(k-1)})|\notag\\
			&\qquad+\int_{0}^{t_{k+1}^{\ell-k}}e^{-\int_s^{t_{k}^{\ell-(k-1)}}\nu^{\ell-k}{\rm d}\tau}\notag\\
			&\quad\qquad\times|[K_{w}h^{\ell-k}+wH^{\ell-k}](s,X_1^{\ell-k}(s;t_{l}^{\ell-(k-1)},x_{k,1}^{\ell-(k-1)},v_{k,1}^{\ell-(k-1)}),v_{k}^{\ell-(k-1)})|{\rm d}s\Bigg\}\notag\\
			&\quad+\textbf 1_{\{t_{k+1}^{\ell-k}\geq0\}}\Bigg\{e^{-\int_{t_{k+1}^{\ell-k}}^{t_{k}^{\ell-(k-1)}}\nu^{\ell-k}{\rm d}s}|h^{\ell+2-(k+1)}(t_{k+1}^{\ell-k},x_{k+1,1}^{\ell-k},v_{k}^{\ell-(k-1)})|\notag\\
			&\qquad+\int_{t_{k+1}^{\ell-k}}^{t_{k}^{\ell-(k-1)}}e^{-\int_s^{t_{k}^{\ell-(k-1)}}\nu^{\ell-k}{\rm d}\tau}\notag\\
			&\quad\qquad\times|[K_{w}h^{\ell-k}+wH^{\ell-k}](s,X_1^{k-l}(s;t_{k}^{\ell-(k-1)},x_{k,1}^{\ell-(k-1)},v_{k,1}^{\ell-(k-1)}),v_{k}^{\ell-(k-1)})|{\rm d}s\Bigg\}.
		\end{align}
On the other hand,	from \eqref{h(t^l)}, we also have
		\begin{align*}
			&\frac{e^{-\int_{t^\ell_1}^t\nu^\ell{\rm d}s}}{\tilde {w}(v)}\int_{\prod_{j=1}^{k-1}\mathcal V^{\ell-(j-1)}_j}\textbf 1_{\{0<t_{k}^{\ell-(k-1)}\}}|h^{\ell+2-k}(t_{k}^{\ell-(k-1)},x_{k,1}^{\ell-(k-1)},v_{k}^{\ell-(k-1)})|{\rm d}
\tilde{\Sigma}_{k-1}(t_k^{\ell-(k-1)})\notag\\
			&\quad\leq \frac{e^{-\int_{t^\ell_1}^t\nu^\ell{\rm d}s}}{\tilde {w}(v)}\int_{\prod_{j=1}^{k-1}\mathcal V^{\ell-(j-1)}_j}\textbf 1_{\{0<t_{k}^{\ell-(k-1)}\}}{\rm d}\tilde{\Sigma}_{k-1}^r\notag\\
			&\qquad+\frac{e^{-\int_{t^\ell_1}^t\nu^\ell{\rm d}s}}{\tilde {w}(v)}\int_{\prod_{j=1}^{k-1}\mathcal V^{\ell-(j-1)}_j}\textbf 1_{\{0<t_{k}^{\ell-(k-1)}\}}|h^{\ell+2-(k+1)}(t_{k}^{\ell-(k-1)},x_{k,1}^{\ell-(k-1)},v_{k}^{\ell-(k-1)})|{\rm d}\tilde{\Sigma}_k(t_{k}^{\ell-(k-1)}).
		\end{align*}
Applying \eqref{es h(t^l)}, we then see that the last term above can be bounded by
		\begin{align*}
			&\frac{e^{-\int_{t^\ell_1}^t\nu^\ell{\rm d}s}}{\tilde{w}(v)}\int_{\prod_{j=1}^{k}\mathcal V^{\ell-(j-1)}_j}\frac{\tilde w(v_{k}^{\ell-(k-1)})}{\tilde w(v_{k-1}^{\ell-(k-2)})}{\rm d}\tilde\sigma^k\\
			&\quad\times \tilde w(v_{k-1}^{\ell-(k-2)})e^{-\int^{t_{k-1}^{\ell-(k-2)}}_{t_{k}^{\ell-(k-1)}}\nu^{\ell-(k-2)}{\rm d}s}{\rm d}\tilde\sigma^{k-1}\prod_{j=1}^{k-2}e^{-\int^{t_{j}^{\ell-(j-1)}}_{t_{j+1}^{\ell-j}}\nu^{\ell-(l-1)}{\rm d}s}{\rm d}\tilde\sigma^{j}\\
			&\quad\times\textbf 1_{\{t_{k+1}^{\ell-k}\leq0<t_{k}^{\ell-(k-1)}\}}\Bigg\{e^{-\int_0^{t_{k}^{\ell-(k-1)}}\nu^{\ell-l}{\rm d}s}|h^{\ell+2-(k+2)}(0,X_1^{\ell-{k+1}}(0;t^k,x^{\ell-(k-1)}_{k,1},v^{\ell-(k-1)}_{k,1}),v^{\ell-(k-1)}_{k})|\notag\\
			&\quad\qquad+\int_{0}^{t^{\ell-k}_{k+1}}e^{-\int_s^{t^{\ell-(k-1)}_{k}}\nu^{\ell-k}{\rm d}\tau}\\
			&\qquad\qquad\times|[K_{w}h^{\ell-(k+1)}+wH^{\ell-(k+1)}](s,X_1^{\ell-(k+1)}
(s;t^{\ell-(k-1)}_{k},x^{\ell-(k-1)}_{k,1},v^{\ell-(k-1)}_{k,1}),v^{\ell-(k-1)}_{k})|{\rm d}s\Bigg\}\notag\\
			&\qquad+\textbf 1_{\{t^{\ell-k}_{k+1}\geq0\}}\Bigg\{e^{-\int_{t^{\ell-k}_{k+1}}^{t^{\ell-(k-1)}_{k}}\nu^{\ell-k}{\rm d}s}|h^{\ell+2-(k+2)}(t^{\ell-k}_{k+1},x^{\ell-k}_{k+1,1},v^{\ell-(k-1)}_{k})|\notag\\
			&\quad\qquad+\int_{t^{\ell-k}_{k}}^{t^{\ell-(k-1)}_{k}}e^{-\int_s^{t^{\ell-(k-1)}_{k}}\nu^{\ell-k}{\rm d}\tau}\\
			&\qquad\qquad\times|[K_{w}h^{\ell-(k+1)}+wH^{\ell-(k+1)}](s,X_1^{\ell-(k+1)}
(s;t^{\ell-(k-1)}_{k},x^{\ell-(k-1)}_{k,1},v^{\ell-(k-1)}_{k,1}),v^{\ell-(k-1)}_{k})|{\rm d}s\Bigg\}.
		\end{align*}
Therefore, \eqref{es tildeH5} can be further expanded as
\begin{align*}
\frac{e^{-\int_{t^\ell_1}^t\nu^\ell{\rm d}s}}{\tilde {w}(v)}&\int_{\prod_{j=1}^{k-1}\mathcal V^{\ell-(j-1)}_j}\textbf 1_{\{0<t_{k}^{\ell-(k-1)}\}}{\rm d}\tilde{\Sigma}_{k-1}^r\notag\\
&+\frac{e^{-\int_{t^\ell_1}^t\nu^\ell{\rm d}s}}{\tilde{w}(v)}\int_{\prod_{j=1}^{k}\mathcal V^{\ell-(j-1)}_j}\tilde w(v_{k}^{\ell-(k-1)}){\rm d}\tilde\sigma^k\\
			&\quad\times e^{-\int^{t_{k-1}^{\ell-(k-2)}}_{t_{k}^{\ell-(k-1)}}\nu^{\ell-(k-2)}{\rm d}s}{\rm d}\tilde\sigma^{k-1}\prod_{j=1}^{k-2}e^{-\int^{t_{j}^{\ell-(j-1)}}_{t_{j+1}^{\ell-j}}\nu^{\ell-(l-1)}{\rm d}s}{\rm d}\tilde\sigma^{j}\\
			&\quad\times\textbf 1_{\{t_{k+1}^{\ell-k}\leq0<t_{k}^{\ell-(k-1)}\}}\Bigg\{e^{-\int_0^{t_{k}^{\ell-(k-1)}}\nu^{\ell-l}{\rm d}s}|h^{\ell+2-(k+2)}(0,X_1^{\ell-{k+1}}(0;t^k,x^{\ell-(k-1)}_{k,1},v^{\ell-(k-1)}_{k,1}),v^{\ell-(k-1)}_{k})|\notag\\
			&\quad\qquad+\int_{0}^{t^{\ell-k}_{k+1}}e^{-\int_s^{t^{\ell-(k-1)}_{k}}\nu^{\ell-k}{\rm d}\tau}\\
			&\qquad\qquad\times|[K_{w}h^{\ell-(k+1)}+wH^{\ell-(k+1)}](s,X_1^{\ell-(k+1)}
(s;t^{\ell-(k-1)}_{k},x^{\ell-(k-1)}_{k,1},v^{\ell-(k-1)}_{k,1}),v^{\ell-(k-1)}_{k})|{\rm d}s\Bigg\}\notag\\
			&\qquad+\textbf 1_{\{t^{\ell-k}_{k+1}\geq0\}}\Bigg\{e^{-\int_{t^{\ell-k}_{k+1}}^{t^{\ell-(k-1)}_{k}}\nu^{\ell-k}{\rm d}s}|h^{\ell+2-(k+2)}(t^{\ell-k}_{k+1},x^{\ell-k}_{k+1,1},v^{\ell-(k-1)}_{k})|\notag\\
			&\quad\qquad+\int_{t^{\ell-k}_{k}}^{t^{\ell-(k-1)}_{k}}e^{-\int_s^{t^{\ell-(k-1)}_{k}}\nu^{\ell-k}{\rm d}\tau}\\
			&\qquad\qquad\times|[K_{w}h^{\ell-(k+1)}+wH^{\ell-(k+1)}](s,X_1^{\ell-(k+1)}
(s;t^{\ell-(k-1)}_{k},x^{\ell-(k-1)}_{k,1},v^{\ell-(k-1)}_{k,1}),v^{\ell-(k-1)}_{k})|{\rm d}s\Bigg\}.\notag
\end{align*}
	Hence, \eqref{t^ell_1>0} holds for $k+1$. The proof of Lemma \ref{lem es h^ell+1} is complete.
	\end{proof}

The following lemma, which provides an estimate of the measure of stochastic cycles, is adapted from \cite[Lemma 23, pp.781]{Guo-2010} with a slight modification.
	\begin{lemma}\label{lem es}
		For $T_0>0$ sufficiently large, there exist constants $C_4$, $C_5>0$ independent of $T_0$, such that for $k=C_4T_0^{5/4}$, and all $(t,x_1,v)\in[0,T_0]\times\bar\Omega\times\mathbb R^3$
		\begin{equation}\label{int-ds^j1}
			\int_{\prod_{j=1}^{k-1}\mathcal V^{\ell-(j-1)}_j}\mathbf 1_{t_{k}^{\ell-(k-1)}>0}\prod_{j=1}^{k-1}{\rm d}\tilde\sigma^j\leq\left\{\frac{1}{2}\right\}^{C_5T_0^{5/4}}.
		\end{equation}
		Suppose
\begin{align}
\mathop{\sup}_s\mathop{\max}_\ell|v_w^\ell(s)|\leq \frac{1}{\sqrt{2\pi}k},\label{vw-k}
\end{align}
we also have
		\begin{equation}\label{int-ds^j2}
			\int_{\prod_{j=1}^{k-1}\mathcal V^{\ell-(j-1)}_j}\sum_{l=1}^{k-1}\mathbf 1_{\{t^{\ell-l}_{l+1}\leq0<t^{\ell-(l-1)}_{l}\}}\tilde w(v^{\ell-(l-1)}_{l})\langle v^{\ell-(l-1)}_{l}\rangle\prod_{j=1}^{k-1}{\rm d}\tilde\sigma^j\leq Ck,
		\end{equation}
		\begin{equation}\label{int-ds^j3}
			\int_{\prod_{j=1}^{k-1}\mathcal V^{\ell-(j-1)}_j}\mathbf 1_{\{0<t^{\ell-l}_{l+1}\}}\tilde w(v^{\ell-(l-1)}_{l})\langle v^{\ell-(l-1)}_{l}\rangle\prod_{j=1}^{k-1}{\rm d}\tilde\sigma^j\leq C\left(1+\frac{1}{k}\right)^{k-1},
		\end{equation}
		for all $l=1,2,\cdots,k-1$.
	\end{lemma}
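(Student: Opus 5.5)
Lemma \ref{lem es} will be proved by adapting Guo's stochastic-cycle measure estimate \cite[Lemma 23]{Guo-2010} to the rescaled characteristics \eqref{ODE X} and the moving-wall measures ${\rm d}\tilde\sigma^j$. Each ${\rm d}\tilde\sigma^j$ is, up to the factor $\sqrt{2\pi}\mu(v)$, a probability measure on $\mathcal V^{\ell-(j-1)}_j$. At $x_1=0$ this is exact, since $\sqrt{2\pi}\int_{v_1<0}\mu|v_1|{\rm d}v=1$. At $x_1=1$ the total mass is $\sqrt{2\pi}\int_{v_1-v_w>0}\mu(v)(v_1-v_w){\rm d}v=1-\sqrt{2\pi}\,\frac{v_w}{2}+O(v_w^2)$, so it is at most $1+\sqrt{2\pi}|v_w^\ell|\le 1+\frac1k$ under \eqref{vw-k}. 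Multiplying such factors over $k-1$ bounces gives the $(1+\frac1k)^{k-1}\le e$ appearing in \eqref{int-ds^j3}. This is the only place \eqref{vw-k} is needed.

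For \eqref{int-ds^j1} I will fix a small $\delta>0$ and split each $\mathcal V_j$ into the non-grazing set $\mathcal V_j^\delta=\{|v_{j,1}-v_w x_{1}|\ge\delta,\ |v_j|\le\delta^{-1}\}$ and its complement. On the complement the measure is at most $C\delta$, uniformly in the iterate index and in $x_1\in\{0,1\}$; this uses $|v_w^\ell|\ll1$ so the moving shift does not spoil the bound. On $\mathcal V_j^\delta$, formula \eqref{X} together with $|x_w^\ell|\ll1$ shows that crossing the interval $(0,1)$ takes time
\[
t_j-t_{j+1}\ \ge\ \frac{1-O(\vps_0)}{|v_{j,1}|+O(\vps_0)}\ \ge\ \frac{\delta}{2}.
\]
Hence $t_k>0$ forces at most $[2T_0/\delta]+1$ of the $k-1$ velocities to be non-grazing. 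The rest of the counting is as in Guo: bound the number of index subsets by binomial coefficients, each contributing $(C\delta)^{k-1-m}$ times a factor at most $(1+\frac1k)^{m}$, so that
\[
\int \mathbf 1_{t_k>0}\prod{\rm d}\tilde\sigma^j\ \le\ \sum_{m\le 2T_0/\delta+1}\binom{k-1}{m}(C\delta)^{k-1-m}e .
\]
Choosing $\delta\sim T_0^{-1/4}$ (so $T_0/\delta\sim T_0^{5/4}$) and $k=C_4T_0^{5/4}$ with $C_4$ large makes this smaller than $(1/2)^{C_5T_0^{5/4}}$.

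For \eqref{int-ds^j3}, the weighted factor $\tilde w(v_l)\langle v_l\rangle\,{\rm d}\tilde\sigma^l=\sqrt{2\pi}\,w^{-1}\mu^{1/2}\langle v_l\rangle|v_{l,1}-v_wx_1|{\rm d}v_l$ is integrable with a bounded constant, because $\zeta\ll1$ keeps the weight $w^{-1}\mu^{1/2}$ decaying. The other $k-2$ factors contribute at most $(1+\frac1k)^{k-2}$ by the mass computation above, and dropping the indicator only enlarges the integral. For \eqref{int-ds^j2}, each of the $k-1$ summands is bounded in the same way by $C(1+\frac1k)^{k-1}\le Ce$, and summing gives $Ck$.

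The main obstacle is the uniformity in \eqref{int-ds^j1}. The characteristics $X_1^{\ell-(j-1)}$ change with the iterate, so the free-flight lower bound must hold uniformly in $\ell$ and $j$. I will obtain this from the uniform smallness \eqref{es L^infty h^ell} of $x_w^n$ and $v_w^n$, together with the explicit formula \eqref{X}.
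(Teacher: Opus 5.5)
Your proposal is correct and follows essentially the same route as the paper. The shared steps are: the grazing/non-grazing split of each $\mathcal V^{\ell-(j-1)}_j$, with measure at most $C\delta$ off the good set; the crossing-time bound $\geq\delta/2$, which caps the number of non-grazing bounces at $[2T_0/\delta]+1$; binomial counting with $\delta\sim T_0^{-1/4}$ (the paper's choice $N=6$); and the per-bounce mass bound $1+\sqrt{2\pi}|v_w|\le 1+\frac1k$, which yields \eqref{int-ds^j2}--\eqref{int-ds^j3}. The only small difference is that in \eqref{int-ds^j1} you bound the non-grazing factors by $(1+\frac1k)^m$, which tacitly uses \eqref{vw-k}; the paper bounds each by $2$, which needs only $|v_w|\ll1$, is equally absorbed by the $(C\delta)^{k-1-m}$ decay, and keeps \eqref{int-ds^j1} independent of the hypothesis \eqref{vw-k}, as the statement requires.
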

	\begin{proof}
To prove \eqref{int-ds^j1},
		choose $0<\delta$ sufficiently small and define
		\begin{align*}
			\mathcal V^{\ell-(j-1)}_{j,\delta}&:=\{v^{\ell-(j-1)}_{j}\in\mathcal V^{\ell-(j-1)}_j: x^{\ell-(j-1)}_{j,1}=0, v^{\ell-(j-1)}_{j,1}\leq-\delta;\\
			&\quad x^{\ell-(j-1)}_{j,1}=1, v^{\ell-(j-1)}_{j,1}-v^{\ell+1-j}_w(t^{\ell-(j-1)}_{j})\geq\delta\}\cap\{v^{\ell-(j-1)}_{j}\in\mathcal V^{\ell-(j-1)}_{j}: |v^{\ell-(j-1)}_{j,1}|\leq\delta^{-1}\}.
		\end{align*}
Then we have
		\begin{align*}
			\int_{\mathcal V^{\ell-(j-1)}_{j}\backslash\mathcal V^{\ell-(j-1)}_{j,\delta}}{\rm d}\tilde\sigma^j&\leq\Big(\int_{x^{\ell-(j-1)}_{j,1}=0, -\delta<v^{\ell-(j-1)}_{j,1}\leq0}{\rm d}\tilde\sigma^j+\int_{x^{\ell-(j-1)}_{j,1}=1, 0<v^{\ell-(j-1)}_{j,1}-v^{\ell+1-j}_w(t^{\ell-(j-1)}_{j})\leq\delta}{\rm d}\tilde\sigma^j\\
			&\quad+\int_{|v^{\ell-(j-1)}_{j,1}|\geq\delta^{-1}}{\rm d}\tilde\sigma^j\Big)\\
			&\leq C\delta,
		\end{align*}
		where $C $ is independent of $j$. On the other hand, if $v_j^{\ell-(j-1)}\in\mathcal V_{j,\delta}^{\ell-(j-1)}$, from
		\begin{equation*}
			x^{\ell-j}_{j+1,1}=\frac{1+x_w^{\ell-j}(t^{\ell-(j-1)}_{j})}{1+x^{\ell-j}_w(t^{\ell-j}_{j+1})}x^{\ell-(j-1)}_{j,1}+\frac{v^{\ell-(j-1)}_{j,1}(t^{\ell-j}_{j+1}-t^{\ell-(j-1)}_{j})}{1+x^{\ell-j}_w(t^{\ell-j}_{j+1})},
		\end{equation*}
		we have
		\begin{equation*}
			|t^{\ell-(j-1)}_{j}-t^{\ell-j}_{j+1}|=\frac{|(1+x^{\ell-j}_w(t^{\ell-j}_{j+1})x^{k-j}_{j+1,1}-(1+x^{\ell-j}_w(t^{\ell-(j-1)}_{j})x^{\ell-(j-1)}_{j,1}|}{|v^{\ell-(j-1)}_{j,1}|}\geq\frac{1}{2|v^{\ell-(j-1)}_{j,1}|}\geq\frac{\delta}{2},
		\end{equation*}
		since $(x_{j,1}^{\ell-(j-1)},x_{j+1,1}^{\ell-j})=(0,1)$ or $(1,0)$.
		Therefore, if $t^{\ell-(k-1)}_{k}>0$, there can be at most $\left[\frac{2T_0}{\delta}\right]+1$ number of $v^{\ell-(j-1)}_{j}\in\mathcal V^{\ell-(j-1)}_{j,\delta}$ for $1\leq j\leq k-1$. There exist as least $k-1-\left[\frac{2T_0}{\delta}\right]$ number $v^{\ell-(m-1)}_{m}\in \mathcal V^{\ell-(m-1)}_{m}\backslash\mathcal V^{\ell-(m-1)}_{m,\delta}$. We have
		\begin{align*}
			\int_{\prod_{j=1}^{k-1}\mathcal V^{k-(j-1)}_{j}}&\textbf 1_{\{t^{\ell-(k-1)}_{l}>0\}}\prod_{j=1}^{k-1}{\rm d}\tilde\sigma^j\\
			&\leq\sum_{m=1}^{\left[\frac{2T_0}{\delta}\right]+1}\int_{\{{\rm There\ are}\ m\ {\rm of}\ v^{\ell-(j-1)}_{j}\in\mathcal V^{\ell-(j-1)}_{j,\delta},\ k-1-m\ {\rm of}\ v^{\ell-(j-1)}_{j}\in \mathcal V^{\ell-(j-1)}_{j}\backslash\mathcal V^{\ell-(j-1)}_{j,\delta} \}}\prod_{j=1}^{k-1}{\rm d}\tilde\sigma^j\\
			&\leq \sum_{m=1}^{\left[\frac{2T_0}{\delta}\right]+1}\binom{k-1}{m}|\mathop{\sup}_{j}\int_{\mathcal V^{\ell-(j-1)}_{j,\delta}}{\rm d}\tilde\sigma^j|^m\Big\{\mathop{\sup}_j\int_{\mathcal V^{\ell-(j-1)}_{j}\backslash\mathcal V^{\ell-(j-1)}_{j,\delta}}{\rm d}\tilde\sigma^j\Big\}^{k-1-m}.
		\end{align*}
If $ x_{j,1}^{\ell-(j-1)}=0$, ${\rm d}\tilde\sigma^j$ is a probability measure and $\int_{\mathcal V^{\ell-(j-1)}_{j}}{\rm d}\tilde\sigma^j=1$. However, if 	$ x_{j,1}^{\ell-(j-1)}=1$,
\begin{align}
\int_{\mathcal V^{\ell-(j-1)}_{j}}{\rm d}\tilde\sigma^j&=\int_{v_{j,1}^{\ell-(j-1)}-v_w^{\ell-(j-1)}(t_j^{\ell-(j-1)})>0}e^{-\frac{1}{2}|v_{j,1}^{\ell-(j-1)}|^2}\left(v_{j,1}^{\ell-(j-1)}-v_w^{\ell-(j-1)}(t_j^{\ell-(j-1)})\right){\rm d}v_{j,1}^{\ell-(j-1)}\notag\\
&=\int_{v_{j,1}^{\ell-(j-1)}>0}e^{-\frac{1}{2}|v_{j,1}^{\ell-(j-1)}+v_w^{\ell-(j-1)}(t_j^{\ell-(j-1)})|^2}v_{j,1}^{\ell-(j-1)}{\rm d}v_{j,1}^{\ell-(j-1)}\notag\\
&=\int_{v_{j,1}^{\ell-(j-1)}>0}e^{-\frac{1}{2}|v_{j,1}^{\ell-(j-1)}|^2}\left(1-v_{j,1}^{\ell-(j-1)}v_w^{\ell-(j-1)}(t_j^{\ell-(j-1)})\right)v_{j,1}^{\ell-(j-1)}{\rm d}v_{j,1}^{\ell-(j-1)}\notag\\
&\quad+O(1)|v_w^{\ell-(j-1)}(t_j^{\ell-(j-1)})|^2\notag\\
&=1-{\frac{\sqrt{2\pi}}{2}}v_w^{\ell-(j-1)}(t_j^{\ell-(j-1)})+O(1)|v_w^{\ell-(j-1)}(t_j^{\ell-(j-1)})|^2\notag\\
&\leq 1+\sqrt{2\pi}|v_w^{\ell-(j-1)}(t_j^{\ell-(j-1)})|.\notag
\end{align}
Therefore,
		\begin{equation}\label{es-int-dsi^j}
			\int_{\mathcal V^{\ell-(j-1)}_{j}}{\rm d}\tilde\sigma^j\leq 1+\sqrt{2\pi}|v_w^{\ell+1-j}(t^{\ell-(j-1)}_{j})|
		\end{equation}
Since $|v_w^{\ell+1-j}(t^{\ell-(j-1)}_{j})|\ll1$, we have
		\begin{equation*}
			\left|\int_{\mathcal V^{\ell-(j-1)}_{j,\delta}}{\rm d}\tilde\sigma^j\right|^m< 2^m\leq 2^{\left[\frac{2T_0}{\delta}\right]+1},
		\end{equation*} and
		\begin{align*}
			\Big\{\mathop{\sup}_j\int_{\mathcal V^{\ell-(j-1)}_{j}\backslash\mathcal V^{\ell-(j-1)}_{j,\delta}}{\rm d}\tilde\sigma^j\Big\}^{k-1-m}\leq \Big\{\mathop{\sup}_j\int_{\mathcal V^{\ell-(j-1)}_{j}\backslash\mathcal V^{\ell-(j-1)}_{j,\delta}}{\rm d}\tilde\sigma^j\Big\}^{k-2-\left[\frac{2T_0}{\delta}\right]}\leq (C\delta)^{k-2-\left[\frac{2T_0}{\delta}\right]}.
		\end{align*}
		From $\binom{k-1}{m}\leq (k-1)^m\leq (k-1)^{\left[\frac{2T_0}{\delta}\right]+1}$, we deduce that
		\begin{align*}
			&\int_{\prod_{j=1}^{k-1}\mathcal V^{\ell-(j-1)}_{j}}\textbf 1_{t^{\ell-(k-1)}_{k}>0}\prod_{j=1}^{k-1}{\rm d}\tilde\sigma^j
			\leq \left(\left[\frac{2T_0}{\delta}\right]+1\right)[2(k-1)]^{\left[\frac{2T_0}{\delta}\right]+1}(C\delta)^{k-2-\left[\frac{2T_0}{\delta}\right]}.
		\end{align*}
		Let $k-2=N\Big\{\left[\frac{2T_0}{\delta}\right]+1\Big\}$, so that if $\frac{2T_0}{\delta}\geq1$ and $C\delta<1$, then
		\begin{align*}
			\int_{\prod_{j=1}^{k-1}\mathcal V^{\ell-(j-1)}_{j}}\textbf 1_{t_k^{\ell-(k-1)}>0}\prod_{j=1}^{k-1}{\rm d}\tilde\sigma^j
			&\leq \Big[2(N+1)\left(\left[\frac{2T_0}{\delta}\right]+1\right)\Big]^{\left[\frac{2T_0}{\delta}\right]+1}(C\delta)^{(N-1)\big\{\left[\frac{2T_0}{\delta}\right]+1\big\}}\\
			&=\Big[2(N+1)\left(\left[\frac{2T_0}{\delta}\right]+1\right)(C\delta)^{N-1}\Big]^{\left[\frac{2T_0}{\delta}\right]+1}\\
			&:=\Big[C_NT_0\delta^{N-2}\Big]^{\left[\frac{2T_0}{\delta}\right]+1}.
		\end{align*}
		Choose $C_NT_0\delta^{N-2}=\frac{1}{2}$, so that $\delta=\left\{\frac{1}{2C_NT_0}\right\}^{\frac{1}{N-2}}$ is small for $T_0$ large and $N\geq2$. Moreover,
		\begin{equation*}
			\left[\frac{2T_0}{\delta}\right]+1\sim C_NT_0^{1+\frac{1}{N-2}},
		\end{equation*}
		and  $\frac{2T_0}{\delta}+1\geq2$. Finally, by choosing $N=6$, then
	$
			\left[\frac{2T_0}{\delta}\right]+1\sim C_NT_0^{5/4}.
$
Thus \eqref{int-ds^j1} is valid.
	
		Next, we prove the remaining results. We note that for $1\leq l\leq k-1$
			\begin{align*}
			\int_{\prod_{j=1}^{k-1}\mathcal V^{\ell-(j-1)}_{j}}&\tilde w(v^{\ell-(l-1)}_{l})\langle v^{\ell-(l-1)}_{l}\rangle\prod_{j=1}^{k-1}{\rm d}\tilde\sigma^j\\
			&\leq \prod_{j\neq l}\left(\int_{\mathcal V^{\ell-(j-1)}_{j}}{\rm d}\tilde\sigma^j\right)\times\left(\int_{\mathcal V^{\ell-(l-1)}_{l}}\tilde w(v^{\ell-(l-1)}_{l})\langle v^{\ell-(l-1)}_{l}\rangle{\rm d}\tilde\sigma^l\right)\\
				&\leq \prod_{j\neq l}\left(\int_{\mathcal V^{\ell-(j-1)}_{j}}{\rm d}\tilde\sigma^j\right)\\
				&\qquad\times\left(\int_{\mathcal V^{\ell-(l-1)}_{l}}\frac{\langle v^{\ell-(l-1)}_{l}\rangle}{ w(v^{\ell-(l-1)}_{l})}|v_{l,1}^{\ell-(l-1)}-v_w^{\ell-(l-1)}(t^{\ell-(l-1)})x_{l,1}^{\ell-(l-1)}|{\rm d}v_l^{\ell-(l-1)}\right)\\
		&\leq \prod_{j\neq l}\left(\int_{\mathcal V^{\ell-(j-1)}_{j}}{\rm d}\tilde\sigma^j\right)\times\left(\int_{\mathcal V^{\ell-(l-1)}_{l}}e^{-\frac{\zeta}{2}|v_l^{\ell-(l-1)}|^2}{\rm d}v_l^{\ell-(l-1)}\right)\\			
			&\leq C\left(1+\frac{1}{k}\right)^{k-1}<\infty,
		\end{align*}
where we have used \eqref{es-int-dsi^j} and \eqref{vw-k}. We can thus bound the left hand side of \eqref{int-ds^j2}	as
		\begin{equation*}
				\int_{\prod_{j=1}^{k-1}\mathcal V^{\ell-(j-1)}_{j}}\sum_{l=1}^{k-1}\textbf 1_{\{t^{\ell-l}_{l+1}\leq0<t^{\ell-(l-1)}_{l}\}}\tilde w(v^{\ell-(l-1)}_{l})\langle v^{\ell-(l-1)}_{l}\rangle\prod_{j=1}^{k-1}{\rm d}\tilde\sigma^j\leq Ck.
		\end{equation*}
		Similarly, for \eqref{int-ds^j3}, we have
		\begin{align*}
			\int_{\prod_{j=1}^{k-1}\mathcal V^{\ell-(j-1)}_{j}}&\textbf 1_{\{0<t^{\ell-(l-1)}_{l}\}}\tilde w(v^{\ell-(l-1)}_{l})\langle v^{\ell-(l-1)}_{l}\rangle\prod_{j=1}^{k-1}{\rm d}\tilde\sigma^j\\
			&\leq \int_{\prod_{j=1}^{k-1}\mathcal V^{\ell-(j-1)}_{j}}\tilde w(v^{\ell-(l-1)}_{l})\langle v^{\ell-(l-1)}_{l}\rangle\prod_{j=1}^{k-1}{\rm d}\tilde\sigma^j\\
			&\leq \prod_{j\neq l}\left(\int_{\mathcal V^{\ell-(j-1)}_{j}}{\rm d}\tilde\sigma^j\right)\times\left(\int_{\mathcal V^{\ell-(l-1)}_{l}}\tilde w(v^{\ell-(l-1)}_{l})\langle v^{\ell-(l-1)}_{l}\rangle{\rm d}\tilde\sigma^l\right)
			\leq C\left(1+\frac{1}{k}\right)^{k-1}.
		\end{align*}
		This completes the proof of Lemma \ref{lem es}.
	\end{proof}
With Lemmas \ref{lem es h^ell+1} and \ref{lem es} established, we now return to the proof of \eqref{es L^infty h^ell}.
	From \eqref{es nu^ell}, \eqref{es |Gamma|-drbcl}, \eqref{t^ell_1<0} and \eqref{t^ell_1>0}, by choosing $k$ large, we obtain
	\begin{align}
		|h^{\ell+1}&(t,x_1,v)|\notag\\
		&\leq C_k\|e^{-\frac{\nu_0}{2}t}h_0\|_\infty+Ce^{-\frac{\nu_0}{2}(t-t_1^\ell)}|v_w^\ell(t_1^\ell)|\notag\\
		&\quad+\int_{\max\{t_1^\ell,0\}}^te^{-\frac{\nu_0}{2}(t-s)}\int_{\mathbb R^3}\textbf k_w(v,v_*)|h^\ell(s,X_1^\ell(s),v_*)|{\rm d}v_*{\rm d}s\notag\\
		&\quad+C_k\mathop{\sup}_l\int_{\max \{t_{l+1}^{\ell-l},0\}}^te^{-\frac{\nu_0}{2}(t-s)}\int_{\mathbb R^3}\int_{\mathbb R^3}\textbf k_w(v_l^{\ell-(l-1)},v_*)\notag\\
		&\qquad\times|h^{\ell-l}(s,X_1^{\ell-l}(s;v^{\ell-(l-1)}_{l}),v_*)|\{v^{\ell-(l-1)}_{l,1}-v_w^{\ell-(l-1)}\textbf 1_{x^{\ell-(l-1)}_{l,1}=1}\}\frac{\sqrt{\mu(v^{\ell-(l-1)}_{l})}}{w(v^{\ell-(l-1)}_{l})}{\rm d}v^{\ell-(l-1)}_{l}{\rm d}v_*{\rm d}s\notag\\
		&\quad+C\int_{\max\{t_1^\ell,0\}}^t\langle v\rangle e^{-\int_s^t\frac{\nu^\ell(\tau)}{2}{\rm d}\tau}\|e^{-\frac{\nu_0}{2}(t-s)}h^\ell(s)\|_\infty^2\notag\\
		&\quad+C_k\mathop{\sup}_l\int_{\max\{t_{l+1}^{\ell-l},0\}}^{t_l^{\ell-(l-1)}}\langle v^{\ell-(l-1)}_{l}\rangle e^{-\int_s^{t^{\ell-(l-1)}_{l}}\frac{\nu^{\ell-l}(\tau)}{2}{\rm d}\tau}\|e^{-\frac{\nu_0}{2}(t-s)}h^{\ell-l}(s)\|_\infty^2\notag\\
		&\quad+\Big\{\frac{1}{2}\Big\}^{k}\|e^{-\frac{\nu_0}{2}(t-t_k^{\ell-(k-1)})}h^{\ell+2-k}(t_k^{\ell-(k-1)})\|_\infty
+Cke^{-\frac{\nu_0}{2}t}\mathop{\max}_{1\leq l\leq k-1}\mathop{\sup}_{0\leq s\leq t}|v_w^{\ell-l}(s)|.\notag
	\end{align}
	On the other hand, from \eqref{ODE x^ell+1}, we get
	\begin{equation}
		|x_w^{\ell+1}(t)|+|v_w^{\ell+1}(t)|\lesssim (|x_{w0}|+|v_{w0}|)+\int_0^t(|x_w^{\ell+1}(s)|+|v_w^{\ell+1}(s)|+\|wf^{\ell}(s)\|_\infty){\rm d}s.\notag
	\end{equation}
Combining the above estimates, we derive
	\begin{equation} \|h^{\ell+1}(t)\|_\infty+|x_w^{\ell+1}(t)|+|v_w^{\ell+1}(t)|
\lesssim_k\|h(0)\|_\infty+(|x_{w0}|+|v_{w0}|)+\left(\left\{\frac{1}{2}\right\}^k+t\right)\vps_0+\vps_0^2.\notag
	\end{equation}
	By taking $T\ll1$, $k$  sufficiently large, and $0\leq t\leq T$, we conclude \eqref{es L^infty h^ell}.

\medskip
\noindent {\bf Step 3. Local regularity.} We begin by noting that
\begin{align*}
	\gamma^\ell_{+}&=\{x_1=0,1,  v\in\mathbb R^3\big| [v_1-v^\ell_w(t)x_1]n(x_1)>0\},\\
	\gamma_-^{\ell}&=\{x_1=0,1,  v\in\mathbb R^3\big|  [v_1-v^\ell_w(t)x_1]n(x_1)<0\},
\end{align*}
	and
	\begin{equation*}
		{\rm d}\tilde{\gamma}^\ell=|v_1|{\rm d}v,\ \textrm{if}\ x_1=0,\ \textrm{and}\ {\rm d}\tilde{\gamma}^\ell=|v_1-v_{w}^\ell(t)|{\rm d}v,\ \textrm{if}\ x_1=1.
	\end{equation*}
	We define the following kinetic distance weight
	\begin{align*}
		{W_c}^{f^{\ell}}(t,x_1,v)&:=\alpha_\vps^{f^{\ell}}(t,x_1,v_1)e^{\zeta'|v|^2}\\
		&:=\chi(\frac{t-t_{\textbf b}^{f^{\ell}}(t,x_1,v_1)+\vps}{\vps})|v_1-v^{\ell-1}_w(t-t_{\textbf b}^{f^{\ell}}(t,x_1,v_1))x_{1\textbf b}^{f^\ell}|\\
		&\qquad+\Big[1-\chi(\frac{t-t_{\textbf b}^{f^{\ell}}(t,x_1,v_1)+\vps}{\vps})\Big]	e^{\zeta'|v|^2},
	\end{align*}
	where
	\begin{equation*}
		t_{\textbf b}^{f^{\ell}}(t,x_1,v_1)=\sup\{s\geq0 : X_1^{\ell-1}(\tau;t,x_1,v_1)\in(0,1)\ for\ all\ \tau\in(t-s,t)\}.
	\end{equation*}
	We also define the following norm:
	\begin{align*}
		\mathcal E^\ell(t):&=\|f^\ell(t)\|_{p}^p+\int_0^t\int_{\gamma_+^{\ell-1}}|f^{\ell}|^p{\rm d}\tilde{\gamma}^{\ell-1}{\rm d}s\notag\\
		&\quad+\|W^{f^\ell}_c\pa_{x_1}f^\ell(t)\|_{p}^p+\int_0^t\int_{\gamma_+^{\ell-1}}|W^{f^\ell}_c\pa_{x_1}f^\ell(t)|^p{\rm d}\tilde{\gamma}^{\ell-1}{\rm d}s,\ p\in(2,+\infty).
	\end{align*}
	We then claim that there exists $T^*\ll1$ ($T^*\leq T$ ) such that
	\begin{align}\label{es E^ell}
		\mathop{\max}_{\ell \geq m\geq0}\mathop{\sup}_{0\leq t\leq T^*}\mathcal E^m(t)&\leq C\|f_0\|_{p}^p+C\|W^{f_0}_c\pa_{x_1}f_0\|_{p}^p+C\vps_0^p.
	\end{align}

	The proof of \eqref{es E^ell} follows similarly to that of \eqref{loc-reg-f} in Proposition \ref{loc-reg}. First, \eqref{es E^ell} holds for
	$m=0$ by \eqref{to-id-drbcl}.
	Assume now that there exists $T^*<T$ such that \eqref{es E^ell} is valid for $0\leq m\leq \ell$.
	From Lemmas \ref{lem green fun} and \ref{es K}, we obtain
	\begin{align*}
		\| f^{\ell+1}(t)\|_{p}^p&+\int_0^t\int_{\gamma_+^\ell}|f^{\ell+1}|^p{\rm d}\tilde{\gamma}^\ell{\rm d}s+\int_0^t\left\|\nu^{\frac{1}{p}}f^{\ell+1}\right\|_{p}^p{\rm d}s\notag\\
		\leq& \left\| f(0)\right\|_{p}^p+\int_0^t\int_{\gamma_-^\ell}|f^{\ell+1}|^p{\rm d}\tilde{\gamma}^\ell{\rm d}s+\int_0^t\int_{\Omega\times\mathbb R^3}(1+x^\ell_w(s))|Kf^\ell||f^{\ell+1}|^{p-2}f^{\ell+1}{\rm d}x_1{\rm d}v{\rm d}s\\
		&+\int_0^t\int_{\Omega\times\mathbb R^3}(1+x_w^\ell(s))\big|\Gamma_{+}(f^\ell,f^\ell)-\Gamma_{-}(f^\ell,f^{\ell+1})\big||f^{\ell+1}|^{p-2}f^{\ell+1}{\rm d}x_1{\rm d}v{\rm d}s\\
		\leq& C\left\| f(0)\right\|_{p}^p+\int_0^t\int_{\gamma_-^\ell}|f^{\ell+1}|^p{\rm d}\tilde{\gamma}^\ell{\rm d}s+CT^*(1+\mathop{\sup}_{0\leq s\leq t}\|w f^\ell(s)\|_\infty)\mathop{\sup}_{0\leq s\leq t}\|(f^\ell,f^{\ell+1})(s)\|_{p}^p.
	\end{align*}
	Now, focusing on $\int_0^t\int_{\gamma_-^\ell}|f^{\ell+1}|^p{\rm d}\tilde{\gamma}^\ell{\rm d}s$, by Lemma \ref{lem trace}, we proceed with a similar calculation as for deriving \eqref{p,-,1} to obtain
	\begin{align}
		\int_0^t\int_{\gamma_-^\ell}|f^{\ell+1}|^p{\rm d}\tilde{\gamma}^\ell{\rm d}s\lesssim& o(1)\int_0^t\int_{\gamma_+^\ell}|f^{\ell+1}|^p{\rm d}\tilde{\gamma}^\ell{\rm d}s+\|f_0\|_{p}^p+\int_0^t\|f^{\ell+1}\|_{p}^p{\rm d}s\notag\\
		&+\int_0^t\int_{\Omega\times\mathbb R^3}\Big|\Big[\partial_t+\frac{v_1-v^\ell_w(t)x_1}{1+v^\ell_w(t)}\partial_{x_1}\Big]f^{\ell+1}|f^{\ell+1}|^{p-2}f^{\ell+1}\Big
		|{\rm d}x_1{\rm d}v{\rm d}s+\int_0^t|v_w^\ell(s)|^p{\rm d}s\notag\\
		\lesssim& \|f_0\|_{p}^p+CT^*(1+\mathop{\sup}_{0\leq s\leq t}\|w f^\ell(s)\|_\infty)\mathop{\sup}_{0\leq s\leq t}\|(f^\ell,f^{\ell+1})(s)\|_{p}^p\notag\\
		&+o(1)\int_0^t\int_{\gamma_+^\ell}|f^{\ell+1}|^p{\rm d}\tilde{\gamma}^\ell{\rm d}s
		+\int_0^t|v_w^\ell(s)|^p{\rm d}s.\notag
	\end{align}
	As a consequence, by the induction hypothesis, we arrive at
	\begin{align}\label{es f^ell L^p}
		\mathop{\sup}_{0\leq t\leq T^*}&\left\| f^{\ell+1}(t)\right\|_{p}^p	+\int_0^{T^*}\int_{\gamma_+^\ell}|f^{\ell+1}|^p{\rm d}\tilde{\gamma}^\ell{\rm d}s\notag\\
		\lesssim& \|f_0\|_{p}^p+CT^*(1+\mathop{\sup}_{0\leq t\leq T^*}\|w f^\ell(t)\|_\infty)\mathop{\sup}_{0\leq t\leq T^*}\|f^\ell(t)\|_{p}^p+\int_0^{T^*}|v_w^\ell(s)|^p{\rm d}s\notag\\
		\lesssim& \|f_0\|_{p}^p+\|W^{f_0}_c\pa_{x_1}f_0\|_{p}^p+\vps_0^p.
	\end{align}
	
	Next, applying $\pa_{x_1}$ to \eqref{PBE F^ell+1}, one gets
	\begin{align}
		(\partial_t&+\frac{v_1-v_w^\ell(t)x_1}{1+x^\ell_w(t)}\partial_{x_1}+\nu)(\partial_{x_1}f^{\ell+1})\notag\\
		&\quad=\frac{v^\ell_w(t)}{1+x^\ell_w(t)}\partial_{x_1}f^{\ell+1}+\pa_{x_1}\big(\Gamma_{+}(f^\ell,f^\ell)-\Gamma_{-}(f^\ell,f^{\ell+1})\big)
		-K\partial_{x_1}f^{k}.\notag
	\end{align}
	Using Lemma \ref{lem green fun}, we can derive the estimate
	\begin{align}\label{es pa f^ell L^p}
		\left\| {W_c}^{f^{\ell+1}}\pa_{x_1}f^{\ell+1}(t)\right\|_{p}^p&+\int_0^t\int_{\gamma_+^\ell}|{W_c}^{f^{\ell+1}}\pa_{x_1}f^{\ell+1}|^p{\rm d}\tilde{\gamma}^\ell{\rm d}s+\int_0^t\left\|\nu^{\frac{1}{p}}{W_c}^{f^{\ell+1}}\pa_{x_1}f^{\ell+1}\right\|_{p}^p{\rm d}s\notag\\
		\lesssim& \|{W_c}^{f_0}\pa_{x_1}f_0\|_{p}^p+\int_0^t\int_{\gamma_-^\ell}|W^{f^{\ell+1}}_c\pa_{x_1}f^{\ell+1}|^p{\rm d}\tilde{\gamma}^\ell{\rm d}s\notag\\
		&+CT^*(1+\mathop{\sup}_{0\leq s\leq t}\|w f^\ell(s)\|_\infty+\mathop{\sup}_{0\leq s\leq t}\|w f^{\ell+1}(s)\|_\infty)\mathop{\sup}_{0\leq s\leq t}\mathop{\max}_{m=\ell,\ell+1}\mathcal E^m.
	\end{align}
	Following the same reasoning as in the derivation of \eqref{es gamma_-}, we get the bound:
	\begin{align}\label{es f^ell ga_-}
		\int_0^t&\int_{\gamma_-^\ell}|W^{f^{\ell+1}}_s\pa_{x_1}f^{\ell+1}|^p{\rm d}\tilde{\gamma}^\ell{\rm d}s\notag\\
		\lesssim& \|{W_c}^{f_0}\pa_{x_1}f_0\|_{p}^p+o(1)\mathop{\max}_{0\leq k\leq \ell}\mathop{\sup}_{0\leq s\leq t}\mathcal E^k(s)+t(1+\|w f^\ell\|_\infty)\mathop{\max}_{0\leq k\leq \ell}\mathop{\sup}_{0\leq s\leq t}\mathcal E^k(s)\notag\\
		&+t\mathop{\sup}_{0\leq s\leq t}|[v^\ell_w]'(s)|^p+t\mathop{\sup}_{0\leq s\leq t}\|wf^\ell(s)\|_\infty^p.
	\end{align}
	Combining \eqref{es f^ell L^p}, \eqref{es pa f^ell L^p}, and \eqref{es f^ell ga_-}, we can conclude \eqref{es E^ell} by the induction hypothesis and by choosing $0<T^*\ll1$.

\medskip
\noindent{\bf Step 4. Convergence.}
	From \eqref{PBE F^ell+1} and \eqref{ODE x^ell+1}, it follows
	\begin{align*}
		\left\{	\begin{array}{l}
			\partial_t[f^{\ell+1}-f^\ell]+\frac{v_1-v^{\ell}_w(t)x_1}{1+x^{\ell}_w(t)}\partial_{x_1}[f^{\ell+1}-f^{\ell}]+L[f^{\ell+1}-f^{\ell}]\vspace{1ex}\\
			\quad=\Big[\frac{v_1-v^{\ell-1}_w(t)x_1}{1+x^{\ell-1}_w(t)}-\frac{v_1-v^{\ell}_w(t)x_1}{1+x^{\ell}_w(t)}\Big]\partial_{x_1}f^\ell\vspace{1ex}\\
			\qquad+\Gamma_{+}(f^\ell,f^\ell)-\Gamma_{-}(f^\ell,f^{\ell+1})-\Gamma_{+}(f^{\ell-1},f^{\ell-1})+\Gamma_{-}(f^{\ell-1},f^{\ell}),\\ (f^{\ell+1}-f^\ell)(0,x_1,v)=0,\vspace{1ex}
		\end{array}\right.
	\end{align*}
	and
	\begin{align*}
		\left\{	\begin{array}{l}
			\frac{{\rm d}}{{\rm d}t}[x^{\ell+1}_w(t)-x_w^\ell(t)]=v^{\ell+1}_w(t)-v_w^\ell(t),\vspace{1ex}\\	\frac{{\rm d}}{{\rm d}t}[v^{\ell+1}_w(t)-v_w^\ell(t)]=-\kappa[x^{\ell+1}_w(t)-x_w^\ell(t)]\\
			\qquad\qquad-\mathcal M(P_r^{\ell+1}[\bar{F}^\ell]-P_l^{\ell+1}[\bar{F}^\ell])
			+\mathcal M(P_r^{\ell}[\bar{F}^{\ell-1}]-P_l^{\ell}[\bar{F}^{\ell-1}]),\vspace{1ex}\\
			x_w^{\ell+1}(0)-x_w^\ell(0)=0, v_w^{\ell+1}(0)-v_w^\ell(0)=0.
		\end{array}\right.
	\end{align*}
	On $\gamma_-^\ell$, if $x_1=0$, one has
	\begin{equation*}
		f^{\ell+1}-f^\ell	=P_{\gamma}(f^{\ell}-f^{\ell-1}).
	\end{equation*}
	If $x_1=1$, we write
	\begin{equation}\label{bc f ell+1-ell}
		[f^{\ell+1}-f^\ell]_{\gamma_-}	=
		\left\{ \begin{array}{l}
			P_{\gamma^\ell}f^{\ell}-P_{\gamma^{\ell-1}}f^{\ell-1}+[r^{\ell}-r^{\ell-1}](t,v),\ \textrm{if}\ v_1<v_{w}^\ell(t)\leq v_w^{\ell-1}(t),\notag\\[2mm]
			\quad\textrm{or}\ \ v_1\leq v_{w}^{\ell-1}(t)< v_w^{\ell}(t),	\\[2mm]
			P_{\gamma^\ell} f^\ell+r^{\ell}-f^\ell,\ \textrm{if}\ v_w^{\ell-1}(t)\leq v_1\leq v_w^\ell(t).
		\end{array}\right.
	\end{equation}
	We then modify the proof of Lemma \ref{lem es px_w} and Proposition \ref{prop es f-g} to obtain the following estimates
	\begin{align}\label{es f-drbcl k+1-k}
		&\left\|f^{\ell+1}(t)-f^\ell(t)\right\|_{1+\delta}^{1+\delta}+\int_0^t\int_{\gamma_{+}^\ell}|f^{\ell+1}-f^\ell|^{1+\delta}{\rm d}\tilde{\gamma}^\ell{\rm d}s\notag\\
		&\quad\lesssim \int_0^t e^{C(1+s)^p}\|f^{\ell+1}-f^\ell\|_{1+\delta}^{1+\delta}{\rm d}s+\int_0^te^{C(1+s)^p}\|f^{\ell}-f^{\ell-1}\|_{1+\delta}^{1+\delta}{\rm d}s\notag\\
		&\qquad+\int_0^t(|x_w^\ell(s)-x_w^{\ell-1}(s)|^{1+\delta}+|v_w^\ell(s)-v_w^{\ell-1}(s)|^{1+\delta}){\rm d}s,
	\end{align}
	and
	\begin{align}\label{es x_w-drbcl k+1-k}
		&|x_{w}^{\ell+1}(t)-x_{w}^{\ell}(t)|^{1+\delta}+|v_{w}^{\ell+1}(t)-v_{w}^\ell(t)|^{1+\delta}\notag\\
		&\lesssim e^{Ct}\Big\{\int_0^t|v_{w}^{\ell}(s)-v_{w}^{\ell-1}(s)|^{1+\delta}{\rm d}s\notag\\
		&\qquad+\int_0^t\int_{v_1-v^{\ell-1}_{w}(s)>0}|f^{\ell}(s,1,v)- f^{\ell-1}(s,1,v)|^{1+\delta}|v_1-v^\ell_{w}(s)|{\rm d}v{\rm d}s\}.
	\end{align}
	Taking an appropriate linear combination of \eqref{es f-drbcl k+1-k} and $\eqref{es x_w-drbcl k+1-k}$ (with $\ell$ replaced by $\ell+1$), we obtain
	\begin{align}\label{es f-drbcl x_w k+1-k}
		&|x_{w}^{\ell+2}(t)-x_{w}^{\ell+1}(t)|^{1+\delta}+|v_{w}^{\ell+2}(t)-v_{w}^{\ell+1}(t)|^{1+\delta}\notag\\
		&\qquad+\left\|f^{\ell+1}(t)-f^\ell(t)\right\|_{1+\delta}^{1+\delta}+\int_0^t\int_{\gamma_{+}^\ell}|f^{\ell+1}-f^\ell|^{1+\delta}{\rm d}\tilde{\gamma}^\ell{\rm d}s\notag\\
		&\quad\lesssim \int_0^t e^{C(1+s)^p}\|f^{\ell+1}-f^\ell\|_{1+\delta}^{1+\delta}{\rm d}s+\int_0^te^{C(1+s)^p}\|f^{\ell}-f^{\ell-1}\|_{1+\delta}^{1+\delta}{\rm d}s\notag\\
		&\qquad+\int_0^t(|x_w^\ell(s)-x_w^{\ell-1}(s)|^{1+\delta}+|v_w^\ell(s)-v_w^{\ell-1}(s)|^{1+\delta}){\rm d}s+e^{Ct}\int_0^t|v_{w}^{\ell+1}(s)-v_{w}^{\ell}(s)|^{1+\delta}{\rm d}s.
	\end{align}
	Next, we define
	\begin{align*}
		\CA^\ell(t)&:=\mathop{\sup}_{0\leq s\leq t}\Big\{|x_{w}^{\ell+2}(s)-x_{w}^{\ell+1}(s)|^{1+\delta}+|v_{w}^{\ell+2}(s)-v_{w}^{\ell+1}(s)|^{1+\delta}
		\\&\qquad+\|f^{\ell+1}(s)-f^\ell(s)\|_{1+\delta}^{1+\delta}
		+\int_0^s\int_{\gamma_{+}^\ell}|f^{\ell+1}-f^\ell|^{1+\delta}{\rm d}\tilde{\gamma}^\ell{\rm d}\tau\Big\}.
	\end{align*}
	By Gr\"{o}nwall's inequality, we get from \eqref{es f-drbcl x_w k+1-k} that
	\begin{align}
		\CA^\ell(t) \leq& Ct e^{C(1+t)^p}(\CA^{\ell-1}(t)+\CA^{\ell-2}(t))\leq \frac{1}{2}(\CA^{\ell-1}(t)+\CA^{\ell-2}(t)),\label{ak-ite-drbcl}
	\end{align}
	where we have chosen $0<t\ll1$ $(t<T^*)$ such that $Ct e^{C(1+t)^p}<\frac{1}{2}$. Let us assume that $T^*$ satisfies $CT^* e^{C(1+T^*)^p}<\frac{1}{2}$.	\eqref{ak-ite-drbcl} further implies that both $\{f^\ell\}_{\ell=1}^\infty$ and $\{[x_w^\ell,v_w^\ell]\}_{\ell=1}^\infty$ are Cauchy sequences, thus there exists $f\in L^{1+\delta}(\Omega\times\mathbb R^3)$ and $[x_w,v_w]\in L^\infty([0,T^*])$ such that
	\begin{equation}\label{f^ell+1-f^ell limit}
		\int_0^t\int_{\gamma_{+}^\ell}|f^{\ell+1}-f^\ell|^{1+\delta}{\rm d}\tilde{\gamma}^\ell{\rm d}s\leq \frac{1}{2}(\CA^{\ell-1}(t)+\CA^{\ell-2}(t)),
	\end{equation}
	and
	\begin{align}\label{strongly 1-drbcl}
		\left\{\begin{array}{rll}
			&f^\ell\rightarrow f\ \textrm{strongly in}\ L^{1+\de}(\Omega\times\mathbb R^3),\\
			& \textrm{and}\
			[x_w^\ell,v_w^\ell]\rightarrow [x_w,v_w]\ \textrm{strongly in}\ L^\infty([0,T^*]),\ \textrm{for small}\ T^*>0.
		\end{array}\right.
	\end{align}
	On the other hand, using \eqref{ak-ite-drbcl} and \eqref{strongly 1-drbcl}, we have
	\begin{align}\label{strongly ga+-drbcl}
		\int_0^t&\int_{\gamma_{+}}|f^{\ell+1}-f^{\ell}|^{1+\delta}{\rm d}\tilde{\gamma}{\rm d}s\notag\\
		&=\Big(\int_0^t\int_{\gamma_{+}}|f^{\ell+1}-f^\ell|^{1+\delta}{\rm d}\tilde{\gamma}{\rm d}s-\int_0^t\int_{\gamma_{+}^\ell}|f^{\ell+1}-f^\ell|^{1+\delta}{\rm d}\tilde{\gamma}^\ell{\rm d}s\Big)+\int_0^t\int_{\gamma_{+}^\ell}|f^{\ell+1}-f^\ell|^{1+\delta}{\rm d}\tilde{\gamma}^\ell{\rm d}s\notag\\
		&\leq C|v_w^{\ell}(t)-v_w(t)|+\int_0^t\int_{\gamma_{+}^\ell}|f^{\ell+1}-f^\ell|^{1+\delta}{\rm d}\tilde{\gamma}^\ell{\rm d}s,
	\end{align}
and
	\begin{align}\label{v^l-v}
	|v^\ell_w(t)-v_w(t)|&=\mathop{\lim}_{n\to\infty}|v^\ell_w(t)-v^n_w(t)|\leq \mathop{\lim}_{n\to\infty}\sum_{j=\ell}^{n-1}|v^j_w(t)-v^{j+1}_w(t)|\notag\\
	&\leq C\mathop{\lim}_{n\to\infty}\sum_{j=\ell}^{n-1}\CA^{j-1}(t)\leq C(\CA^{\ell-1}(t)+\CA^\ell(t)).
\end{align}
	By the boundary condition \eqref{bc f ell+1-ell}, applying \eqref{f^ell+1-f^ell limit}, \eqref{strongly ga+-drbcl}, \eqref{v^l-v} and \eqref{strongly 1-drbcl}, we conclude that
	\begin{equation}\label{strongly 2-drbcl}
		f^\ell\rightarrow f\ \textrm{strongly in}\ L^\infty([0,T],L^{1+\de}(\Omega\times\mathbb R^3))\cap L^{1+\de}([0,T^*]\times \gamma).
	\end{equation}

	On the other hand, from \eqref{es L^infty h^ell} and \eqref{strongly 2-drbcl} we obtain a unique weak-$*$ convergence (up to subsequence if necessary) $f^\ell\overset{\ast}{\rightarrow}f$ in $L^\infty([0,T^*]\times\Omega\times\mathbb R^3)$.
	We now show that $[f,x_w,v_w]$ constructed above is indeed a weak solution of the system \eqref{PBE}, \eqref{trb-ode}, \eqref{ic f}, \eqref{drbl-f} and \eqref{drb-f}. To this end,
	for $\varphi\in C_c^\infty(\mathbb R\times\Omega\times\mathbb R^3)$, we have
	\begin{align}\label{7.3}
		\int_0^{T^*}& \int_{\Omega\times\mathbb R^3}(1+x_w^\ell(t))f^{\ell+1}\Big[\partial_t+\frac{v_1-v_w^\ell(t)}{1+x_w^\ell(t)}\pa_{x_1}+\nu\Big]\varphi{\rm d}x_1{\rm d}v{\rm d}t\notag\\
		&=\int_0^{T^*}\int_{\Omega\times\mathbb R^3}(1+x_w^\ell(t))[Kf^\ell+\Gamma_{+}(f^\ell,f^\ell)-\Gamma_{-}(f^\ell,f^{\ell+1})]\varphi{\rm d}x_1{\rm d}v{\rm d}t\notag\\
		&\quad+\int_0^{T^*}\int_{\gamma_+^\ell}f^{\ell+1}\varphi{\rm d}\tilde\gamma^\ell{\rm d}t-\int_0^{T^*}\int_{\gamma_-^\ell}f^{\ell+1}\varphi{\rm d}\tilde\gamma^\ell{\rm d}t.
	\end{align}
	Except $\Gamma_{+}(f^\ell,f^\ell)$ and $\Gamma_{-}(f^\ell.k^{\ell+1})$ in \eqref{7.3}, all other terms converges to their limits with $[f,x_w,v_w]$ instead of $[f^\ell,x_w^\ell,v_w^\ell]$ or $[f^{\ell+1},x_w^\ell,v_w^\ell]$. To handle these two remaining terms, we follow the approach introduced in \cite{li-wang}.
	
	First, we consider the term involving $\Gamma_{-}(f^\ell,f^{\ell+1})$. It follows that
	\begin{align*}
		\Big|\int_0^{T^*}&\int_{\Omega\times\mathbb R^3}(1+x_w^\ell(t))\Gamma_{-}(f^\ell,f^{\ell+1})\varphi{\rm d}x_1{\rm d}v{\rm d}t-\int_0^{T^*}\int_{\Omega\times\mathbb R^3}(1+x_w(t))\Gamma_{-}(f,f)\varphi{\rm d}x_1{\rm d}v{\rm d}t\Big|\\
		&\leq\Big|\int_0^{T^*}\int_{\Omega\times\mathbb R^3}(1+x_w^\ell(t))\Gamma_{-}(f^\ell,f^{\ell+1})\varphi{\rm d}x_1{\rm d}v{\rm d}t-\int_0^{T^*}\int_{\Omega\times\mathbb R^3}(1+x_w(t))\Gamma_{-}(f^\ell,f^{\ell+1})\varphi{\rm d}x_1{\rm d}v{\rm d}t\Big|\\\
		&\quad+\Big|\int_0^{T^*}\int_{\Omega\times\mathbb R^3}(1+x_w(t))\int_{\mathbb R^3}|v-u|\{f^\ell(u)-f(u)\}\sqrt{\mu(u)}{\rm d}u f^{\ell+1}(v)\varphi(t,x_1,v){\rm d}x_1{\rm d}v{\rm d}t\Big|\\
		&\quad+\Big|\int_0^{T^*}\int_{\Omega\times\mathbb R^3}(1+x_w(t))\int_{\mathbb R^3}|v-u|f(u)\sqrt{\mu(u)}{\rm d}u\{f^{\ell+1}(v)-f(v)\} \varphi(t,x_1,v){\rm d}x_1{\rm d}v{\rm d}t\Big|.
	\end{align*}
	The first term above converges to zero from the strong convergence $x_w^\ell$ in $L^\infty([0,{T^*}])$ and uniform bound \eqref{es L^infty h^ell}. The third term converges to zero from the weak$-*$ convergence $f^{\ell+1}$ in $L^\infty([0,{T^*}]\times\Omega\times\mathbb R^3)$ and uniform bound \eqref{es L^infty h^ell}.
	For the second term, by changing the variables $(u,v)\leftrightarrow(v,u)$, we obtain
	\begin{align}
		\int_{\mathbb R^3\times\mathbb R^3}&|v-u|\{f^\ell(u)-f(u)\}\sqrt{\mu(u)} f^{\ell+1}(v)\varphi(t,x_1,v){\rm d}u{\rm d}v\notag\\
		&=\int_{\mathbb R^3\times\mathbb R^3}|v-u|\{f^\ell(v)-f(v)\}\sqrt{\mu(v)} f^{\ell+1}(u)\varphi(t,x_1,u){\rm d}u{\rm d}v\notag\\
		&=\int_{\mathbb R^3}\{f^\ell(v)-f(v)\}\Big(\int_{\mathbb R^3}|v-u|\sqrt{\mu(v)} f^{\ell+1}(u)\varphi(t,x_1,u){\rm d}u\Big){\rm d}v\notag\\
		&=:\int_{\mathbb R^3}\{f^\ell(v)-f(v)\} \mathcal S_1(t,x_1,v){\rm d}v.\notag
	\end{align}
	We have $|\mathcal S_1|\leq \sqrt{\mu(v)}\int_{\mathbb R^3}|v-u|w^{-1}(u)\|wf^{\ell+1}\|_{\infty}|\varphi(t,x_1,u)|{\rm d}u\in L^1_{t,x_1,v}$. Therefore, from the weak$-*$ convergence $f^{\ell+1}$ in $L^\infty([0,{T^*}]\times\Omega\times\mathbb R^3)$ and uniform bound \eqref{es L^infty h^ell}, we have
	\begin{equation*}
		\int_0^{T^*}\int_{\Omega\times\mathbb R^3}(1+x_w(t))	\{f^\ell(v)-f(v)\}\mathcal S_1(t,x_1,v){\rm d}x_1{\rm d}v{\rm d}t\rightarrow0,
	\end{equation*}
	and
	\begin{equation*}
		\int_0^{T^*}\int_{\Omega\times\mathbb R^3}(1+x_w^\ell(t))\Gamma_{-}(f^\ell,f^{\ell+1})\varphi{\rm d}x_1{\rm d}v{\rm d}t\rightarrow\int_0^{T^*}\int_{\Omega\times\mathbb R^3}(1+x_w(t))\Gamma_{-}(f,f)\varphi{\rm d}x_1{\rm d}v{\rm d}t.
	\end{equation*}
	
	We now turn to consider the term involving $\Gamma_{+}(f^\ell,f^\ell)$. By a standard change of variables $(v,u)\mapsto(v',u')$ and $(v,u)\mapsto(u',v')$, we get
	\begin{align}
		\int_0^{T^*}&(1+x_w^\ell(t))\int_{\Omega\times\mathbb R^3}\Gamma_{+}(f^\ell,f^\ell)\varphi{\rm d}x_1{\rm d}v{\rm d}t-\int_0^{T^*}(1+x_w(t))\int_{\Omega\times\mathbb R^3}\Gamma_{+}(f,f)\varphi{\rm d}x_1{\rm d}v{\rm d}t\notag\\
		&=\int_0^{T^*}(1+x_w^\ell(t))\int_{\Omega\times\mathbb R^3}\Gamma_{+}(f^\ell,f^\ell)\varphi{\rm d}x_1{\rm d}v{\rm d}t-\int_0^{T^*}(1+x_w(t))\int_{\Omega\times\mathbb R^3}\Gamma_{+}(f^\ell,f^\ell)\varphi{\rm d}x_1{\rm d}v{\rm d}t\notag\\
		&\quad+\int_0^{T^*}(1+x_w(t))\int_{\Omega\times\mathbb R^3}\Gamma_{+}(f^\ell-f,f^\ell)\varphi{\rm d}x_1{\rm d}v{\rm d}t+\int_0^{T^*}(1+x_w(t))\int_{\Omega\times\mathbb R^3}\Gamma_{+}(f,f^\ell-f)\varphi{\rm d}x_1{\rm d}v{\rm d}t\notag\\ \label{gain 1-drbcl}
		&=	\int_0^{T^*}(x_w^\ell(t)-x_w(t))\int_{\Omega\times\mathbb R^3}\Gamma_{+}(f^\ell,f^\ell)\varphi{\rm d}x_1{\rm d}v{\rm d}t\\
		&\quad+\int_0^{T^*}(1+x_w(t))\int_{\Omega\times\mathbb R^3}\Big(\int_{\mathbb R^3\times \S^2}(f^\ell(t,x_1,u)-f(t,x_1,u))\sqrt{\mu(u')}|(v-u)\cdot\omega|\varphi(t,x_1,v'){\rm d}u{\rm d}\omega\Big)\notag\\ \label{gain 2-drbcl}
		&\qquad\times f^\ell(t,x_1,v){\rm d}x_1{\rm d}v{\rm d}t\\
		&\quad+\int_0^{T^*}(1+x_w(t))\int_{\Omega\times\mathbb R^3}\Big(\int_{\mathbb R^3\times \S^2}(f^\ell(t,x_1,u)-f(t,x_1,u))\sqrt{\mu(v')}|(v-u)\cdot\omega|\varphi(t,x_1,u'){\rm d}u{\rm d}\omega\Big)\notag\\ \label{gain 3-drbcl}
		&\qquad\times f(t,x_1,v){\rm d}x_1{\rm d}v{\rm d}t.
	\end{align}
Note that \eqref{gain 1-drbcl} vanishes as $\ell\rightarrow\infty$ according to the strong convergence \eqref{strongly 1-drbcl} and the uniform bound \eqref{es L^infty h^ell}.
	
	Next, for \eqref{gain 2-drbcl}, we  have
	\begin{align*}
		&\int_{\mathbb R^3\times\mathbb R^3\times \S^2}(f^\ell(t,x_1,u)-f(t,x_1,u))\sqrt{\mu(u')}|(v-u)\cdot\omega|f^\ell(t,x_1,v)\varphi(t,x_1,v'){\rm d}u{\rm d}v{\rm d}\omega\notag\\
		&\quad=\int_{\mathbb R^3}(f^\ell(t,x_1,u)-f(t,x_1,u))\Big(\int_{\mathbb R^3\times \S^2}\sqrt{\mu(u')}|(v-u)\cdot\omega|f^\ell(t,x_1,v)\varphi(t,x_1,v'){\rm d}v{\rm d}\omega\Big){\rm d}u\notag\\
		&\quad=:\int_{\mathbb R^3}(f^\ell(t,x_1,u)-f(t,x_1,u))\mathcal S_2(t,x_1,u){\rm d}u.
	\end{align*}
	Because $\varphi(t,x_1,v')$ has compact support, we have
	\begin{equation*}
		\sqrt{\mu(u')}=e^{-\frac{1}{4}(|u|^2+|v|^2-|v'|^2)}\lesssim \sqrt{\mu(u)}\sqrt{\mu(v)},\ {\rm for}\ |v'|\leq C,
	\end{equation*}
	where $C>0$ is a constant. Hence $|\mathcal S_2|\leq \sqrt{\mu(u)}\int_{\mathbb R^3}|v-u|\sqrt{\mu(v)}\|f^{\ell}\|_{\infty}|\varphi(t,x_1,v')|{\rm d}v\in L^1_{t,x_1,u}$. We obtain that $\eqref{gain 2-drbcl}\rightarrow0$ as $\ell\rightarrow\infty$. Similarly, one can show $\eqref{gain 3-drbcl}\rightarrow0$ as $\ell\rightarrow\infty$.
	This proves the existence of a (weak) solution of $f\in L^\infty$.
	
	Next we prove \eqref{loc es par f}. Modify the proof of \eqref{5.26}, we can obtain
	\begin{equation}
		\mathop{\sup}_\ell\mathop{\sup}_{0\leq t\leq T^*}\Big\{\|W_{c}^{f^{\ell+1}}\pa_{x_1}f^{\ell+1}(t)\|_{p}^p+\int_0^t|W_{c}^{f^{\ell+1}}\pa_{x_1}f^{\ell+1}(\tau)|_{L_{\tilde{\gamma}^\ell}^p,+}^p{\rm d}\tau\Big\}<\infty,\notag
	\end{equation}
where $|\cdot|^p_{L_{\tilde{\gamma}^\ell}^p,+}=\int_{\ga^\ell_+}|\cdot|^p{\rm d}\tilde{\ga}^\ell=\int_{\ga^\ell_+}|\cdot|^p|v_1-v^\ell_w(t)x_1|{\rm d}v$ for $p\geq1$.

By the lower semi-continuity of $L^p$, we know that (if necessary we extract a subsequence)
	\begin{equation*}
		W_{c}^{f^{\ell+1}}\pa_{x_1}f^{\ell+1}\rightharpoonup\mathcal F,
	\end{equation*}
	and
	\begin{equation*}
		\mathop{\sup}_{0\leq t\leq T^*}\|\mathcal F(t)\|_{p}^p\leq \lim\inf\mathop{\sup}_{0\leq t\leq T^*}\|W_{c}^{f^{\ell+1}}\pa_{x_1}f^{\ell+1}(t)\|_{p}^p,
	\end{equation*}
	and
	\begin{equation*}
		\int_0^{T^*}|\mathcal F(t)|_{p}^p{\rm d}t\leq \lim\inf\int_0^{T^*}|W_{c}^{f^{\ell+1}}\pa_{x_1}f^{\ell+1}(t)|_{L_{\tilde{\gamma}^\ell}^p,+}^p{\rm d}t.
	\end{equation*}
	It remains now to prove
	\begin{equation}\label{equ mathF-drbcl}
		\mathcal F={W_c}\pa_{x_1}f\ \textrm{almost everywhere except}\ \gamma_0.
	\end{equation}
	To this end,
	for $N\in\mathbb N$, we define a set
	\begin{align}
		\mathcal S_N:=\Big\{(x_1,v)\in\bar\Omega\times\mathbb R^3:\min\{|x_1|,|x_1-1|\}\leq\frac{1}{N}\ \textrm{and}\ |v_1-v_w(t)x_1|\leq\frac{1}{N}\Big\}\cup\{|v_1|> N\}.\notag
	\end{align}
	For a given test function $\psi\in C_c^\infty(\bar\Omega\times\mathbb R^3\setminus\gamma_0)$, we can always find $N\gg1$ such that
	\begin{equation}
		supp(\psi)\subset(\mathcal S_N)^c:=\bar\Omega\times\mathbb R^3\setminus\mathcal S_N.\notag
	\end{equation}
	By an integration by parts, we obtain
	\begin{align}
		\int_0^{T^*}&\int_{\Omega\times\mathbb R^3}e^{\zeta'|v|^2}(\alpha_{\vps}^{f^{\ell+1}})^\vho\pa_{x_1}f^{\ell+1}\psi{\rm d}x_1{\rm d}v{\rm d}t\notag\\\label{7.23-drbcl}
		&=-\int_0^{T^*}\int_{\Omega\times\mathbb R^3}e^{\zeta'|v|^2}(\alpha_{\vps}^{f^{\ell+1}})^\vho f^{\ell+1}\pa_{x_1}\psi{\rm d}x_1{\rm d}v{\rm d}t\\ \label{7.24-drbcl}
		&\quad+\int_0^{T^*}\int_{\mathbb R^3}e^{\zeta'|v|^2}(\alpha_{\vps}^{f^{\ell+1}})^\vho f^{\ell+1}\psi(t,1,v){\rm d}v{\rm d}t-\int_0^{T^*}\int_{\mathbb R^3}e^{\zeta'|v|^2}(\alpha_{\vps}^{f^{\ell+1}})^\vho f^{\ell+1}\psi(t,0,v){\rm d}v{\rm d}t\\
		\label{7.25-drbcl}
		&\quad-\int_0^{T^*}\int_{\Omega\times\mathbb R^3}e^{\zeta'|v|^2}\pa_{x_1}(\alpha_{\vps}^{f^{\ell+1}})^\vho f^{\ell+1}\psi{\rm d}x_1{\rm d}v{\rm d}t.
	\end{align}
	From \eqref{def alpha} and \eqref{es L^infty h^ell}, if $(x_1,v)\in(\mathcal S_N)^c$, then
	\begin{equation*}
		\mathop{\sup}_k|\alpha_{\vps,k}^\vho(t,x_1,v)|\lesssim |v_1|^\vho+1\lesssim N^\vho+1<\infty.
	\end{equation*}
	Hence, we can extract the subsequence $\{k_N\}$ such that $(\alpha_{\vps}^{f^{\ell+1}})^\vho\overset{\ast}{\rightharpoonup} \overline{\alpha_{\vps}^\vho}\in L^\infty$ weak-$*$ in $L^\infty((0,T^*)\times(\mathcal S_N)^c)\cap L^\infty((0,T^*)\times(\gamma\cap(\mathcal S_N)^c))$. Note that $(\alpha_{\vps}^{f^{\ell+1}})^\vho$ satisfies that $\Big[\pa_t+\frac{v_1-v_w^\ell x_1}{1+x_w^\ell}\pa_{x_1}\Big](\alpha_{\vps}^{f^{\ell+1}})^\vho=0$ and $(\alpha_{\vps}^{f^{\ell+1}})^\vho|_{\gamma_-}=|v_1-v_w^\ell(t)x_1|^\vho$. By passing a limit in the weak formulation, we conclude  that $\Big[\pa_t+\frac{v_1-v_wx_1}{1+x_w}\pa_{x_1}\Big]\overline{\alpha_{\vps}^\vho}=0$ and $\overline{\alpha_{\vps}^\vho}|_{\gamma_-}=|v_1-v_w^\ell(t)x_1|^\vho$. Hence we derive $\overline{\alpha_{\vps}^\vho}=\alpha_{\vps}^\vho$ almost everywhere and
	\begin{equation}\label{alpha weak*-drbcl}
		(\alpha_{\vps}^{f^{\ell+1}})^\vho\overset{\ast}{\rightharpoonup}\alpha_\vps^\vho\ weakly-*\ \textrm{in}\ L^\infty((0,T^*)\times(\mathcal S_N)^c)\cap L^\infty((0,T^*)\times(\gamma\cap(\mathcal S_N)^c)).
	\end{equation}
	By the strong convergence \eqref{strongly 2-drbcl} and the weak-$*$ convergence \eqref{alpha weak*-drbcl},
	\begin{align}\label{conver par f^ell 1}
		\eqref{7.23-drbcl}&+\eqref{7.24-drbcl}\notag\\
		&\rightarrow-\int_0^{T^*}\int_{\Omega\times\mathbb R^3}e^{\zeta'|v|^2}\alpha_{\vps}^\vho f\pa_{x_1}\psi{\rm d}x_1{\rm d}v{\rm d}t\notag\\
		&\quad+\int_0^{T^*}\int_{\mathbb R^3}e^{\zeta'|v|^2}\alpha_{\vps}^\vho f\psi(t,1,v){\rm d}v{\rm d}t-\int_0^{T^*}\int_{\mathbb R^3}e^{\zeta'|v|^2}\alpha_{\vps}^\vho f\psi(t,0,v){\rm d}v{\rm d}t.
	\end{align}
	
	For the convergence of \eqref{7.25-drbcl}, we choose $(x_1,v)\in(\mathcal S_N)^c$. From \eqref{def chi}, we derive that
	\begin{equation*}
		\textrm{if}\ t_{\textbf b}^{f^{\ell+1}}\geq t+\vps,\ \textrm{then}\ \alpha_{\vps}^{f^{\ell+1}}(t,x_1,v)=1.
	\end{equation*}
	Now we consider the case of $t_{\textbf b}^{f^{\ell+1}}\leq t+\vps$. If $|v_1-v_w^\ell(t)|\geq 3\mathop{\sup}_t\mathop{\max}_\ell|v_w^\ell(t)|$, then
	\begin{align}\label{7.28-drbcl}
		|v_1-v_w^\ell(t-t_{\textbf b}^{f^{\ell+1}})x_{1\textbf b}^{f^{\ell+1}}|&\geq |v_1|-|v_w^\ell(t-t_{\textbf b}^{f^{\ell+1}})x_{1\textbf b}^{f^{\ell+1}}|\notag\\
		&\geq 2\mathop{\sup}_t\mathop{\max}_\ell|v_w^\ell(t)|-\mathop{\sup}_t\mathop{\max}_\ell|v_w^\ell(t)|\geq \mathop{\sup}_t\mathop{\max}_\ell|v_w^\ell(t)|.
	\end{align}
	If $|v_1-v_w^\ell(t)|\leq 3\mathop{\sup}_t\mathop{\max}_\ell|v_w^\ell(t)|$, then
	\begin{align}\label{7.29-drbcl}
		&|v_1-v_w^\ell(t-t_{\textbf b}^{f^{\ell+1}})x_{1\textbf b}^{f^{\ell+1}}|\notag\\
		&\quad\geq |v_1-v_w^\ell(t)x_1|-|(v_1-v_w^\ell(t))x_1-(v_1-v_w^\ell(t-t_{\textbf b}^{f^{\ell+1}}))x_1|\notag\\
		&\qquad-|(v_1-v_w^\ell(t-t_{\textbf b}^{f^{\ell+1}}))x_1-(v_1-v_w^\ell(t-t_{\textbf b}^{f^{\ell+1}}))x_{1\textbf b}^{f^{\ell+1}}|\notag\\
		&\quad\geq \frac{1}{N}-\mathop{\sup}_t\mathop{\max}_\ell|[v_w^\ell]'(t)|t_{\textbf b}^{f^{\ell+1}}-5\mathop{\sup}_t\mathop{\max}_\ell|v_w^\ell(t)|\notag\\
		&\quad\geq \frac{1}{N}-C\vps_0.
	\end{align}
	From \eqref{par t_b}, \eqref{7.28-drbcl} and \eqref{7.29-drbcl}, we have
	\begin{equation*}
		\mathop{\sup}_{\substack{k\in\mathbb N, (x_1,v)\in(\mathcal S_N)^c\\-\vps\leq t-t_{\textbf b}^{f^{\ell+1}}\leq t\leq T^*}}|\pa_{x_1}(\alpha_{\vps}^{f^{\ell+1}})^\vho|\lesssim \frac{1}{|v_1-v_w^\ell(t-t_{\textbf b}^{f^{\ell+1}})x_{1\textbf b}^{f^{\ell+1}}|^{2-\vho}}\lesssim C_{\vps,T^*,N}.
	\end{equation*}
	Hence we extract  another subsequence such that
	\begin{equation*}
		\pa_{x_1}(\alpha_{\vps}^{f^{\ell+1}})^\vho\overset{\ast}{\rightharpoonup}\pa_{x_1}(\alpha_\vps^\vho)\ \textrm{weak}-*\ in\ L^\infty((-\vps,T^*)\times(\mathcal S_N)^c).
	\end{equation*}
	Clearly,
	\begin{equation}\label{conver par f^ell 2}
		\eqref{7.25-drbcl}\rightarrow-\int_0^{T^*}\int_{\Omega\times\mathbb R^3}e^{\zeta'|v|^2}\pa_{x_1}(\alpha_{\vps}^\vho) f\psi{\rm d}x_1{\rm d}v{\rm d}t.
	\end{equation}
	
	From \eqref{conver par f^ell 1} and \eqref{conver par f^ell 2}, we prove \eqref{equ mathF-drbcl}.
	
	The proof of the continuity of $h^\ell$ is similar to Lemma 26 in \cite{Guo-2010}. As the limit of $h^\ell$, $h$ preserves the continuity in $[0,T^*]\times\{\bar\Omega\times\mathbb R^3\setminus \gamma_0\}$.  Similar to the derivation of \eqref{5.26}, we can obatin the continuity of $\|{W_c}\pa_{x_1}f(t)\|_p^p+\int_0^t|{W_c}\pa_{x_1}f|_{p,+}^p{\rm d}\tau$. Therefore, the proof of Theorem \ref{loc} is complete.
\qed


\section{Appendix}\label{Appendix}

In this appendix, we collect several significant estimates and identities that have been used in the previous sections.

We first derive the explicit form of the drag force caused by the pressure difference on the two sides of the free boundary.
\begin{proof}[\bf{Derivation of \eqref{P_rF-P_lF}}] By \eqref{def-PF} and \eqref{+0-0 data->0}, we can further write $P_r[F]-P_r[F]$ as
\begin{align}
P_r[F]-P_r[F]&=\int_{\mathbb R^3} [v_1-v_w(t)]^2F(t,x_w(t)+0,v){\rm d}v-\int_{\mathbb R^3} [v_1-v_w(t)]^2F(t,x_w(t)-0,v){\rm d}v\notag\\
&=\int_{v_1-v_w(t)<0}[v_1-v_w(t)]^2\mu(v){\rm d}v\notag\\
&\quad+\sqrt{2\pi}\int_{v_1-v_w(t)>0}[v_1-v_w(t)]^2\mu_w(v)\left(\int_{u_1-v_w(t)<0}|u_1-v_w(t)|\mu(u){\rm d}u\right){\rm d}v\notag\\
&\quad-\int_{\mathbb R^3} [v_1-v_w(t)]^2F(t,x_w(t),v){\rm d}v\notag\\
&=\int_{v_1-v_w(t)<0}[v_1-v_w(t)]^2\mu(v){\rm d}v+\frac{\sqrt{2\pi}}{2}\int_{v_1-v_w(t)<0}|v_1-v_w(t)|\mu(v){\rm d}v\notag\\
&\quad-\int_{\mathbb R^3} [v_1-v_w(t)]^2F(t,x_w(t),v){\rm d}v.\label{der.ppm}
\end{align}
This proves \eqref{P_rF-P_lF}.
\end{proof}

We next show the basic estimates for the linearized Boltzmann collision operators $K$ and $L$ and the nonlinear collision operator $\Ga$. This is established in the following three lemmas.

The first lemma is concerned with the integral operator $K$ given by \eqref{k-def}, and its proof has been given by \cite[Lemma 3, pp.727]{Guo-2010}.
	\begin{lemma}\label{k-op}
		Let $K$ be defined as \eqref{k-def}. Denote
		\begin{align}
			\Fk(v,v_\ast)=\Fk_2(v,v_\ast)-\Fk_1(v,v_\ast),\
			\Fk_w(v,v_\ast)=e^{\zeta |v|^2}\Fk(v,v_\ast)e^{-\zeta |v_\ast|^2}\notag
		\end{align}
		with  $0\leq \zeta\ll1$.
		Then, it holds that
		\begin{equation}\notag
			\int_{\R^3} \Fk_w(v,v_\ast)e^{\frac{\eps|v-v_\ast|^2}{8}}dv_\ast\leq \frac{C}{1+|v|},
		\end{equation}
		for $\eps=0$ or any $\eps> 0$ small enough.
		
		Moreover, it holds that
		\begin{align}
			|e^{\zeta |v|^2} (Kf)|\leq C\|e^{\zeta |v|^2} f\|_{L^\infty}.\notag
		\end{align}
		\end{lemma}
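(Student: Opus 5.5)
The plan is to treat the exponential weight exactly as in \cite[Lemma 3]{Guo-2010}, working with the explicit kernels $\Fk_1$ and $\Fk_2$ separately. For $\Fk_1(v,v_\ast)=C|v-v_\ast|e^{-\frac{|v|^2+|v_\ast|^2}{4}}$, the weighted kernel is $|v-v_\ast|e^{-(\frac14-\zeta)|v|^2}e^{-(\frac14+\zeta)|v_\ast|^2}e^{\frac{\eps|v-v_\ast|^2}{8}}$. Using $|v-v_\ast|^2\le 2|v|^2+2|v_\ast|^2$ and the smallness of $\zeta$ and $\eps$, the Gaussian decay survives in both variables. Integrating in $v_\ast$ then gives a bound $C\langle v\rangle e^{-c|v|^2}\le C/(1+|v|)$.

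For $\Fk_2$ the key is an exact computation of the exponent. I would set $\eta=v-v_\ast$, so that $|v|^2-|v_\ast|^2=2v\cdot\eta-|\eta|^2$. The total exponent is
\[
-\tfrac18|\eta|^2-\tfrac{(2v\cdot\eta-|\eta|^2)^2}{8|\eta|^2}+\zeta(|v|^2-|v-\eta|^2)+\tfrac{\eps}{8}|\eta|^2 .
\]
Here $\zeta(|v|^2-|v_\ast|^2)=\zeta(2v\cdot\eta-|\eta|^2)$. Writing $v_\parallel=v\cdot\eta/|\eta|$ and expanding, the exponent becomes a quadratic form in $(|\eta|,v_\parallel)$:
\[
-\tfrac{1-\eps+8\zeta}{8}\cdot\ldots-\tfrac12 v_\parallel^2+\ldots .
\]
The next step is to complete the square and check that for $0\le\zeta\ll1$ and $\eps$ small this form stays negative definite. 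It should be bounded above by $-c_\zeta(|\eta|^2+|v\cdot\eta|^2/|\eta|^2)$ with some $c_\zeta>0$. This algebraic verification is the only delicate point: the cross term $\zeta\, v\cdot\eta$ must be absorbed by the $|v\cdot\eta|^2/|\eta|^2$ and $|\eta|^2$ terms, which works because the discriminant condition holds for small $\zeta$. That settles the first claim up to one remaining estimate, namely
\[
\int_{\R^3}|\eta|^{-1}e^{-c|\eta|^2-c\frac{|v\cdot\eta|^2}{|\eta|^2}}\,d\eta\le C(1+|v|)^{-1}.
\]
I would prove this in spherical-type coordinates, taking the polar axis along $v$ with $\eta=r\omega$ and $v\cdot\omega=|v|\cos\theta$. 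The angular integral $\int_0^\pi e^{-c|v|^2\cos^2\theta}\sin\theta\,d\theta$ is $\lesssim \min\{1,|v|^{-1}\}$, and the remaining radial integral $\int_0^\infty r e^{-cr^2}dr$ is finite. This yields the decay $C/(1+|v|)$.

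Finally, the $L^\infty$ bound follows at once. One writes $e^{\zeta|v|^2}Kf=\int \Fk_w(v,v_\ast)\,e^{\zeta|v_\ast|^2}f(v_\ast)\,dv_\ast$ and takes the supremum of $e^{\zeta|v_\ast|^2}|f|$ out of the integral. The first claim with $\eps=0$ then gives $|e^{\zeta|v|^2}Kf|\le C(1+|v|)^{-1}\|e^{\zeta|v|^2}f\|_{L^\infty}\le C\|e^{\zeta|v|^2}f\|_{L^\infty}$.
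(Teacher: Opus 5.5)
Your proposal is correct and is essentially the standard argument of \cite[Lemma 3]{Guo-2010}, which the paper cites rather than reproves. With $\eta=v-v_\ast$, the weighted exponent of $\Fk_2$ is the quadratic form $-\frac{2+8\zeta-\eps}{8}|\eta|^2+\frac{1+4\zeta}{2}\,v\cdot\eta-\frac12\frac{|v\cdot\eta|^2}{|\eta|^2}$, which is negative definite exactly when $16\zeta^2<1-\eps$, and $\Fk_1$ is handled by its Gaussian factors as you do. Your polar-coordinate evaluation of the final integral, with the axis along $v$, is only a cosmetic variant of Guo's splitting of $\eta$ into components parallel and perpendicular to $v$.
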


\begin{lemma}{\cite[pp.639, Lemma 3.3]{Guo-2006}}
		There is a constant $\de_0>0$ such that
		\begin{align}
			(Lf,f)=( L\{\FI-\FP\}f,\{\FI-\FP\}f)\geq\de_0\|\{\FI-\FP\}f\|_\nu^2,\notag
		\end{align}
		where $\|\cdot\|_\nu=\|\nu^{\frac{1}{2}}\cdot\|_2.$
	\end{lemma}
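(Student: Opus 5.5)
The statement to prove is the coercivity of the linearized collision operator: $(Lf,f)=(L\{\FI-\FP\}f,\{\FI-\FP\}f)\geq\de_0\|\{\FI-\FP\}f\|_\nu^2$.

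I would first reduce to the microscopic part. Since $L$ is self-adjoint on $L^2_v$ with $\ker L=\mathrm{span}\{\phi_i\}_{i=1}^5$, we have $L\FP f=0$. Self-adjointness also gives $(L\{\FI-\FP\}f,\FP f)=(\{\FI-\FP\}f,L\FP f)=0$. Expanding $(Lf,f)$ with $f=\FP f+\{\FI-\FP\}f$ therefore yields the identity. It remains to show $(Lg,g)\geq\de_0\|g\|_\nu^2$ for every $g\perp\ker L$.

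Next I would establish nonnegativity and a weak coercivity in plain $L^2$. From the symmetric form of the linearized collision operator,
\begin{equation*}
(Lg,g)=\frac14\iint\int_{\S^2}B\,\mu(v)\mu(u)\Big[\frac{g}{\sqrt\mu}(v')+\frac{g}{\sqrt\mu}(u')-\frac{g}{\sqrt\mu}(v)-\frac{g}{\sqrt\mu}(u)\Big]^2\,{\rm d}\omega\,{\rm d}u\,{\rm d}v\geq0 .
\end{equation*}
Equality forces $g/\sqrt\mu$ to be a collision invariant, so $g\in\ker L$. To get a quantitative bound, write $L=\nu-K$ with $K$ compact on $L^2_v$ (Hilbert–Schmidt, using the kernel bounds $\Fk_1,\Fk_2$ as in Lemma \ref{k-op}). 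I would then argue by contradiction. Suppose $g_n\perp\ker L$, $\|g_n\|_\nu=1$ and $(Lg_n,g_n)\to0$. Pass to a weak limit $g_n\rightharpoonup g$ in $L^2_\nu$. Compactness of $K$, together with $\nu^{-1/2}K\nu^{-1/2}$ being compact, gives $(Kg_n,g_n)\to(Kg,g)$. Since $(Lg_n,g_n)=\|g_n\|_\nu^2-(Kg_n,g_n)\to0$, we get $(Kg,g)=1$. Weak lower semicontinuity then gives $(Lg,g)\leq\liminf(Lg_n,g_n)=0$, so $g\in\ker L$. But $g\perp\ker L$ as a weak limit of such functions, hence $g=0$. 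This contradicts $(Kg,g)=1$.

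The main obstacle is the compactness step with the weight $\nu\sim\langle v\rangle$, namely showing that $\nu^{-1/2}K\nu^{-1/2}$ is compact on $L^2_v$. I would handle it by truncation. Split the kernel into its restriction to $|v|,|u|\leq N$, $|v-u|\geq 1/N$, which is a bounded Hilbert–Schmidt kernel. The remainder has operator norm $o(1)$ as $N\to\infty$, by the integral bound $\int\Fk(v,u)\,{\rm d}u\lesssim(1+|v|)^{-1}$ and Schur's test. Once this is in place, the contradiction argument closes with some $\de_0>0$, completing the proof.
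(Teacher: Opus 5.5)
Your argument is correct. The paper gives no proof of this lemma and simply cites Guo. The most common route in the literature differs slightly from yours. With $L=\nu-K$, $K$ compact and $\nu\ge\nu_0$, Weyl's theorem gives an $L^2$ spectral gap $(Lg,g)\ge\lambda\|g\|_2^2$ for $g\perp\ker L$. The bound $(Kg,g)\le C\|g\|_2^2$ then upgrades it, since $\|g\|_\nu^2=(Lg,g)+(Kg,g)\le(1+C/\lambda)(Lg,g)$, so one can take $\de_0=\lambda/(\lambda+C)$. Your weak-limit contradiction in $L^2_\nu$ bypasses the spectral theory. Both arguments rest on the same compactness of $K$, and both give a non-explicit $\de_0$.

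A few points of precision:
\begin{itemize}
\item The step you call the main obstacle is immediate. Multiplication by $\nu^{-1/2}\le\nu_0^{-1/2}$ is bounded on $L^2_v$, so $\nu^{-1/2}K\nu^{-1/2}$ is compact as soon as $K$ is. Your truncation argument is therefore really a proof that $K$ itself is compact.
\item $K$ is not Hilbert--Schmidt on $\R^3\times\R^3$: for large $|v|$, $\int|\Fk_2(v,u)|^2\,{\rm d}u$ decays only like $|v|^{-1}$, which is not integrable in $v$. The parenthetical in your second paragraph should instead read ``operator-norm limit of Hilbert--Schmidt truncations''. That is exactly what your last paragraph supplies.
\item In that truncation, the near-diagonal remainder $\{|v-u|<1/N\}$ is not made small by $\int|\Fk(v,u)|\,{\rm d}u\lesssim(1+|v|)^{-1}$. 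Use the pointwise bound $|\Fk(v,u)|\lesssim|v-u|^{-1}$ instead, which gives $\sup_v\int_{|v-u|<1/N}|\Fk(v,u)|\,{\rm d}u\lesssim N^{-2}$.
\item The pieces with $|v|>N$ or $|u|>N$ are $O(N^{-1/2})$. This follows from Schur's test applied to the symmetric kernel, using that integral bound together with the factors $\nu(v)^{-1/2}$, $\nu(u)^{-1/2}\lesssim N^{-1/2}$ on those regions.
\end{itemize}
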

For polynomial weights, the following lemma was proved in {\cite[pp.~276, Lemma~2.6]{UY-06}} and \cite[pp.730, Lemma 5]{Guo-2010}. The corresponding estimate for exponential weights, with sufficiently small $\zeta$, can be obtained in a similar manner.
\begin{lemma}\label{Ga}
		Let $0\leq \zeta\ll1$, it holds that
		\begin{align}
			\|e^{\zeta |v|^2}\nu^{-1}\Ga(f,g)\|_{L^\infty}\leq C
			\left\{\|e^{\zeta |v|^2}f\|_{L^\infty}\|e^{\zeta |v|^2}g\|_{L^\infty}+\|e^{\zeta |v|^2}f\|_{L^\infty}
			\|e^{\zeta |v|^2} g\|_{L^\infty}\right\}.\notag
		\end{align}

			\end{lemma}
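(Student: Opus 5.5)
The estimate follows from splitting $\Ga=\Ga_+-\Ga_-$ and bounding each part pointwise in $v$. In each part I would pull out the weighted sup norms of $f$ and $g$ and reduce everything to the single integral
$$
\int_{\R^3}\int_{\S^2}|(v-u)\cdot\omega|\sqrt{\mu(u)}\,{\rm d}\omega\,{\rm d}u\lesssim \nu(v)\sim 1+|v|.
$$
This is the standard argument of \cite{Guo-2010} (Lemma 5), with the polynomial weight replaced by $w=e^{\zeta|v|^2}$. The only new point is checking that the exponential weight is compatible with the collision geometry.

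\emph{Loss term.} Directly from the definitions,
$$
\Ga_-(f,g)(v)=\frac{1}{\sqrt{\mu(v)}}Q_-(\sqrt\mu f,\sqrt\mu g)=f(v)\int_{\R^3}\int_{\S^2}B(v-u,\omega)\sqrt{\mu(u)}\,g(u)\,{\rm d}\omega\,{\rm d}u .
$$
Multiply by $e^{\zeta|v|^2}$, keep that weight on $f(v)$, and bound $|g(u)|\le e^{-\zeta|u|^2}\|e^{\zeta|\cdot|^2}g\|_{L^\infty}\le \|e^{\zeta|\cdot|^2}g\|_{L^\infty}$. This gives
$$
|e^{\zeta|v|^2}\Ga_-(f,g)(v)|\le C\,\nu(v)\,\|e^{\zeta|v|^2}f\|_{L^\infty}\|e^{\zeta|v|^2}g\|_{L^\infty}.
$$
If the paper's convention puts the roles of $f$ and $g$ the other way, one gets the same bound with them swapped. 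That is exactly why the statement is symmetric in $f$ and $g$.

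\emph{Gain term.} Energy conservation gives $\mu(v)\mu(u)=\mu(v')\mu(u')$. Hence
$$
\Ga_+(f,g)(v)=\int_{\R^3}\int_{\S^2}B(v-u,\omega)\sqrt{\mu(u)}\,f(v')\,g(u')\,{\rm d}\omega\,{\rm d}u .
$$
Energy conservation also gives $|v|^2\le|v|^2+|u|^2=|v'|^2+|u'|^2$. Therefore $e^{\zeta|v|^2}\le e^{\zeta|v'|^2}e^{\zeta|u'|^2}$, which yields
$$
|e^{\zeta|v|^2}\Ga_+(f,g)(v)|\le \|e^{\zeta|v|^2}f\|_{L^\infty}\|e^{\zeta|v|^2}g\|_{L^\infty}\int\!\!\int|(v-u)\cdot\omega|\sqrt{\mu(u)}\,{\rm d}\omega\,{\rm d}u\le C\nu(v)\|e^{\zeta|v|^2}f\|_{L^\infty}\|e^{\zeta|v|^2}g\|_{L^\infty}.
$$

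\emph{Conclusion.} Dividing both bounds by $\nu(v)\ge\nu_0$ and adding them gives the lemma. There is no real obstacle here. The one step that needs care is the weight transfer $e^{\zeta|v|^2}\le e^{\zeta|v'|^2}e^{\zeta|u'|^2}$ in the gain term, and it holds for every $\zeta\ge0$. The smallness $\zeta\ll1$ is needed only so that the weighted norms stay compatible with the $\sqrt\mu$ factors used elsewhere, for example in Lemma \ref{k-op}.
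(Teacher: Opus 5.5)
Your argument is correct and is exactly the standard proof of \cite[Lemma 5]{Guo-2010} that the paper invokes; the paper itself only cites that lemma and \cite{UY-06} and says the exponential-weight case is similar, and your use of $\mu(v)\mu(u)=\mu(v')\mu(u')$ together with $e^{\zeta|v|^2}\le e^{\zeta|v'|^2}e^{\zeta|u'|^2}$ to reduce both $\Ga_\pm$ to $\int\!\int|(v-u)\cdot\omega|\sqrt{\mu(u)}\,{\rm d}\omega\,{\rm d}u\lesssim\nu(v)$ is precisely that adaptation. As you note, it holds for every $\zeta\ge0$, and the stated right-hand side is just $2C\|e^{\zeta|v|^2}f\|_{L^\infty}\|e^{\zeta|v|^2}g\|_{L^\infty}$ (the same term written twice), so your remark about swapping $f$ and $g$ explaining a ``symmetric'' form is unnecessary.
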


\begin{lemma}\label{Lemma 2.1}
		For fixed $t$, a map
		\begin{equation}\label{map1}
			(x_1,v)\in\Omega\times\mathbb R^3\mapsto(t-t_{\textbf b}(t,x_1,v_1),x_{1\textbf b},v)\in\mathbb R\times\gamma_-		
		\end{equation}
		is one-to-one and
		\begin{equation}\label{Jac det1}
			\Big|\det\Big(\frac{\partial(t-t_{\textbf b}(t,x_1,v_1),v)}{\partial(x_1,v)}\Big)\Big|=\frac{1+x_w(t)}{|v_1-v_w(t-t_{\textbf b})x_{1\textbf b}|}.
		\end{equation}
		Also, a map
		\begin{equation}\label{map2}
			(t,x_1,v)\in\mathbb R\times\gamma_+\mapsto(t-t_{\textbf b}(t,x_1,v_1),x_{1\textbf b},v)\in\mathbb R\times\gamma_-		
		\end{equation}
		is one-to-one and
		\begin{equation}\label{Jac det2}
			\Big|\det\Big(\frac{\partial(t-t_{\textbf b}(t,x_1,v),v)}{\partial(t,v)}\Big)\Big|=\Big|\frac{v_1-v_w(t)x_1}{v_1-v_w(t-t_{\textbf b})x_{1\textbf b}}\Big|.
		\end{equation}
	\end{lemma}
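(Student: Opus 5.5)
The statement to prove is Lemma \ref{Lemma 2.1}: the backward-exit maps are bijections, with the stated Jacobians. I will work directly from the explicit trajectory formula \eqref{X}. Fix $t$ and $(x_1,v)\in\Omega\times\mathbb R^3$ off the grazing set. The backward exit time $t_{\textbf b}$ is defined implicitly by
\[
(1+x_w(t-t_{\textbf b}))\,x_{1\textbf b}=(1+x_w(t))\,x_1-v_1t_{\textbf b},\qquad x_{1\textbf b}\in\{0,1\}.
\]
With $x_{1\textbf b}$ fixed, write $G(s):=(1+x_w(t-s))x_{1\textbf b}+v_1s$. Then $G'(s)=v_1-v_w(t-s)x_{1\textbf b}$, and this is nonzero at $s=t_{\textbf b}$ because the exit point lies in $\gamma_-$ and not on the grazing set. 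The implicit function theorem therefore makes $t_{\textbf b}$ a $C^1$ function of $(x_1,v_1)$ locally, with $x_{1\textbf b}$ locally constant. Differentiating the identity gives
\[
\frac{\partial t_{\textbf b}}{\partial x_1}=\frac{1+x_w(t)}{v_1-v_w(t-t_{\textbf b})x_{1\textbf b}},\qquad
\frac{\partial t_{\textbf b}}{\partial v_1}=-\frac{t_{\textbf b}}{v_1-v_w(t-t_{\textbf b})x_{1\textbf b}}.
\]
This is the identity \eqref{par t_b} that the paper cites elsewhere.

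\textbf{The map \eqref{map1}.} Here $v$ is unchanged, so the Jacobian matrix of $(x_1,v)\mapsto(t-t_{\textbf b},v)$ is block triangular. Its determinant is therefore $-\partial_{x_1}t_{\textbf b}$, which gives \eqref{Jac det1}.

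For injectivity, suppose two points $(x_1,v)$ and $(x_1',v)$ share the same $v$, the same exit time and the same exit point. They then lie on the same characteristic, since the characteristic ODE \eqref{ODE X} is uniquely solvable forward from $(t-t_{\textbf b},x_{1\textbf b})$. Evaluated at time $t$, this gives $x_1=x_1'$.

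The image lies in $\mathbb R\times\gamma_-$ because $v_1-v_w(t-t_{\textbf b})x_{1\textbf b}$ has the inward sign at the exit point. For the convention $x_1=1$, I use $v_1-v_w<0$ there. This identification is the point to state carefully.

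\textbf{The map \eqref{map2}.} Now $x_1\in\{0,1\}$ is fixed and $t$ varies. Differentiate the same identity in $t$, remembering that $t$ enters both through $x_w(t)$ and through $x_w(t-t_{\textbf b})$:
\[
v_w(t-t_{\textbf b})(1-\partial_t t_{\textbf b})\,x_{1\textbf b}=v_w(t)x_1-v_1\,\partial_t t_{\textbf b}.
\]
This yields
\[
1-\partial_t t_{\textbf b}=\frac{v_1-v_w(t)x_1}{v_1-v_w(t-t_{\textbf b})x_{1\textbf b}}.
\]
The Jacobian is again block triangular in $(t,v)$, so its determinant is $1-\partial_t t_{\textbf b}$, which gives \eqref{Jac det2}. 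Injectivity follows by the same uniqueness-of-characteristics argument, run from the boundary point $(t,x_1)\in\gamma_+$.

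\textbf{Main obstacle.} The delicate part is global well-definedness and injectivity. Since $x_w$ moves, we need a trajectory that leaves the interval not to re-enter before its first exit. I will settle this using the smallness $|x_w|+|v_w|\ll1$. Along \eqref{X}, the rescaled variable $(1+x_w(s))X_1^f(s)=(1+x_w(t))x_1+v_1(s-t)$ is affine in $s$, and both endpoints are the fixed values $0$ and $1+x_w(s)$. So the first crossing is unique, apart from a small band where $|v_1|\lesssim|v_w|$. I expect to handle that band, where grazing-type degeneracy appears, with the same sign bookkeeping used in \eqref{7.28-drbcl}–\eqref{7.29-drbcl}.
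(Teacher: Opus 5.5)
Your proof is correct and follows essentially the paper's route. Both arguments compute $\partial_{x_1}t_{\textbf b}$ and $\partial_{v_1}t_{\textbf b}$ by implicitly differentiating the exit relation coming from \eqref{X}, then read off block-triangular determinants. The one difference is how $1-\partial_t t_{\textbf b}$ is obtained: you differentiate the exit relation in $t$ directly. The paper instead derives the transport identity $\bigl(\partial_t+\frac{v_1-v_w(t)x_1}{1+x_w(t)}\partial_{x_1}\bigr)t_{\textbf b}=1$ from the flow property $t_{\textbf b}(t+\epsilon,X_1^f(t+\epsilon;t,x_1,v_1),v_1)=t_{\textbf b}(t,x_1,v_1)+\epsilon$, and then substitutes $\partial_{x_1}t_{\textbf b}$. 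Your version avoids one-sided $x_1$-derivatives at boundary points, and it justifies differentiability of $t_{\textbf b}$ through the implicit function theorem, which the paper takes for granted.

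The re-entry concern in your last paragraph is unnecessary, because $t_{\textbf b}$ is by definition the first backward exit. Injectivity of \eqref{map1} is just the explicit inversion $x_1=\frac{(1+x_w(t-t_{\textbf b}))x_{1\textbf b}+v_1t_{\textbf b}}{1+x_w(t)}$. For \eqref{map2} you need one more fact, which your ``same argument'' phrasing leaves implicit: $t$ is recovered as the first forward boundary-hitting time of the characteristic issued from $(t-t_{\textbf b},x_{1\textbf b})$.
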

	\begin{proof}
Clearly, maps \eqref{map1} and \eqref{map2} is one-to-one since the characteristic
is a solution of \eqref{ODE X}. We only need to  prove \eqref{Jac det1} and \eqref{Jac det2}.

		First, denote
		\begin{align*}
			\partial_sX_1^f&=\frac{\partial X_1^f(s;t,x_1,v_1)}{\partial s},\	\partial_tX_1^f=\frac{\partial X_1^f(s;t,x_1,v_1)}{\partial t},\\ 	\partial_{x_1}X_1^f&=\frac{\partial X_1^f(s;t,x_1,v_1)}{\partial x_1},\ 	\partial_{v_1}X_1^f=\frac{\partial X_1^f(s;t,x_1,v_1)}{\partial v_1}.
		\end{align*}
		Differentiate $x_{1\textbf b}=X_1^f(t-t_{\textbf b};t,x_{1},v_1)$ with respect to $x_1$ and $v_1$, and use \eqref{ODE X}, \eqref{X}, we have
		\begin{align}\label{par t_b}
				\partial_{x_1}t_{\textbf b}&=\frac{\partial_{x_1}X_1^f(t-t_{\textbf b})}{\partial_{s}X_1^f(t-t_{\textbf b})}=\frac{1+x_w(t)}{v_1-v_w(t-t_{\textbf b})x_{1\textbf b}},\notag\\
				\
				\partial_{v_1}t_{\textbf b}&=\frac{\partial_{v_1}X_1^f(t-t_{\textbf b})}{\partial_{s}X_1^f(t-t_{\textbf b})}=\frac{-t_{\textbf b}}{v_1-v_w(t-t_{\textbf b})x_{1\textbf b}}.
		\end{align}
		Next, it follows that
		\begin{equation*}
			\frac{\partial(t-t_{\textbf b}(t,x_1,v_1),v)}{\partial(x_1,v)}=
			\begin{bmatrix}
				-\partial_{x_1}t_{\textbf b}&-\partial_{v_1}t_{\textbf b}&0&0\\
				0&1&0&0\\
				0&0&1&0\\
				0&0&0&1\\
			\end{bmatrix},
		\end{equation*}
		this together with \eqref{par t_b} gives
		\begin{equation*}
			\det\Big(\frac{\partial(t-t_{\textbf b}(t,x_1,v_1),v)}{\partial(x_1,v)}\Big)=-\partial_{x_1}t_{\textbf b}=-\frac{1+x_w(t)}{v_1-v_w(t-t_{\textbf b})x_{1\textbf b}}.
		\end{equation*}
This proves \eqref{Jac det1}.

		Next, we prove the second result. It is easy to check that
		\begin{equation*}
			\det\Big(\frac{\partial(t-t_{\textbf b}(t,x_1,v_1),v)}{\partial(t,v)}\Big)=1-\partial_t t_{\textbf b}.
		\end{equation*}
		Note that for either $0<\vps\ll1$ or $0<-\vps\ll1$, with $X_1^f(t+\vps;t,x_1,v_1)\in[0,1]$, it holds that
		\begin{equation}\label{t_b+vps}
			t_{\textbf b}(t+\vps,X_1^f(t+\vps;t,x_1,v_1),v_1)	=t_{\textbf b}(t,x_1,v_1)+\vps.
		\end{equation}
		Then differentiate \eqref{t_b+vps} with respect to $\vps$ and take $\vps=0$,
		\begin{equation*}
			(\partial_t+\frac{v_1-v_w(t)x_1}{1+x_w(t)}\partial_{x_1})t_{\textbf b}(t,x_1,v_1)=1.
		\end{equation*}
		This gives that
		\begin{align*}
			\det\Big(\frac{\partial(t-t_{\textbf b}(t,x_1,v_1),v)}{\partial(t,v)}\Big)&=1-\partial_t t_{\textbf b}=\frac{v_1-v_w(t)x_1}{1+x_w(t)}\partial_{x_1}t_{\textbf b}\\
			&=\frac{v_1-v_w(t)x_1}{1+x_w(t)}\times\frac{1+x_w(t)}{v_1-v_w(t-t_{\textbf b})x_{1\textbf b}}\\
			&=\frac{v_1-v_w(t)x_1}{v_1-v_w(t-t_{\textbf b})x_{1\textbf b}}.
		\end{align*}
		This completes the proof of Lemma \ref{Lemma 2.1}.
	\end{proof}
	
	We define the forward exit time
	\begin{equation*}
		t_{\textbf f}(t,x_1,v_1)=\sup\{s\geq0: X_1^f(\tau;t,x_1,v_1)\in\Omega\ \textrm{for\ all}\ \tau\in(t,t+s)\}.
	\end{equation*}
	
	\begin{lemma}\label{Lemma 2.2}
		For fixed $s>0$ so that $t-t_{\textbf b}<s<t$, the map
		\begin{equation}\label{map3}
			(t,x_1,v)\in[0,T]\times\gamma_+\mapsto(X_1^f(s;t,x_1,v_1),v)\in(0,1)\times\mathbb R^3		
		\end{equation}
		is injective. For fixed $s>0$ so that $t<s<t+t_f$, the map
		\begin{equation}\label{map4}
			(t,x_1,v)\in[0,T]\times\gamma_-\mapsto(X_1^f(s;t,x_1,v_1),v)\in(0,1)\times\mathbb R^3		
		\end{equation}
		is also injective. For maps \eqref{map3} and \eqref{map4}, they have the same Jacobian
		\begin{equation}\label{Jac det3}
			\Big|\det\Big(\frac{\partial(X_1^f(s;t,x_1,v_1),v)}{\partial(t,v)}\Big)\Big|=\frac{|v_1-v_w(t)x_1|}{1+x_w(s)}.
		\end{equation}
		Moreover,  the maps
		\begin{equation}\label{map5}
			\begin{split}
				(t,s&,x_1,v)\in[0,T]\times\{-\min\{t,t_{\textbf b}(t,x_1,v_1)\}<s<0\}\times\gamma_+\\
				&\mapsto(t+s,X_1^f(t+s;t,x_1,v_1),v)\in[0,T]\times\Omega\times\mathbb R^3,
			\end{split}	
		\end{equation}
		and
		\begin{equation}\label{map6}
			\begin{split}
				(t,s&,x_1,v)\in[0,T]\times\{0<s<\min\{t_f(t,x_1,v_1),T-t\}\}\times\gamma_-\\
				&\mapsto(t+s,X_1^f(t+s;t,x_1,v_1),v)\in[0,T]\times\Omega\times\mathbb R^3,
			\end{split}	
		\end{equation}
		are both injecture, and the Jacobian of the maps of \eqref{map5} and \eqref{map6} are
		\begin{equation}\label{Jac det4}
			\Big|\det\Big(\frac{\partial(t+s,X_1^f(s;t,x_1,v_1),v)}{\partial(t,s,v)}\Big)\Big|=\frac{|v_1-v_w(t)x_1|}{1+x_w(t+s)}.
		\end{equation}
In \eqref{Jac det3} and \eqref{Jac det4}, $x_1=0$ or $1$.
	\end{lemma}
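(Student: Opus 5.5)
The plan is to mirror the proof of Lemma \ref{Lemma 2.1}: an explicit computation with the closed-form characteristic \eqref{X}, plus an injectivity argument based on uniqueness of solutions to the ODE \eqref{ODE X}.

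\emph{Injectivity.} For \eqref{map3}, fix $s$ and suppose $(t,x_1,v)$ and $(t',x_1',v')$ in $[0,T]\times\gamma_+$ have the same image. Then $v=v'$, and the two curves $\tau\mapsto X_1^f(\tau;t,x_1,v_1)$ and $\tau\mapsto X_1^f(\tau;t',x_1',v_1)$ solve the same linear ODE \eqref{ODE X} and agree at $\tau=s$. By uniqueness they coincide on their common interval of existence. On $\gamma_+$ the characteristic is strictly exiting at time $t$: indeed $\frac{\rm d}{{\rm d}s}X_1^f$ has the sign of $(v_1-v_w(t)x_1)n(x_1)$ there. It also stays in $(0,1)$ on $(t-t_{\textbf b},t)$. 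Hence $t$ is the unique first exit time after $s$, so $t=t'$ and $x_1=x_1'$. The map \eqref{map4} is handled the same way using the forward exit time $t_{\textbf f}$ and the incoming sign condition on $\gamma_-$. For \eqref{map5} and \eqref{map6}, injectivity in $(t+s,\cdot)$ reduces to the previous statements after fixing the image time $t+s$.

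\emph{Jacobians.} Since $v$ is mapped identically and $x_1\in\{0,1\}$ is a discrete parameter, the Jacobian determinant of $(t,v)\mapsto(X_1^f(s;t,x_1,v_1),v)$ equals $\partial_tX_1^f(s;t,x_1,v_1)$. From \eqref{X},
\[
\partial_t X_1^f(s;t,x_1,v_1)=\frac{\dot x_w(t)x_1-v_1}{1+x_w(s)}=-\frac{v_1-v_w(t)x_1}{1+x_w(s)},
\]
using $\dot x_w=v_w$. Taking absolute values gives \eqref{Jac det3}. An equivalent route is to differentiate the identity $X_1^f(s;t+\vps,X_1^f(t+\vps;t,x_1,v_1),v_1)=X_1^f(s;t,x_1,v_1)$ in $\vps$; I will keep the explicit formula since it is shorter.

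For \eqref{map5} and \eqref{map6}, the map is $(t,s,v)\mapsto(t+s,\,X_1^f(t+s;t,x_1,v_1),\,v)$. Its Jacobian matrix has first row $(1,1,0)$ and second row $(\partial_t[X_1^f(t+s;t)],\ \partial_s[X_1^f(t+s;t)],\ *)$. Subtracting the first column from the second, the determinant equals $\partial_s[\cdot]-\partial_t[\cdot]$. This equals minus the partial derivative of $X_1^f(\sigma;t,x_1,v_1)$ in its last time argument $t$, evaluated at $\sigma=t+s$, because the total $t$-derivative picks up the first-slot derivative, which cancels with the $s$-derivative. By the computation above, this gives $\frac{v_1-v_w(t)x_1}{1+x_w(t+s)}$ up to sign, which is \eqref{Jac det4}.

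\emph{Main obstacle.} The step needing care is the bookkeeping in the $3\times3$ determinant for \eqref{map5}--\eqref{map6}: the time argument $t+s$ appears both in the first slot of $X_1^f$ and inside $x_w$. The computation is routine once one uses that $X_1^f(\sigma;t,x_1,v_1)$ depends on $t$ only through $(1+x_w(t))x_1-v_1t$.
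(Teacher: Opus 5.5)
Your proposal is correct and follows the paper's proof. Injectivity comes from uniqueness of solutions to \eqref{ODE X}; your exit-time argument just spells out what the paper states in one line. The Jacobians reduce to $\partial_t X_1^f(s;t,x_1,v_1)$ and $-\partial_t X_1^f(t+s;t,x_1,v_1)$. The only cosmetic difference is that you differentiate the closed form \eqref{X} directly, whereas the paper differentiates the flow identity $X_1^f(s;t+\vps,X_1^f(t+\vps;t,x_1,v_1),v_1)=X_1^f(s;t,x_1,v_1)$ to get $\partial_tX_1^f=-\frac{v_1-v_w(t)x_1}{1+x_w(t)}\partial_{x_1}X_1^f$, which yields the same formula.
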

	\begin{proof}
		The maps are injective since the characteristic is a solution of \eqref{ODE X}.
		
		First, we prove \eqref{Jac det3}. By a direct  computation, we have
		\begin{equation}\label{pa_t X}
			\det \Big(\frac{\partial(X_1^f(s;t,x_1,v_1),v)}{\partial(t,v)}\Big)=\partial_t X_1^f(s;t,x_1,v_1).
		\end{equation}
		Also we have
		\begin{equation*}
			X_1^f(s;t+\vps,X_1^f(t+\vps;t,x_1,v_1),v_1)=X_1^f(s;t,x_1,v_1).
		\end{equation*}
		Differentiate the above equality with respect to $\vps$ and take $\vps=0$, we get
		\begin{equation}\label{X equ}
			[\partial_t+\frac{v_1-v_w(t)x_1}{1+x_w(t)}\partial_{x_1}]X_1^f(s;t,x_1,v_1)=0.
		\end{equation}
From \eqref{pa_t X} and \eqref{X equ}, it holds that
		\begin{align*}
			\det \Big(\frac{\partial(X_1^f(s;t,x_1,v_1),v)}{\partial(t,v)}\Big)&=\partial_t X_1^f(s;t,x_1,v_1)\\
			&=-\frac{v_1-v_w(t)x_1}{1+x_w(t)}\partial_{x_1}X_1^f(s;t,x_1,v_1)\\
			&=-\frac{v_1-v_w(t)x_1}{1+x_w(t)}\cdot\frac{1+x_w(t)}{1+x_w(s)}=-\frac{v_1-v_w(t)x_1}{1+x_w(s)}.
		\end{align*}
Then \eqref{Jac det3} holds.
		
		For \eqref{Jac det4},
		\begin{equation*}
			\det\Big(\frac{\partial(t+s,X_1^f(s;t,x_1,v_1),v)}{\partial(t,s,v)}\Big)=-\partial_t X_1^f(t+s;t,x_1,v_1).
		\end{equation*}
		Similarly, we obtain
		\begin{equation*}
			\det\Big(\frac{\partial(t+s,X_1^f(s;t,x_1,v_1),v)}{\partial(t,s,v)}\Big)=\frac{v_1-v_w(t)x_1}{1+x_w(t+s)}.
		\end{equation*}
		This proves \eqref{Jac det4} and hence completes the proof of Lemma \ref{Lemma 2.2}.
	\end{proof}
	
	\begin{lemma}\label{lem h}
		Suppose $h(t,x_1,v)\in L^1([0,T]\times\Omega\times\mathbb R^3)$, it holds that
		\begin{align}\label{h}
			\int_0^T&\int_{\Omega\times\mathbb R^3}h(t,x_1,v){\rm d}v{\rm d}x_1{\rm d}t\nonumber\\
			&=\int_{\Omega\times\mathbb R^3}\int_{-\min\{T,t_{\textbf b}(T,x_1,v_1)\}}^0 h(T+s,X_1^f(T+s;t,x_1,v_1),v)\frac{1+x_w(T)}{1+x_w(T+s)}{\rm d}s{\rm d}x_1{\rm d}v\nonumber\\
			&\quad+\int_0^T\int_{\gamma_+}\int_{-\min\{t,t_{\textbf b}(t,x_1,v_1)\}}^0 h(t+s,X_1^f(t+s;t,x_1,v_1),v)\frac{1}{1+x_w(t+s)}{\rm d}s{\rm d}\tilde\gamma{\rm d}t.
		\end{align}
	\end{lemma}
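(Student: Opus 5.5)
The plan is a change of variables along characteristics that foliates the time--phase-space cylinder $[0,T]\times\Omega\times\mathbb R^3$ by backward trajectories. Every point $(\tau,y_1,v)$ with $\tau\in[0,T]$ and $y_1\in\Omega$ lies on exactly one characteristic $X_1^f$ solving \eqref{ODE X}. Follow that characteristic forward in time from $\tau$. Either it stays in $\Omega$ up to time $T$, where it arrives at some $x_1\in\Omega$; this happens when $t_{\textbf f}(\tau,y_1,v_1)\geq T-\tau$. Or it first exits through the boundary at a time $t\leq T$, reaching a point $(x_1,v)\in\gamma_+$; this happens when $t_{\textbf f}<T-\tau$. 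Up to the null set of grazing trajectories ($\gamma_0$ and trajectories with $t_{\textbf f}=T-\tau$ exactly), this splits the cylinder into two disjoint regions, and the two terms on the right of \eqref{h} correspond to them. Injectivity in both regions comes from uniqueness of solutions to the linear ODE \eqref{ODE X}, as already used in Lemmas \ref{Lemma 2.1} and \ref{Lemma 2.2}.

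\textbf{Region 1.} Parametrize it by $(x_1,v)\in\Omega\times\mathbb R^3$ and $s\in(-\min\{T,t_{\textbf b}(T,x_1,v_1)\},0)$, using the map $(s,x_1,v)\mapsto(T+s,X_1^f(T+s;T,x_1,v_1),v)$. The Jacobian is elementary. The time component $T+s$ depends only on $s$, so the determinant reduces to $\partial_{x_1}X_1^f(T+s;T,x_1,v_1)$. By the explicit formula \eqref{X} this equals $\frac{1+x_w(T)}{1+x_w(T+s)}$, which is the weight in the first term of \eqref{h}. (The $X_1^f(T+s;t,\dots)$ written there should be read with $t=T$.)

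\textbf{Region 2.} Use the map \eqref{map5}, $(t,s,x_1,v)\mapsto(t+s,X_1^f(t+s;t,x_1,v_1),v)$, with $(x_1,v)\in\gamma_+$ and $-\min\{t,t_{\textbf b}\}<s<0$. By \eqref{Jac det4} of Lemma \ref{Lemma 2.2}, its Jacobian is $\frac{|v_1-v_w(t)x_1|}{1+x_w(t+s)}$. The factor $|v_1-v_w(t)x_1|\,{\rm d}v$ is exactly ${\rm d}\tilde\gamma$, and what remains is the weight $\frac{1}{1+x_w(t+s)}$ in the second term.

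\textbf{Main obstacle.} The real work is surjectivity: checking that the images of the two maps exactly cover $[0,T]\times\Omega\times\mathbb R^3$ up to a null set, and do not overlap. I would argue via the forward exit time. Given $(\tau,y_1,v)$, set $t=\tau+t_{\textbf f}(\tau,y_1,v_1)$. If $t\leq T$, take $(t,X_1^f(t),v)$, which lies in $\gamma_+$, and $s=\tau-t$. Then one must verify $-s\leq\min\{t,t_{\textbf b}(t,x_1,v_1)\}$, using $\tau\geq0$ together with the identity \eqref{t_b+vps}. Otherwise take $x_1=X_1^f(T;\tau,y_1,v_1)\in\Omega$ and $s=\tau-T$, which is admissible because the trajectory stays in $\Omega$ on $[\tau,T]$. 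The excluded set consists of grazing points and trajectories tangent to the boundary. Since $|x_w|,|v_w|\ll1$ and $X_1^f\approx x_1+v_1(s-t)$, this set is Lebesgue-null. Once that is settled, summing the two changes of variables and applying Fubini for $h\in L^1$ gives \eqref{h}.
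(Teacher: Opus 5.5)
Your proposal is correct and follows the paper's proof. Both split $[0,T]\times\Omega\times\mathbb R^3$ according to whether the forward trajectory exits before time $T$ ($t+t_{\textbf f}\le T$), and both use the same two changes of variables: the map \eqref{map5} with Jacobian \eqref{Jac det4} for the exiting part, and $(s,x_1,v)\mapsto(T+s,X_1^f(T+s;T,x_1,v_1),v)$ with Jacobian $\frac{1+x_w(T)}{1+x_w(T+s)}$ for the rest. Your surjectivity and null-set argument fills in a step the paper only asserts, and reading $X_1^f(T+s;t,\dots)$ as $X_1^f(T+s;T,\dots)$ is the right interpretation.
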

	\begin{proof}
		First, the region $\{(t,x_1,v)\in[0,T]\times\Omega\times\mathbb R^3\}$ can be split into two disjoint parts
		\begin{align*}
			A:&=\{(t,x_1,v)\in[0,T]\times\Omega\times\mathbb R^3:t_{\textbf f}(t,x_1,v_1)+t\leq T\},\\
			B:&=\{(t,x_1,v)\in[0,T]\times\Omega\times\mathbb R^3:t_{\textbf f}(t,x_1,v_1)+t> T\}.
		\end{align*}
		We also define
		\begin{align*}
			A'&=\{(t,s,x_1,v)\in[0,T]\times\mathbb R\times\gamma_+:-\min\{t_{\textbf b}(t,x_1,v_1),t\}\leq s\leq0\}\\
			B'&=\{(s,x_1,v)\in[0,T]\times\Omega\times\mathbb R^3:s\leq t_{\textbf b}(T,x_1,v_1)\}.
		\end{align*}
		Denote
		\begin{align*}
			\mathcal A:\ &A'\rightarrow A\\
			&(t,s,x_1,v)\in[0,T]\times\{-\min\{t_{\textbf b}(t,x_1,v_1),t\}\leq s\leq0\}\times\gamma_+\\
			&\quad\mapsto (t+s,X_1^f(t+s;t.x_1,v_1),v)\in[0,T]\times\Omega\times\mathbb R^3.
		\end{align*}
		Since $t_{\textbf f}(t+s,X_1^f(t+s;t,x_1,v_1),v_1)+(t+s)=-s+(t+s)=t\leq T$ if $(x_1,v)\in\gamma_+$ and $-\min\{t_{\textbf b}(t,x_1,v_1),t\}\leq s\leq0$, the map $\mathcal A$ is well-defined. Applying the change of variable $\mathcal A$ and \eqref{Jac det4}, we have
		\begin{equation}\label{h A}
			\begin{split}
				\int_A&h(t,x_1,v){\rm d}v{\rm d}x_1{\rm d}t\nonumber\\
				&\quad=\int_0^T\int_{\gamma_+}\int_{-\min\{t,t_{\textbf b}(t,x_1,v_1)\}}^0 h(t+s,X_1^f(t+s;t,x_1,v_1),v) \frac{1}{1+x_w(t+s)}{\rm d}s{\rm d}\tilde\gamma{\rm d}t.
			\end{split}	
		\end{equation}
		
		Next consider the map
		\begin{equation}\label{map7}
			(s,x_1,v)\in B'\mapsto(T-s,X_1^f(T-s;T,x_1,v_1),v)\in B.
		\end{equation}
		By computations and using \eqref{X}, the Jacobian of map \eqref{map7} is
		\begin{align*}
			\det\Big(\frac{\partial(T-s,X_1^f(T-s;T,x_1,v_1),v)}{\partial(s,x_1,v)}\Big)&=-\partial_{x_1}X_1^f(T-s;T,x_1,v_1)=-\frac{1+x_w(T)}{1+x_w(T-s)}.
		\end{align*}
		Then by the change of variable \eqref{map7} and $s\mapsto-s$, we get
		\begin{align}\label{h B}
			\int_B&h(t,x_1,v){\rm d}v{\rm d}x_1{\rm d}t\notag\\
				&=\int_{\Omega\times\mathbb R^3}\int_{-\min\{T,t_{\textbf b}(T,x_1,v_1)\}}^0 h(T+s,X_1^f(T+s;t,x_1,v_1),v) \frac{1+x_w(T)}{1+x_w(T+s)}{\rm d}s{\rm d}x_1{\rm d}v.
		\end{align}
		Combining \eqref{h A} with \eqref{h B}, one has \eqref{h}. Then, the proof of Lemma \ref{lem h} is complete.
	\end{proof}
	
	\begin{lemma}\label{lem green fun}
		For $p\in[1,\infty)$, we assume $f\in L^p_{loc}(\mathbb R_+\times\Omega\times\mathbb R^3)$ satisfies
		\begin{equation*}
			\partial_t f+\frac{v_1-v_w(t)x_1}{1+x_w(t)}\partial_{x_1}f\in L^p_{loc}(\mathbb R_+\times\Omega\times\mathbb R^3),\ f\in L^p_{loc}(\mathbb R_+;L^p(\gamma_+)).
		\end{equation*}
Then $f\in C_{loc}^0(\mathbb R_+;L^p(\Omega\times\mathbb R^3))$ and $f\in L_{loc}^p(\mathbb R_+;L^p(\gamma_+))$.		Moreover,
		\begin{align}\label{equ green fun}
				p\int_0^T&\int_{\Omega\times\mathbb R^3}(1+x_w(t))\{\partial_t+\frac{v_1-v_w(t)x_1}{1+x_w(t)}\partial_{x_1}\}f|f|^{p-2}f{\rm d}x_1{\rm d}v{\rm d}t\notag\\
				&=\left\|((1+x_w)^{1/p}f)(T)\right\|_{2}^p-\left\|((1+x_w)^{1/p}f)(0)\right\|_{2}^p\notag\\
				&\quad+\int_0^T\int_{\gamma_+}|f(t)|^p{\rm d}\tilde\gamma{\rm d}t-\int_0^T\int_{\gamma_-}|f(t)|^p{\rm d}\tilde\gamma{\rm d}t.	
		\end{align}
	\end{lemma}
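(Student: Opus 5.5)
The identity \eqref{equ green fun} is a Green's formula along the characteristics \eqref{ODE X}. I would prove it first for smooth $f$ with compact support in velocity, and then extend it by density. The regularity claims ($f\in C^0_{loc}(\mathbb R_+;L^p)$, plus the trace on $\gamma_+$) follow from the same density argument once the identity is in hand.

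\textbf{Step 1: smooth case.} For smooth $f$, I would put $g=|f|^p$ and use
\[
(1+x_w)\Big\{\partial_t+\tfrac{v_1-v_wx_1}{1+x_w}\partial_{x_1}\Big\}g=p(1+x_w)\{\cdots\}f\,|f|^{p-2}f .
\]
Next I rewrite the left side in divergence form. Since $\partial_t(1+x_w)=v_w$ and $\partial_{x_1}(v_1-v_wx_1)=-v_w$,
\[
(1+x_w)\partial_t g+(v_1-v_wx_1)\partial_{x_1}g=\partial_t\big[(1+x_w)g\big]+\partial_{x_1}\big[(v_1-v_wx_1)g\big].
\]
The two $v_w g$ terms cancel exactly; this is the same cancellation already used in \eqref{weak-f-eq}. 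Integrating over $[0,T]\times\Omega\times\mathbb R^3$ then gives:
\begin{itemize}
\item the time boundary terms $\|(1+x_w)^{1/p}f(T)\|_p^p-\|(1+x_w)^{1/p}f(0)\|_p^p$ (the subscript $2$ in the displayed statement should read $p$);
\item the spatial boundary terms $\int\big[(v_1-v_w)g(1)-v_1g(0)\big]dv$, which by definition of ${\rm d}\tilde\gamma$ and the sign convention $n(0)=-1$, $n(1)=1$ split as $\int_{\gamma_+}g\,{\rm d}\tilde\gamma-\int_{\gamma_-}g\,{\rm d}\tilde\gamma$.
\end{itemize}

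\textbf{Step 2: general case.} This follows Ukai/Guo/Esposito–Guo–Kim–Marra, using the change of variables in Lemmas~\ref{Lemma 2.1}, \ref{Lemma 2.2} and \ref{lem h}. I would mollify $f$ in $(t,x_1,v)$ after cutting off near the grazing set $\gamma_0$ and near large $|v|$. The point to check is that mollification commutes with the transport operator up to an error that vanishes in $L^p_{loc}$. The coefficient $\frac{v_1-v_wx_1}{1+x_w}$ is Lipschitz in $(t,x_1)$, because $x_w,v_w\in W^{1,\infty}$ by the ODE, so a Friedrichs commutator lemma applies.

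For the boundary traces I would argue along characteristics. Let $\mathcal T f$ denote the transport derivative. Away from $\gamma_0$, $f$ restricted to a characteristic segment is absolutely continuous, with $f(t,x_1,v)-f(s,X_1^f(s),v)=\int_s^t \mathcal T f\,d\tau$. Using the Jacobians \eqref{Jac det3} and \eqref{Jac det4}, this gives the trace estimate on $\gamma_+\setminus\gamma_+^{\varepsilon}$ in terms of $\|f\|_{L^p}+\|\mathcal T f\|_{L^p}$. On the near-grazing region, the assumed $f\in L^p(\gamma_+)$ allows passage to the limit by monotone convergence.

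Applying Step 1 to the smooth approximants and passing to the limit, I use Lemma~\ref{lem h} to control interior terms by boundary and time-slice contributions. This yields \eqref{equ green fun}, including finiteness of the $\gamma_-$ integral. Continuity in time comes from applying the identity to differences $f_n-f_m$ over $[t_1,t_2]$, which shows the approximants are Cauchy in $C^0([0,T];L^p)$.

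\textbf{Main obstacle.} The hard part is the trace theory near $\gamma_0$. There the Jacobian factor $|v_1-v_w(t)x_1|$ degenerates, and the moving boundary shifts the grazing set in time. I would handle this exactly as in the cited fixed-domain proofs, with the cutoff $\gamma_+^\varepsilon$ adapted to $v_1-v_w(t)x_1$; smallness of $v_w$ keeps $t_{\bf b}$ comparable to the fixed-domain case.
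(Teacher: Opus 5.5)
Your Step 1 is correct, and so is your fix $\|\cdot\|_2^p\to\|\cdot\|_p^p$. The overall plan is a legitimate proof, but it is not the paper's argument: the paper never uses smooth approximants.

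\textbf{What the paper does.} It applies Lemma~\ref{lem h} directly to $h=p(1+x_w)\{\partial_t+\frac{v_1-v_wx_1}{1+x_w}\partial_{x_1}\}f\,|f|^{p-2}f$. This turns the space-time integral into an integral over backward characteristic segments issued from $\{t=T\}$ and from $\gamma_+$. The weight $1+x_w$ cancels the Jacobian factors in Lemma~\ref{lem h}; this is the Lagrangian form of your identity $\partial_t(1+x_w)+\partial_{x_1}(v_1-v_wx_1)=0$. Along each segment the integrand is $\frac{{\rm d}}{{\rm d}s}|f|^p$. The lower endpoints lie on $\{t=0\}$ or on $\gamma_-$, and the Jacobians of Lemmas~\ref{Lemma 2.1} and~\ref{Lemma 2.2} convert them into $\int(1+x_{w0})|f(0)|^p$ and $\int_0^T\int_{\gamma_-}|f|^p{\rm d}\tilde\gamma$.

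\textbf{What each route buys.} The paper's argument only needs absolute continuity of $f$ along a.e.\ characteristic. The traces are by definition endpoint values, so no commutator lemma or boundary-adapted mollifier is needed. Grazing is harmless because the degenerate factor $|v_1-v_w(t)x_1|$ in ${\rm d}\tilde\gamma$ is exactly the Jacobian. The paper does not, however, address the claim $f\in C^0_{loc}(\mathbb R_+;L^p)$; your Cauchy-sequence argument would supply it. Your route makes the identity a one-line divergence computation, but it concentrates all the difficulty in the approximation near $\partial\Omega$ and $\gamma_0$.

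\textbf{A point to watch in your route.} The moving wall matters more there than your sketch suggests, because $v_1-v_w(t)x_1$ is not constant along characteristics. A grazing cutoff $\chi_\varepsilon=\chi(|v_1-v_w(t)x_1|/\varepsilon)$ therefore produces an extra term $p\int(1+x_w)|f|^p\chi_\varepsilon^{p-1}\mathcal T\chi_\varepsilon$. Here $|\mathcal T\chi_\varepsilon|$ is of size $\varepsilon^{-1}|v_w'|$ on a shell of width $\varepsilon$. This term is not small in absolute value, and it disappears as $\varepsilon\to0$ only through a cancellation along characteristics that you would have to prove. You avoid it by building the cutoff from quantities that are constant along characteristics, such as the speeds $|v_1-v_w(\cdot)x_1|$ at the backward and forward exit points, in the spirit of $\alpha_\vps$ and \eqref{par alpha}.

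Finally, the two tools you already invoke for the traces, absolute continuity along characteristics and Lemma~\ref{lem h}, give \eqref{equ green fun} by themselves. The mollification step can therefore be dropped entirely, and doing so is exactly the paper's proof.
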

	
	\begin{proof}
		By Lemma \ref{lem h}, it holds
		\begin{align*}
			p\int_0^T&\int_{\Omega\times\mathbb R^3}(1+x_w(t))\{\partial_t+\frac{v_1-v_w(t)x_1}{1+x_w(t)}\partial_{x_1}\}f|f|^{p-2}f{\rm d}x_1{\rm d}v{\rm d}t\notag\\
			&=p\int_{\Omega\times\mathbb R^3}\int_{-\min\{T,t_{\textbf b}(T,x_1,v_1)\}}^0 \{\partial_t+\frac{v_1-v_w(t)x_1}{1+x_w(t)}\partial_{x_1}\}f|f|^{p-2}f(T+s,X_1^f(T+s;t,x_1,v_1),v)\nonumber\\
			&\qquad\times (1+x_w(T)){\rm d}s{\rm d}x_1{\rm d}v\nonumber\\
			&\quad+p\int_0^T\int_{\gamma_+}\int_{-\min\{t,t_{\textbf b}(t,x_1,v_1)\}}^0 \{\partial_t+\frac{v_1-v_w(t)x_1}{1+x_w(t)}\partial_{x_1}\}f|f|^{p-2}f(t+s,X_1^f(t+s;t,x_1,v_1),v)\nonumber\\
			&\qquad\times {\rm d}s{\rm d}\tilde\gamma{\rm d}t.
		\end{align*}
		Note that
		\begin{align*}
			\frac{d}{ds}&|f(t+s,X_1^f(t+s;t.x_1,v_1),v)|^p\\
			&=p\{\partial_tf+\frac{v_1-v_w(t)x_1}{1+x_w(t)}\partial_{x_1}f\}|f|^{p-2}f(t+s,X_1^f(t+s;t,x_1,v_1),v)	.
		\end{align*}
		Then,
		\begin{align}\label{equ 2.22}
			p\int_0^T&\int_{\Omega\times\mathbb R^3}(1+x_w(t))\{\partial_t+\frac{v_1-v_w(t)x_1}{1+x_w(t)}\partial_{x_1}\}f|f|^{p-2}f{\rm d}x_1{\rm d}v{\rm d}t\notag\\
			&=\int_{\Omega\times\mathbb R^3}(1+x_w(T))|f(T,x_1,v)|^p{\rm d}x_1{\rm d}v+\int_0^T\int_{\gamma_+}|f(t,x_1,v)|^p{\rm d}\tilde\gamma{\rm d}t\nonumber\\
			&\quad-\underbrace{\int_{\Omega\times\mathbb R^3}1_{\{T\geq t_{\textbf b}(T,x_1,v_1)\}}(1+x_w(T))|f(T-t_{\textbf b},x_{1\textbf b},v)|^p{\rm d}x_1{\rm d}v}_{\eqref{equ 2.22}_1}\nonumber\\
			&\quad-\underbrace{\int_{\Omega\times\mathbb R^3}1_{\{T< t_{\textbf b}(T,x_1,v_1)\}}(1+x_w(T))|f(0,X_1^f(0;T,x_1,v_1),v)|^p{\rm d}x_1{\rm d}v}_{\eqref{equ 2.22}_2}\nonumber\\
			&\quad-\underbrace{\int_0^T\int_{\gamma_+}1_{\{t\geq t_{\textbf b}(t,x_1,v_1)\}}|f(t-t_{\textbf b},x_{1\textbf b},v)|^p{\rm d}\tilde\gamma{\rm d}t}_{\eqref{equ 2.22}_3}\nonumber\\
			&\quad-\underbrace{\int_0^T\int_{\gamma_+}1_{\{t< t_{\textbf b}(t,x_1,v_1)\}}|f(0,X_1^f(0;t,x_1,v_1),v)|^p{\rm d}\tilde\gamma{\rm d}t}_{\eqref{equ 2.22}_4}.
		\end{align}
		In $\eqref{equ 2.22}_2$ and $\eqref{equ 2.22}_4$, we use the changes of variable $(x_1,v)\mapsto(X_1^f(0;T,x_1,v),v)$ and $(t,v)\mapsto(X_1^f(0;t,x_1,v),v)$ respectively to get
		\begin{equation*}
			\eqref{equ 2.22}_2+\eqref{equ 2.22}_4=\int_{\Omega\times\mathbb R^3}(1+x_{w0})|f(0,x_1,v)|^p{\rm d}x_1{\rm d}v.
		\end{equation*}
		By \eqref{Jac det1} and \eqref{Jac det2} in Lemma \ref{Lemma 2.1}, we can claim that
		\begin{equation*}
			\eqref{equ 2.22}_1+\eqref{equ 2.22}_3=\int_0^T\int_{\gamma_-}|f|^p{\rm d}\tilde\gamma{\rm d}t.
		\end{equation*}
		This proves \eqref{equ green fun} and hence completes the proof of Lemma \ref{lem green fun}.
	\end{proof}
	
We define
\begin{equation}\label{def gamma_+^vps}
	\gamma_+^\vps:=\{(x_1,v)\in\gamma_+:|v_1-v_w(t)x_1|\leq \vps\ or\ |v_1|\geq\frac{1}{\vps}\}.
\end{equation}

\begin{lemma}\label{lem trace}
	There exists a constant $C_{\Omega}>0$	such that
	\begin{align}\label{equ trace}
			\int_0^t\int_{\gamma_+\setminus\gamma_+^\vps}|h|{\rm d}\tilde\gamma{\rm d}s\notag
			&\leq C_\Omega\Big\{\left\|(1+x_{w0}) h_0\right\|_{L^1_{x_1,v}}+\int_0^t\left\|(1+x_w(s))h(s)\right\|_{L^1_{x_1,v}}{\rm d}s\notag\\
			&\quad+\int_0^t\left\|(1+x_w)[\partial_t+\frac{v_1-v_w(s)x_1}{1+v_w(t)}\partial_{x_1}]h(s)\right\|_{L^1_{x_1,v}}{\rm d}s\Big\}.
	\end{align}
\end{lemma}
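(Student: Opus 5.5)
We will prove Lemma \ref{lem trace} by adapting the classical trace argument (cf. \cite{Guo-2010}, \cite{EGKM-13}) to the transport field $\frac{v_1-v_w(s)x_1}{1+x_w(s)}\partial_{x_1}$. The starting point is the change of variables in Lemma \ref{lem h}, or more precisely the maps \eqref{map5} and \eqref{map6} of Lemma \ref{Lemma 2.2} with Jacobian \eqref{Jac det4}. For $(s,x_1,v)\in\gamma_+\setminus\gamma_+^\vps$, we have $|v_1-v_w(s)x_1|>\vps$ and $|v_1|<1/\vps$. The backward characteristic $X_1^f(\tau;s,x_1,v_1)$ therefore stays in $\bar\Omega$ for a time
\[
t_{\mathbf b}(s,x_1,v_1)\geq c\,\vps \qquad(\text{e.g. } t_{\mathbf b}\geq\sqrt{2/5}\,\vps).
\]
This holds because $|\tfrac{d}{d\tau}X_1^f|\leq 2|v_1|+C\vps_0\leq 2/\vps$, so that crossing the unit interval takes time at least of order $\vps$. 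This lower bound is the estimate \eqref{lb-tb} quoted later.

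Next, fix such a boundary point and set $\tau_0=\min\{s,c\vps\}$. Along the characteristic, for $-\tau_0\le\tau\le 0$, the fundamental theorem of calculus gives
\[
|h(s,x_1,v)|\leq |h(s+\tau,X_1^f(s+\tau),v)|+\int_{-\tau_0}^0\Big|\big[\partial_t+\tfrac{v_1-v_wx_1}{1+x_w}\partial_{x_1}\big]h\Big|(s+\tau',X_1^f(s+\tau'),v)\,{\rm d}\tau'.
\]
We split into two cases.
\begin{itemize}
\item If $s\geq c\vps$, we average in $\tau\in(-c\vps,0)$. This gives a factor $1/(c\vps)$ in front of $\int_{-c\vps}^0|h|\,{\rm d}\tau$.
\item If $s<c\vps$, we additionally keep the endpoint value at time $0$, namely $h_0(X_1^f(0),v)$.
\end{itemize}

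We then integrate over $\gamma_+\setminus\gamma_+^\vps$ against ${\rm d}\tilde\gamma\,{\rm d}s$ and apply the change of variables $(s,\tau,x_1,v)\mapsto(s+\tau,X_1^f(s+\tau;s,x_1,v_1),v)$. By Lemma \ref{Lemma 2.2} this map is injective, and its Jacobian $|v_1-v_w(s)x_1|/(1+x_w(s+\tau))$ exactly absorbs ${\rm d}\tilde\gamma$, up to the factor $1+x_w$. Consequently the volume terms become
\[
\frac{C}{\vps}\int_0^t\|(1+x_w)h\|_{L^1}\qquad\text{and}\qquad \int_0^t\|(1+x_w)[\partial_t+\cdots]h\|_{L^1}.
\]
For the initial term we use the map $(s,v)\mapsto(X_1^f(0;s,x_1,v_1),v)$ with Jacobian \eqref{Jac det3}. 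This yields $\|(1+x_{w0})h_0\|_{L^1}$, since each interior point at time $0$ is hit at most once for each boundary endpoint. Collecting the three contributions gives \eqref{equ trace} with $C_\Omega\sim C/\vps$.

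The main obstacle is the uniform lower bound on $t_{\mathbf b}$ together with the injectivity and Jacobian bookkeeping in the presence of the moving coefficients $x_w,v_w$. The bound on $t_{\mathbf b}$ requires $|v_w|\le 2\vps_0\ll\vps$, so that the grazing set in the moving frame is controlled by the cutoff $|v_1-v_w x_1|>\vps$. We will verify this directly from the explicit formula \eqref{X}: since $x_1\in\{0,1\}$, reaching the opposite endpoint requires $|v_1|\,t_{\mathbf b}\gtrsim 1-C\vps_0$.
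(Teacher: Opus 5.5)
Your proposal is correct and follows essentially the same route as the paper. You integrate along the backward characteristic from $\gamma_+\setminus\gamma_+^\vps$, use the lower bound $t_{\mathbf b}\geq\sqrt{2/5}\,\vps$ from \eqref{lb-tb}, handle times away from $0$ by averaging plus the change of variables \eqref{map5} (as in Lemma \ref{lem h}), and handle times near $0$ through the initial-data term, with the Jacobians \eqref{Jac det3}--\eqref{Jac det4} absorbing ${\rm d}\tilde\gamma$. The only difference is cosmetic: you average over a fixed window of length $c\vps$ instead of the paper's full segment of length $\min\{s,t_{\mathbf b}\}$, which spares you the paper's auxiliary upper bound on $t_{\mathbf b}$ when controlling the source term.
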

\begin{proof}
	Suppose $h$ solves
	\begin{equation*}
		\partial_th+\frac{v_1-v_w(t)x_1}{1+x_w(t)}\partial_{x_1}h+\Psi h=N.	
	\end{equation*}
	Alone the characteristic, we have
	\begin{align*}
		h(t,x_1,v)&=h(t+s,X_1^f(t+s),v)e^{-\int_{t+s}^t\Psi(\tau',X_1^f(\tau'),v){\rm d}\tau'}\\
		&\quad+\int_{t+s}^t e^{-\int_\tau^t\Psi(\tau',X_1^f(\tau'),v){\rm d}\tau'}N(\tau,X_1^f(\tau),v){\rm d}\tau,
	\end{align*}
	for $(t,x_1,v)\in[0,T]\times\gamma_+$ and $-\min\{t,t_{\textbf b}(t,x_1,v_1)\}\leq s\leq0$. Here, $X_1^f(s):=X_1^f(s;t,x_1,v_1)$. Then
	\begin{align*}
		\min\{&t,t_{\textbf b}(t,x_1,v_1)\}\times h(t,x_1,v)\\
		&=\int_{-\min\{t,t_{\textbf b}(t,x_1,v_1)\}}^0| h(t,x_1,v)|{\rm d}s\\
		&\leq \int_{-\min\{t,t_{\textbf b}(t,x_1,v_1)\}}^0| h(t+s,X_1^f(t+s),v)|{\rm d}s\\
		&\quad+\int_{-\min\{t,t_{\textbf b}(t,x_1,v_1)\}}^0\int_{t+s}^t| N(\tau,X_1^f(\tau),v)|{\rm d}\tau{\rm d}s\\
		&\leq \int_{-\min\{t,t_{\textbf b}(t,x_1,v_1)\}}^0| h(t+s,X_1^f(t+s),v)|{\rm d}s\\
		&\quad+t_{\textbf b}(t,x_1,v_1)\int_{t-\min\{t,t_{\textbf b}(t,x_1,v_1)\}}^t| N(\tau,X_1^f(\tau),v)|{\rm d}\tau.
	\end{align*}

	Now we bound $t_{\textbf b}$. For $t-t_{\textbf b}(t,x_1,v_1)\leq s\leq t$, $|v_1|\geq\vps$, $\vps>4\delta_1$, if $2(|x_{w0}|+|v_{w0}|+C\vps)\leq \delta_1$,
	\begin{align*}
		|X_1^f(t-t_\textbf{b};t,x_1,v_1)-x_1|&\geq \Big|\int_{t-t_\textbf{b}}^t\frac{v_1}{1+x_w(\tau)}{\rm d}\tau\Big|-\Big|\int_{t-t_\textbf{b}}^t\frac{v_w(\tau)x_1}{1+x_w(\tau)}{\rm d}\tau\Big|\\
		&\geq \frac{1}{2}| v_1|t_b-\delta_1 t_b\geq\frac{1}{4}|v_1|t_b.
	\end{align*}
	This implies that $t_b\leq
	\frac{1}{\delta_1}$. On the other hand, for $(t,x_1,v)\in [0,T]\times \gamma_+\setminus\gamma_+^\vps$, one has
	\begin{align*}
		1&=|x_1-x_{1\textbf b}|^2=\Big|\int_{t-t_b}^t\frac{v_1-v_w(\tau)x_1}{1+x_w(\tau)}{\rm d}\tau\Big|^2
		\leq 2t_{\textbf b}^2|v_1|^2+2\delta_1^2t_{\textbf b}^2\\
		&\leq 2t_{\textbf b}^2|v_1|^2+\frac{1}{2}t_{\textbf b}^2|v_1|^2\leq \frac{5}{2}t_{\textbf b}^2|v_1|^2,
	\end{align*}
	which gives the lower bound of $t_{\textbf b}$ as
	\begin{equation}\label{lb-tb}
		t_{\textbf b}\geq\sqrt{\frac{2}{5}}\vps=:\vps_1.
	\end{equation}
	
	Next, for $(t,x_1,v)\in [\vps_1,T]\times \gamma_+\setminus\gamma_+^\vps$, it holds
	\begin{align}\label{t_b large}
		\vps_1\int_{\vps_1}^T&\int_{\gamma_+\setminus\gamma_+^\vps}|h(t,x_1,v)|{\rm d}\tilde\gamma{\rm d}t\nonumber\\
		&\leq\int_{\vps_1}^T\int_{\gamma_+\setminus\gamma_+^\vps}\mathop{\min}_{[\vps_1,T]\times\gamma_+\setminus\gamma_+^\vps}\{t,t_{\textbf b}(t,x_1,v_1)\}\times |h(t,x_1,v)|{\rm d}\tilde\gamma{\rm d}t\nonumber\\
		&\leq \int_{0}^T\int_{\gamma_+\setminus\gamma_+^\vps}\int_{-\min\{t,t_{\textbf b}(t,x_1,v_1)\}}^0| h(t+s,X_1^f(t+s),v)|{\rm d}s{\rm d}\tilde\gamma{\rm d}t\nonumber\\
		&\quad+\mathop{\sup}_{\gamma_+\setminus\gamma_+^\vps}t_{\textbf b}(t,x_1,v_1)\int_{0}^T\int_{\gamma_+\setminus\gamma_+^\vps}\int_{t-\min\{t,t_{\textbf b}(t,x_1,v_1)\}}^t| N(\tau,X_1^f(\tau),v)|{\rm d}\tau{\rm d}\tilde\gamma{\rm d}t\nonumber\\
		&\lesssim \int_0^T\left\|(1+x_w(t))h\right\|_{L^1_{x_1,v}}{\rm d}t+\int_0^T\left\|(1+x_w(t))[\partial_t+\frac{v_1-v_w(t)x_1}{1+x_w(t)}\partial_{x_1}+\psi]h\right\|_{L^1_{x_1,v}}{\rm d}t,
	\end{align}
where we have used Lemma \ref{lem h} in the last inequality.
	On the other hand,
	\begin{equation*}
		t_{\textbf b}(t,x_1,v_1)>t\ \textrm{for all}\ (t,x_1,v)\in[0,\vps_1]\times\gamma_+\setminus\gamma_+^\vps.
	\end{equation*}
By using the change of variables \eqref{map5}, we get
	\begin{align}\label{t_b small}
		\int_0^{\vps_1}&\int_{\gamma_+\setminus\gamma_+^\vps}|h(t,x_1,v)|{\rm d}\tilde\gamma{\rm d}t\nonumber\\
		&=\int_0^{\vps_1}\int_{\gamma_+\setminus\gamma_+^\vps}|h_0(X_1^f(0),v)|{\rm d}\tilde\gamma{\rm d}t+\int_0^{\vps_1}\int_{\gamma_+\setminus\gamma_+^\vps}\int_0^{-t}|N(t+\tau,X_1^f(t+\tau),v)|{\rm d}\tau{\rm d}\tilde\gamma{\rm d}t\nonumber\\
		&\leq \left\|(1+x_{w0})h_0\right\|_{L^1_{x_1,v}}+\int_0^{\vps_1}\left\|(1+x_w(t))[\partial_t+\frac{v_1-v_w(t)x_1}{1+x_w(t)}\partial_{x_1}+\psi]h\right\|_{L^1_{x_1,v}}{\rm d}t.
	\end{align}
	Combining \eqref{t_b large} and \eqref{t_b small} together, we obtain \eqref{equ trace}. The proof of Lemma \ref{lem trace} is complete. 
\end{proof}

\medskip
\noindent {\bf Acknowledgment:}\,
The research of Renjun Duan was partially supported by the grant from the National Natural Science Foundation of China (Project No.~12425109). The research of Shuangqian Liu was supported by grants from the National Natural Science Foundation of China under contract 12325107. RJD would like to thank Prof.~Kazuo Aoki and Prof.~Tetsuro Tsuji for many stimulating helpful discussions and suggestions to the current problem.

\medskip
\noindent{\bf Data availability:} The manuscript contains no associated data.

\medskip
\noindent{\bf Conflict of Interest:} The authors declare that they have no conflict of interest.


\end{document}